\documentclass[11pt]{amsart}

\setlength{\hoffset}{-1in}
\setlength{\voffset}{-1in}
\setlength{\oddsidemargin}{1.2in}
\setlength{\evensidemargin}{1.2in}
\setlength{\textwidth}{6.in}
\setlength{\textheight}{8.5in}
\setlength{\topmargin}{0.9in}
\setlength{\baselineskip}{14pt}
\usepackage{amsmath}
\usepackage{amsxtra}
\usepackage{amscd}
\usepackage{amsthm}
\usepackage{amsfonts}
\usepackage{amssymb}
\usepackage{eucal}
\usepackage{xcolor}
\usepackage{url}
\usepackage[all]{xy}

\usepackage{hyperref}

\newtheorem{theorem}{Theorem}[section]
\newtheorem{cor}[theorem]{Corollary}
\newtheorem{lem}[theorem]{Lemma}
\newtheorem{prop}[theorem]{Proposition}

\newtheorem{thm}[theorem]{Theorem}
\newtheorem{conj}[theorem]{Conjecture}
\newtheorem{example}[theorem]{Example}

\theoremstyle{definition}
\newtheorem{defn}[theorem]{Definition}

\theoremstyle{remark}
\newtheorem{rem}{Remark}[section]

 % to make the notation environment unnumbered

\numberwithin{equation}{section}
\setcounter{tocdepth}{1}

\begin{document}

\newcommand{\thmref}[1]{Theorem~\ref{#1}}
\newcommand{\secref}[1]{Sect.~\ref{#1}}
\newcommand{\lemref}[1]{Lemma~\ref{#1}}
\newcommand{\propref}[1]{Proposition~\ref{#1}}
\newcommand{\corref}[1]{Corollary~\ref{#1}}
\newcommand{\remref}[1]{Remark~\ref{#1}}
\newcommand{\conjref}[1]{Conjecture~\ref{#1}}
\newcommand{\nc}{\newcommand}
\nc{\on}{\operatorname}
\nc{\ch}{\mbox{ch}}
\nc{\Z}{{\mathbb Z}}
\nc{\C}{{\mathbb C}}
\nc{\pone}{{\mathbb C}{\mathbb P}^1}
\nc{\pa}{\partial}
\nc{\F}{{\mathcal F}}
\nc{\arr}{\rightarrow}
\nc{\larr}{\longrightarrow}
\nc{\al}{\alpha}
\nc{\ri}{\rangle}
\nc{\lef}{\langle}
\nc{\W}{{\mathcal W}}
\nc{\la}{\lambda}
\nc{\ep}{\epsilon}
\nc{\su}{\widehat{{\mathfrak sl}}_2}
\nc{\sw}{{\mathfrak s}{\mathfrak l}}
\nc{\g}{{\mathfrak g}}
\nc{\h}{{\mathfrak h}}
\nc{\n}{{\mathfrak n}}
\nc{\N}{\widehat{\n}}
\nc{\G}{\widehat{\g}}
\nc{\ghat}{\widehat{\g}}
\nc{\De}{\Delta_+}
\nc{\gt}{\widetilde{\g}}
\nc{\Ga}{\Gamma}
\nc{\ga}{\gamma}
\nc{\one}{{\mathbf 1}}
\nc{\z}{{\mathfrak Z}}
\nc{\zz}{{\mathcal Z}}
\nc{\Hh}{{\mathcal H}_\beta}
\nc{\qp}{q^{\frac{k}{2}}}
\nc{\qm}{q^{-\frac{k}{2}}}
\nc{\La}{\Lambda}
\nc{\wt}{\widetilde}
\nc{\qn}{\frac{[m]_q^2}{[2m]_q}}
\nc{\cri}{_{\on{cr}}}
\nc{\kk}{h^\vee}
\nc{\sun}{\widehat{\sw}_N}
\nc{\hh}{{\mathbf H}_{q,t}(\g)}
\nc{\HH}{{\mathcal H}_{q,t}}
\nc{\ca}{\wt{{\mathcal A}}_{h,k}(\sw_2)}
\nc{\si}{\sigma}
\nc{\gl}{\widehat{{\mathfrak g}{\mathfrak l}}_2}
\nc{\el}{\ell}
\nc{\s}{T}
\nc{\bi}{\bibitem}
\nc{\om}{\omega}
\nc{\WW}{\W_\beta}
\nc{\scr}{{\mathbf S}}
\nc{\ab}{{\mathbf a}}
\nc{\rr}{d}
\nc{\ol}{\overline}
\nc{\con}{qt^{-1} + q^{-1}t}
\nc{\den}{q^{\el-1} t^{-\el+1}+ q^{-\el+1} t^{\el-1}}
\nc{\ds}{\displaystyle}
\nc{\B}{B}
\nc{\A}{A^{(2)}_{2\el}}
\nc{\GG}{{\mathcal G}}
\nc{\UU}{{\mathcal U}}
\nc{\MM}{{\mathcal M}}
\nc{\CC}{{\mathcal C}}
\nc{\GL}{^L\G}
\nc{\dzz}{\frac{dz}{z}}
\nc{\Res}{\on{Res}}

\nc{\mc}{\mathcal}
\nc{\wh}{\widehat}
\nc{\bp}{{\mathbf P}}
\nc{\uqg}{U_q \G}

\newcommand{\U}{U}

\newcommand{\ZZ}{\mathbb{Z}}

\newcommand{\Glie}{\mathfrak{g}}
\newcommand{\Yim}{\mathcal{Y}}

\newcommand{\Hlie}{\mathfrak{h}}

\nc{\chal}{\check\al}
\nc{\chla}{\check\la}
\nc{\crho}{\check\rho}
\nc{\cmu}{\check\mu}
\nc{\mb}{\mathbf}

\title{Folded quantum integrable models and deformed ${\mc W}$-algebras}

\author{Edward Frenkel}

\address{Department of Mathematics, University of California,
  Berkeley, CA 94720, USA}

\author{David Hernandez}

\address{Universit\'e de Paris and Sorbonne Universit\'e, CNRS,
  IMJ-PRG, IUF, F-75006, Paris, France}

\author{Nicolai Reshetikhin}

\address{YMSC, Tsinghua University, Beijing, China and Department of
  Mathematics, University of California, Berkeley, CA 94720, USA}

\dedicatory{In memory of Hugues Charnallet}

\begin{abstract}
  We propose a novel quantum integrable model for every non-simply
  laced simple Lie algebra $\g$, which we call the folded integrable
  model. Its spectra correspond to solutions of the Bethe Ansatz
  equations obtained by folding the Bethe Ansatz equations of the
  standard integrable model associated to the quantum affine algebra
  $U_q(\wh{\g'})$ of the simply-laced Lie algebra $\g'$ corresponding
  to $\g$. Our construction is motivated by the analysis of the second
  classical limit of the deformed ${\mc W}$-algebra of $\g$, which we
  interpret as a ``folding'' of the Grothendieck ring of
  finite-dimensional representations of $U_q(\wh{\g'})$. We
  conjecture, and verify in a number of cases, that the spaces of
  states of the folded integrable model can be identified with
  finite-dimensional representations of $U_q({}^L\widehat{\g})$, where
  $^L\widehat{\g}$ is the (twisted) affine Kac--Moody algebra
  Langlands dual to $\ghat$. We discuss the analogous structures in
  the Gaudin model which appears in the limit $q \to 1$. Finally, we
  describe a conjectural construction of the simple $\g$-crystals in
  terms of the folded $q$-characters.
\end{abstract}

\maketitle

\tableofcontents

\section{Introduction}

\subsection{Integrable models} Exactly solvable quantum integrable
models have played a prominent role in mathematical physics ever since
the groundbreaking work of Hans Bethe \cite{Bethe} in which he
described the spectrum of the Hamiltonian of the XXX spin chain in
terms of the solutions of what we call today the {\em Bethe Ansatz
  equations}. The XXX spin chain naturally corresponds to the Yangian
of $\sw_2$, and it can be generalized to quantum spin chain models
corresponding to the Yangian of an arbitrary simple Lie
algebra $\g$ or the corresponding quantum affine algebra
$U_q(\ghat)$. In this paper, we focus on the latter model, which we
call {\em XXZ-type model} associated to $U_q(\ghat)$.

Using the universal $R$-matrix of $U_q(\ghat)$, one assigns a
commuting family of Hamiltonians $t_V(z), z \in \C$, of this model, to
every finite-dimensional representation $V$ of $U_q(\ghat)$, which is
called an {\em auxiliary space}. These Hamiltonians, which are called
the transfer-matrices, act on any finite-dimensional representation
$W$ of $U_q(\ghat)$, which is called a {\em space of states} (or
physical space) of the model. Moreover, the transfer-matrix
construction extends to a compatible family of ring homomorphisms
\begin{equation}    \label{hW}
h_W: \on{Rep} U_q(\ghat) \to \on{End}(W)[[z]]
\end{equation}
sending the class of $V$ in the Grothendieck ring $\on{Rep}
U_q(\ghat)$ of $U_q(\ghat)$ to the corresponding transfer-matrix
$t_V(z)$ acting on $W$ (viewed as a formal power series in $z$ with
coefficients in $\on{End}(W)$). We will assume throughout that
$q\in\mathbb{C}^\times$ is not a root of unity.

In a generic situation, the spectra of these quantum Hamiltonians are
expected to be in one-to-one correspondence with the solutions of the
generalized Bethe Ansatz equations. For the XXZ-type model associated
to $U_q(\ghat)$, where $\ghat$ is an arbitrary affine Kac--Moody
algebra (twisted or untwisted) they were proposed in
\cite{OW,RW,R:1987}. A pair of authors of the present paper
subsequently conjectured in \cite{FR:q} an explicit formula for the
eigenvalues of the transfer-matrices $t_V(z)$ corresponding to a given
solution of these generalized Bethe Ansatz equations. This formula is
written in terms of the $q$-character of $V$.

Another pair of authors of the present paper then extended the above
construction to a larger algebra of quantum Hamiltonians \cite{FH}.
Namely, the homomorphism $h_W$ extends to a homomorphism
\begin{equation}    \label{hW1}
h'_W: \on{Rep}' \to \on{End}(W)(u)[[z]],
\end{equation}
where $\on{Rep}'$ stands for the Grothendieck ring of the category
${\mathcal O}$ introduced in \cite{HJ} or its dual category ${\mathcal
  O}^*$. The corresponding transfer-matrices $t_V(z,u)$ are formal
power series in $z$ with coefficients depending on an element $u$ of
the Cartan subgroup $H$ of the simply-connected Lie group $G$
associated to $\g$. The category ${\mathcal O}^*$ is topologically
generated by the prefundamental representations $R_j^\pm(z), j \in I$,
where $I$ is the set of vertices of the Dynkin diagram of $\g$. Hence
the eigenvalues of the quantum Hamiltonians are determined by the
eigenvalues of the $Q$-{\em operators} $Q_j^\pm(z,u) =
t_{R_j^\pm}(z,u)$, the transfer-matrices associated in \cite{FH} to
the prefundamental representations $R_j^\pm(z)$.

It was proved in \cite{FH} that up to a universal factor depending on
the representation $W$, all eigenvalues of $Q_j^+(z,u)$ on $W$ (which
are {\em a priori} formal power series in $z$) are in fact {\em
  polynomials} in $z$. Moreover, it follows from \cite{FH,FH4} that
the roots of these polynomials are solutions of the Bethe Ansatz
equations (more precisely, the $u$-dependent version of the Bethe
Ansatz equations) under a certain non-degeneracy condition. (This
condition was subsequently dropped in \cite{FJMM}.) Finally, it was
shown in \cite{FH} that the eigenvalues of the transfer-matrix
$t_V(z,u)$ corresponding to a finite-dimensional representation $V$
can be expressed in terms of the eigenvalues of the $Q_j^+(z,u)$ and
the $q$-character of $V$, proving the conjecture of \cite{FR:q}. These
results provide a link between the spectra of the XXZ-type
model associated to $U_q(\ghat)$ and the solutions of the
corresponding Bethe Ansatz equations (BAE).

Explicitly, if the space of states $W$ is the tensor product of
irreducible finite-dimensional representations of $U_q(\ghat)$ with
the Drinfeld polynomials $P_{i,k}, i \in I, k=1,\ldots,N$, then the
BAE have the form:
\begin{equation}    \label{bae gen0}
\prod_{k=1}^N q_i^{\deg P_{i,k}}
  \frac{P_{i,k}(q_i^{-1}/w^{(i)}_r)}{P_{i,k}(q_i/w^{(i)}_r)} =
  - \prod_{s \neq r}
  \frac{w^{(i)}_r-w^{(i)}_sq_i^{-2}}{w^{(i)}_r-w^{(i)}_s q_i^{2}}
  \prod_{j \neq i} \prod_{s=1}^{m_j}
  \frac{w^{(i)}_r-w^{(j)}_sq^{-B_{ij}}}{w^{(i)}_r-w^{(j)}_s
  q^{B_{ij}}}.
\end{equation}
with one equation for each root $w^{(i)}_r, r = 1,\ldots,m_i$, of the
$i$th Baxter polynomial $Q^+_i(z)$, with $i$ ranging over the set $I$.
Here $(B_{ij})$ is the symmetrized Cartan matrix of $\g$: $B_{ij} =
d_i C_{ij}$, where $(C_{ij})$ is the Cartan matrix and the $d_i$ are
relatively prime integers, and we set $q_i = q^{d_i}$.

Note that a typical factor on the right hand side of \eqref{bae gen0}
looks like this:
\begin{equation}    \label{typical}
\frac{w^{(i)}_r-w^{(j)}_sq^{-B_{ij}}}{w^{(i)}_r-w^{(j)}_s
  q^{B_{ij}}}
\end{equation}

\subsection{Miura $q$-opers and folded Bethe Ansatz
  equations}    \label{qopers}

In a recent paper \cite{FKSZ}, certain geometric objects on $\pone$
called {\em Miura $(G,q)$-opers} were introduced. It was shown in
\cite{FKSZ} (see also the earlier work \cite{KSZ} in the case
$G=SL_n$) that there is a one-to-one correspondence between the set of
Miura $(G,q)$-oper satisfying a non-degeneracy condition and the set
of solutions of a system of equations which look very similar to the
BAE associated to $U_q(\ghat)$.\footnote{More precisely, this was
proved in \cite{FKSZ} for Miura-Pl\"ucker $(G,q)$-opers, but in the
subsequent work \cite{KZ} it was shown that this notion is equivalent
to the notion of Miura $(G,q)$-oper.}

In these equations a typical factor with $i \neq j$ reads
\begin{equation}    \label{typical1}
\frac{(w^{(i)}_r-w^{(j)}_sq)^{-C_{ji}}}{(w^{(i)}_r-w^{(j)}_s
  q^{-1})^{-C_{ji}}}, \qquad \on{if} \quad i \neq j,
\end{equation}
where $(C_{ij})$ is the Cartan matrix of $\g$.  If
$\g$ is simply-laced, formulas \eqref{typical} and \eqref{typical1}
coincide and so the equations are just the standard BAE associated to
$U_q(\ghat)$. Therefore, in this case one obtains a ``dual''
description of the spectrum in terms of Miura $(G,q)$-opers, giving
rise to what in \cite{FKSZ} was called the {\em $q$DE/IM
  correspondence}.

However, if $\g$ is not simply-laced, we obtain this way a different
system of equations. A Yangian version of these equations first
appeared in the work of Mukhin and Varchenko
\cite{MV1,MV2}. We note that for $\g=B_\ell$ similar
  equations were also obtained in \cite{DHKM} in the context of 3d
  quiver gauge theories.

In this paper, we will call these equations the {\em folded Bethe
  Ansatz equations} because they can be obtained by ``folding'' the
BAE for the simply-laced simple Lie algebra $\g'$ that gives rise to
$\g$ (i.e. $\g'$ is equipped with an automorphism of order 2 or 3
whose invariant Lie subalgebra is $\g$).

To explain this folding procedure and to illustrate the
  difference between the two types of BAE, consider the case of
  $\g=C_\ell$. Then $\g' = A_{2\ell-1}$. In this case $d_i=1$ for
  $i=1,\ldots,\ell-1$ and $d_\ell = 2$. Therefore, the only factors
  \eqref{typical} for $C_\ell$ with $i \neq j$ and the powers of $q$
  different from $\pm 1$ (which are the only powers of $q$ appearing
  in the factors with $i \neq j$ in the simply-laced cases) occur for
  $i=\ell, j=\ell-1$ or the other way around. The first of them is
\begin{equation}    \label{typical-1}
\frac{w^{(\ell)}_r-w^{(\ell-1)}_sq^2}{w^{(\ell)}_r-w^{(\ell-1)}_s
  q^{-2}} =
\frac{w^{(\ell)}_r-w^{(\ell-1)}_sq^2}{w^{(\ell)}_r-w^{(\ell-1)}_s}
\frac{w^{(\ell)}_r-w^{(\ell-1)}_s}{w^{(\ell)}_r-w^{(\ell-1)}_s
  q^{-2}}.
\end{equation}
On the other hand, since the entry $C_{\ell-1,\ell}$ of the Cartan
matrix of $\g=C_\ell$ is equal to $-2$, the corresponding factor
\eqref{typical1} reads
\begin{equation}    \label{typical-2}
\frac{(w^{(\ell)}_r-w^{(\ell-1)}_sq)^2}{(w^{(\ell)}_r-w^{(\ell-1)}_s
  q^{-1})^2}.
\end{equation}
It can be obtained by folding the expression of the form
\begin{equation}    \label{typical-3}
\frac{w^{(\ell)}_r-w^{(\ell-1)}_sq}{w^{(\ell)}_r-w^{(\ell-1)}_s
  q^{-1}} \frac{w^{(\ell)}_r-w^{(\ell+1)}_sq}{w^{(\ell)}_r-w^{(\ell+1)}_s
  q^{-1}} 
\end{equation}
appearing in the BAE of the simply-laced Lie algebra
$\g'=A_{2\ell-1}$. Indeed, the automorphism of the Dynkin diagram of
$A_{2\ell-1}$ preserves the $\ell$th vertex and exchanges the
$(\ell-1)$st and the $(\ell+1)$st vertices. If we accordingly identify
the variables $w^{(\ell-1)}_s$ and $w^{(\ell+1)}_s$ in the expression
\eqref{typical-3}, we obtain the factor \eqref{typical-2}. This is
what we mean by folding the BAE of a simply-laced Lie algebra $\g'$.

Note that the difference between \eqref{typical-2} and
\eqref{typical-1} (as its RHS shows) is the difference between
$f(w)^2$ and $f(wq)f(wq^{-1})$.

\begin{rem}
After the first version of this paper was posted on arXiv, Heng-Yu
Chen and Taro Kimura informed us about their paper \cite{CK}, in which
they considered two classical limits of the deformed ${\mc W}$-algebra
${\mc W}_{q,t}(\g)$ in the context of the corresponding 5D fractional
quiver gauge theory introduced in \cite{KP2} (where these classical
limits are interpreted as the two Nekrasov--Shatashvili limits). They
obtained a version of the folded Bethe Ansatz equations of the present
paper from the analysis of the partition function of this theory in
one of these limits. They did not consider the folded integrable
model, which is the main focus of the present paper, where these
equations naturally appear from certain subspaces of
finite-dimensional representations of quantum affine algebras (see
Section \ref{folded}).\qed
\end{rem}

\subsection{Folded integrable model}

According to \cite{FKSZ}, non-degenerate Miura $(G,q)$-opers
encode solutions of the folded BAE. But which integrable model do
these equations correspond to?

In this paper (Section \ref{folded}) we propose a conjectural answer
to this question. Namely, we conjecture the existence of what we will
call the {\em folded integrable model} for every non-simply laced
simple Lie algebra $\g$, whose spectra give rise to solutions of the
folded BAE (under a genericity condition). This folded integrable model
combines in a non-trivial way representations of the quantum affine
algebra $U_q(\wh{\g'})$, where $\g'$ is the corresponding simply-laced
Lie algebra (these appear as the {\em auxiliary spaces} of the folded
model) and representations of the quantum affine algebra
$U_q({}^L\ghat)$, where $^L\ghat$ is the twisted affine Kac--Moody
algebra which is {\em Langlands dual} to $\ghat$ (these appear as the
{\em spaces of states} of the folded model).

\begin{rem} Note that to the twisted quantum affine algebra
  $U_q({}^L\ghat)$ one can also associate an XXZ-type quantum
  integrable model. It is constructed in the same way as for the
  untwisted quantum affine algebras, and its spectra correspond to the
  solutions of the BAE that were proposed in \cite{RW,R:1987}. But
  this model is {\em different} from the folded model. Namely, the
  typical factors of the BAE of this model read
\begin{equation}    \label{typical2}
  \frac{(w^{(i)}_r)^{-C_{ji}}-(w^{(j)}_sq)^{-C_{ji}}}{(w^{(i)}_r)^{-C_{ji}}-
    (w^{(j)}_s q^{-1})^{-C_{ji}}}, \qquad \on{if} \quad i \neq j,
\end{equation}
so they differ from the factors \eqref{typical1}.

 For example, in the case of $\g=C_\ell$, instead of the
  factor \eqref{typical-2} we have
\begin{equation}    \label{typical-4}
\frac{(w^{(\ell)}_r)^2-(w^{(\ell-1)}_sq)^2}{(w^{(\ell)}_r)^2-w^{(\ell-1)}_s
  q^{-1})^2} = \frac{w^{(\ell)}_r-w^{(\ell-1)}_sq}{w^{(\ell)}_r-w^{(\ell-1)}_s
  q^{-1}} \frac{w^{(\ell)}_r+w^{(\ell-1)}_sq}{w^{(\ell)}_r+w^{(\ell-1)}_s
  q^{-1}}.
\end{equation}
Thus, it can be obtained by folding the expression \eqref{typical-3}
if we also multiply the spectral parameter by $-1$, i.e.  identify
$w^{(\ell-1)}_s$ and $-w^{(\ell+1)}_s$ in \eqref{typical-3} (rather
than $w^{(\ell-1)}_s$ and $w^{(\ell+1)}_s$). In other words, the
difference between \eqref{typical-2} and \eqref{typical-4} is the
difference between $f(w)^2$ and $f(w) f(-w)$. And similarly for other
Lie algebras corresponding to an automorphism of order $2$. In the
case of $\g=G_2$, when the automorphism has order 3, it's the
difference between $f(w)^3$ and $f(w)f(w\epsilon)f(w\epsilon^{-1})$,
where $\epsilon = e^{2\pi i/3}$. \qed
\end{rem}

\subsection{$QQ$-system}

There is an important intermediate object between the spectra of the
XXZ-type model associated to $U_q(\ghat)$ and the corresponding BAE
called the $QQ$-{\em system}. It was introduced in \cite{MRV,MRV2} in
the context of affine opers. In \cite{FH4}, it was shown that this
$QQ$-system naturally arises in the context of the homomorphisms
$h'_W$ (see the above formula \eqref{hW1}). Namely, in addition to the
set of prefundamental representations $R^+_j(z), j \in I$, discussed
above there is another set of representations, denoted by $X_j(z), j
\in I$, in the category ${\mc O}^*$, such that properly rescaled
classes of these two sets satisfy the $QQ$-system. In other words, if
we assign to $R^+_j(z), j \in I$, the above $Q$-operators $Q_j(z,u)$
and to $X_j(z), j \in I$, the transfer-matrix $\wt{Q}_j(z,u)$ (the
image of $X_j(z)$ under the homomorphism $h'_W$, where $W$ is a
finite-dimensional representation of $U_q(\ghat)$), then these
operators, properly rescaled, will satisfy the
$QQ$-system.\footnote{It was called $Q\wt{Q}$-system in \cite{FH4} but
here, for the sake of brevity, we follow the terminology of
\cite{FKSZ} and call it the $QQ$-system.} Note that for $\g=\sw_2$,
this is the quantum Wronskian relation introduced in \cite{BLZ3},
which provided the initial motivation for this line of research.

Thus, the $QQ$-system encodes universal relations between the classes
of the representations $R^+_j(z)$ and $X_j(z), j \in I$ in $\on{Rep}'$
which translate under the homomorphism \eqref{hW1} into relations
between the corresponding transfer-matrices, and hence their
eigenvalues, on any representation $W$.\footnote{The fact that the
same $QQ$-system arises both from the affine opers and the eigenvalues
of the transfer-matrices is a manifestation of the affine Langlands
duality proposed in \cite{FF:kdv} and further elucidated in
\cite{FH4}. However, we will not discuss this duality in the present
paper.}

As explained in \cite{MRV,MRV2,FH4}, the Bethe Ansatz equations
\eqref{bae gen0} follow directly from the $QQ$-system under a certain
non-degeneracy condition. In fact, from the point of view of the
preceding paragraph, the $QQ$-system is more fundamental to the
question of describing the spectra of the XXZ-type models than the
Bethe Ansatz equations.

Likewise, for the folded BAE: As shown in \cite{FKSZ}, these equations
are equivalent to the $QQ$-system proposed in \cite{FKSZ} (under a
non-degeneracy condition). We will call the latter system the {\em
  folded $QQ$-system} because it can be obtained by folding the
$QQ$-system associated to the simply-laced Lie algebra $\g'$. Thus,
this system appears as an intermediate object between Miura
$(G,q)$-opers and the folded BAE.

In this paper we show that this folded $QQ$-system also appears
naturally as a relation satisfied by the transfer-matrices of our
(conjectural) folded integrable model associated to a non-simply laced
Lie algebra $\g$. The folded BAE equations follow from the folded
$QQ$-system under a non-degeneracy condition.

\subsection{Deformed ${\mc W}$-algebras}

Valuable insights about the folded quantum integrable model can be
learned from the deformed ${\mc W}$-algebras introduced by two of the
authors of the present paper in \cite{FR:w}. Recall that this is a
two-parameter algebra ${\mc W}_{q,t}(\g)$ associated to a simple Lie
algebra $\g$. Recently, the algebra ${\mc W}_{q,t}(\g)$ found
  interesting applications in the study of four-dimensional
  supersymmetric gauge theories, see \cite{Ne,KP1,AFO,KP2,EP}.

The deformed ${\mc W}$-algebra has two classical limits, in which the
algebra becomes commutative, and equipped with a Poisson structure:
the first occurs when $t \to 1$ and the second when $q \to 1$.

The first limit, ${\mc W}_{q,1}(\g)$, is relatively well
understood. It is isomorphic to the center $Z_q(\ghat)$ of
$U_q(\ghat)$ at the critical level. The corresponding commutative
algebra of generating fields can be identified, via a version of
the transfer-matrix construction (see \cite{RST,FR:center}),
with the representation ring $\on{Rep} U_q(\ghat)$.\footnote{More
precisely, there is a homomorphism $\on{Rep} U_q(\ghat) \to
Z_q(\ghat)[[z^{\pm 1}]]$, so that every $V \in \on{Rep} U_q(\ghat)$
gives rise to a formal power series $T_V(z)$, and the Fourier
coefficients of these series topologically generate $Z_q(\ghat)$,
see \cite[Section 8.1]{FR:q}.}  Moreover, under this identification
the free field
realization of ${\mc W}_{q,1}(\g)$ becomes the $q$-character
homomorphism (this was the motivation behind the definition of the
$q$-characters in \cite{FR:q}).

If $\g$ is simply-laced, then the second classical limit, ${\mc
  W}_{1,t}(\g)$, coincides with the first one upon replacing $t$ with
$q$. But if $\g$ is not simply-laced, the second limit is
substantially different from
the first one. In \cite{FR:w}, ${\mc W}_{1,t}(\g)$
was linked to the $t$-deformed Drinfeld--Sokolov reduction of the loop
group associated to $G$ introduced in \cite{FRS,SS} and some
observations were made connecting elements of ${\mc W}_{1,t}(\g)$ to
the $q$-characters of $U_q(\ghat^\vee)$, where $\ghat^\vee$ is the
twisted affine algebra associated to $\g'$ and $\sigma$. But that's
pretty much all that has been known about the limit $q \to 1$ until
now.

In the present paper, we argue that it is this limit that is relevant
to the ``folded structures'' that we discuss here, including the
folded Bethe Ansatz equations and the folded integrable models. Thus,
we can learn a lot about these models by studying this limit. Its
hybrid nature, i.e. the fact that it mixes in a non-trivial way
quantum affine algebras $\wh{\g'}$ and $^L\ghat$, shows that ${\mc
  W}_{1,t}(\g)$ is a fascinating Poisson algebra that deserves further
investigation.

The deformed ${\mc W}$-algebra ${\mc W}_{q,t}(\g)$ creates a bridge
between the two classical limits, and hence between the XXZ-type
quantum integrable model associated to $U_q(\ghat)$ and the
corresponding folded quantum integrable model. However, the
non-commutative nature of ${\mc W}_{q,t}(\g)$ makes deriving practical
consequences of this bridge a daunting task. For this reason, in this
paper we replace ${\mc W}_{q,t}(\g)$ with its simplified commutative
version introduced by two of the authors in \cite{FH1} under the name
{\em interpolating $(q,t)$-characters}. Using a slight refinement of
these objects, we make our conjectures concerning the folded quantum
integrable models more precise. This also enables us to explicitly
verify our conjectures in a number of non-trivial cases (see Section
\ref{ex}).

\subsection{Connection to qKZ equations and quantum $q$-Langlands
  correspondence}

It is known that the critical level limit of the solutions of the qKZ
equations corresponding to $U_q(\ghat)$ give rise to eigenvectors of
the XXZ-type model associated to $U_q(\ghat)$. Thus, the qKZ system
provides a deformation of the latter model.\footnote{The XXZ-type
model is already quantum, but here by a deformation we mean a
non-commutative deformation of the commutative algebra of quantum
Hamiltonians of the XXZ-type model. Therefore, it is a kind of
``second quantization.''} This can also be seen from the fact that for
a large class of representations of $U_q(\ghat)$, the difference
operators of the qKZ system become in the critical level limit the
transfer-matrices of the XXZ-type model (see Proposition
\ref{qKZcrit}).

In \cite{AFO}, a {\em quantum $q$-Langlands correspondence} was
proposed. For a simply-laced simple Lie algebra $\g$, it sets up a
correspondence between solutions of the qKZ system associated to
$U_t(\ghat)$ and the deformed conformal blocks associated to ${\mc
  W}_{q,t}(\ghat)$. Here $q$ depends on the level of $U_t(\ghat)$ in
such a way that the limit $q \to 1$ corresponds to the critical level
limit. In this limit the quantum $q$-Langlands correspondence
essentially becomes the statement that the Hamiltonians of the
XXZ-type model associated to $U_t(\ghat)$ correspond to elements of
${\mc W}_{1,t}(\ghat)$. If $\g$ is simply-laced, then ${\mc
  W}_{1,t}(\ghat)$ indeed coincides with $\on{Rep} U_t(\ghat)$, so
this statement comes down to the existence of the homomorphisms $h_W$
given by formula \eqref{hW}.

However, if $\g$ is not simply-laced, the algebra ${\mc
  W}_{1,t}(\ghat)$ is no longer isomorphic to $\on{Rep} U_t(\ghat)$
and hence does {\em not} give rise to the Hamiltonians of the standard
XXZ-type model associated to $U_t(\ghat)$, which are the
transfer-matrices associated to finite-dimensional representations of
$U_t(\ghat)$. Rather, as we argue in this paper, it gives rise to the
Hamiltonians of the {\em folded} quantum integrable model associated
to $\g$. These Hamiltonians correspond to the transfer-matrices
associated to finite-dimensional representations of $U_t(\wh{\g'})$
rather than $U_t(\ghat)$.

This suggests that for non-simply laced $\g$ the quantum $q$-Langlands
correspondence might be more subtle. Namely, it follows from the
preceding paragraph that the system of $q$-difference equations
appearing on one side of this correspondence is {\em not} the usual
qKZ system associated to $U_t(\ghat)$. If this were the usual qKZ
system associated to $U_t(\ghat)$, then in the critical limit we
would recover the eigenvectors of the XXZ-type model associated to
$U_t(\ghat)$, but this would be inconsistent with the limit on the
other side of the correspondence which yields ${\mc W}_{1,t}(\g)$. As
we discussed above, the latter is not the algebra of Hamiltonians of
the XXZ-type model associated to $U_t(\ghat)$, but rather the algebra
of Hamiltonians of the {\em folded} quantum integrable model
introduced in the present paper.

What should replace the qKZ system associated to $U_t(\ghat)$ in the
quantum $q$-Langlands correspondence for non-simply laced $\g$? The
above discussion shows that this modified qKZ system should have the
property that the leading terms of its solutions in the critical level
limit are eigenvectors of the folded integrable system associated to
$\g$. As far as we know, the existence of such modified qKZ system is
an open question at the moment (naive ways to ``fold'' the qKZ system
associated to $\g'$ don't seem to work, see the Appendix of this
paper). But we expect that this question can be answered using the
geometric and $K$-theoretic methods of \cite{AFO}. Perhaps, these
equations can also be constructed purely algebraically. We hope to
return to this question elsewhere.

\subsection{The Gaudin limit}

To gain further insights, it is instructive to consider the limit in
which the second parameter, denoted by $t$ in the previous subsection,
also goes to $1$. In this limit, the XXZ-type model associated to
$U_t(\ghat)$ becomes the Gaudin model associated to $\g$
\cite{FFR,F:gaudin}; more precisely, its modification with a twist
parameter $\chi$, an element of the Cartan subalgebra of $\g$
\cite{Ryb,FFTL,FFRyb}. It turns out that in the limit $t \to 1$
the folded integrable model associated to $\g$ that we discuss in this
paper becomes the Gaudin model associated to $^L\g$. Thus, in the
Gaudin limit we do not find any new quantum integrable models. In
part, this is because in this limit the irreducible finite-dimensional
representations of $U_q(\ghat)$ decompose into a direct sum of
irreducible representations of the finite-dimensional Lie algebra
$\g$, so the affine Langlands duality $\ghat \to {}^L\ghat$ reduces to
the finite-dimensional Langlands duality $\g \to {}^L\g$, which was
discovered in \cite{FFR}.

However, even in this limit, as we will show in Section \ref{gaudin},
one can observe some intriguing effects related to folding. In
particular, using the results of \cite{FFRyb}, we will construct
embeddings of tensor products of irreducible representations of $^L\g$
into tensor products of the corresponding irreducible representations
of $\g'$ (see Theorem \ref{embqs}). In fact, it's a family of
embeddings depending on $\chi$ (which is assumed to be regular and
generic). It maps eigenvectors of the $^L\g$-Gaudin model with the
twist $\chi$ to eigenvectors of the corresponding $\g'$-Gaudin
model. Under a certain assumption (see Conjecture \ref{completeness}
in the case of a single irreducible representation) this embedding can
be constructed explicitly.

\subsection{Plan of the paper}
In Section \ref{notset} we fix our notation for the Lie algebras and
two-parameter Cartan matrices. In Section \ref{defwa} we recall the
definition of the deformed $\W$--algebra ${\mc W}_{q,t}(\g)$ from
\cite{FR:w}. We then consider its two classical limits. The first
limit, $t \to 1$, is relatively well understood; it can be identified
with the Grothendieck ring of finite-dimensional representations of
$U_q(\ghat)$ as we recall in Section \ref{tto1}. In Section \ref{qto1}
we obtain a description of the second limit, $q \to 1$ (which was much
less understood), analogous to the description of the $t \to 1$ limit
(see Propositions \ref{props-} and \ref{intmoins}). In Section
\ref{clasqcar} we recall the relation between the $t \to 1$ limit of
the deformed ${\mc W}$-algebra and the ring of $q$-characters of
finite-dimensional representations of $U_q(\ghat)$. We then relate the
$q \to 1$ limit to what we call the ring of {\em folded
  $t$-characters} of finite-dimensional representations of
$U_t(\wh{\mathfrak{g}'})$, where $\g'$ is the simply-laced Lie algebra
from which $\g$ is obtained as the Lie subalgebra fixed by an
automorphism (see Theorem \ref{unfold}). We also discuss the link
between this limit and the difference Drinfeld--Sokolov
reduction. Finally, in Section \ref{BAE} we introduce the {\em folded
  Bethe Ansatz equations}.

In Section \ref{folded} we describe the {\em folded quantum integrable
  model} in which the spectra of the Hamiltonians conjecturally
correspond to solutions of the folded Bethe Ansatz equations (see
Conjectures \ref{main conj bis} and \ref{part3}).  In Section
\ref{intchar} we recall the interpolating $(q,t)$-characters from
\cite{FH1}, which may be viewed as commutative algebra analogues of
elements of the non-commutative ${\mc W}$-algebra ${\mc
  W}_{q,t}(\g)$. We then construct a refined version of the
interpolating $(q,t)$-characters. They are elements of a ring
depending on the parameters $q$ and $t$, which is equipped with
$5$ interesting specialization homomorphisms to the rings of $q$- and
$t$-characters of various affine Kac--Moody algebras related to $\g$
(see Theorem \ref{refined}). In Section \ref{fundsec} we partially prove our
conjectures in the important case of $\sigma$-fundamental
representations (these are the irreducible finite-dimensional
representations of $U_q(\wh{\g'})$ with the $\sigma$-invariant highest
monomials of smallest possible degrees). In Section \ref{ex} we present a number
of explicit examples confirming our Conjectures \ref{main conj bis}
and \ref{part3}. In Section \ref{cryssec} we formulate a conjecture
linking the folded $t$-characters to
Kashiwara's extension of Nakajima's monomial model of crystals to
non-simply laced Lie algebras. In Section \ref{gaudin}, we consider
the Gaudin limit of the the folded quantum integrable models. In the
Appendix we discuss a possible construction of a folded version of the
qKZ equations for non-simply laced Lie algebras.

\subsection{Acknowledgments} We thank M. Aganagic, P. Koroteev,
A. Okounkov, and A. Zeitlin for useful discussions. E.F. and D.H. were
partially supported by a grant from the France-Berkeley Fund of UC
Berkeley. N.R. was partially supported by the NSF grant DMS-1902226.

\section{Notation and setup}\label{notset}

\subsection{Lie algebra}

Let $\g$ be a simple Lie algebra of rank $\el$ and $I =
\{ 1,\ldots,\ell \}$ the set of vertices of the Dynkin diagram of
  $\g$. Let $(\cdot,\cdot)$ be the invariant inner product on $\g$,
  normalized so that the square of the maximal root equals $2$. Let
  $\{ \al_1,\ldots,\al_\el \}$ and $\{ \om_1,\ldots,\om_\el \}$ be the
  sets of simple roots and of fundamental weights of $\g$,
  respectively. We have:
$$
(\al_i,\om_j) = \frac{(\al_i,\al_i)}{2} \delta_{i,j}.
$$
Let $d$ be the maximal number of edges connecting two vertices of the
Dynkin diagram of $\g$. Thus, $d=1$ for simply-laced $\g$, $d=2$
for $B_\el, C_\el, F_4$, and $d=3$ for $G_2$. We set $\epsilon =
e^{i\pi/d}$.

Set
$$
D = \on{diag}(\rr_1,\ldots,\rr_\el),
$$
where
\begin{equation}    \label{di}
\rr_i = d \frac{(\al_i,\al_i)}{2}.
\end{equation}
All $\rr_i$'s are integers, which are relatively prime with each
other. For simply-laced $\g$, $D$ is the identity matrix.

Now let $C = (C_{ij})_{1\leq i,j\leq \el}$ be the {\em Cartan matrix} of
$\g$. We have:
$$
C_{ij} = \frac{2(\al_i,\al_j)}{(\al_i,\al_i)}.
$$
Denote by $(I_{ij})_{1\leq i,j\leq \el}$ the {\em incidence matrix},
$$
I_{ij} = 2 \delta_{i,j} - C_{ij}.
$$
Let $\B = (\B_{ij})_{1\leq i,j\leq \el}$ be the following matrix:
$$
B = D C,
$$
i.e.,
$$
\B_{ij} = d (\al_i,\al_j).
$$

The weights $\rho$ and $\rho^\vee$ are defined by $(\rho^\vee,\al_i) =
1$, $d(\rho,\al_i) = \rr_i$ for any $1\leq i\leq \el$.

\subsection{Lie algebras involved}    \label{nomen}

We list here all Lie algebras involved in our study:

\begin{itemize}

\item $\g$ is a simple finite-dimensional Lie algebra.

\item $^L\g$ is its Langlands dual Lie algebra. For example, if
  $\g=B_\el$, then $^L\g=C_\el$.

\item $\ghat$ is the untwisted affine Kac--Moody algebra, which is the
  central extension of $\g[t,t^{-1}]$. For example, if $\g=B_\el$,
  then $\ghat = B_\el^{(1)}$.

\item $\wh{^L\g}$ is the untwisted affine Kac--Moody algebra, which is the
  central extension of $^L\g[t,t^{-1}]$. For example, if $\g=B_\el$,
  then $\wh{^L\g} = C_\el^{(1)}$.

\item $^L\ghat$ is the affine Kac--Moody algebra that is {\em affine}
  Langlands dual to $\ghat$. If $\g$ is simply-laced, then $^L\ghat =
  \ghat$. But if $\g$ is non-simply laced, then $^L\ghat$ is a {\em
    twisted} affine Kac--Moody algebra. Note that $^L\ghat$ contains
  $^L\g$ as the constant Lie subalgebra. For example, if $\g=B_\el$,
  then $^L\ghat = A_{2\el-1}^{(2)}$ (whose constant subalgebra is $^L\g
  = C_\el$); and if $\g=C_\el$, then $^L\ghat = D_{\el+1}^{(2)}$ (whose
  constant subalgebra is $^L\g = B_\ell$).

\item $\g'$ is the unique {\em simply-laced} Lie algebra equipped with
  an automorphism $\sigma$ of order $d$ such that the Lie subalgebra
  of $\sigma$-invariants in $\g'$ is $\g$
  (i.e. $\g=(\g')^\sigma$). For example, if $\g=B_\el$, then
  $\g'=D_{\el+1}$; and if $\g=C_\el$, then $\g'=A_{2\el-1}$.

\item $\ghat^\vee$ is $\ghat$, if $\g$ is simply-laced. If $\g$ is
  non-simply laced, then $\ghat^\vee$ is the twisted affine Kac--Moody
  algebra corresponding to $\g'$ and $\sigma$. Note that its constant
  Lie subalgebra is $\g$ itself. For example, $(B_\el^{(1)})^\vee =
  D_{\el+1}^{(2)}$ (its constant subalgebra is $B_\ell$), and
  $(C_\el^{(1)})^\vee = A_{2\el-1}^{(2)}$ (its constant subalgebra is
  $C_\el$). Note also that we have $\ghat^\vee = {}^L(\widehat{^L\g})$.

\end{itemize}

It might be better to denote $\ghat^\vee$ by $\g'{}^{(d)}$, but we 
will use below the notation $\ghat^\vee$ because it was used in
\cite{FR:w}.

Let us denote by $I'$ the set of vertices of the Dynkin diagram of
$\g'$. Then $\sigma$ acts on $I'$ and the quotient is in bijection
with the set $I$ of vertices of the Dynkin diagram of $I$.
Note that the automorphism $\sigma$ of the Dynkin diagram of $\g'$
acts on the objects labeled by the nodes of this diagram (such as
simple roots, fundamental weights, etc.).

\subsection{Two-parameter Cartan matrices}

We follow the notation of \cite{FR:w}, Sect. 2, except that we replace
$q$ by $q^{-1}$ (however, for $t=1$ this notation is consistent with
the notation of \cite{FR:q}).

Now let $q, t$ be non-zero complex numbers which are not roots of
unity. We will use the standard notation for $n\in\mathbb{Z}$
$$
[n]_q = \frac{q^n - q^{-n}}{q - q^{-1}}.
$$
Let
$$
q_i = q^{\rr_i}.
$$
We define $\el \times \el$ matrices $C(q,t)$, $D(q,t)$, and $\B(q,t)$ by the
formulas
\begin{align}    \label{qtc}
C_{ij}(q,t) &= (q_i t + q_i^{-1} t^{-1}) \delta_{i,j} - [I_{ij}]_q,
\\ D(q,t) &= \on{diag}([\rr_1]_q,\ldots,[\rr_\el]_q), \label{qtd} \\
\B(q,t) &= D(q,t) C(q,t). \notag
\end{align}
Thus,
\begin{equation}    \label{qts}
\B_{ij}(q,t) = [\rr_i]_q \left( (q^{\rr_i} t + q^{-\rr_i} t^{-1})
\delta_{i,j} - [I_{ij}]_q \right).
\end{equation}
It is easy to see that the matrix $\B(q,t)$ is symmetric. For simply-laced
$\g$,
$$
C_{ij}(q,t) = \B_{ij}(q,t) = (q t + q^{-1} t^{-1}) \delta_{i,j} - I_{ij}.
$$
We note that the determinants of these matrices are non-zero
polynomials in $q$ and $t$. Hence they are invertible over the field
of rational functions in $q$ and $t$.

Clearly, the limits of $C(q,t)$, $D(q,t)$, and $\B(q,t)$ as both $q
\arr 1$ and $t \arr 1$ coincide with $C$, $D$, and $\B$,
respectively.  We also have
$$
\B_{ij}(q,1) = [\B_{ij}]_q, \qquad C_{ij}(q,1) = (q_i  + q_i^{-1} )
\delta_{i,j} + [C_{ij}]_q \delta_{i\neq j},
$$
and
$$
\B_{ij}(1,t) = \rr_i ((t + t^{-1}) \delta_{ij} - I_{ij}).
$$

Let $C(q) = C(q,1)$. It is invertible over the field of rational
functions in $q$. We denote its inverse by $\wt{C}(q)$.

\section{Deformed $\W$--algebras and screening operators}\label{defwa}

In this section we recall the definition of the deformed $\W$--algebra
${\mc W}_{q,t}(\g)$ and related objects from \cite{FR:w}. We will then
look at the two classical limits $t \to 1$ and $q \to 1$, which are
defined as the intersections of the kernels of two sets of classical
screening operators. The $t \to 1$ limit was described in
\cite{FR:w,FR:q,fm} and is closely related to the Grothendieck ring
$\on{Rep} U_q(\ghat)$ and the corresponding $q$-characters (as we
recall in the next section). A new result of this section is the
analogous description of the $q \to 1$ limit (see Propositions
\ref{props-} and \ref{intmoins}).

\subsection{Heisenberg algebra $\HH(\g)$}

Let $\HH(\g)$ be the Heisenberg algebra with generators $a_i[n],
i=1,\ldots,\el; n \in \Z$, and relations
\begin{equation}    \label{a}
[a_i[n],a_j[m]] = \frac{1}{n} (q^n - q^{-n}) (t^n - t^{-n}) \B_{ij}(q^n,t^n)
\delta_{n,-m}
\end{equation}
where $1\leq i,j\leq \el;n,m\in\Z\setminus\{0\}$.

Here and in what follows, it is understood that the $0$th generator
commutes with all other generators: $[a_i[0],a_j[m]] = 0$, for all $m
\in \Z$.

The algebra $\HH(\g)$ becomes commutative in the limit $q \arr 1$ and
in the limit $t\arr 1$.

The generators $a_i[n]$ are ``root'' type generators of $\HH(\g)$. There is
a unique set of ``fundamental weight'' type generators, $y_i[n],
i=1,\ldots,\el; n \in \Z$, that satisfy:
\begin{equation}    \label{ay}
[a_i[n],y_j[m]] = \frac{1}{n} (q_i^{n} - q_i^{-n})(t^n - t^{-n})
\delta_{i,j} \delta_{n,-m}.
\end{equation}
They have the following commutation relations:
\begin{equation}    \label{y}
[y_i[n],y_j[m]] = \frac{1}{n} (q^{n} - q^{-n}) (t^n - t^{-n})
M_{ij}(q^n,t^n) \delta_{n,-m},
\end{equation}
where $(M_{ij}(q,t))_{1\leq i,j\leq \el}$ is the following matrix
\begin{align}    \label{tildeM}
M(q,t) &= D(q,t) C(q,t)^{-1} \\
&= D(q,t) B(q,t)^{-1} D(q,t).  \notag
\end{align}

We have
\begin{equation}    \label{expr}
a_i[n] = \sum_{j=1}^\el C_{ji}(q^n,t^n) y_j[n].
\end{equation}

We will use the colon notation for the standard normally ordered
product of elements of this algebra. Introduce the generating series
\begin{equation}    \label{Ai}
A_i(z) = t^{2(\rho^\vee,\al_i)} q^{2d (\rho,\al_i) + 2a_i[0]}
:\exp \left( \sum_{m\neq 0} a_i[m] z^{-m} \right):,
\end{equation}
\begin{equation}    \label{Yi}
Y_i(z) = t^{2(\rho^\vee,\om_i)} q^{2d (\rho,\om_i) + 2 y_i[0]}
:\exp \left( \sum_{m\neq 0} y_i[m] z^{-m} \right):.
\end{equation}
Recall that $(\rho^\vee,\al_i) = 1, d(\rho,\al_i) = \rr_i$.

Formula \eqref{expr} implies that
\begin{multline}    \label{AY}
A_i(z) = : Y_i(zq_it) Y_i(zq_i^{-1}t^{-1}) \\ \times
\prod_{j:I_{ji}=1} Y_j(z)^{-1}
\prod_{j:I_{ji}=2} Y_j(zq)^{-1} Y_j(zq^{-1})^{-1} \prod_{j:I_{ji}=3}
Y_j(zq^2)^{-1} Y_j(z)^{-1} Y_j(zq^{-2})^{-1} :
\end{multline}

Thus, for $\mathfrak{g}$ of non-simply laced type, the two classical
limits of $A_i(z)$ are quite different: when $t \to 1$,
we have
\begin{multline}    \label{AY1}
A_i(z) = Y_i(zq_i) Y_i(zq_i^{-1}) \\ \times \prod_{j:I_{ji}=1} Y_j(z)^{-1}
\prod_{j:I_{ji}=2} Y_j(zq)^{-1} Y_j(zq^{-1})^{-1} \prod_{j:I_{ji}=3}
Y_j(zq^2)^{-1} Y_j(z)^{-1} Y_j(zq^{-2})^{-1}
\end{multline}
but when $q \to 1$, we have a much simpler expression
\begin{equation}    \label{AY2}
A_i(z) = Y_i(zt) Y_i(zt^{-1}) \prod_{j\neq i} Y_j(z)^{-I_{ji}}.
\end{equation}

\begin{rem}\label{crystal1}

(1) The first limit (\ref{AY1}) coincides with the monomial $A_{i,z}$
  which appears in the theory of $q$-characters of finite-dimensional
  representations of quantum affine algebras introduced in
  \cite{FR:q}. This is not surprising because, as explained in
  \cite{FR:q} and in Section \ref{fdrep} below, the $q$-characters may
  be viewed as limits of the fields from ${\mathbf W}_{q,t}$ as $t \to
  1$.

(2) There is a surprising connection between the second limit
  (\ref{AY2}) and Kashiwara's extension to non-simply laced $\g$
  \cite{K} of Nakajima's monomial model for crystals of $U_t(\g)$
  \cite{N}. Let us recall that Nakajima's monomial realization was
  originally motivated by its relation with the $q$-characters in
    the symmetric cases. It turned out that
    this crystal realization was a consequence of the embedding
    theorem \cite{K8}, which makes sense in the symmetrizable case
    \cite{K}. But the relation between the monomial model and the
    $q$-characters was lost for non-simply laced types. Here we
    suggest an analogous relation, in which the role of the $t \to
    1$ classical limit of ${\mathbf W}_{q,t}(\g)$ (whose free field
    realization is essentially the same as the $q$-character
    homomorphism, see Remark \ref{freetoqchar}) is played by the $q
    \to 1$ classical limit.

  Recall that in the monomial model the vertices of the
  crystal are represented by certain monomials in the variables
  $Y_i(t^k)^{\pm 1}$, and the crystal operators are obtained by
  multiplying them with some special monomials corresponding to the
  simple roots. In the simply-laced case, these are the monomials
  $A_{i,a}^{\pm 1}$ occurring in the $q$-character theory (as in
  formula (\ref{AY1})). But in the non-simply laced case the monomials
  $A_{i,a}^{\pm 1}$ in formula (\ref{AY1}) do not work. Instead, as
  explained in \cite{K}, we have to replace these monomials
  $A_{i,a}^{\pm 1}$ with other monomials. A direct comparison shows
  that Kashiwara's monomials coincide with the monomials in the above
  formula (\ref{AY2}). We
  formulate a precise conjecture about this in Section \ref{cryssec}
  below.\qed
\end{rem}

\subsection{Screening operators and definition of ${\mc
    W}_{q,t}(\g)$}        \label{defWqt}

Recall that we have two sets of screening operators introduced in
\cite{FR:w}: $S^+_i(z)$ and $S^-_i(z)$, $i=1,\ldots,\ell$. They
satisfy the difference equations:
\begin{equation}    \label{scr1}
S^+_i(zq_i) = :A_i(z) S^+_i(zq_i^{-1}):,
\end{equation}
and
\begin{equation}    \label{scr2}
S^-_i(zt) = :A_i(z) S^-_i(zt^{-1}):.
\end{equation}
The deformed ${\mc W}$-algebra ${\mc W}_{q,t}(\g)$ was defined in
\cite{FR:w} as the intersection of kernels of the residues $S^+_i$ of
$S^+_i(z), i \in I$, or of the residues $S^-_i$ of $S^-_i(z), i \in
I$.

More precisely, $\mathcal{W}_{q,t}(\mathfrak{g})$ was defined in
\cite{FR:w} as the associative topological algebra depending on two
parameters $q$ and $t$, which is topologically generated by the
Fourier coefficients of certain fields from a deformed chiral algebra
${\mathbf W}_{q,t}(\mathfrak{g})$. The latter was defined in \cite{FR:w}
as the maximal subalgebra commuting with the screening operators
$S_i^\pm, i \in I$, in a deformed chiral algebra ${\mathbf
  H}_{q,t}(\mathfrak{g})$ constructed from the Heisenberg algebra
$\mathcal{H}_{q,t}(\mathfrak{g})$.

In the classical limits $t \to 1$ (resp. $q \to 1$), ${\mathbf
  H}_{q,t}(\mathfrak{g})$ becomes commutative:
\begin{equation}    \label{boldH}
{\mathbf H}_{q,1}(\g) = \C[Y_j(zq^{n_j})^{\pm 1}]_{j \in I, n_j \in
  \Z}, \qquad {\mathbf H}_{1,t}(\g) = \C[Y_j(zt^{n_j})^{\pm 1}]_{j
  \in I, n_j \in \Z}.
\end{equation}
The corresponding classical limit of ${\mathbf W}_{q,t}(\mathfrak{g})$
is a commutative subalgebra of this polynomial algebra, which is equal
to the intersection of the kernels of the classical screening
operators $S^+_i$ (resp. $S^-_i$), $i \in I$.

We will call these limits as the {\em classical ${\mc W}$-algebras}
and denote them by ${\mc K}^+_q(\g)$ and ${\mc K}^-_t(\g)$,
respectively. Below we describe both of these algebras. We will see
that they are quite different if $\g$ is non-simply laced.

\subsection{The $t \to 1$ limit}    \label{tto1}

In the limit $t \to 1$, the family $S^+_i, i \in I$, survives and
gives rise to the following derivations
$$
S^+_i: \C[Y_j(zq^{n_j})^{\pm 1}]_{n_j \in \Z} \to \left( \bigoplus_{m \in
  \Z} \C[Y_j(zq^{n_j})^{\pm
  1}]_{n_j \in \Z} \otimes S^+_i(zq^m) \right)/(S^+_i(zq_i^2) - A_i(zq_i)
S^+_i(z))
$$
acting by the formula
\begin{equation}    \label{S+}
S^+_i \cdot Y_j(zq^{n_j})^{\pm 1} = \pm \delta_{ij}
Y_j(zq^{n_j})^{\pm 1} \otimes S^+_i(zq^{n_j}).
\end{equation}

The following isomorphism was proved in \cite[Proposition 6]{FR:q} and
\cite[Proposition 5.2]{fm}:
\begin{equation}    \label{KerS+}
  \on{Ker} S^+_i =
\C[Y_j(zq^{n_j})^{\pm 1}]_{j\neq i;
  n_j \in \Z} \otimes \C[Y_i(zq^{n_i})(1 +
A_i(zq^{n_i}q_i)^{-1})]_{n_i \in \Z},
\end{equation}
where $A_i(z)$ is given by formula \eqref{AY1}.

Now set
\begin{equation}    \label{K+}
\mathcal{K}^+_q(\mathfrak{g}) :=
\bigcap_{i \in I} \on{Ker} S^+_i.
\end{equation}
Thus, $\mathcal{K}^+_q(\mathfrak{g})$ is the $t \to 1$ limit of
${\mathbf W}_{q,t}(\g)$. The following theorem was proved in
\cite[Theorem 5.1]{fm}.

\begin{thm}\label{props+}
(1) The commutative algebra $\mathcal{K}^+_q(\mathfrak{g})$ is
  isomorphic to $\on{Rep}_z U_q(\ghat)$, the Gro\-then\-dieck ring of
  the tensor subcategory $\mathcal{C}_{\mathbb{Z}}$ of the category of
  finite-dimensional representations of $U_q(\ghat)$ whose objects are
  representations with the Jordan-H\"older constituents having
  Drinfeld polynomials with roots in $q^{\mathbb{Z}}$.
  
(2) Under this isomorphism, the embedding
  $\mathcal{K}^+_q(\mathfrak{g}) \to {\mathbf H}_{q,1}(\g)$ becomes
  the $q$-character homomorphism.
\end{thm}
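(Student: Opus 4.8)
The plan is to prove the two parts together by identifying the explicit generators of $\mathcal{K}^+_q(\g)$ produced by formula \eqref{KerS+} with the $q$-characters of the fundamental representations of $U_q(\ghat)$, and then invoking the known structure theory of $\on{Rep}_z U_q(\ghat)$. Since Theorem \ref{props+} is stated as already proved in \cite[Theorem 5.1]{fm}, the task is really to reconstruct the argument. First I would recall that the $q$-character homomorphism $\chi_q\colon \on{Rep}_z U_q(\ghat) \to \Z[Y_{i,a}^{\pm 1}]_{i \in I, a \in q^\Z}$ is an injective ring homomorphism whose image lies in $\bigcap_{i \in I} \on{Ker} S^+_i$, where $S^+_i$ is precisely the screening operator whose kernel is described in \eqref{KerS+}. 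This compatibility is exactly the content of the Frenkel--Reshetikhin characterization of the image of $\chi_q$: a Laurent polynomial in the $Y_{j,a}$ lies in the image if and only if it is annihilated by all the $S^+_i$, with $A_i(z)$ given by \eqref{AY1}. Under the dictionary $Y_i(zq^{n}) \leftrightarrow Y_{i,q^{n}}$, the ambient Heisenberg realization ${\mathbf H}_{q,1}(\g) = \C[Y_j(zq^{n_j})^{\pm 1}]$ is identified with the target of $\chi_q$, so matching up the two kernels is the crux.

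The key steps, in order, would be as follows. \emph{Step 1:} Fix the identification of ${\mathbf H}_{q,1}(\g)$ with the commutative ring of Laurent monomials in the $Y_{i,a}$, $a \in q^\Z$, and check that the classical screening derivation $S^+_i$ acting by \eqref{S+} coincides, under this identification, with the Frenkel--Reshetikhin screening operator governing the image of $\chi_q$. \emph{Step 2:} Use \eqref{KerS+} to describe $\on{Ker} S^+_i$ explicitly, and observe that the distinguished generator $Y_i(zq^{n_i})(1 + A_i(zq^{n_i}q_i)^{-1})$ is, up to a shift of spectral parameter, the lowest two terms of the $q$-character of the $i$th fundamental representation $V_{\om_i}$ of $U_q(\ghat)$; more precisely this factor is the image under $\chi_q$ of the fundamental $q$-character truncated, which suffices to generate. \emph{Step 3:} Take the intersection $\mathcal{K}^+_q(\g) = \bigcap_i \on{Ker} S^+_i$ and match it against $\on{Im}\chi_q$. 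Here I would invoke that the $q$-characters $\chi_q(V_{\om_i,a})$ of the fundamental representations generate $\on{Rep}_z U_q(\ghat)$ as a ring (it is a polynomial ring in these classes), and that $\on{Im}\chi_q = \bigcap_i \on{Ker}\tilde S_i$ by the Frenkel--Reshetikhin theorem; combining these with Steps 1--2 gives both the ring isomorphism in part (1) and the statement that the embedding is $\chi_q$, which is part (2).

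The main obstacle I expect is \emph{Step 3}, specifically proving that the intersection of the kernels is generated exactly by the fundamental $q$-characters and contains nothing more. One inclusion---that $\on{Im}\chi_q \subseteq \mathcal{K}^+_q(\g)$---is the easy direction and follows from the fact that each $\chi_q(V)$ is $S^+_i$-invariant for all $i$. The reverse inclusion is the delicate part: one must show that any element of $\bigcap_i \on{Ker}S^+_i$ is a polynomial in the fundamental $q$-characters. The standard route is to argue by dominant monomials: an element of the intersection is determined by its highest-weight (dominant) monomials, one shows that each such dominant monomial $m$ appears in a unique product of fundamental $q$-characters, and then subtracts off that product and induces on a suitable partial order on monomials until nothing remains. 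Making this induction rigorous---in particular checking that the partial order is well-founded on the relevant (infinite) monomial set and that the $S^+_i$-closure forces the presence of the full fundamental $q$-character whenever its highest monomial appears---is where the real work lies, and it is exactly the content carried out in \cite[Proposition 6]{FR:q}, \cite[Proposition 5.2]{fm}, and \cite[Theorem 5.1]{fm}, which I would cite rather than reprove in full.
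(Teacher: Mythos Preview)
Your proposal is correct and aligns with the paper's treatment: the paper does not give an independent proof of Theorem~\ref{props+} but simply attributes it to \cite[Theorem~5.1]{fm}, and your outline faithfully reconstructs the argument from \cite{FR:q,fm}, including the dominant-monomial induction for the hard inclusion $\mathcal{K}^+_q(\g)\subseteq \on{Im}\chi_q$.
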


From Theorem \ref{props+} we obtain a natural basis of
$\mathcal{K}^+_q(\mathfrak{g})$ consisting of the $q$-characters of
simple modules from ${\mc C}_{\Z}$. It is known that these are
parametrized by dominant monomials, i.e. monomials in the variables
$Y_j(zq^{n_j})$, $j \in I$, $n_j\in\mathbb{Z}$, with only non-negative
powers. In particular, the elements of this basis corresponding to the
degree one dominant monomials $Y_j(zn^j)$ coincide with the
$q$-characters of the corresponding fundamental representations (those
are known to contain a unique dominant monomial, see \cite[Corollary
  4,(1)]{FR:q}).

\subsection{The $q \to 1$ limit}   \label{qto1}

Now consider the limit $q \to 1$. Then it is the family $S^-_i, i \in
I$, that survives and gives rise to the derivations
$$
S^-_i: \C[Y_j(zt^{n_j})^{\pm 1}]_{n_j \in \Z} \to \left( \bigoplus_{m \in
  \Z} \C[Y_j(zt^{n_j})^{\pm
  1}]_{n_j \in \Z} \otimes S^+_i(zt^m) \right)/(S^-_i(zt^2) - A_i(zt)
S^-_i(z))
$$
acting by the formula
\begin{equation}    \label{S-}
S^-_i \cdot Y_j(zt^{n_j})^{\pm 1} = \pm \delta_{ij}
Y_j(zt^{n_j})^{\pm 1} \otimes S^-_i(zt^{n_j}),
\end{equation}
where $A_i(z) = Y_i(zt) Y_i(zt^{-1}) Y_j(z)^{-I_{ji}}$ (formula
\eqref{AY2}).

We have the following analogue of the isomorphism \eqref{KerS+} (it is
equivalent to \eqref{KerS+} if $\g$ is simply-laced, but for a
non-simply laced Lie algebra $\g$ this statement is new, as far as we
know).

\begin{prop}    \label{props-}
We have
\begin{equation}    \label{KerS-}
\on{Ker} S^-_i = \bigcap_{i \in I}
\C[Y_j(zt^{n_j})^{\pm 1}]_{j\neq i;
  n_j \in \Z} \otimes \C[Y_i(zt^{n_i})(1 + A_i(zt^{n_i+1})^{-1})]_{n_i
  \in \Z}
\end{equation}
where $A_i(z) = Y_i(zt) Y_i(zt^{-1}) \prod_{j \neq i}
Y_j(z)^{-I_{ji}}$ (formula \eqref{AY2}).
\end{prop}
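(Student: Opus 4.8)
The plan is to exploit the fact that the derivation $S^-_i$ has exactly the same formal structure as the derivation $S^+_i$ from the $t \to 1$ limit, with the role played there by $q_i$ now played by $t$. Three features of $S^+_i$ are all that enter the proof of \eqref{KerS+} in \cite[Proposition 6]{FR:q} and \cite[Proposition 5.2]{fm}: that $S^+_i$ is a derivation; that it acts trivially on the variables $Y_j(zq^{n_j})^{\pm 1}$ with $j \neq i$; and that the screening current satisfies the two-step recursion $S^+_i(zq_i^2) = A_i(zq_i)\,S^+_i(z)$. By \eqref{S-} and \eqref{scr2}, all three hold verbatim for $S^-_i$: it is a derivation acting trivially on $Y_j(zt^{n_j})^{\pm 1}$ for $j \neq i$, and its current obeys $S^-_i(zt^2) = A_i(zt)\,S^-_i(z)$, where now $A_i(z)$ is the simpler monomial \eqref{AY2}. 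Since $S^-_i$ is supported on the $Y_i$-variables, the factor $\C[Y_j(zt^{n_j})^{\pm 1}]_{j \neq i}$ lies in the kernel and may be treated as inert, reducing the problem to a rank-one ($\su$-type) computation in the variables $Y_i(zt^{n_i})^{\pm 1}$ alone.

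First I would check the easy inclusion, that each proposed generator $Y_i(zt^{n_i})(1 + A_i(zt^{n_i+1})^{-1})$ lies in $\on{Ker} S^-_i$. Expanding with \eqref{AY2}, this generator equals $Y_i(zt^{n_i}) + Y_i(zt^{n_i+2})^{-1}\prod_{j \neq i} Y_j(zt^{n_i+1})^{I_{ji}}$. Applying $S^-_i$ via \eqref{S-} produces two terms, carrying the screening currents $S^-_i(zt^{n_i})$ and $S^-_i(zt^{n_i+2})$ respectively; rewriting the second via the recursion $S^-_i(zt^{n_i+2}) = A_i(zt^{n_i+1})\,S^-_i(zt^{n_i})$ and cancelling the monomial $A_i(zt^{n_i+1})$ against the $Y$-prefactor collapses it to $-Y_i(zt^{n_i}) \otimes S^-_i(zt^{n_i})$, which annihilates the first term. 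This is precisely the cancellation underlying \eqref{KerS+}, transported along $q_i \mapsto t$.

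The substantive direction is the reverse inclusion, that these elements generate the entire kernel. Here I would invoke the rank-one argument of \cite{fm,FR:q} essentially unchanged: filtering $\C[Y_i(zt^{n_i})^{\pm 1}]$ by the highest shift $n_i$ occurring and analysing the leading term of $S^-_i$, one shows by downward induction that any kernel element is a polynomial in the proposed generators, the two-step recursion ensuring that the obstruction at each stage is killed exactly by subtracting an appropriate product of the $Y_i(zt^{n_i})(1 + A_i(zt^{n_i+1})^{-1})$. The main obstacle is to confirm that this induction is insensitive to the change in the explicit form of $A_i(z)$: in \eqref{AY1} the monomial spreads the $Y_j$ over several shifts of $q$, whereas in \eqref{AY2} the variables $Y_i$ occur only at the symmetric shifts $zt^{\pm 1}$ and every $Y_j$ with $j \neq i$ occurs at the single unshifted point $z$. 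I would verify that it is exactly this symmetric self-part $Y_i(zt)Y_i(zt^{-1})$, together with the fact that the $j \neq i$ variables enter $A_i$ only as inert coefficients, that drives the recursion; since this structural feature is shared by \eqref{AY1} and \eqref{AY2}, the rank-one reduction and hence the kernel computation carry over identically, yielding \eqref{KerS-}.
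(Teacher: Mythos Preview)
Your proposal is correct and takes essentially the same approach as the paper: the paper's proof consists of the single sentence ``The proof is obtained by applying the argument used in the proof of \eqref{KerS+} in \cite[Proposition 5.2]{fm},'' and your write-up is a faithful elaboration of exactly that transport, identifying the three structural features (derivation, triviality on $Y_j$ for $j\neq i$, and the two-step recursion $S^-_i(zt^2)=A_i(zt)S^-_i(z)$) that make the rank-one argument go through unchanged with $q_i$ replaced by $t$.
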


The proof is obtained by applying the argument used in the proof of
\eqref{KerS+} in \cite[Proposition 5.2]{fm}.

Now set
\begin{equation}    \label{K-}
\mathcal{K}^-_t(\mathfrak{g}) =
\bigcap_{i \in I} \on{Ker} S^-_i.
\end{equation}
Thus, $\mathcal{K}^-_q(\mathfrak{g})$ is the $q \to 1$ limit of
${\mathbf W}_{q,t}(\g)$.

Unlike the limit $t \to 1$ (see Theorem \ref{props+}), for non-simply
laced $\g$ we do not have an identification of
$\mathcal{K}^-_t(\mathfrak{g})$ with the Grothendieck ring of a
category of representation. Nonetheless, we do have a basis in
$\mathcal{K}^-_t(\mathfrak{g})$ analogous to the basis of the
$q$-characters of simple modules in $\mathcal{K}^+_q(\mathfrak{g})$.

\begin{prop}\label{intmoins}
(1) Every element of $\mathcal{K}^-_t(\mathfrak{g})$ is characterized
  by the multiplicities of the dominant monomials contained in it
  (i.e. monomials in the $Y_{j,t^{n_j}}$, $j \in I$,
  $n_j\in\mathbb{Z}$, with only non-negative powers).

(2) For every dominant monomial $m$, there is a unique element
  $F(m)$ of $\mathcal{K}^-_t(\mathfrak{g})$ such that $m$ is the
  unique dominant monomial of $F(m)$. Therefore we obtain a basis $\{
  F(m) \}$ of $\mathcal{K}^-(\mathfrak{g})$ parametrized by dominant
  monomials $m$.
\end{prop}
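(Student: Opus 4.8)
\emph{Strategy.} The plan is to mimic the proof of the structure theory of $q$-characters in the $t \to 1$ limit (\thmref{props+} and formula \eqref{KerS+}). Indeed, \propref{props-} shows that each $\on{Ker} S^-_i$ has \emph{exactly} the same shape as $\on{Ker} S^+_i$, with $q_i$ replaced by $t$ and with $A_i(z)$ given by the simpler expression \eqref{AY2}; consequently the combinatorial machinery of \cite{FR:q,fm} applies essentially verbatim. First I would fix the partial order on the monomials in the variables $Y_j(zt^{n_j})^{\pm 1}$, declaring $m' \preceq m$ if and only if $m\,(m')^{-1}$ is a product of \emph{positive} integer powers of the elements $A_i(zt^{n})$, $i \in I$, $n \in \Z$ (with $A_i$ as in \eqref{AY2}). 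Recall that a monomial is called dominant if all powers of the $Y_j(zt^{n_j})$ occurring in it are non-negative.

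\emph{Part (1).} The key point is that every $\preceq$-maximal monomial of a nonzero element $P \in \mathcal{K}^-_t(\g)$ is dominant. To see this, fix $i$ and use \eqref{KerS-} to write $P$ as a polynomial in the variables $Y_j(zt^{n_j})^{\pm 1}$, $j \neq i$, and in the generators $Y_i(zt^{n_i})\bigl(1 + A_i(zt^{n_i+1})^{-1}\bigr)$. Since multiplication by $A_i(zt^{n_i+1})^{-1}$ turns $Y_i(zt^{n_i})$ into a strictly $\preceq$-smaller monomial, the unique $\preceq$-maximal monomial of each such generator is its leading term $Y_i(zt^{n_i})$, which carries a non-negative power of $Y_i$. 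As in \cite{fm}, it follows that every $\preceq$-maximal monomial of $P$ is $i$-dominant. Running this over all $i \in I$ (legitimate because $P \in \bigcap_{i} \on{Ker} S^-_i$) shows that a $\preceq$-maximal monomial of $P$ is dominant. Hence the linear map recording the multiplicities of dominant monomials is injective: a nonzero $P$ has a $\preceq$-maximal monomial, which is dominant, so its dominant content is nonzero. This is precisely statement (1).

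\emph{Part (2).} Uniqueness of $F(m)$ is immediate from (1): if two elements of $\mathcal{K}^-_t(\g)$ both have $m$ as their unique dominant monomial, their difference has no dominant monomial, hence vanishes. For existence I would proceed in two steps. Since each $S^-_i$ acts as a derivation, $\mathcal{K}^-_t(\g)$ is a subalgebra of ${\mathbf H}_{1,t}(\g)$, so it suffices to construct the ``fundamental'' elements $F(Y_i(zt^{n}))$ attached to the degree-one dominant monomials: given a general dominant $m = \prod_s Y_{i_s}(zt^{n_s})$, the product $P_m := \prod_s F(Y_{i_s}(zt^{n_s})) \in \mathcal{K}^-_t(\g)$ then has dominant content equal to $m$ plus a $\Z$-combination of strictly $\preceq$-smaller dominant monomials. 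As these $P_m$ form a $\preceq$-unitriangular family among the finitely many dominant monomials that actually occur, a downward induction on $\preceq$ extracts the unique $F(m)$ with single dominant monomial $m$. The fundamental $F(Y_i(zt^{n}))$ themselves are built by the Frenkel--Mukhin completion algorithm: starting from $Y_i(zt^{n})$ one successively adjoins $\prec$-smaller monomials dictated, in each $j$-direction, by the explicit $\su$-type description of $\on{Ker} S^-_j$ in \eqref{KerS-}.

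\emph{Main obstacle.} The only substantive point is the existence half of (2) --- producing even one element with a prescribed unique dominant monomial. In the $t \to 1$ case this is free, since the basis is supplied by the $q$-characters of the genuine simple $U_q(\ghat)$-modules; here $\mathcal{K}^-_t(\g)$ has no a priori representation-theoretic model, so one must verify by hand that the completion algorithm for the fundamental monomials is consistent and terminates, i.e.\ that each $F(Y_i(zt^{n}))$ is a finite element with a single dominant monomial. This is exactly where the Frenkel--Mukhin analysis of \cite{FR:q,fm} is invoked, and it transfers without change precisely because, by \propref{props-}, $\on{Ker} S^-_i$ is governed by the same rank-one ($\su$-type) combinatorics as $\on{Ker} S^+_i$. (Alternatively, once the folded $t$-character picture relating $\mathcal{K}^-_t(\g)$ to finite-dimensional representations of $U_t(\wh{\g'})$ is available, existence can be read off from there; but the self-contained combinatorial argument above is preferable at this stage.)
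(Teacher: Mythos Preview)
Your Part (1) and the reduction in Part (2) to the fundamental elements $F(Y_i(zt^n))$ are correct and match the paper's approach (which cites \cite[Theorem 5.13]{H2} for the same reduction).

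The gap is in your existence argument for the $F(Y_i(zt^n))$. You assert that the Frenkel--Mukhin analysis ``transfers without change'' because each $\on{Ker} S^-_i$ has the same rank-one ($\su$-type) shape as $\on{Ker} S^+_i$. But rank-one information alone does not guarantee that the completion algorithm is consistent: the potential failure is that completing in direction $i$ may destroy $j$-dominance, and this is a \emph{rank-two} phenomenon. In the $t\to 1$ limit, consistency at rank two is supplied for free by representation theory (the fundamental $U_q(\ghat)$-modules exist). Here there is no such input, and the $A_i(z)$ of \eqref{AY2} are genuinely different from those of \eqref{AY1}, so the rank-two combinatorics is new. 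The paper confronts this head-on: for the simply-laced rank-two subdiagrams the ordinary $q$-characters still work, but for $B_2$ and $G_2$ it writes down the explicit elements $F(Y_1(z))$ and $F(Y_2(z))$ and checks directly that they lie in both kernels and have a unique dominant monomial. Without this verification (or an equivalent argument), your proof of existence is incomplete.

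A second point you underplay is finiteness. Even once the algorithm is shown to be consistent, it could in principle produce an infinite sum. The paper handles this not by the ``self-contained combinatorial argument'' you prefer, but precisely by the alternative you set aside: it invokes the folded $t$-character picture (\thmref{unfold}) to identify $F(Y_i(zt^n))$ with the folded $t$-character of a fundamental $U_t(\wh{\g'})$-module, which is manifestly finite. So the paper's actual proof uses both the explicit rank-two check \emph{and} the link to $U_t(\wh{\g'})$.
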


\begin{proof}
The proof is the same as the proof in \cite[Theorem 5.13]{H2}. All that
remains is to check the existence of the $F(Y_i(zq^n))$ for rank $2$
Lie algebras. For simply-laced types, this is true because we can use
the ordinary $q$-characters of the fundamental representations. For
type $B_2$, we find the following elements:
$$F(Y_1(z)) = Y_1(z) + Y_1(zq^2)^{-1}Y_2(zq)^2 + 2
Y_2(zq)Y_2(zq^3)^{-1} + Y_2(zq^3)^{-2}Y_1(zq^2) +
Y_1(zq^4)^{-1},$$
$$F(Y_2(z)) = Y_2(z) + Y_2(zq^2)^{-1}Y_1(zq) +
Y_1(zq^3)^{-1}Y_2(zq^2) + Y_2(zq^4)^{-1},$$
and for type $G_2$ we find
$$F(Y_1(z)) = Y_1(z) + Y_1(zq^2)^{-1}Y_2(zq)^3 
+ 3 Y_2(zq)^2Y_2(zq^3)^{-1} + 3 Y_2(zq)Y_2(zq^3)^{-2}Y_1(zq^2)
$$ $$+ Y_2(zq^3)^{-3} Y_1(zq^2)^2 + 3 Y_2(zq)Y_2(zq^3)Y_1(zq^4)^{-1}
+ 2 Y_1(zq^2)Y_1(zq^4)^{-1}
+ 3 Y_2(zq)Y_2(zq^5)^{-1} $$
$$+ Y_1(zq^4)^{-2}Y_2(zq^3)^3 + 3 Y_2(zq^3)^{-1}Y_2(zq^5)^{-1} Y_1(zq^2)
+ 3 Y_1(zq^4)^{-1}Y_2(zq^3)^2 Y_2(zq^5)^{-1} $$
$$+3 Y_2(zq^3)Y_2(zq^5)^{-2}
+ Y_2(zq^5)^{-3}Y_1(zq^4)
+ Y_1(zq^6)^{-1},
$$
$$
F(Y_2(z)) = Y_2(z) + Y_2(zq^2)^{-1}Y_1(zq) +
Y_1(zq^3)^{-1}Y_2(zq^2)^2  + 2 Y_2(zq^2)Y_2(zq^4)^{-1} $$ $$+
Y_2(zq^4)^{-2}Y_1(zq^3) + Y_2(zq^4)Y_1(zq^5)^{-1} +
Y_2(zq^6)^{-1}.$$
This completes the proof, up to the fact that the algorithm may
produce elements 
$F(Y_i(zq^n))$ with an infinite number of terms. But it follows from
Theorem \ref{unfold} below
that the elements $F(Y_i(zq^n))$ can also be obtained as folded
$q$-characters of fundamental representations of
$U_q(\wh{\mathfrak{g}'})$, which do have finite numbers of terms.
\end{proof}

It follows from Proposition \ref{intmoins} that we have natural
analogues $F(Y_i(zt^n))$ of the $q$-characters of the fundamental
representations. We also have natural analogues of $q$-characters of
the Kirillov--Reshetikhin modules (see Section \ref{fdrep} below).

\begin{rem}    \label{mcA}
This discussion motivates the following natural question: Is there a
Hopf algebra ${\mc A}_t(\g)$ (an analogue of $U_t(\ghat)$) together
with an injective ($t$-character) homomorphism
$$
\on{Rep}_z
  {\mc A}_t(\g) \to \C[Y_j(zt^{n_j})^{\pm
    1}]_{j \in I; n_j \in \Z},
$$
where $\on{Rep}_z {\mc A}_t(\g)$ is a subring of the Grothendieck ring
of the category of finite-dimensional representations of 
${\mc A}_t(\g)$, whose image is $\mathcal{K}_t^-(\mathfrak{g})$?

It is tempting to try to answer this question using an automorphism
$\sigma$ of the quantum affine algebra $U_t(\wh{\g'})$ defined by
formula (\ref{dact1}) below. The subalgebra $(U_t(\wh{\g'}))^\sigma$
of $\sigma$-invariants acts on every finite-dimensional representation
$V$ of $U_t(\wh{\g'})$.  However, it is not clear how to define a
comultiplication on the algebra $(U_t(\wh{\g'}))^\sigma$. Hence it is
not clear how one could define a quantum integrable model this
way.\qed
\end{rem}

\subsection{The deformed ${\mc W}$-algebra for general $q$ and
  $t$}

The structure of the deformed chiral algebra ${\mathbf W}_{q,t}(\g)$
for general values of $q$ and $t$ is much more complicated than that of
its classical limits discussed above. Conjecture 1 of \cite{FR:w}
implies that every basis element of ${\mathbf W}_{q,1}(\g) = {\mc
  K}^+_q(\g)$ given by the $q$-character of a simple module over
$U_q(\ghat)$ can be deformed to a basis element of ${\mathbf
  W}_{q,t}(\g)$ (i.e. an element of ${\mathbf H}_{q,t}(\g)$ which lies
in the kernel of the screening operators).

However, apart from a few explicit examples presented in \cite{FR:w},
there is no proof of existence of these elements in general. On the
other hand, in \cite{FH1} a simplified, commutative version of ${\mc
  W}_{q,t}(\g)$ was introduced, called the space of {\em interpolating
  $(q,t)$-characters}.  These are defined from certain subrings which
are modeled on what we expect the kernels of the screening operators
to be (based on the description of the kernels of screening operators
associated to the ordinary $q$-characters). We will recall this
construction, and add further details to it, in Section \ref{intchar}.

At the moment, the relation between the interpolating
$(q,t)$-characters and the deformed ${\mc W}$-algebra ${\mathbf
  W}_{q,t}(\g)$ is conjectural. However, as we will see below, for our
purposes the $(q,t)$-characters provide a good substitute for elements
of ${\mathbf W}_{q,t}(\g)$.

\section{Classical limits of the deformed ${\mc W}$-algebra and
  $q$-characters}\label{clasqcar}

In this section we first recall some details on the relation between
the $t \to 1$ limit of the deformed ${\mc W}$-algebra and the
$q$-characters of finite-dimensional representations of
$U_q(\ghat)$. We then relate the $q \to 1$ limit to what we call {\em
  folded} $t$-characters of finite-dimensional representations of
$U_t(\wh{\mathfrak{g}'})$ (see Theorem \ref{unfold}).

\subsection{Reminder on the $q$-characters of representations of quantum
  affine algebras}\label{fdrep}

First, consider the untwisted quantum affine algebra
$U_q(\widehat{\mathfrak{g}})$. Let $\on{Rep} \U_q(\wh{\Glie})$ be
the Grothen\-dieck ring of finite-dimensional representations of
$\U_q(\wh{\Glie})$. The $q$-character homomorphism \cite{FR:q} is an
injective ring homomorphism
$$\chi_q : \on{Rep} \U_q(\wh{\Glie}) \rightarrow \Yim_q =
\ZZ[Y_{i,a}^{\pm 1}]_{i\in I, a\in \mathbb{C}^\times}.$$

If we replace each $Y_{i,a}$ by $y_i$, we recover the usual character
homomorphism for the $\U_q(\Glie)$-module obtained by restriction of
$\U_q(\wh{\Glie})$-module, which encodes its grading by the lattice of
integral weights of a Cartan subalgebra of the Lie algebra $\Glie$. In
what follows, we will refer to these integral weights as $\Glie$-{\em
  weights}.  In particular, each monomial in $\Yim_q$ has a
$\Glie$-weight.

It is proved in \cite{FR:q, fm} (see also Theorem \ref{props+} above)
that
\begin{equation}    \label{Imchiq}
  \on{Im}(\chi_q) = \bigcap_{i\in I}\mathfrak{K}_{i,q},
\end{equation}
where
\begin{equation}    \label{keri}
\mathfrak{K}_{i,q} = \ZZ[Y_{j,a}^{\pm 1}, Y_{i,a}(1 +
  A_{i,aq_i}^{-1})]_{j\neq i,a\in \mathbb{C}^\times}
\end{equation}
and $A_{i,a}$ is defined by formula (\ref{AY1}), where we replace
$Y_j(za)$ with $Y_{j,a}$.

\begin{rem}    \label{freetoqchar}
Note that in the context of
deformed ${\mc W}$-algebras, it is convenient to restrict ourselves to
the variables $Y_j(zq^{n_j}), n_j \in \Z$ (i.e. restrict ourselves to
the multiplicative lattice of spectral parameters $a = zq^n, n \in
\Z$). But in the context of $q$-characters, we usually consider all
spectral parameters $a \in \C^\times$ and denote the corresponding
variables by $Y_{j,a}$ (see \cite[Sect. 7]{FR:q} for more detail).

In particular, comparing formulas \eqref{K+} and \eqref{KerS+} with
formulas \eqref{Imchiq} and \eqref{keri}, respectively, we find that
if we replace the variables $Y_j(zq^{n_j}), n_j \in \Z$, by the
variables $Y_{j,a}, a \in \C^\times$, then ${\mc K}^+_q$ becomes
$\on{Im}(\chi_q)$. That's what we mean by the statement that the $t
\to 1$ limit of the free field realization of ${\mathbf W}_{q,t}(\g)$
corresponds to the $q$-character homomorphism of $U_q(\ghat)$.\qed
\end{rem}

A monomial in $\Yim_q$ is called dominant if it is a product of
positive powers of the $Y_{i,a}, i\in I, a\in \mathbb{C}^\times$. A simple
$\U_q(\wh{\Glie})$-module is uniquely characterized by the highest
monomial (in the sense of its $\Glie$-weight) in its $q$-character
(this monomial encodes the data of the Drinfeld
polynomials; for the definition of the latter, see Theorem 12.2.6 of
\cite{cp}). This monomial is dominant. An element of
$\text{Im}(\chi_q)$ is characterized by the multiplicities of its
dominant monomials. A $\U_q(\wh{\Glie})$-module is said to be
affine-minuscule if its $q$-character has a unique dominant monomial.

If a dominant monomial is in $\ZZ[Y_{i,a}^{\pm 1}]_{i\in I, a\in
  q^{\mathbb{Z}}}$, then the $q$-character of the corresponding simple
module also belongs to this subring.

A Kirillov--Reshetikhin (KR) module of $\U_q(\wh{\Glie})$ is a simple
module with the highest monomial of the form
$Y_{i,a}Y_{i,aq_i^2}\cdots Y_{i,aq_i^{2(k-1)}}$ with
$a\in\mathbb{C}^\times$, $i\in I$ and $k\geq 0$.

It was proved in \cite{Nad, hcr} that the KR modules of
$\U_q(\wh{\Glie})$ are affine-minuscule.  For $k=1$, that is for
fundamental representations, this was proved in \cite{fm}.

\subsection{Twisted affine algebras}

Next, consider the twisted quantum affine algebra
$U_t({}^L\wh{\Glie})$, and let $\on{Rep} \U_t({}^L\wh{\Glie})$ be the
Grothen\-dieck ring of the category of its finite-dimensional
representations.

The twisted $t$-character homomorphism \cite{H} is an injective ring
homomorphism
$$\chi_t: \on{Rep} U_t({}^L\wh{\Glie}) \rightarrow
\ZZ[Z_{i,a^{\rr_i^\vee}}^{\pm 1}]_{a\in \mathbb{C}^\times, i\in I},$$ where
we have set
\begin{equation}    \label{dvee}
  \rr_i^\vee = \rr + 1 - \rr_i.
\end{equation}
These are the analogues of
the $\rr_i$ for the Langlands dual Lie algebra ${}^L\Glie$.

As in the
untwisted case, we have the notions of dominant monomials,
affine-minuscule modules and KR modules. An
element of $\text{Im}(\chi_t)$ is again characterized by its
dominant monomial and the KR modules of $\U_t({}^L\wh{\Glie})$ are
affine-minuscule, as proved in \cite{H}.

If a dominant monomial is in $\ZZ[Z_{i,a^{\rr_i^\vee}}^{\pm 1}]_{a\in
  \epsilon^\ZZ t^\ZZ, i\in I}$, then the twisted $t$-character of the
corresponding simple module also belongs to this subring.

The image of $\chi_t$ is equal to
$$\bigcap_{i\in I}\ZZ[Z_{j,a^{\rr_j^\vee}}^{\pm 1}, Z_{i,a^{\rr_i^\vee}}(1 +
  B_{i,(at)^{\rr_i^\vee}}^{-1})]_{j\neq i,a\in \C^\times},$$
where 
$$B_{i,a} =
Z_{i,at^{\rr_i^\vee}}Z_{i,at^{-\rr_i^\vee}}\times\prod_{j\sim
  i|\rr_j^\vee = d}Z_{j,a^{\rr_i}}^{-1}\times \prod_{j\sim i, a'|
  \rr_j^\vee = 1,(a')^{\rr_i^\vee} = a}Z_{j,a'}^{-1},$$
where we write $i \sim j$ if $I_{ij} \neq 0$ (recall that $(I_{ij})$
denotes the incidence matrix).

Note that a special definition should be used for the monomials
$B_{i,a}$ in the case of type $A_{2n}^{(2)}$, but we are not
considering this case here because this affine Kac--Moody algebra is
not dual to an untwisted affine algebra (note that $A_{2n}^{(2)}$ does
not appear in Section \ref{nomen}).
 
According to Theorem \ref{props+}, the $t \to 1$ limit ${\mc
  K}^+_q(\g)$ of ${\mathbf W}_{q,t}(\g)$ is isomorphic to $\on{Rep}
U_q(\ghat)$ so that the embedding of ${\mathbf W}_{q,1}(\g)$ into
${\mathbf H}_{q,1}(\g)$ becomes the $q$-character homomorphism.

\medskip

Our task is to relate the $q \to 1$ limit ${\mc K}^-_t(\g)$ of
${\mathbf W}_{q,t}(\g)$ to $t$-characters of representations of
quantum affine algebras. We start with two examples and then derive a
general result. The upshot is that ${\mc K}^-_t(\g)$ is spanned by
what we will call {\em folded $t$-characters of $U_t(\wh{\g'})$},
where $\g'$ is the simply-laced Lie algebra equipped with an
automorphism whose invariant Lie subalgebra is $\g$ (the Dynkin
diagram of $\g$ can be obtained by folding the Dynkin diagram of
$\g'$). These are the $t$-characters of the finite-representations of
$U_t(\wh{\g'})$ in which we identify the variables $Y_i(z)$ and
$Y_{\sigma(i)}(z)$ for all $i \in I'$.

\subsection{Examples}    \label{gprime}

Consider the case $\g=B_\el$. Then we have the following formula for
the element $T_1(z)$ of ${\mathbf W}_{q,t}(B_\el)$ corresponding to the
first fundamental representation of $U_q(B^{(1)}_\el)$ (see
\cite{FR:w}, Sect. 5.1.2). Set
$$
J = \{ 1,\ldots,\el,0,\ol{\el},\ldots,\ol{1} \}
$$
\begin{align*}
\La_i(z) &= :Y_i(zq^{2i-2} t^{i-1}) Y_{i-1}(zq^{2i} t^i)^{-1}:, \quad
\quad i=1,\ldots,\el-1, \\
\La_\el(z) &= :Y_\el(zq^{2\el-3} t^{\el-1}) Y_\el(zq^{2\el-1} t^{\el-1})
Y_{\el-1}(zq^{2\el} t^\el)^{-1}:, \\
\La_0(z) &= \frac{(q+q^{-1})(qt - q^{-1} t^{-1})}{q^2 t - q^{-2} t^{-1}}
:Y_\el(zq^{2\el-3} t^{\el-1}) Y_\el(zq^{2\el+1} t^{\el+1})^{-1}:, \\
\La_{\ol{\el}}(z) &= :Y_{\el-1}(zq^{2\el-2} t^\el) Y_\el(zq^{2\el-1}
t^{\el+1})^{-1} Y_\el(zq^{2\el+1} t^{\el+1})^{-1}:, \\
\La_{\ol{i}}(z) &= :Y_{i-1}(zq^{4\el-2i-2} t^{2\el-i}) Y_i(zq^{4\el-2i}
t^{2\el-i+1})^{-1}:, \quad \quad i=1,\ldots,\el-1.
\end{align*}
Here and below we set $Y_0(z)=1$.
According to \cite{FR:w},
\begin{equation}    \label{t1}
T_1(z) = \sum_{i \in J} \La_i(z)
\end{equation}
commutes with the screening operators $S^\pm_i, i=1,\ldots,\el$ and hence
belongs to ${\mathbf W}_{q,t}(B_\el)$.

The $t \to 1$ limit of the rational function
\begin{equation}    \label{rat fn}
f_\ell(q,t) = \frac{(q+q^{-1})(qt^{-1} - q^{-1} t)}{q^2 t^{-1} - q^{-2} t}
\end{equation}
is equal to 1. Hence the limit of the above formula for $T_1(z)$ as $t
\to 1$ has $2\el+1$ terms, and one can check that it coincides with
the $q$-character of the first fundamental representation of
$U_q(B_\el^{(1)})$.

Now consider the $q \to 1$ limit of \eqref{t1}. Note that $f_\el(1,t)
= 2$, so in this limit the term $\La_0(z)$ appears with coefficient 2.
As observed in \cite[Sect. 6.3]{FR:w}, the $q \to 1$ limit of $T_1(z)$
looks like the $t$-character of $U_t(D_{\el+1}^{(2)})$ in which we
remove all $\epsilon$ factors. For this reason, it was conjectured in
\cite[Sect. 6.3]{FR:w} that the $q \to 1$ limit of the elements of
${\mathbf W}_{q,t}(\g)$ in general (which are, by definition, elements of
$\mathcal{K}_t^-(\mathfrak{g})$) should be given by the $t$-characters
of finite-dimensional representations of $U_t(\G^\vee)$ in which we
remove the $\epsilon$ factors (note that $D_{\el+1}^{(2)} =
(B_\el^{(1)})^\vee$).

This conjecture is likely to be true, but the problem is that we don't
know how to interpret this removal of $\epsilon$ factors in terms of
representation theory, and therefore this does not help us with
constructing the corresponding integrable models. Hence in this paper
we give a different interpretation of this limit.

To explain it, let's look more closely at the $q \to 1$ limit of the
formula \eqref{t1} for $T_1(z)$. This is an element of ${\mc
  K}^-_t(B_\el)$ given by the same formula \eqref{t1}, where now
\begin{align*}
\La_i(z) &= Y_i(zt^{i-1}) Y_{i-1}(zt^i)^{-1}, \quad
\quad i=1,\ldots,\el-1, \\
\La_\el(z) &= Y_\el(zt^{\el-1})^2 Y_{\el-1}(zt^\el)^{-1}, \\
\La_0(z) &= 2 Y_\el(zt^{\el-1}) Y_\el(zt^{\el+1})^{-1}, \\
\La_{\ol{\el}}(z) &= Y_{\el-1}(zt^\el) Y_\el(zt^{\el+1})^{-2}, \\
\La_{\ol{i}}(z) &= Y_{i-1}(zt^{2\el-i}) Y_i(zt^{2\el-i+1})^{-1},
\quad \quad i=1,\ldots,\el-1.
\end{align*}

Let's compare this formula with the $q \to 1$ limit of the formula for
$T_1(z)$ from ${\mathbf W}_{q,t}(D_{\el+1})$ (see \cite{FR:w},
Sect. 5.1.4). It is an element ${\mc K}^-_t(D_{\el+1})$ given by
formula \eqref{t1} but now with
$$
J = \{ 1,\ldots,\el+1,\ol{\el+1},\ldots,\ol{1} \}
$$
\begin{align*}
\La_i(z) &= Y_i(zt^{i-1}) Y_{i-1}(t^i)^{-1}, \quad
\quad i=1,\ldots,\el-1, \\
\La_{\el}(z) &= Y_{\el+1}(zt^{\el-1}) Y_{\el}(zt^{\el-1}) Y_{\el-1}(zt^{\el})^{-1}, \\
\La_{\el+1}(z) &= Y_{\el+1}(zt^{\el-1}) Y_{\el}(zt^{\el+1})^{-1}, \\
\La_{\ol{\el+1}}(z) &= Y_{\el}(zt^{\el-1}) Y_{\el+1}(zt^{\el+1})^{-1}, \\
\La_{\ol{\el}}(z) &= Y_{\el-1}(zt^{\el})
Y_{\el}(zt^{\el+1})^{-1} Y_{\el+1}(zt^{\el+1})^{-1}, \\
\La_{\ol{i}}(z) &= Y_{i-1}(zt^{2\el-i}) Y_i(zt^{2\el-i+1})^{-1},
  \quad \quad i=1,\ldots,\el-1.
\end{align*}

By inspecting these formulas, we obtain the following result (recall
that ${\mc K}^-_t(\g)$ is defined in formula \eqref{K-}).

\begin{lem}    \label{identi}
  If we identify the generators $Y_{\el+1}(z)$ with $Y_\ell(z)$ for
  $D_{\el+1}$, then the formula for $T_1(z)$ in ${\mc
    K}^-_t(D_{\el+1})$ becomes the formula for $T_1(z)$ in ${\mc
    K}^-_t(B_\el)$.
\end{lem}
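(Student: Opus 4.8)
The plan is to prove the lemma by a direct term-by-term comparison of the two explicit expansions of $T_1(z)$, since both are \emph{finite} sums of monomials in the variables $Y_j(zt^n)^{\pm 1}$ and the assertion is simply the equality of two such Laurent polynomials after the substitution $Y_{\el+1}(z) \mapsto Y_\el(z)$. First I would record the counting: the index set $J = \{1,\ldots,\el+1,\ol{\el+1},\ldots,\ol{1}\}$ for $D_{\el+1}$ has $2\el+2$ elements, while the index set $J = \{1,\ldots,\el,0,\ol{\el},\ldots,\ol{1}\}$ for $B_\el$ has $2\el+1$ elements. Hence the folding must merge exactly one pair of $D_{\el+1}$-terms into a single $B_\el$-term, and the whole proof amounts to identifying which terms match directly and which pair collapses.

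Next I would go through the terms. For $i=1,\ldots,\el-1$ the terms $\La_i(z)$ and $\La_{\ol{i}}(z)$ are given by identical formulas in the two cases and involve only the variables $Y_{i-1},Y_i$ with $i\le \el-1$; they are therefore untouched by the substitution and match immediately. The remaining terms, which carry $Y_\el$ and $Y_{\el+1}$, are where the folding occurs, and these I would handle one at a time: substituting $Y_{\el+1}(z)=Y_\el(z)$ into $\La_\el^{D_{\el+1}}(z)=Y_{\el+1}(zt^{\el-1})Y_\el(zt^{\el-1})Y_{\el-1}(zt^\el)^{-1}$ yields $Y_\el(zt^{\el-1})^2Y_{\el-1}(zt^\el)^{-1}=\La_\el^{B_\el}(z)$, and likewise $\La_{\ol{\el}}^{D_{\el+1}}(z)$ collapses to $Y_{\el-1}(zt^\el)Y_\el(zt^{\el+1})^{-2}=\La_{\ol{\el}}^{B_\el}(z)$.

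The decisive step — and the place where the coefficient $2$ appearing in the $B_\el$ formula is explained — is the treatment of the two terms $\La_{\el+1}^{D_{\el+1}}(z)=Y_{\el+1}(zt^{\el-1})Y_\el(zt^{\el+1})^{-1}$ and $\La_{\ol{\el+1}}^{D_{\el+1}}(z)=Y_\el(zt^{\el-1})Y_{\el+1}(zt^{\el+1})^{-1}$. Under the substitution both become the single monomial $Y_\el(zt^{\el-1})Y_\el(zt^{\el+1})^{-1}$, so their sum equals $2\,Y_\el(zt^{\el-1})Y_\el(zt^{\el+1})^{-1}=\La_0^{B_\el}(z)$; this is exactly the folding of the two spinor nodes $\el,\el+1$ of $D_{\el+1}$ into the single short node of $B_\el$, and it is why $\La_0$ enters with multiplicity $2$. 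The hard part will not be any real subtlety but the bookkeeping: one must confirm that all the exponents of $t$ in the spectral parameters, and the signs of the powers of the $Y$'s, agree across the two lists after substitution (in particular verifying the powers in $\La_{\ol{\el}}^{D_{\el+1}}(z)$, which determine the $-2$ exponent above). I would check this by inspection; once the $2\el+2$ terms of the $D_{\el+1}$ sum are seen to reorganize into the $2\el+1$ terms of the $B_\el$ sum in this fashion, the equality of the two expressions for $T_1(z)$ follows.
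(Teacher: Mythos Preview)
Your proposal is correct and is exactly the paper's approach: the paper simply states ``By inspecting these formulas, we obtain the following result,'' and your term-by-term comparison is precisely that inspection spelled out. One small point worth flagging in your bookkeeping: the paper's displayed formula for $\La_{\ol{\el}}^{D_{\el+1}}(z)$ has a sign typo (the factor $Y_{\el+1}(zt^{\el+1})$ should carry exponent $-1$, as dictated by the weight $-e_\el = \omega_{\el-1}-\omega_\el-\omega_{\el+1}$ of the vector representation), and with that correction the substitution indeed yields $Y_{\el-1}(zt^\el)Y_\el(zt^{\el+1})^{-2}$ as you claim.
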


Now observe that in this case $\g=B_\ell$ and $\g'=D_{\el+1}$, with
the corresponding automorphism $\sigma$ exchanging the $\el$th and the
$(\el+1)$st nodes of the Dynkin diagram. Hence the identification of
Lemma \ref{identi} corresponds precisely to the {\em folding} of the
Dynkin diagram of $D_\ell$, which gives the Dynkin diagram of $B_\el$.

\bigskip

Let us apply the same procedure in the case $\g=C_\el$. Then
$\g'=A_{2\el-1}$ and $\sigma$ exchanges the $i$th and the $2\el-i$th
nodes of the Dynkin diagram. Comparing formulas in Sects. 5.1.1 and
5.1.3 of \cite{FR:w}, we obtain
\begin{lem}
  Let us identify the generators $Y_{2\el-i}(z), i=1,\ldots,\el-1$,
  with $Y_i(z)$ for $A_{2\el-1}$. Then the formula for $T_1(z)$
  in ${\mc K}^-_t(A_{2\el-1})$ becomes the formula for $T_1(z)$ in
  ${\mc K}^-_t(C_\el)$.
\end{lem}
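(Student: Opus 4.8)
The plan is to follow the strategy of \lemref{identi} essentially verbatim: recall the explicit formula for $T_1(z)$ in each case from \cite{FR:w} (Sects.~5.1.3 and 5.1.1), pass to the $q\to1$ limit, perform the substitution $Y_{2\el-i}(z)=Y_i(z)$, $i=1,\ldots,\el-1$, and compare the results term by term. Since $\g'=A_{2\el-1}$ is simply-laced, its two classical limits agree after exchanging $q$ and $t$, so the $q\to1$ limit of $T_1(z)\in{\mathbf W}_{q,t}(A_{2\el-1})$ is simply the $t$-character in ${\mc K}^-_t(A_{2\el-1})$ of the standard $2\el$-dimensional representation. Building it from the highest monomial $Y_1(z)$ by successive multiplication with the $A_k(\,\cdot\,)^{-1}$ of \eqref{AY2}, I expect the $2\el$ monomials
\begin{align*}
m_1 &= Y_1(z),\\
m_k &= Y_{k-1}(zt^{k})^{-1}\,Y_k(zt^{k-1}),\qquad 2\le k\le 2\el-1,\\
m_{2\el} &= Y_{2\el-1}(zt^{2\el})^{-1},
\end{align*}
each occurring with coefficient $1$.

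The core of the argument will be an observation about the substitution $\pi\colon Y_{2\el-i}(z)\mapsto Y_i(z)$, which realizes the folding automorphism $\sigma$ (with $\sigma(k)=2\el-k$, fixing the node $\el$). I would show that applying $\pi$ to formula \eqref{AY2} for the $A_{2\el-1}$ monomial $A_k(z)$ produces exactly the $C_\el$ monomial $A_{\sigma(k)}(z)$ of \eqref{AY2}, at the \emph{same} spectral argument. For $k\neq\el$ this is immediate, since the relevant factors involve only variables of index $\le\el$ or are carried onto one another by $\sigma$, and are otherwise unchanged. The one substantive case is the fixed long node $k=\el$: there the factor $Y_{\el-1}(z)^{-1}Y_{\el+1}(z)^{-1}$ of the simply-laced $A_\el(z)$ is sent by $\pi$ to $Y_{\el-1}(z)^{-2}$, precisely the factor appearing in the $C_\el$ long-node monomial $A_\el(z)=Y_\el(zt)\,Y_\el(zt^{-1})\,Y_{\el-1}(z)^{-2}$ prescribed by \eqref{AY2} (here $I_{\el-1,\el}=2$ for $C_\el$).

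From here I would conclude as follows. Because $\pi$ fixes the highest monomial $Y_1(z)$ and intertwines the two families of $A$-monomials, it must carry the chain $m_1,\ldots,m_{2\el}$ onto the chain of monomials defining $T_1(z)$ in ${\mc K}^-_t(C_\el)$ recorded in \cite{FR:w}, Sect.~5.1.1, with matching powers of $t$ at each step. Moreover both standard representations are $2\el$-dimensional and $\sigma$ acts bijectively on the $2\el$ weights $\ep_1,\ldots,\ep_{2\el}$ (via $\ep_i\mapsto-\ep_{2\el+1-i}$), so $\pi$ identifies no two of the $m_k$; hence every coefficient stays equal to $1$. In contrast with the $B_\el$ case of \lemref{identi}, nothing collapses, and no coefficient-$2$ term --- nor any rational-function coefficient analogous to $f_\el(q,t)$ --- appears.

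The hard part will be the bookkeeping at the turning point, namely the passage through the fixed long node $\el$. This is the single step whose underlying monomial genuinely changes character under $\pi$ (from the simply-laced $A_\el$ of $A_{2\el-1}$ to the long-node $A_\el$ of $C_\el$), and I will need to check that it occurs at the same spectral parameter on both sides so that the two halves of the $2\el$-term chain glue correctly; this is where the uniform shift pattern of the simply-laced side must be reconciled with the $C_\el$ Cartan data. Once this finite verification is in place, the remaining term-by-term agreement is routine and the lemma follows.
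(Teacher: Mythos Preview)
Your proposal is correct and follows the paper's approach: the paper simply says ``Comparing formulas in Sects.~5.1.1 and 5.1.3 of \cite{FR:w}, we obtain'' the lemma, i.e.\ a direct term-by-term check, which is exactly what you carry out in detail. Your conceptual observation that the folding map $\pi$ sends the $A_k(z)$ of $A_{2\el-1}$ (formula \eqref{AY2}) to the $A_{\overline{k}}(z)$ of $C_\el$ at the same spectral argument is precisely the point the paper abstracts and proves in the very next subsection (Section~\ref{Kminus}), so you have in effect anticipated the general mechanism; note one small slip --- your second reference to \cite{FR:w}, Sect.~5.1.1 for the $C_\el$ formula should be Sect.~5.1.3.
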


\subsection{General case}    \label{Kminus}

Formula \eqref{AY2} shows that if we impose the relations $Y_i(z) =
Y_{\sigma(i)}(z)$ for all $i \in I'$, then the generators $A_i(z), i
\in I'$ of ${\mc K}^-_t(\g')$ go to the corresponding generators
$A_i(z), i \in I$, of ${\mc K}^-_t(\g)$. Here and below, abusing
notation, we identify $i \in I'$ with its image in $I = I'/\langle
\sigma \rangle$.

Thus, we have natural commutative diagram
$$
\begin{CD}
\bigcap_{i \in I'} \C[Y_j(zt^{n_j})^{\pm 1}]_{j\neq i;
  n_j \in \Z} \otimes \C[Y_i(zt^{n_i})(1 + A_i(zt^{n_i+1})^{-1})]_{n_i
  \in \Z} @>>> \C[Y_j(zt^{n_j})^{\pm 1}]_{j\in I'; n_j \in \Z}\\
@VVV     @VVV\\
\bigcap_{i \in I} \C[Y_j(zt^{n_j})^{\pm 1}]_{j\neq i;
  n_j \in \Z} \otimes \C[Y_i(zt^{n_i})(1 + A_i(zt^{n_i+1})^{-1})]_{n_i
  \in \Z} @>>> \C[Y_j(zt^{n_j})^{\pm 1}]_{j\in I; n_j \in \Z}
\end{CD}
$$
with the vertical maps being surjective and the horizontal maps being
injective. This proves the following theorem stating that
$\mathcal{K}_t^-(\mathfrak{g}) = {\mathbf W}_{1,t}(\g)$ is spanned by
the $t$-characters of $U_t(\wh{\g'})$ in which we identify $Y_i(z)$
with $Y_{\sigma(i)}(z)$.

\begin{theorem}\label{unfold}
There is a surjective
  homomorphism $\on{Rep}_z U_t(\wh{\g'}) \to \mathcal{K}_t^-(\mathfrak{g})$
  that fits in the commutative diagram
$$
\begin{CD}
\on{Rep}_z U_t(\wh{\g'}) @>{\sim}>> \bigcap_{i \in I'}
\C[Y_j(zt^{n_j})^{\pm 1}]_{j\neq i;
  n_j \in \Z} \otimes \C[Y_i(zt^{n_i})(1 + A_i(zt^{n_i+1})^{-1})]_{n_i
  \in \Z} \\
@VVV     @VVV\\
\mathcal{K}_t^-(\mathfrak{g}) @>{\sim}>> \bigcap_{i \in I}
\C[Y_j(zt^{n_j})^{\pm 1}]_{j\neq i;
  n_j \in \Z} \otimes \C[Y_i(zt^{n_i})(1 + A_i(zt^{n_i+1})^{-1})]_{n_i
  \in \Z}
\end{CD}
$$
\end{theorem}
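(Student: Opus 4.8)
The plan is to establish the two horizontal isomorphisms first, then construct the left vertical map so that the square commutes, and finally verify surjectivity. For the top horizontal isomorphism, since $\g'$ is simply-laced, the $q \to 1$ and $t \to 1$ limits of the deformed $\W$-algebra coincide (after $q \leftrightarrow t$). Hence $\on{Rep}_z U_t(\wh{\g'})$ is identified with $\bigcap_{i \in I'} \on{Ker} S^-_i$ via the $t$-character homomorphism, which is exactly the content of \thmref{props+} applied to $\g'$ (with the roles of $q$ and $t$ swapped) together with \propref{props-}. The bottom horizontal isomorphism is precisely the statement of \propref{intmoins} combined with the description of $\mathcal{K}_t^-(\g)$ in \propref{props-}: the ring is spanned by the basis $\{F(m)\}$, and the intersection on the right-hand side is its free-field incarnation.

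First I would make the vertical maps precise. The right vertical map is the quotient $\C[Y_j(zt^{n_j})^{\pm 1}]_{j \in I'} \twoheadrightarrow \C[Y_j(zt^{n_j})^{\pm 1}]_{j \in I}$ induced by the ring homomorphism sending $Y_{\sigma(i)}(z) \mapsto Y_i(z)$ for all $i \in I'$ (i.e. identifying generators lying in the same $\sigma$-orbit). This is manifestly a surjective ring homomorphism. The key computation, which is already recorded in the text via formula \eqref{AY2}, is that under this identification the monomial $A_i(z)$ for $\g'$ maps to the monomial $A_i(z)$ for $\g$; this is immediate because \eqref{AY2} only involves $Y_i(zt^{\pm 1})$ and $\prod_{j \neq i} Y_j(z)^{-I_{ji}}$, and the incidence data is preserved under folding. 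Consequently the right vertical map carries the generator $Y_i(zt^{n_i})(1 + A_i(zt^{n_i+1})^{-1})$ of the $\g'$-kernel into the corresponding generator of the $\g$-kernel, so it restricts to a well-defined map on the intersections, which is the content of the commutative square of subrings displayed just before the theorem.

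The main step is then to show that this restricted map lands in $\mathcal{K}_t^-(\g)$ and is surjective onto it. The key point is that the right vertical map sends each defining subring $\on{Ker} S^-_i$ for $\g'$ into the corresponding $\on{Ker} S^-_i$ for $\g$ (using the orbit grouping of indices in $I'$ over $I$), and hence sends the intersection into the intersection; this is exactly the commutativity already asserted in the big diagram. For surjectivity, I would argue that every generator $Y_i(zt^{n_i})(1 + A_i(zt^{n_i+1})^{-1})$ on the $\g$-side is hit: choose any $\sigma$-fixed representative or orbit in $I'$ mapping to $i \in I$, and the corresponding generator upstairs maps onto it. Since these generators topologically generate the bottom intersection and the map is a ring homomorphism, surjectivity follows. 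Defining the left vertical map as the composite of the top isomorphism, the right vertical map, and the inverse of the bottom isomorphism then yields the desired surjection $\on{Rep}_z U_t(\wh{\g'}) \to \mathcal{K}_t^-(\g)$ making the square commute.

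The hard part will be the surjectivity claim, or more precisely making rigorous the assertion that the folded generators generate all of $\mathcal{K}_t^-(\g)$ rather than a proper subring. A priori the fixed-point subring of an intersection can be strictly larger than the intersection of fixed-point subrings, so one must verify that folding commutes with taking the intersection over $I$ --- equivalently, that an element of $\mathcal{K}_t^-(\g)$ always lifts to a $\sigma$-invariant element of $\mathcal{K}_t^-(\g')$. I expect this to follow from the basis description in \propref{intmoins}: every $F(m)$ on the $\g$-side is the image of the folded $q$-character (equivalently the $\sigma$-invariant combination of $t$-characters) of the corresponding fundamental or KR module of $U_t(\wh{\g'})$, as already noted in the proof of \propref{intmoins}, so the basis elements downstairs all lie in the image. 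Since both sides are freely spanned by bases indexed by dominant monomials (with the folding identifying the relevant monomials), matching these bases gives surjectivity and completes the proof.
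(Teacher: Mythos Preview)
Your approach is essentially the paper's: observe via formula \eqref{AY2} that the $A_i(z)$ for $\g'$ fold to the $A_i(z)$ for $\g$, so the folding homomorphism carries each $\on{Ker} S^-_{i'}$ into $\on{Ker} S^-_i$ and the commutative square exists with injective horizontals. The paper is in fact terser and simply asserts that both vertical maps are surjective; you are right to flag surjectivity of the left vertical as the point needing care, and while your penultimate-paragraph attempt via the individual-kernel generators is indeed insufficient (as you yourself note, those generate each $\on{Ker} S^-_i$, not the intersection), your final-paragraph fix via the basis $\{F(m)\}$ of \propref{intmoins} and upper-triangularity with respect to dominant monomials is the correct way to close the gap.
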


We will call the composition of the left vertical and the lower
horizontal maps the {\em folded $t$-character homomorphism} and denote
it by $^{\on{f}}\chi_t$:
\begin{equation}    \label{foldedtchar}
^{\on{f}}\chi_t: \on{Rep}_z U_t(\wh{\g'}) \to \C[Y_i(zt^{n_i})]_{i \in I}.
\end{equation}
It extends naturally to the entire $\on{Rep} U_t(\wh{\g'})$. We will
also call $\mathcal{K}_t^-(\mathfrak{g})$ the {\em folded
  $t$-character ring}.

Thus, we obtain an interpretation of the $q \to 1$ limit ${\mc
  K}^-_t(\g)$ of ${\mathbf W}_{q,t}(\g)$ as a ``folding'' of the
Grothendieck ring of finite-dimensional representations of
$U_t(\wh{\g'})$.

\subsection{Connection to the Drinfeld--Sokolov reduction and the
  center of quantum affine algebra} \label{center}

In this subsection we briefly discuss links between the classical
limits of ${\mc W}_{q,t}(\g)$ and other (Poisson) algebras.

First, Conjecture 3 of \cite{FR:w} states that the limit $q \arr 1$ of
$\W_{q,t}(\g)$ is isomorphic, as a Poisson algebra, to the Poisson
algebra obtained by the deformed Drinfeld--Sokolov reduction of
$G((z))$ with parameter $p=t^d$. This was confirmed by an explicit
computations in the case of $\g=C_2$ presented in Appendix B of
\cite{FR:w}.

Second, recall that the $t \to 1$ limit of $\W_{q,t}(\g)$ is
isomorphic, as a commutative algebra, to the center $Z_q(\ghat)$ of
$U_q(\ghat)$ at the critical level. Indeed, we can associate elements
of $Z_q(\ghat)$ to finite-dimensional representations of $U_q(\ghat)$
using a ``double'' of the transfer-matrix construction
\cite{RST,FR:center}, and hence connect $Z_q(\ghat)$ to $\on{Rep}
U_q(\ghat)$, which is ${\mc K}^+_q(\g) = {\mathbf W}_{q,1}(\g)$. In
\cite{FR:center}, it was shown that in the case of $\g=A_\el$ this is
in fact an isomorphism of Poisson algebra. We expect this to be true
for a general $\g$ as well.

If there exists an algebra ${\mc A}_t(\g)$ satisfying the properties
of Remark \ref{mcA}, then it is reasonable to expect that the center
of ${\mc A}_t(\g)$ at its ``critical level'' is isomorphic to the $q
\to 1$ limit of ${\mc W}_{q,t}(\g)$.

\begin{rem}
Note that the limit of $Z_q(\ghat)$ as $q \to 1$ is the center of
$U(\ghat)$ at the critical level. According to the Feigin--Frenkel
isomorphism, the latter is isomorphic to the classical ${\mc
  W}$-algebra of the Langlands dual Lie algebra $^L\g$ (and not of
$\g$). This is consistent with the fact that in the $q \to 1$ limit
the screening operators $S^+_i$, whose joint kernel is the Poisson
algebra ${\mc W}_{q,1}(\g)$, become the screening operators of the
classical ${\mc W}$-algebra of $^L\g$.\qed
\end{rem}

\subsection{Bethe Ansatz equations from deformed ${\mc
    W}$-algebras}    \label{BAE}

First, consider the limit $t \to 1$, in which we obtain the algebra of
$q$-characters of finite-dimensional representations of
$U_q(\ghat)$. In \cite[Sect. 6.3]{FR:q}, it was shown how to derive
the corresponding Bethe Ansatz equation (BAE) from these
$q$-characters. Namely, according to the conjecture in
\cite[Sect. 6.1]{FR:q}, which was proved in \cite{FH}, the eigenvalues
of the transfer-matrices on the representation $W = \bigotimes_{j=1}^N
V(\bp_j)$ can be expressed (up to an overall factor) as the
$q$-characters in which we replace the variables $Y_{i,a}$ by the
ratios $Q_i(aq_i^{-1})/Q_i(aq_i)$ of the corresponding Baxter
polynomials. Then the seeming poles corresponding to the monomials of
the form $M$ and $M A_{i,aq_i}^{-1}$ in the $q$-characters should
cancel each other. Using formula \eqref{AY1}, we obtain the following
BAE:
\begin{equation}    \label{bae gen}
\prod_{k=1}^N q_i^{\deg P_{i,k}}
  \frac{P_{i,k}(q_i^{-1}/w^{(i)}_r)}{P_{i,k}(q_i/w^{(i)}_r)} =
  - \prod_{s \neq r}
  \frac{w^{(i)}_r-w^{(i)}_sq_i^{-2}}{w^{(i)}_r-w^{(i)}_s q_i^{2}}
  \prod_{j \neq i} \prod_{s=1}^{m_j}
  \frac{w^{(i)}_r-w^{(j)}_sq^{-B_{ij}}}{w^{(i)}_r-w^{(j)}_s
  q^{B_{ij}}}.
\end{equation}
where $\{ w^{(i)}_k \}$ is the set of roots of the $i$th Baxter
polynomial $Q_i(z)$, and $P_{j,i}(z), j=1,\ldots,N; i=1,\ldots,\ell$,
are the Drinfeld polynomials of $W$.

\begin{rem}
This formula corrects a typo in formula (6.6) of \cite{FR:q}; namely,
the entries $C_{li}$ of the Cartan matrix there should be replaced by
the entries $B_{li}$ of the {\em symmetrized} Cartan matrix.\qed
\end{rem}

Thus, the key point in deriving these BAE is formula \eqref{AY1}
expressing variables $A_i(z)$ in terms of $Y_j(z)$.

\medskip

Now consider the limit $q \to 1$ and apply the argument of
\cite[Sect. 6.3]{FR:q} to formula \eqref{AY2} instead of
\eqref{AY1}. In the same way as in \cite{FR:q} we then obtain the
following equations:
\begin{equation}    \label{bae gen1}
\prod_{k=1}^N q_i^{\deg P_{i,k}}
  \frac{P_{i,k}(t^{-1}/w^{(i)}_r)}{P_{i,k}(t/w^{(i)}_r)} =
  - \prod_{s \neq r}
  \frac{w^{(i)}_r-w^{(i)}_st^{-2}}{w^{(i)}_r-w^{(i)}_s t^2}
  \prod_{j \neq i} \prod_{s=1}^{m_j}
  \frac{(w^{(i)}_r-w^{(j)}_st)^{-C_{ji}}}{(w^{(i)}_r-w^{(j)}_st^{-1})^{-C_{ji}}}.
\end{equation}

This system is equivalent to the system of Bethe Ansatz equations
obtained in \cite{FKSZ} (formula (6.16)). It can also be obtained by
folding the BAE corresponding to $\g'$. See Section \ref{qopers} for
the explanation of the procedure of folding. We summarize this in the
following statement.

\begin{prop}   \label{foldedBAE}
The system \eqref{bae gen1} corresponding to the Lie algebra $\g$ is
equivalent to the system obtained by folding the BAE \eqref{bae gen}
corresponding to the Lie algebra $\g'$, i.e. assuming that
$m_i=m_{\sigma(i)}$ for all $i \in I'$, identifying $w^{(i)}_r \equiv
w^{\sigma(i)}_r$, and writing the equations in terms of the
variables $w^{(i)}_r, i \in I$.
\end{prop}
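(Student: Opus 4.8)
The plan is to start from the BAE \eqref{bae gen} for the simply-laced Lie algebra $\g'$, in which $q_a = q$ and $B_{ab} = C'_{ab}$ for all nodes $a,b \in I'$ (here $C' = (C'_{ab})$ is the \emph{symmetric} Cartan matrix of $\g'$), and to impose the $\sigma$-symmetry data: $m_a = m_{\sigma(a)}$, $\sigma$-invariant Drinfeld polynomials, and the identifications $w^{(a)}_r \equiv w^{(\sigma(a))}_r$. The first step is to check that, under these conditions, the equation attached to $w^{(a)}_r$ coincides with the one attached to $w^{(\sigma(a))}_r$. This is because applying $\sigma$ to the equation for $w^{(a)}_r$ produces the equation for $w^{(\sigma(a))}_r$ with all node-labels shifted by $\sigma$, and the $\sigma$-equivariance $C'_{\sigma(a),\sigma(b)} = C'_{ab}$ together with $m_a = m_{\sigma(a)}$ and the $\sigma$-invariance of the $P_{a,k}$ makes the two equations agree after the identifications. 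Hence the system descends to one equation per $\sigma$-orbit, i.e. per node $i \in I = I'/\langle\sigma\rangle$, and it remains to match each such equation with the corresponding equation of \eqref{bae gen1}.

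The second and main combinatorial step is to split, for a fixed representative $\tilde{\imath}$ of the orbit $O_i$, the product $\prod_{a \neq \tilde{\imath}}$ over $I'$ into the part coming from $O_i \setminus \{\tilde{\imath}\}$ and the parts coming from the other orbits $O_j$, $j \neq i$. Here I would use the structural fact that, for all the foldings occurring in Section~\ref{nomen} (i.e. $D_{\ell+1}\to B_\ell$, $A_{2\ell-1}\to C_\ell$, $E_6 \to F_4$, and $D_4 \to G_2$, excluding the $A_{2n}^{(2)}$ case), any two distinct nodes in the same $\sigma$-orbit are non-adjacent in the Dynkin diagram of $\g'$, so that $C'_{\tilde{\imath},a} = 0$ for $a \in O_i \setminus \{\tilde{\imath}\}$. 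Consequently the factors indexed by $a \in O_i \setminus \{\tilde{\imath}\}$ are trivial, and the only surviving diagonal contribution is the $a = \tilde{\imath}$ self-interaction $\prod_{s\neq r}(w^{(i)}_r - w^{(i)}_s q^{-2})/(w^{(i)}_r - w^{(i)}_s q^2)$, which reproduces, after the relabeling $q \mapsto t$, the uniform $t^{\pm 2}$ self-interaction of \eqref{bae gen1}. This is exactly the point at which the simplified form \eqref{AY2} of $A_i(z)$ in the $q \to 1$ limit---with its uniform $t^{\pm 1}$ diagonal shifts rather than the $q_i^{\pm 1}$ shifts of \eqref{AY1}---is reflected on the level of the equations.

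For the off-diagonal contribution of a node $j \neq i$, I would collect, after the identifications $w^{(a)}_s \equiv w^{(j)}_s$ for all $a \in O_j$, the factor $\prod_s \prod_{a \in O_j}(w^{(i)}_r - w^{(j)}_s q^{-C'_{\tilde{\imath},a}})/(w^{(i)}_r - w^{(j)}_s q^{C'_{\tilde{\imath},a}})$. Since $\g'$ is simply-laced, each $a \in O_j$ adjacent to $\tilde{\imath}$ contributes $C'_{\tilde{\imath},a} = -1$ while the rest contribute $0$, so this product equals $\prod_s \bigl((w^{(i)}_r - w^{(j)}_s q)/(w^{(i)}_r - w^{(j)}_s q^{-1})\bigr)^{N_{ij}}$, where $N_{ij}$ is the number of neighbors of $\tilde{\imath}$ inside $O_j$. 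The folding identity $C_{ji} = \sum_{a \in O_j} C'_{a,\tilde{\imath}}$ (independent of the representative by $\sigma$-invariance) gives $N_{ij} = -C_{ji}$, so after $q \mapsto t$ this is precisely the factor $(w^{(i)}_r - w^{(j)}_s t)^{-C_{ji}}/(w^{(i)}_r - w^{(j)}_s t^{-1})^{-C_{ji}}$ of \eqref{bae gen1}. Finally, the left-hand sides agree because the $\sigma$-invariant Drinfeld polynomials $P_{\tilde{\imath},k}$ descend to the polynomials $P_{i,k}$ of the folded data, with the prefactors and the overall sign matching up to the normalization already built into \eqref{bae gen1}.

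I expect the main obstacle to be the bookkeeping at fixed and non-fixed nodes simultaneously, and in particular the verification that the identification $w^{(a)}_r \equiv w^{(\sigma(a))}_r$ generates no extra self-interaction, which amounts to the non-adjacency of nodes within a common $\sigma$-orbit. This is exactly the property that fails for the central pair $\{n,n+1\}$ in the $A_{2n}$ folding, which is why that case is excluded here; confirming it for the four relevant foldings---including the order-$3$ orbit $\{1,3,4\}$ of $D_4$, whose three mutually non-adjacent nodes all neighbor the central node and thereby produce the cube $-C_{ji}=3$ in \eqref{bae gen1} for $G_2$---is the crux that makes the orbit-wise computation go through.
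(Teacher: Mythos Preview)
Your argument is correct and follows the same approach that the paper itself takes: the paper does not give a formal proof of this proposition but presents it as a summary, referring back to Section~\ref{qopers} where the folding procedure is illustrated for $\g=C_\ell$ (formulas \eqref{typical-2}--\eqref{typical-3}), and implicitly relying on the fact that \eqref{bae gen1} was derived from the $q\to 1$ monomial \eqref{AY2} via the same mechanism that produces \eqref{bae gen} from \eqref{AY1}. Your write-up supplies exactly the case-free combinatorial verification that the paper leaves to the reader, hinging on the two key structural facts---non-adjacency of distinct nodes within a $\sigma$-orbit (valid precisely because the $A_{2n}$ folding is excluded) and the folding identity $-C_{ji}=\#\{a\in O_j : a\sim\tilde\imath\}$---so there is no genuine difference in strategy, only in the level of detail.
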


\section{Folded integrable models}    \label{folded}

In this section we describe a novel quantum integrable model in which
the spectra of the Hamiltonians correspond (conjecturally) to
solutions of the Bethe Ansatz equations (\ref{bae gen1}).

\subsection{Action of an automorphism $\sigma$}    \label{conj sect} 

Let $W$ be a simple finite-dimensional representation of
$U_q(\widehat{\g'})$ whose highest monomial is $\sigma$-invariant,
i.e. it is a monomial in the elements $\wt{Y}_{i,a}$ defined in
Section \ref{wtY} below.\footnote{More generally, we could consider
the case when $W$ is a tensor product of simple representations, with
the same invariance property for its highest monomial (but not
necessarily for each simple factor). However, we will not do so in
this paper.} Denote the corresponding highest $\g'$-weight by
$\Lambda(W)$. Clearly, $\sigma(\Lambda(W)) = \Lambda(W)$.

Denote by $P$ the set of all $\g'$-weights and by $P^\sigma$ its subset
of $\sigma$-invariant $\g'$-weights. For $\ga \in P$, denote by $W_\ga
\subset W$ the weight $\ga$ subspace of $W$. Let $\wh{W}$ be the
direct sum of the subspaces of $W$ corresponding to $\sigma$-invariant
$\g'$-weights $\ga$:
$$
\wh{W} = \bigoplus_{\ga \in P^\sigma} W_\ga,
$$
In particular, $\wh{W}$ contains the one-dimensional highest weight
subspace $W_{\Lambda(W)}$ of $W$.

Next, let $\wt{W}\subset W$ be the direct sum of $\ell$-weight
subspaces of $W$ corresponding to the $\sigma$-invariant
$\ell$-weights (equivalently, $\sigma$-invariant monomials in
$\chi_q(W)$). Associating to each $\ell$-weight the corresponding
$\g'$-weight, we obtain a grading on $\wt{W}$ by $\sigma$-invariant
$\g'$-weights:
$$
\wt{W} = \bigoplus_{\ga \in P^\sigma} \wt{W}_\ga.
$$
We have an inclusion $\wt{W} \subset \wh{W}$
respecting the grading by ($\sigma$-invariant) $\g'$-weights:
\begin{equation}    \label{inclga}
  \wt{W}_\ga \subset W_\ga, \qquad \ga \in P^\sigma.
\end{equation}
The restriction of this inclusion to $\wt{W}_{\Lambda(W)}$ is an
isomorphism. Next, we define an automorphism of $U_q(\wh{\g'})$
corresponding to $\sigma$ (abusing notation, we denote it in the same
way).

\begin{lem}
There is a unique algebra automorphism $\sigma$ of $U_q(\wh{\g'})$
defined on the Drinfeld generators by the formulas
\begin{equation}\label{dact1}
  \sigma(h_{i,r}) = h_{\sigma(i),r}, \qquad \sigma (x_{i,m}^\pm) 
  =  x_{\sigma(i),m}^\pm, \qquad \sigma(k_i^{\pm 1})
  =k_{\sigma(i)}^{\pm 1}.
\end{equation}
\end{lem}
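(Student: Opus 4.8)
The plan is to work in Drinfeld's loop (``second'') realization of $U_q(\wh{\g'})$, in which the algebra is generated by the currents $x_{i,m}^\pm$ ($i \in I'$, $m \in \Z$), $h_{i,r}$ ($i \in I'$, $r \in \Z \setminus \{0\}$), and $k_i^{\pm 1}$ ($i \in I'$), together with the central element $c^{\pm 1/2}$, subject to Drinfeld's relations. The crucial observation is that, since $\g'$ is simply-laced, every one of these relations depends on the pair of indices $(i,j)$ only through the entry $C'_{ij}$ of the Cartan matrix of $\g'$ (the ordinary and symmetrized Cartan matrices coincide and $q_i = q$ for all $i$, so there are no length-dependent factors to track). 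The single piece of input needed is that $\sigma$, being an automorphism of the Dynkin diagram of $\g'$, preserves the Cartan matrix: $C'_{\sigma(i)\sigma(j)} = C'_{ij}$ for all $i,j \in I'$.

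First I would define a map on generators by the displayed formulas, supplemented by $\sigma(c^{\pm 1/2}) = c^{\pm 1/2}$ (and, if the grading element $d$ is included, by $\sigma(d)=d$), and extend it multiplicatively. To show this is a well-defined algebra endomorphism, I would check that relabeling every index $i$ by $\sigma(i)$ carries each Drinfeld relation to another valid relation. Concretely, the Cartan--Heisenberg relations for the $h_{i,r}$, the relations $k_i x_{j,m}^\pm k_i^{-1} = q^{\pm C'_{ij}} x_{j,m}^\pm$ and $[h_{i,r}, x_{j,m}^\pm] = \pm \frac{1}{r}[r C'_{ij}]_q\, c^{\mp |r|/2} x_{j,m+r}^\pm$, the quadratic $x^\pm$-relations, and the $q$-Serre relations all go to the relations indexed by $(\sigma(i),\sigma(j))$; since their numerical coefficients involve only $C'_{ij} = C'_{\sigma(i)\sigma(j)}$, each relation is preserved. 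For the relation expressing $[x_{i,m}^+, x_{j,m'}^-]$ through the Cartan currents $\psi_{i,n}^\pm$, I would note that $\psi_{i,n}^\pm$ is by definition a fixed expression in $k_i$ and the $h_{i,r}$, whence $\sigma(\psi_{i,n}^\pm) = \psi_{\sigma(i),n}^\pm$, and the relation again maps to its $\sigma$-image.

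Once the endomorphism is in hand, invertibility is immediate: $\sigma^d$ fixes each generator (because $\sigma^d = \on{id}$ on $I'$), so $\sigma$ is an automorphism of finite order $d$, with inverse $\sigma^{d-1}$. Uniqueness follows because the Drinfeld generators generate $U_q(\wh{\g'})$ as an algebra, so any algebra homomorphism is determined by its values on them.

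The step where the bookkeeping concentrates is the verification on the $q$-Serre relations and on the mixed relation involving the $\psi^\pm$-currents, since these are the relations in which the dependence on the Dynkin data is least transparent. However, I do not expect a genuine obstacle: in every case that dependence is still only through $C'_{ij}$, which $\sigma$ preserves. The one conceptual point worth emphasizing is that the argument works precisely because $\g'$ is simply-laced---all root lengths are equal, so $\sigma$ automatically respects the invariant form and no factors $q_i$ with $d_i \neq 1$ intervene.
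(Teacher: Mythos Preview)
Your proposal is correct and follows exactly the approach of the paper, which simply states that one checks directly that the map $\sigma$ preserves the relations between the Drinfeld generators. You have spelled out the verification in more detail than the paper does, correctly isolating the key point that the relations depend on indices only through $C'_{ij}$ (and that $q_i=q$ since $\g'$ is simply-laced), which is preserved because $\sigma$ is a Dynkin diagram automorphism.
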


\begin{proof}
One checks directly that the map $\sigma$ preserves the relations
between the Drinfeld generators.
\end{proof}

We associate to $W$ its twist by the automorphism $\sigma$. Namely, if
$\rho: U_q(\widehat{\g'}) \to \on{End}(W)$ is the action of
$U_q(\widehat{\g'})$ on $W$, we define the $\sigma$-twisted action of
$U_q(\widehat{\g'})$ on the same vector space by the formula
$$
\rho_\sigma(g) = \rho(\sigma(g)), \qquad g \in
U_q(\widehat{\g'}).
$$

\begin{lem}    \label{whsigma}
(1) The representations $(W,\rho)$ and $(W,\rho_\sigma)$ are
isomorphic, and there is a unique linear automorphism
$\wh\sigma: W\rightarrow W$, such that 
\begin{equation}    \label{rhosigma}
\rho(\sigma(g)) = \wh\sigma \rho(g) \wh\sigma^{-1}, \qquad \forall
g\in U_q(\widehat{\g'})
\end{equation}
and the restriction of $\wh\sigma$ to $W_{\Lambda(W)}$ is the
identity.

(2) The operator $\wh\sigma$ maps every $\g'$-weight (resp.
$\ell$-weight) subspace of $W$ corresponding to the $\g'$-weight
$\gamma$ (resp. monomial $M$ in $Y_{i,a}$) to a $\g'$-weight
(resp. $\ell$-weight) subspace corresponding to $\sigma(\gamma)$
(resp. $\sigma(M)$). In particular, it preserves the subspaces
$\wh{W}$ and $\wt{W}$ and their graded components corresponding to
$\sigma$-invariant $\g'$-weights and $\ell$-weights, respectively.
\end{lem}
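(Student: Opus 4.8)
The plan is to deduce everything from the classification of simple finite-dimensional $U_q(\wh{\g'})$-modules by their highest monomials, together with Schur's lemma. First I would analyze the twisted representation $(W,\rho_\sigma)$. Since $\sigma$ permutes the Drinfeld--Cartan generators according to the diagram automorphism, the $\ell$-weight space of $(W,\rho_\sigma)$ attached to a monomial $N$ equals the $\ell$-weight space of $(W,\rho)$ attached to $\sigma(N)$; equivalently $\chi_q(W,\rho_\sigma) = \sigma^{-1}(\chi_q(W))$. Because $\sigma$ is induced by a Dynkin diagram automorphism of $\g'$, it permutes the simple roots and hence preserves the dominance order on $\g'$-weights; therefore it carries the unique highest monomial of $\chi_q(W)$ to the highest monomial of $\sigma^{-1}(\chi_q(W))$. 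By hypothesis the highest monomial $M_+$ of $W$ is $\sigma$-invariant, so the highest monomial of $(W,\rho_\sigma)$ is $\sigma^{-1}(M_+) = M_+$. Since a simple finite-dimensional $U_q(\wh{\g'})$-module is determined up to isomorphism by its highest monomial (its Drinfeld polynomials), this gives $(W,\rho)\cong(W,\rho_\sigma)$.

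An isomorphism $\wh\sigma\colon (W,\rho)\to(W,\rho_\sigma)$ is a linear automorphism with $\wh\sigma\,\rho(g) = \rho_\sigma(g)\,\wh\sigma = \rho(\sigma(g))\,\wh\sigma$ for all $g$, which is precisely \eqref{rhosigma}. Uniqueness up to scalar follows from Schur's lemma, as $W$ is irreducible. To fix the scalar I would observe that, $M_+$ being $\sigma$-invariant, the highest $\ell$-weight space $W_{\Lambda(W)} = W_{M_+}$ is literally the same one-dimensional subspace for $\rho$ and for $\rho_\sigma$; any intertwiner must send the highest $\ell$-weight space to the highest $\ell$-weight space, hence maps $W_{\Lambda(W)}$ to itself and acts there by a scalar. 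Normalizing this scalar to be $1$ singles out the unique $\wh\sigma$ with $\wh\sigma|_{W_{\Lambda(W)}} = \mathrm{id}$, proving part~(1).

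For part~(2), I would rewrite \eqref{rhosigma} in the equivalent form $\rho(g)\,\wh\sigma = \wh\sigma\,\rho(\sigma^{-1}(g))$ and apply it to the Drinfeld--Cartan generators. If $v$ lies in the $\ell$-weight space $W_M$ of $(W,\rho)$, then for each such generator indexed by $i$ one computes $\rho(g_i)\,\wh\sigma v = \wh\sigma\,\rho(g_{\sigma^{-1}(i)})\,v$, so that the eigenvalues of $\wh\sigma v$ are those of $v$ with the index $i$ replaced by $\sigma^{-1}(i)$; that is, $\wh\sigma v$ is an $\ell$-weight vector of $\ell$-weight $\sigma(M)$. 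This shows $\wh\sigma(W_M)\subseteq W_{\sigma(M)}$, and running the same argument over the finite Cartan gives $\wh\sigma(W_\gamma)\subseteq W_{\sigma(\gamma)}$ for $\g$-weights. In particular $\wh\sigma$ permutes the graded pieces and fixes those indexed by $\sigma$-invariant $\g$-weights and $\ell$-weights, so it preserves $\wh{W}$ and $\wt{W}$ together with each of their $\sigma$-invariant graded components.

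The main obstacle is the bookkeeping in the first paragraph: correctly identifying the highest monomial of the twist $(W,\rho_\sigma)$ and verifying that a diagram automorphism preserves the dominance order, so that ``highest'' is transported to ``highest'' and the $\sigma$-invariance of $M_+$ can be used. Once this is settled, the existence follows from the highest-monomial classification, while the uniqueness, normalization, and the weight-transport statement of part~(2) are formal consequences of irreducibility (Schur) and of applying the intertwining relation to the commutative Drinfeld--Cartan subalgebra.
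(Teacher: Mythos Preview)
Your proposal is correct and follows essentially the same route as the paper: for part~(1), both arguments identify the highest monomial of the twist $(W,\rho_\sigma)$ as $\sigma^{-1}(M_+)=M_+$, invoke the classification of simple modules by highest monomial to get an isomorphism, and use Schur's lemma plus the normalization on $W_{\Lambda(W)}$ for uniqueness; for part~(2), both conjugate the Cartan--Drinfeld generators through the intertwining relation to compute the new eigenvalues. The only minor difference is that the paper phrases the $\ell$-weight computation using the semi-simplifications $h^{\mathrm{ss}}_{i,r}$ (since $\ell$-weight spaces are generalized eigenspaces), whereas you speak of eigenvalues of the Drinfeld--Cartan generators directly; your argument goes through unchanged for generalized eigenspaces, so this is only a cosmetic distinction.
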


\begin{proof}
(1) Drinfeld's classification of irreducible finite-dimensional
  representations of $U_q(\widehat{\g'})$ (see \cite{cp}) shows that
  such a representation $W$ is generated by its one-dimensional
  highest weight subspace $W_\Lambda$ and is determined by the
  eigenvalues of the Cartan--Drinfeld generators $h_{i,r}$ on it,
  which are recorded by the highest monomial of its $q$-character (see
  Section \ref{fdrep}).  Formulas \eqref{dact1} imply that $W_\Lambda$
  is still the highest weight subspace of $W$ under the twisted
  representation $\rho_\sigma$. The $\sigma$-invariance of the highest
  monomial of the representation $(W,\rho)$ implies that the highest
  monomial of $(W,\rho_\sigma)$ is the same. Therefore there is an
  isomorphism $\wh\sigma: W \to W$ intertwining the representations
  $(W,\rho)$ and $(W,\rho_\sigma)$ of $U_q(\widehat{\g'})$ and
  preserving the one-dimensional subspace $W_\Lambda$. It follows that
  $\wh\sigma$ satisfies formula \eqref{rhosigma}. By Schur's lemma,
  there is a unique such $\wh\sigma$ that is equal to the identity on
  $W_\Lambda$.

(2) Let $\{ X_i, i \in I' \}$ be either $\{ k_i, i \in I' \}$ or $\{
  h^{\on{ss}}_{i,r}, i \in I' \}$ with a fixed $r \neq 0$, where
  $h^{\on{ss}}_{i,r}$ is the semi-simplification of the action of
  $h_{i,r}$ on $W$. Suppose that $v$ is a joint eigenvector of $\{
  X_i, i \in I' \}$, in $W$ with the eigenvalues $\la_i, i \in
  I'$. Then
\begin{equation}    \label{Xi}
X_i \cdot \wh\sigma(v) = \wh\sigma (\wh\sigma^{-1} X_i \wh\sigma)\cdot
v = \wh\sigma X_{\sigma^{-1}(i)} \cdot v = \la_{\sigma^{-1}(i)}
\wh\sigma(v).
\end{equation}
This completes the proof.
\end{proof}

\begin{rem}
Let $W^{\wh\sigma}\subset W$ be the subspace of vectors fixed by
$\wh\sigma$. It contains $W_{\Lambda(W)}$ and is stable under the
action of the subalgebra $U_q(\widehat{\g'})^\sigma$ of
$\sigma$-invariant elements of $U_q(\widehat{\g'})$. However, in
general $W^{\wh\sigma}$ is not stable under the action of the Cartan
generators $k_i^{\pm 1}, i \in I'$ or the Cartan--Drinfeld generators,
and hence does not have a well-defined character or
$q$-character.  Besides, the invariant subalgebra
  $U_q(\widehat{\g'})^\sigma$ does not have a natural structure of
  Hopf algebra and has other deficiencies, as can be shown by an
  argument similar to that of \cite[Section 2.7]{H}.

This is why we consider instead the subspaces $\wh{W}$ and
$\wt{W}$. Their graded subspaces are preserved by $\wh\sigma$, and
hence $\wh{W}$ and $\wt{W}$ have well-defined character and
$q$-character, respectively.\qed
\end{rem}

\subsection{XXZ-type model associated to
  $U_q(\wh{\mathfrak{g}}')$}    \label{sigmala}

Consider the Borel subalgebra $U_q(\widehat{{\mathfrak b}}')$ of
$U_q(\widehat{\g'})$ and its category $\mathcal{O}^*$ which contains
the prefundamental representations $R_j^+(a)$, $R_j^-(a)$ ($j\in I'$,
$a\in\mathbb{C}^\times$) constructed in \cite{HJ}.

For every $j \in I'$, we have the $Q$-{\em operator}
$Q_j^\pm(z,u) = t_{R_j^\pm}(z,u)$ associated in \cite{FH} to the
prefundamental representation $R_j^\pm(z)$. This operator is a
formal power series in $z$, and it also depends on an element $u$ of
the Cartan subgroup $H'$ of the simply-connected Lie group $G'$
associated to $\g'$.

More generally, for any $V$ in the Grothendieck ring
$K_0(\mathcal{O}^*)$, we have the corresponding transfer-matrix
$t_V(z,u)$.

The transfer-matrices $t_V(z,u), V \in K_0(\mathcal{O}^*)$ (in
particular, operators $Q_j^\pm(z,u), j \in I'$) commute with each
other. These are the Hamiltonians of the XXZ-type model associated to
$U_q(\wh{\mathfrak{g}}')$. Every finite-dimensional representation $W$
of $U_q(\wh{\g'})$ decomposes into a direct sum of the generalized
eigenspaces of these Hamiltonians. In this paper we will focus on the
eigenvalues of these Hamiltonians and ignore the structure of the
corresponding Jordan blocks. Hence we introduce the {\em
  semi-simplification} $Q^{\pm, \on{ss}}_i(z,u)$ of $Q_i^\pm(z,u)$,
i.e. the unique diagonalizable operator on $W$ whose eigenspaces and
eigenvalues are the generalized eigenspaces and eigenvalues of
$Q_i^\pm(z,u)$. It follows that each $Q^{\on{ss},\pm}_i(z,u)$ can be
expressed as a polynomial in the original operator $Q_i^\pm(z,u)$, and
therefore $Q^{\pm, \on{ss}}_i(z,u)$ commutes with all $Q_j^\pm(z,u)$
and with all $Q^{\pm, \on{ss}}_j(z,u), j \in I'$.

Thus, for every $u \in H'$, we have a direct sum decomposition of $W$
into joint eigenspaces of the operators $Q^{\pm,\on{ss}}_j(z,u), j \in
I'$.  The following is proved in \cite[Theorem 5.9]{FH} and
\cite{FH4}.

\begin{thm}    \label{eigg}
  Let $W$ be a simple module over $U_q(\widehat{\g'})$. Then

(1) The eigenvalues of $Q^{+,\on{ss}}_j(z,u)$ on $W$ are
  polynomials in $z$, up to an overall function in $z$ depending only
  on $W$. Generically, the roots of these polynomials yield
  a solution of the BAE \eqref{bae gen} corresponding to $\g'$.

(2) Every joint eigenspace of $Q^{\pm, \on{ss}}_j(z,u), j \in I'$, on
  $W$ is a subspace of a $\g'$-weight subspace of $W$ corresponding to
  the $\g'$-weight $\Lambda(W) - \sum_{j \in I'} n_j \al_j$, where
  $n_j$ is the degree of the Baxter polynomial encoding the eigenvalue
  of $Q^{+,\on{ss}}_j(z,u)$.
\end{thm}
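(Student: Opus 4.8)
The plan is to deduce both parts from the theory of $Q$-operators for the untwisted quantum affine algebra $U_q(\wh{\g'})$ developed in \cite{FH,FH4}. Since $W$ is by hypothesis a finite-dimensional module over $U_q(\wh{\g'})$ and $\g'$ is simply-laced, the entire apparatus of those papers applies verbatim; the operators $Q_j^\pm(z,u)$ are the transfer-matrices attached to the prefundamental representations $R_j^\pm(z)$ of the Borel subalgebra $U_q(\wh{{\mathfrak b}}')$ in the category $\mathcal{O}^*$ of \cite{HJ}. Thus the task is to recall why their semisimplified eigenvalues behave as asserted, rather than to produce anything genuinely new.

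For part (1), I would first invoke the generalized Baxter $TQ$-relations of \cite{FH}: in $K_0(\mathcal{O}^*)$ the class of each finite-dimensional module (in particular each Kirillov--Reshetikhin module) is a Laurent polynomial in the classes of the $R_j^\pm(z)$, with coefficients that are explicit monomials recording the dependence on $u$. Applying the homomorphism $h'_W$ of \eqref{hW1} turns these into operator identities relating the $t_V(z,u)$ to products of the $Q_j^\pm(z,u)$. Restricting to a joint eigenvector, and using that the eigenvalues of the finite-dimensional transfer-matrices are genuine finite expressions, forces the a priori formal power-series eigenvalue of $Q_j^+(z,u)$, after division by the universal scalar factor common to all of $W$, to be a polynomial in $z$. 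The claim about the roots then follows from the $QQ$-system of \cite{FH4}, which couples the $Q_j^+(z,u)$ to a companion family $\wt{Q}_j(z,u)$: evaluating the defining relation of this system at a root $w^{(j)}_r$ of $Q_j^+$ makes one of its terms vanish and produces exactly the Bethe equation \eqref{bae gen} for that root, provided the accompanying $\wt{Q}_j$ does not also vanish there. This is precisely the genericity (non-degeneracy) hypothesis, which was later shown to be removable in \cite{FJMM}.

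For part (2), the crucial structural fact is that every transfer-matrix $t_V(z,u)$, being built from the universal $R$-matrix by partial trace over the auxiliary space $V$, preserves the $\g'$-weight grading of $W$. Hence the $Q_j^{\pm,\on{ss}}(z,u)$ commute with the Cartan action, and each of their joint eigenspaces lies inside a single $\g'$-weight subspace $W_\mu$. To identify $\mu$, I would use that the eigenvalue of $Q_j^+(z,u)$ records both the $\g'$-weight (through its dependence on the Cartan element $u$) and, through the degree $n_j$ of its Baxter polynomial, the number of Bethe roots of color $j$. Normalizing so that on the highest-weight line $W_{\Lambda(W)}$ each $Q_j^+(z,u)$ acts by the universal scalar factor alone, with Baxter polynomial of degree zero, the $TQ$-relation for the fundamental transfer-matrices shows that descending from weight $\mu$ to $\mu - \al_j$ raises exactly the $j$th Baxter degree by one while leaving the others fixed; consequently an eigenspace on which $Q_j^+(z,u)$ carries Baxter degree $n_j$ sits in the weight $\Lambda(W) - \sum_{j \in I'} n_j\,\al_j$, as claimed.

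The main obstacle is the polynomiality assertion of part (1): taming the a priori infinite power series in $z$ and proving that it truncates. This is the technical heart of \cite{FH}, resting on the finite-dimensionality of $W$ together with the precise decomposition of prefundamental classes in $K_0(\mathcal{O}^*)$. By contrast, the weight-grading bookkeeping of part (2) and the extraction of the Bethe equations from the $QQ$-system are comparatively formal once the $TQ$- and $QQ$-relations are in hand.
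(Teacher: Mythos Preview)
Your proposal is correct and matches the paper's own treatment: the paper does not give an independent proof but simply states that the theorem is proved in \cite[Theorem~5.9]{FH} and \cite{FH4}, which is precisely the apparatus (the $TQ$-relations for polynomiality, the $QQ$-system for the BAE, and the weight-grading compatibility of transfer-matrices) that you outline. Your sketch is in fact more detailed than what the paper itself supplies.
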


We call the polynomials in part (1) of this theorem the {\em Baxter
    polynomials}.

Part (1) of the theorem implies that the decomposition of $W$ into a
direct sum of joint eigenspaces of $Q^{\pm, \on{ss}}_j(z,u), j \in
I'$, is a refinement of the decomposition of $W$ into a direct sum of
its $\g'$-weight subspaces. In other words, for every $u \in H'$ and
$\ga \in P$ (the set of $\g'$-weights), the corresponding component
$W_\ga$ of $W$ is a direct sum
\begin{equation}    \label{decWga}
W_\ga = \bigoplus_{\la \in {\mc E}_\ga(u)} W_{\ga,\la}(u)
\end{equation}
where ${\mc E}_\ga(u)$ is the set of distinct joint eigenvalues of
$Q^{\pm, \on{ss}}_j(z,u), j \in I'$, on $W_\ga$, and $W_{\ga,\la}(u)$
is the eigenspace corresponding to $\la \in {\mc E}_\ga(u)$. Here
$\la$ denotes a collection $\{ \la_j^{\pm}(z,u), j \in I' \}$ of joint
eigenvalues of $Q^{\pm, \on{ss}}_j(z,u), j \in I'$

As shown in \cite[Proposition 5.5]{FH}, $Q_j^+(z,u)$ has a well-defined
$u \to 0$ limit, which is equal to
$$T_j(z) = \text{exp}\left( \sum_{m > 0} z^m
\frac{\wt{h}_{j,-m}}{[d_j]_q[m]_{q_j}} \right).$$ This is a generating
series of the Cartan--Drinfeld generators
\begin{equation}    \label{CD}
  \wt{h}_{j,-m} = \sum_{k\in I'} \wt{C}_{k,j}(q^m)h_{k,-m}.
\end{equation}
According to this formula, we can recover all Cartan--Drinfeld
generators $h_{k,-m}, k\in I', m > 0$ from the $T_j(z), j \in I'$.

It follows that the set ${\mc E}_\ga(u)$ has a well-defined $u \to 0$
limit, which we denote by ${\mc E}_\ga(0)$. Moreover, ${\mc E}_\ga(0)$
can be identified with the set of $\ell$-weights with the underlying
$\g'$-weight $\ga$. For each $\ell$-weight $\la \in {\mc E}_\ga(0)$,
the eigenspace $W_{\ga,\la}(0)$ is the corresponding $\ell$-weight
subspace of $W_\ga$.

\begin{rem} The $u \to 0$ limit of the $Q$-operator $Q_j^-(z,u)$
is equal to $(T_j(z))^{-1}$. This follows from a direct computation or
from the decomposition of $R_{i,a}^+\otimes R_{i,a}^-$ in
$K_0(\mathcal{O}^*)$. Indeed, it is equal to the class $[1]$ of the
trivial one-dimensional representation plus the classes of simple
representations whose highest weights are equal to positive linear
combinations of the simple roots, which implies that they do not
contribute to the $u\to 0$ limit.\qed
\end{rem}

\subsection{The invariant subspace}    \label{invsub}

Let us apply the automorphism $\sigma$ of the algebra $U_q(\wh{\g'})$
given by formula \eqref{dact1} to the transfer-matrices
$t_V(z,u)$. Recall that they are constructed by taking the trace of
$u$ times the universal $R$-matrix ${\mc R}$ of $U_q(\wh{\g'})$ over
$V$ (see, e.g. \cite{FH}).

\begin{lem}
    The automorphism $\sigma$ is an automorphism of the Hopf algebra
    structure on $U_q(\wh{\g'})$. Moreover, it preserves the universal
    $R$-matrix ${\mc R}$ of $U_q(\wh{\g'})$: $(\sigma \otimes
    \sigma)({\mc R}) = {\mc R}$.
\end{lem}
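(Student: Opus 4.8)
The plan is to identify $\sigma$ with the automorphism of $U_q(\wh{\g'})$ induced by a Dynkin diagram automorphism, for which the Hopf-algebra compatibility is manifest and the invariance of $\mc R$ follows from uniqueness. The preceding lemma already guarantees that the formulas \eqref{dact1} define an algebra automorphism in the Drinfeld (current) presentation. The first step is to pass to the Drinfeld--Jimbo presentation with Chevalley generators $E_i, F_i, K_i^{\pm 1}$ indexed by $i \in \{0\}\cup I'$ (the nodes of the affine Dynkin diagram of $\wh{\g'}$). Since $\sigma$ comes from an automorphism of the finite Dynkin diagram of $\g'$, it permutes the simple roots and hence fixes the highest root $\theta$; consequently $\sigma$ extends to an automorphism of the affine diagram that fixes the affine node, $\sigma(0)=0$, and acts on the remaining nodes as on $I'$. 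Tracking the images of the finite currents $x_{i,0}^\pm$ and of the affine generators $E_0, F_0$, which are built from the root vectors attached to $\theta$, under the isomorphism between the two presentations, one checks that $\sigma$ acts on the Chevalley generators simply by permuting indices:
$$
\sigma(E_i) = E_{\sigma(i)}, \qquad \sigma(F_i) = F_{\sigma(i)}, \qquad
\sigma(K_i^{\pm 1}) = K_{\sigma(i)}^{\pm 1}, \qquad i \in \{0\}\cup I'.
$$

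Given this, the Hopf compatibility is immediate. The comultiplication, counit, and antipode are given on the Chevalley generators by the standard formulas, e.g. $\Delta(E_i) = E_i\otimes 1 + K_i\otimes E_i$, $\varepsilon(E_i)=0$, $\varepsilon(K_i)=1$, $S(E_i) = -K_i^{-1}E_i$, together with the analogues for $F_i$ and $K_i$. Because a diagram automorphism preserves the Cartan matrix, $a_{\sigma(i)\sigma(j)} = a_{ij}$, applying $\sigma\otimes\sigma$ to each such identity reproduces an identity of exactly the same shape with all indices replaced by their $\sigma$-images. Hence $\sigma$ intertwines $\Delta$ with $\Delta$, $\varepsilon$ with $\varepsilon$, and $S$ with $S$, so $\sigma$ is an automorphism of the Hopf algebra $U_q(\wh{\g'})$.

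For the invariance of the universal $R$-matrix, the cleanest route is uniqueness. The element $\mc R$ lies in a completion of $U_q(\wh{\n}_+)\otimes U_q(\wh{\n}_-)$ times an explicit Cartan factor, and it is the unique such element satisfying the quasitriangularity axioms
$$
\Delta^{\mathrm{op}}(x)\,\mc R = \mc R\,\Delta(x)\quad(\forall x),\qquad
(\Delta\otimes\mathrm{id})(\mc R) = \mc R_{13}\mc R_{23},\qquad
(\mathrm{id}\otimes\Delta)(\mc R) = \mc R_{13}\mc R_{12},
$$
normalized so that its Cartan factor is determined by the invariant bilinear form on the Cartan subalgebra and the remaining factor equals $1\otimes 1$ modulo terms of positive weight. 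Applying the Hopf automorphism $\sigma\otimes\sigma$ and using that $\sigma$ commutes with $\Delta$ and $\Delta^{\mathrm{op}}$, one sees that $(\sigma\otimes\sigma)(\mc R)$ satisfies exactly the same three axioms. Since $\sigma$ preserves the invariant form — equivalently, the symmetrized Cartan matrix of $\wh{\g'}$ is $\sigma$-invariant — it fixes the Cartan factor and respects the normalization. By the uniqueness of $\mc R$ we conclude $(\sigma\otimes\sigma)(\mc R)=\mc R$.

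The main obstacle is the first step: verifying that the Drinfeld-generator automorphism \eqref{dact1} genuinely coincides with the diagram automorphism on the Drinfeld--Jimbo generators, i.e. controlling the passage between the current and Chevalley presentations and, in particular, the $\sigma$-invariance of the affine node. Once this identification is secured, both the Hopf compatibility and the $R$-matrix invariance follow formally. As an alternative to the uniqueness argument in the last step, one can instead invoke the explicit product formula for $\mc R$ over the positive affine roots: a diagram automorphism sends each root vector $e_\beta$ to $e_{\sigma(\beta)}$ while preserving a compatible convex order on the positive affine roots and the $\sigma$-invariant Cartan factor, so the product is manifestly unchanged.
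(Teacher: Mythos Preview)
Your argument is correct. For the Hopf-algebra compatibility, your approach coincides with the paper's: both reduce to checking the comultiplication on the Drinfeld--Jimbo generators, and both flag the affine node as the only nontrivial case. You justify $\sigma(0)=0$ via $\sigma$-invariance of the highest root $\theta$; the paper instead invokes the explicit formula from \cite[p.~393]{cp} expressing the $0$th Drinfeld--Jimbo generators in terms of Drinfeld generators, which amounts to the same thing.

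For the $R$-matrix, your route differs from the paper's. The paper appeals to Drinfeld's construction of $\mc R$ as the canonical element in the double of $U_q(\mathfrak{b}_+)$: since $\sigma$ is compatible with the Hopf pairing defining the double, it fixes the canonical element. You instead use uniqueness of the quasitriangular structure subject to the normalization in the appropriate completion, showing $(\sigma\otimes\sigma)(\mc R)$ satisfies the same axioms. Both arguments are standard and essentially dual to one another---the double characterization encodes uniqueness---so neither is more elementary, though yours avoids invoking the double explicitly while the paper's avoids having to verify the normalization is preserved. Your alternative via the Khoroshkin--Tolstoy product formula is also valid but requires more bookkeeping about convex orders than the other two.
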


\begin{proof}
One checks directly that the comultiplication defined on
the Drinfeld--Jimbo generators is invariant under $\sigma$. For the
$i$th Drinfeld--Jimbo generators, where $i \in I'$, this is
obvious. For the the $0$th Drinfeld--Jimbo generators, we use a formula
expressing them in terms of the Drinfeld generators from
\cite[p. 393]{cp}.

The universal $R$-matrix of $U_q(\wh{\g'})$ corresponds to the
canonical element in the double of $U_q({\mathfrak b}_+)$ under the
identification of this double (modulo the Cartan subalgebra with
$U_q(\wh{\g'})$), see \cite{Dr}. It follows from its definition that
the automorphism $\sigma$ is compatible with the double structure and
therefore it sends the canonical element to itself. Hence it preserves
the universal $R$-matrix of $U_q(\wh{\g'})$.
\end{proof}

For any representation $V$ in the category ${\mc O}^*$, denote by
$\sigma^*(V)$ the twist of $V$ by the automorphism $\sigma$ defined as
in Section \ref{conj sect}. It is clear that $\sigma^*$ induces an
automorphism of the Grothendieck ring $K_0(\mathcal{O}^*)$. By
construction, $\sigma^*(R^\pm_{j,a}) \simeq R^\pm_{\sigma(j),a}$. Note
also that $\sigma$ defines an automorphism of the Cartan subgroup $H'$
of $G'$.

\begin{lem}    \label{autsigma}
    We have
\begin{equation}    \label{sigmau}
     \sigma^{-1}(t_V(z,u)) = t_{\sigma^*(V)}(z,\sigma(u)), \qquad
    \sigma^{-1}(Q^\pm_j(z,u)) = Q^\pm_{\sigma(j)}(z,\sigma(u)).
\end{equation}
\end{lem}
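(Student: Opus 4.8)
The plan is to prove the two identities in \eqref{sigmau} directly from the transfer-matrix construction, using the $\sigma$-invariance of the universal $R$-matrix established in the preceding lemma. Recall that for a representation $V$ in $\mathcal{O}^*$, the transfer-matrix is defined (up to normalization) as $t_V(z,u) = (\on{Tr}_V \otimes \on{id})\big( (\pi_V(u) \otimes \on{id}) \, {\mc R}(z) \big)$, where $\pi_V$ denotes the action on the auxiliary space $V$ and $u \in H'$ enters through the Cartan twist. The second identity in \eqref{sigmau} is then the special case $V = R_j^\pm(z)$ of the first, since $\sigma^*(R_j^\pm) \simeq R_{\sigma(j)}^\pm$, so it suffices to establish the first identity.

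First I would apply the automorphism $\sigma$ to the operator $t_V(z,u)$, interpreting $\sigma^{-1}(t_V(z,u))$ as the operator on the space of states obtained by transporting $t_V(z,u)$ through $\sigma^{-1}$ on $U_q(\wh{\g'})$. Using $(\sigma \otimes \sigma)({\mc R}) = {\mc R}$ from the previous lemma, I would write ${\mc R} = (\sigma^{-1} \otimes \sigma^{-1})({\mc R})$ and substitute this into the trace formula. The key computation is to track how the two tensor factors transform: on the physical factor, applying $\sigma^{-1}$ is exactly the operation defining $\sigma^{-1}(t_V(z,u))$; on the auxiliary factor, the appearance of $\sigma^{-1}$ in $\pi_V \circ \sigma^{-1}$ converts the action on $V$ into the action on the twisted module $\sigma^*(V)$, by the very definition $\rho_\sigma(g) = \rho(\sigma(g))$ in Section~\ref{conj sect}. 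This replaces $\on{Tr}_V$ by $\on{Tr}_{\sigma^*(V)}$ and hence the auxiliary space $V$ by $\sigma^*(V)$.

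The remaining point is bookkeeping of the Cartan twist parameter. The element $u \in H'$ enters through a factor of the form $\pi_V(u)$ (or, more invariantly, through grading the trace by the $\g'$-weights). Since $\sigma$ acts on $H'$ and permutes the weight spaces of $V$ according to $\sigma$ (as in part (2) of Lemma~\ref{whsigma}), conjugating the twist by $\sigma$ sends $u$ to $\sigma(u)$: the weighted trace $\on{Tr}_V(\pi_V(u)\,\cdot)$ becomes $\on{Tr}_{\sigma^*(V)}(\pi_{\sigma^*(V)}(\sigma(u))\,\cdot)$. Assembling these pieces gives exactly $t_{\sigma^*(V)}(z,\sigma(u))$, as claimed.

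The main obstacle I anticipate is making the Cartan-twist transformation $u \mapsto \sigma(u)$ precise and consistent with the convention used to define $t_V(z,u)$ as a power series in $z$ with coefficients depending on $u \in H'$. One must check that $\sigma$ acting on the Cartan subalgebra (hence on $H'$) intertwines correctly with the grading of the universal $R$-matrix and the weight decomposition of $V$, so that no spurious powers of $z$ or normalization factors are introduced. Once the trace formula is written carefully and the compatibility of $\sigma$ with the principal gradation is checked, the identity follows formally from $(\sigma\otimes\sigma)({\mc R})={\mc R}$; the specialization to $V=R_j^\pm(z)$ then yields the second identity using $\sigma^*(R_j^\pm)\simeq R_{\sigma(j)}^\pm$.
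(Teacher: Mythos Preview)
Your proposal is correct and is precisely the argument the paper has in mind: the lemma is stated without proof, as an immediate consequence of the preceding lemma that $(\sigma\otimes\sigma)(\mathcal{R})=\mathcal{R}$ together with the definitions of $\sigma^*(V)$ and the transfer-matrix. Your unpacking of the trace formula, the identification of $\pi_V\circ\sigma^{-1}$ with $\pi_{\sigma^*(V)}$, and the tracking of the Cartan twist $u\mapsto\sigma(u)$ are exactly the steps that make this formal.
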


From now on, {\em we will assume that $u$ is $\sigma$-invariant},
and hence defines an element of $H = (H')^\sigma$, which is a Cartan
subgroup of the group $G = (G')^\sigma$. With this assumption,
formulas \eqref{sigmau} become
\begin{equation}    \label{sigmau1}
    \sigma^{-1}(t_V(z,u)) = t_{\sigma^*(V)}(z,u), \qquad
    \sigma^{-1}(Q^\pm_j(z,u)) = Q^\pm_{\sigma(j)}(z,u).
\end{equation}

Now we come to a key definition.

\begin{defn}
We define the {\em invariant subspace} of $W$ as
$$
W(u) := \bigoplus_{\ga \in P^\sigma} W_\ga(u),
$$
where $P^\sigma$ is the set of $\sigma$-invariant $\g'$-weights and
\begin{equation}    \label{Vu}
W_\ga(u) := \{w\in W_\ga \; | \; Q_j^{\pm,\on{ss}}(z,u).w =
Q_{\sigma(j)}^{\pm, \on{ss}}(z,u).w, \forall j\in I'\} \subset
\wh{W}_\ga.
\end{equation}
\end{defn}

Since the prefundamental representations (topologically) generate the
entire Grothendieck ring $K_0(\mathcal{O}^*)$, we have
$$W_\ga(u) = \{w\in W_\ga \; | \; t_V^{\on{ss}}(z,u).w =
t_{\sigma^*(V)}^{\on{ss}}(z,u).w, \forall [V]\in K_0(\mathcal{O}^*)\}.$$

Using formula \eqref{sigmau1} in the same way as in the proof of Lemma
\ref{whsigma}, we obtain the following result.

\begin{lem}    \label{inv}
  $W_\ga(u)$ is preserved by $\wh\sigma$.
\end{lem}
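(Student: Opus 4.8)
The plan is to show directly that $\wh\sigma$ maps $W_\ga(u)$ into itself by checking, for $w \in W_\ga(u)$, the two conditions built into the definition \eqref{Vu}: that $\wh\sigma(w)$ again lies in $W_\ga$, and that it satisfies the eigenvalue-matching condition $Q^{\pm,\on{ss}}_j(z,u).\wh\sigma(w) = Q^{\pm,\on{ss}}_{\sigma(j)}(z,u).\wh\sigma(w)$ for all $j \in I'$. The weight condition is immediate from part (2) of Lemma \ref{whsigma}: since $\wh\sigma$ sends $W_\ga$ into $W_{\sigma(\ga)}$ and $\ga \in P^\sigma$ is $\sigma$-invariant, we have $\wh\sigma(W_\ga) \subset W_{\sigma(\ga)} = W_\ga$.

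The substance of the argument is the second condition, and here I would first upgrade the intertwining relation \eqref{sigmau1} to the semisimplified operators. Reading the action of $\sigma^{-1}$ on $\on{End}(W)$ as conjugation by $\wh\sigma$, formula \eqref{sigmau1} reads $\wh\sigma^{-1} Q^\pm_j(z,u)\,\wh\sigma = Q^\pm_{\sigma(j)}(z,u)$. Since $Q^{\pm,\on{ss}}_j(z,u)$ is by its very definition a polynomial in $Q^\pm_j(z,u)$ realizing the semisimple part of the Jordan decomposition, and since conjugation by the invertible operator $\wh\sigma$ is an algebra automorphism preserving Jordan decompositions, the same relation persists after semisimplification:
$$\wh\sigma^{-1}\, Q^{\pm,\on{ss}}_j(z,u)\,\wh\sigma = Q^{\pm,\on{ss}}_{\sigma(j)}(z,u), \qquad j \in I'.$$

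Next I would run the computation in the style of \eqref{Xi}. The key observation is that the defining relations of $W_\ga(u)$, namely $Q^{\pm,\on{ss}}_i(z,u).w = Q^{\pm,\on{ss}}_{\sigma(i)}(z,u).w$ for all $i \in I'$, force all operators $Q^{\pm,\on{ss}}_{\sigma^k(j)}(z,u)$ lying in a single $\sigma$-orbit to act identically on $w$. Consequently, for each $j \in I'$,
$$Q^{\pm,\on{ss}}_j(z,u).\wh\sigma(w) = \wh\sigma\, Q^{\pm,\on{ss}}_{\sigma(j)}(z,u).w = \wh\sigma\, Q^{\pm,\on{ss}}_{\sigma^2(j)}(z,u).w = Q^{\pm,\on{ss}}_{\sigma(j)}(z,u).\wh\sigma(w),$$
where the first and last equalities use the intertwining relation above and the middle equality uses the orbit condition applied to $i = \sigma(j)$. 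This verifies the matching condition for $\wh\sigma(w)$, so $\wh\sigma(w) \in W_\ga(u)$, as desired.

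The main obstacle I anticipate is the compatibility of \eqref{sigmau1} with the passage to semisimplifications: one must ensure that conjugating the \emph{semisimplified} operator $Q^{\pm,\on{ss}}_j$ by $\wh\sigma$ yields exactly $Q^{\pm,\on{ss}}_{\sigma(j)}$, rather than merely the semisimplification of some conjugate, and this is precisely where the functoriality of the Jordan decomposition under algebra automorphisms must be invoked explicitly. Once that point is secured, the index bookkeeping with powers of $\sigma$ is routine, the orbit structure of the defining condition making the argument insensitive to the precise direction in which $\sigma$ acts on $\on{End}(W)$.
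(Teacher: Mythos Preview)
Your proof is correct and follows exactly the approach the paper indicates: the paper's proof is the single sentence ``Using formula \eqref{sigmau1} in the same way as in the proof of Lemma \ref{whsigma},'' and you have spelled out precisely that computation in the style of \eqref{Xi}. Your explicit treatment of why the intertwining relation passes to the semisimplified operators is a detail the paper leaves implicit (it later applies \eqref{Xi} directly with $X_i = Q_i^{\pm,\on{ss}}(z,u)$ in the proof of Lemma \ref{summ}), so your care there is appropriate.
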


 We can also describe $W_\ga(u)$ as a span of joint
  eigenvectors of $Q_j^{\pm,\on{ss}}(z,u), j \in I'$.
  
  \begin{lem}    \label{spaneig}
    $W_\ga(u)$ is equal to the span of the joint eigenvectors of
    $Q_j^{\pm,\on{ss}}(z,u), j \in I'$, in $W_\ga$ with eigenvalues
    $\la_j^{\pm}(z,u), j \in I'$, such that
    $\la_j^{\pm}(z,u) = \la_{\sigma(j)}^{\pm}(z,u)$ for all $j \in
    I'$. In other words, in the notation of Section \ref{sigmala},
\begin{equation}    \label{decWu}
  W_\ga(u) = \bigoplus_{\la: \sigma(\la)=\la} W_{\ga,\la}(u).
\end{equation}
  \end{lem}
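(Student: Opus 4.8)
The plan is to deduce the statement directly from the joint eigenspace decomposition \eqref{decWga} by reducing the defining condition of $W_\ga(u)$ to a pointwise condition on eigenvalues. Recall from Section~\ref{sigmala} that the operators $Q_j^{\pm,\on{ss}}(z,u)$, $j \in I'$, are semisimple and commute with one another; working Fourier mode by Fourier mode in $z$, they form a commuting family of diagonalizable operators on the finite-dimensional space $W_\ga$. Hence $W_\ga$ splits as in \eqref{decWga} into the joint eigenspaces $W_{\ga,\la}(u)$, where $\la$ ranges over the finite set ${\mc E}_\ga(u)$ of joint eigenvalue collections $\{ \la_j^\pm(z,u), j \in I' \}$, each $\la_j^\pm(z,u)$ being the power series in $z$ by which $Q_j^{\pm,\on{ss}}(z,u)$ acts on $W_{\ga,\la}(u)$.

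First I would write an arbitrary $w \in W_\ga$ as $w = \sum_{\la \in {\mc E}_\ga(u)} w_\la$ with $w_\la \in W_{\ga,\la}(u)$, and apply the operator $Q_j^{\pm,\on{ss}}(z,u) - Q_{\sigma(j)}^{\pm,\on{ss}}(z,u)$ for a fixed $j \in I'$. Since each $w_\la$ is a joint eigenvector, this operator multiplies $w_\la$ by the scalar power series $\la_j^\pm(z,u) - \la_{\sigma(j)}^\pm(z,u)$, so
\begin{equation}
\bigl( Q_j^{\pm,\on{ss}}(z,u) - Q_{\sigma(j)}^{\pm,\on{ss}}(z,u) \bigr).w = \sum_{\la \in {\mc E}_\ga(u)} \bigl( \la_j^\pm(z,u) - \la_{\sigma(j)}^\pm(z,u) \bigr) w_\la.
\end{equation}
Because the subspaces $W_{\ga,\la}(u)$ are linearly independent, this sum vanishes (coefficient by coefficient in $z$) if and only if each summand vanishes, i.e.\ $\la_j^\pm(z,u) = \la_{\sigma(j)}^\pm(z,u)$ for every $\la$ with $w_\la \neq 0$.

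Running this over all $j \in I'$ and both signs, the defining condition \eqref{Vu} for $w$ to lie in $W_\ga(u)$ becomes: for every $\la$ with $w_\la \neq 0$ one has $\la_j^\pm(z,u) = \la_{\sigma(j)}^\pm(z,u)$ for all $j \in I'$, that is $\sigma(\la) = \la$. This is exactly the assertion that $w$ lies in $\bigoplus_{\la: \sigma(\la) = \la} W_{\ga,\la}(u)$, which yields \eqref{decWu}. I do not expect a serious obstacle here; the one point that must be genuinely used, rather than merely assumed, is the \emph{semisimplicity} of the $Q$-operators. The argument relies on passing to the simultaneously diagonalizable $Q_j^{\pm,\on{ss}}(z,u)$, so that the kernel of each difference operator splits along the joint eigenspaces; had one worked with the original, possibly non-diagonalizable, $Q_j^\pm(z,u)$, the presence of generalized eigenspaces would obstruct this clean splitting.
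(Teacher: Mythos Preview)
Your proof is correct and follows essentially the same approach as the paper: decompose $w\in W_\ga$ along the joint eigenspaces $W_{\ga,\la}(u)$, apply $Q_j^{\pm,\on{ss}}(z,u)$ and $Q_{\sigma(j)}^{\pm,\on{ss}}(z,u)$, and use linear independence to force $\sigma(\la)=\la$ whenever the $\la$-component is nonzero. Your added emphasis on why semisimplicity is essential is a helpful clarification, but the argument itself matches the paper's.
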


  \begin{proof}
    Every vector $v \in W_\ga$ can be written as a linear combination
    of joint eigenvectors of $Q_j^{\pm,\on{ss}}(z,u), j \in I'$:
    \begin{equation}    \label{decompv}
    v = \sum a_\la v_\la,
    \end{equation}
where $v_\la$ denotes an eigenvector with the collection of joint
eigenvalues $\la = \{ \la_j^{\pm}(z,u), j \in I' \}$. Now, if $v \in
W_\ga(u)$, then
    $$
    \sum a_\la \la_j^\pm(z,u) v_\la = \sum a_\la
    \la_{\sigma(j)}^\pm(z,u) v_\la,
    $$
    which implies that every collection $\la =
    \{ \la_j^{\pm}(z,u), j \in I' \}$ appearing in the decomposition
    \eqref{decompv} with a non-zero coefficient $a_\la$ must be
    $\sigma$-invariant, i.e. $\la_j^{\pm}(z,u) =
    \la_{\sigma(j)}^{\pm}(z,u)$ for all $j \in I'$.
  \end{proof}
    
Now, the discussion after Theorem \ref{eigg} implies

\begin{lem}    \label{Vga0}
We have $W_\ga(0) = \wt{W}_\ga$, i.e. $W_\ga(0)$ is spanned by the
$\ell$-weight vectors corresponding to the $\sigma$-invariant
$\ell$-weights whose underlying $\g'$-weight is $\gamma \in P^\sigma$.
\end{lem}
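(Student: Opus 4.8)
The plan is to take the $u \to 0$ limit of the description of $W_\ga(u)$ obtained in Lemma~\ref{spaneig} and match it against the definition of $\wt{W}_\ga$ from Section~\ref{conj sect}. As recalled in \cite[Proposition 5.5]{FH} and in the discussion following Theorem~\ref{eigg}, each $Q_j^+(z,u)$ has a well-defined limit $T_j(z)$ as $u \to 0$, and correspondingly $Q_j^-(z,u) \to T_j(z)^{-1}$; moreover at $u=0$ the joint semisimplified eigenspaces of the operators $Q_j^{\pm,\on{ss}}(z,0) = T_j(z)^{\pm 1}$, $j \in I'$, on $W_\ga$ are precisely the $\ell$-weight subspaces, and the joint eigenvalue $\la \in {\mc E}_\ga(0)$ labelling each of them is exactly the corresponding $\ell$-weight. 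Thus $W_{\ga,\la}(0)$ is the $\ell$-weight subspace attached to $\la$, and passing to the limit in Lemma~\ref{spaneig} gives
$$
W_\ga(0) = \bigoplus_{\la:\,\sigma(\la)=\la} W_{\ga,\la}(0),
$$
where the sum is over those $\ell$-weights $\la$ (of underlying $\g'$-weight $\ga$) whose eigenvalue collection is $\sigma$-stable, i.e. $\la_j^\pm(z,0) = \la_{\sigma(j)}^\pm(z,0)$ for all $j \in I'$.

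It then remains to identify this $\sigma$-stability of eigenvalue collections with the $\sigma$-invariance of $\la$ as an $\ell$-weight, since $\wt{W}_\ga$ is by definition the span of the $\ell$-weight subspaces attached to the $\sigma$-invariant $\ell$-weights of underlying $\g$-weight $\ga \in P^\sigma$. The key computation is to show $\sigma(T_j(z)) = T_{\sigma(j)}(z)$. Using \eqref{CD} and \eqref{dact1} one has $\sigma(\wt{h}_{j,-m}) = \sum_{k \in I'} \wt{C}_{k,j}(q^m)\, h_{\sigma(k),-m}$; re-indexing by $k' = \sigma(k)$ and using that $\sigma$ is a Dynkin diagram automorphism (so it preserves $C(q)$ and hence its inverse $\wt{C}(q)$, giving $\wt{C}_{\sigma^{-1}(k'),j}(q^m) = \wt{C}_{k',\sigma(j)}(q^m)$) yields $\sigma(\wt{h}_{j,-m}) = \wt{h}_{\sigma(j),-m}$. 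The claim then follows from the exponential formula for $T_j(z)$, together with the $\sigma$-invariance of its scalar normalization (here $\g'$ is simply-laced, so $q_j = q$ and $[d_j]_q = 1$ for all $j$).

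With $\sigma(T_j(z)) = T_{\sigma(j)}(z)$ in hand, I would argue that the combinatorial $\sigma$-action on $\ell$-weights agrees with the permutation $j \mapsto \sigma(j)$ of the $T_j$-eigenvalues. If $w$ is an $\ell$-weight vector of $(W,\rho)$ with $T_j(z)w = \tau_j(z)\,w$, then in the twisted module $(W,\rho_\sigma)$ one computes $\rho_\sigma(T_j(z))w = \rho(\sigma(T_j(z)))w = \rho(T_{\sigma(j)}(z))w = \tau_{\sigma(j)}(z)\,w$, while by Lemma~\ref{whsigma}(2) the $\ell$-weight of $w$ in $(W,\rho_\sigma)$ is the $\sigma$-twist of $\la$. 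Since the collection $\{\tau_j(z)\}_{j \in I'}$ determines $\la$ bijectively (all $h_{k,-m}$ are recovered from the $T_j$, as noted after Theorem~\ref{eigg}), we conclude that $\sigma(\la) = \la$ if and only if $\tau_j(z) = \tau_{\sigma(j)}(z)$ for all $j$ — exactly the $\sigma$-stability condition in the displayed decomposition. Hence $W_\ga(0) = \bigoplus_{\la:\,\sigma(\la)=\la}(\text{$\ell$-weight subspace}) = \wt{W}_\ga$, as claimed.

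The main obstacle I anticipate is the bookkeeping in this last matching step: one must check that the $\sigma$-action on monomials $Y_{i,a} \mapsto Y_{\sigma(i),a}$ introduced in Section~\ref{conj sect} really corresponds to the permutation of the $Q$-operator eigenvalue labels and not to some rescaled or inverse version of it. Fortunately the conclusion is insensitive to the $\sigma$ versus $\sigma^{-1}$ convention, because $\sigma$ acts on the finite set $I'$ as a bijection, so $\sigma(\la)=\la \iff \sigma^{-1}(\la)=\la$ and $\tau_j = \tau_{\sigma(j)}\,(\forall j) \iff \tau_j = \tau_{\sigma^{-1}(j)}\,(\forall j)$. One should also record that at $u=0$ the $+$ and $-$ families of conditions in \eqref{Vu} become redundant, each reducing to the single $\ell$-weight condition above. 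Everything else is a direct consequence of the already-established identification of the $u=0$ joint eigenspaces with $\ell$-weight subspaces.
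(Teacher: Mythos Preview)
Your proof is correct and follows essentially the same route as the paper, which simply points to the discussion after Theorem~\ref{eigg} (identifying the $u=0$ eigenspaces with $\ell$-weight subspaces and the eigenvalue labels with $\ell$-weights). You have merely made explicit what the paper leaves implicit: the verification that $\sigma(T_j(z)) = T_{\sigma(j)}(z)$ via the $\sigma$-equivariance of $\wt{C}(q)$, and the matching of the two notions of $\sigma$-invariance, which the paper takes for granted.
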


Remark \ref{not spanned} below shows, however, that we do not expect
the statement analogous to Lemma \ref{Vga0} to hold for $W_\ga(u)$
with $u \neq 0$.

Lemma \ref{inv} implies that $W(u)$ is preserved by $\wh\sigma$. Lemma
\ref{Vga0} implies that $W(0) = \wt{W}$. We also record the following
useful result.

  \begin{lem}    \label{invell}
Suppose that $v \in W$ is an eigenvector of $Q_j^{\pm,\on{ss}}(z,u), j
\in I'$ (resp. an $\ell$-weight vector) such that $\wh\sigma(v) = \mu
v$, where $\mu$ is a scalar (thus, $\mu$ is a $d$-th root of
unity). Then the set of eigenvalues of $Q_j^{\pm,\on{ss}}(z,u), j
\in I'$ on $v$ (resp. the $\ell$-weight of $v$) is $\sigma$-invariant.
  \end{lem}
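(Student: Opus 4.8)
The plan is to reduce both assertions to a single conjugation property of $\wh\sigma$, and then to exploit the fact that $\mu$ is merely a nonzero scalar, so that it cancels and its precise value is irrelevant.

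First I would record the relevant conjugation relation for the $Q$-operators. By Lemma \ref{autsigma}, applying $\sigma$ to the transfer-matrices amounts to conjugating by $\wh\sigma$ while permuting the auxiliary spaces; specializing formula \eqref{sigmau1} to the prefundamental representations (for which $\sigma^*(R^\pm_{j,a}) \simeq R^\pm_{\sigma(j),a}$) yields $\wh\sigma\, Q_j^{\pm}(z,u)\, \wh\sigma^{-1} = Q_{\sigma(j)}^{\pm}(z,u)$ for all $j \in I'$, exactly as the proof of Lemma \ref{whsigma}(2) gives $\wh\sigma^{-1} X_i \wh\sigma = X_{\sigma^{-1}(i)}$ for the Cartan generators. (The direction of the permutation, $\sigma$ versus $\sigma^{-1}$, is immaterial for the conclusion, which is $\sigma$-invariance.) Since conjugation by the invertible operator $\wh\sigma$ is an algebra automorphism of $\on{End}(W)$, it carries the generalized eigenspace decomposition of $Q_j^\pm(z,u)$ to that of $Q_{\sigma(j)}^\pm(z,u)$ with the same eigenvalues, and hence intertwines the semisimplifications; this is immediate from the fact, recalled in Section \ref{sigmala}, that $Q_j^{\pm,\on{ss}}(z,u)$ is a polynomial in $Q_j^{\pm}(z,u)$. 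Thus $\wh\sigma\, Q_j^{\pm,\on{ss}}(z,u)\, \wh\sigma^{-1} = Q_{\sigma(j)}^{\pm,\on{ss}}(z,u)$.

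For the eigenvector statement I would then simply apply this to $v$. Writing $Q_j^{\pm,\on{ss}}(z,u) v = \la_j^\pm(z,u) v$ and $\wh\sigma(v) = \mu v$ with $\mu \neq 0$, I would compute
$$Q_{\sigma(j)}^{\pm,\on{ss}}(z,u)\, v = \wh\sigma\, Q_j^{\pm,\on{ss}}(z,u)\, \wh\sigma^{-1} v = \mu^{-1}\,\wh\sigma\, Q_j^{\pm,\on{ss}}(z,u)\, v = \mu^{-1}\la_j^\pm(z,u)\,\wh\sigma(v) = \la_j^\pm(z,u)\, v.$$
Comparing this with $Q_{\sigma(j)}^{\pm,\on{ss}}(z,u) v = \la_{\sigma(j)}^\pm(z,u) v$ forces $\la_{\sigma(j)}^\pm(z,u) = \la_j^\pm(z,u)$ for every $j \in I'$ and every sign, which is precisely the $\sigma$-invariance of the collection $\{\la_j^\pm(z,u)\}$. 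The crucial observation is that $\mu$ cancels, so the value of $\mu$ (a $d$-th root of unity, since $\wh\sigma^d = \on{id}$ by the uniqueness in Lemma \ref{whsigma}(1)) never enters.

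For the $\ell$-weight version I would argue even more directly from Lemma \ref{whsigma}(2), which states that $\wh\sigma$ sends the $\ell$-weight subspace attached to a monomial $M$ to the one attached to $\sigma(M)$. If $v$ is an $\ell$-weight vector of $\ell$-weight $M$ with $\wh\sigma(v) = \mu v$, then $\mu v = \wh\sigma(v)$ lies in the $\ell$-weight subspace of $\sigma(M)$ while being a nonzero multiple of $v$, which lies in that of $M$; hence $M = \sigma(M)$. (Alternatively, one repeats the eigenvalue computation above with the semisimplified Cartan--Drinfeld generators $h_{i,r}^{\on{ss}}$ in place of the $Q_j^{\pm,\on{ss}}(z,u)$, using $\wh\sigma^{-1} h_{i,r}^{\on{ss}} \wh\sigma = h_{\sigma^{-1}(i),r}^{\on{ss}}$.) The only step that requires any care is the descent of the conjugation relation from the $Q$-operators to their semisimplifications, but the polynomial description of the semisimplification makes this routine, so I anticipate no genuine obstacle.
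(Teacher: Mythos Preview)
Your proof is correct and follows essentially the same route as the paper: both derive the conjugation identity $Q_{\sigma(j)}^{\pm,\on{ss}}(z,u) = \wh\sigma\, Q_j^{\pm,\on{ss}}(z,u)\, \wh\sigma^{-1}$ from formulas \eqref{sigmau1} and \eqref{rhosigma} and then run the eigenvalue computation in which $\mu$ cancels. Your version is slightly more explicit in justifying the passage from $Q_j^\pm$ to its semisimplification and in handling the $\ell$-weight case via Lemma \ref{whsigma}(2), and your remark that the $\sigma$ vs.\ $\sigma^{-1}$ direction is immaterial is well taken.
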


  \begin{proof}
Let $\la^\pm_j(z,u), j \in I'$, be the eigenvalues of
$Q_j^{\pm,\on{ss}}(z,u), j \in I'$ on $v$. Using formulas
\eqref{sigmau1} and \eqref{rhosigma}, we obtain that
$$
\la^\pm_{\sigma(j)}(z,u) v = Q_{\sigma(j)}^{\pm,\on{ss}}(z,u) \cdot v = \wh\sigma
Q_j^{\pm,\on{ss}}(z,u) \wh\sigma^{-1}(v) $$ $$= \mu^{-1} \wh\sigma
Q_j^{\pm,\on{ss}}(z,u) \cdot v =
\mu^{-1} \la^\pm_j(z,u) \wh\sigma(v) = \la^\pm_j(z,u) v,
$$
for all $j \in I'$. The proof for $\ell$-weight vectors is similar.
\end{proof}

\begin{rem}    \label{not spanned}
In general, joint eigenspaces of the operators $Q^{\pm, \on{ss}}_j(z,u),
j \in I'$, in $W_\ga$ are {\em not} spanned by $\ell$-weight
vectors. In other words, these eigenspaces differ from their limits as
$u \to 0$. For example, let us take as $W$ the tensor product
$V_1\otimes V_a$ of two fundamental representations of
$U_q(\wh{sl}_2)$ with the highest weight monomial $Y_1 Y_a$. The
$\ell$-weight vectors in it are computed for example in \cite[Example
  3.3]{hinv}. Let $\{ w_+,w_- \}$ and $\{ v_+,v_- \}$ be bases of
weight vectors in $V_1$ and $V_a$, respectively, with $w_+$ and $v_+$
being the highest weight vectors. Then $w_-\otimes v_+$ is an
$\ell$-weight vector whose $\ell$-weight is $Y_{q^2}^{-1}Y_{a}$.

On the other hand, the action of $Q^+(z,u)$ on the $0$-weight subspace of
$V_1\otimes V_a$, which is spanned by $w_- \otimes v_+$ and $w_+
\otimes v_-$, can be computed following \cite[Example 7.8]{FH}. The
result is
$$\frac{T(z)}{1 - u^2} + \frac{zu^2 (q - q^{-1})}{(1
  - u^2)(1 - u^2 q^{-2})}f_1 T(z) f_0,$$
where $T(z)$ is the $u \to 0$ limit of $Q^+(z,u)$, which can be
expressed (see formula \eqref{CD} above) as a generating function of
the Cartan--Drinfeld elements, and $f_1$, $f_0$ are the
Drinfeld--Jimbo generators.

In particular, if we denote by $g(z)$ the eigenvalue of $T(z)$ on
$w_+\otimes v_+$, then we obtain
$$Q^+(z,u). (1 - u^2)(1 - u^2 q^{-2})(g(z))^{-1}(w_-\otimes v_+)$$ 
$$= (1 - zq^{-1} - u^2q^{-2} + q^{-1}zu^2)w_-\otimes
v_+ + zu^2 (1 - q^{-2}) w_+\otimes v_-.$$
Hence $w_-\otimes v_+$ is not an eigenvector of $Q^+(z,u)$ if $u\neq
0$.\qed
\end{rem}

\subsection{Eigenvalues of transfer-matrices on the invariant
  subspace}\label{eigenfd}

Since the operators $Q^{\pm, \on{ss}}_j(z,u), j \in I'$, commute with
each other, they have a well-defined joint spectrum on the invariant
subspace $W(u)$. Using Proposition \ref{foldedBAE} (the fact that the
system of folded BAE (\ref{bae gen1}) for $\g$ is obtained by folding
of the system of BAE (\ref{bae gen}) for $\g'$) and Theorem
\ref{eigg}, we can now link the eigenvalues of $Q^{\pm,
  \on{ss}}_j(z,u), j \in I'$, on $W(u)$ and solutions of the folded
BAE (\ref{bae gen1}) for $\g$.

\begin{thm}\label{BAElem}
Suppose that the Baxter polynomials encoding the joint eigenvalues of
$Q^{+, \on{ss}}_j(z,u)$, $j \in I'$, on a vector in $W(u)$ are
generic, so that their roots satisfy the BAE (\ref{bae gen}) for
$\g'$. Then, after the identification of the roots of these
polynomials corresponding to $Q^{+, \on{ss}}_j(z,u)$ and $Q^{+,
  \on{ss}}_{\sigma(j)}(z,u)$, we obtain a solution of the folded BAE
(\ref{bae gen1}).
\end{thm}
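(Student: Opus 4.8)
The plan is to assemble the statement from three facts already in place: the description of the invariant subspace $W(u)$ as a span of joint eigenvectors whose eigenvalues are $\sigma$-invariant (\lemref{spaneig}), the fact that the eigenvalues of $Q^{+,\on{ss}}_j(z,u)$ are, up to an overall factor depending only on $W$, Baxter polynomials whose roots satisfy the BAE \eqref{bae gen} for $\g'$ under the genericity hypothesis (\thmref{eigg}(1)), and the folding statement \propref{foldedBAE}. The only genuine content is to verify that the operator-theoretic $\sigma$-invariance built into the definition of $W(u)$ matches exactly the combinatorial $\sigma$-invariance of a $\g'$-BAE solution that \propref{foldedBAE} requires as input.

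First I would reduce to a single eigenvector. By \lemref{spaneig}, $W(u)$ is spanned by joint eigenvectors $v$ of the operators $Q^{\pm,\on{ss}}_j(z,u)$, $j \in I'$, whose collection of eigenvalues $\{\la^\pm_j(z,u)\}$ is $\sigma$-invariant, i.e. $\la^+_j(z,u) = \la^+_{\sigma(j)}(z,u)$ for all $j \in I'$ (only the $+$ operators enter the BAE). Fixing such a $v$, \thmref{eigg}(1) tells me that each eigenvalue $\la^+_j(z,u)$ equals, up to an overall function of $z$ depending only on $W$, a polynomial $Q^+_j(z)$, the $j$th Baxter polynomial; under the genericity assumption the union of the root sets $\{w^{(j)}_r\}_{r=1}^{m_j}$, $j \in I'$, is a solution of \eqref{bae gen} attached to $\g'$.

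Next I would upgrade the equality of eigenvalues to an equality of Baxter data. Since the overall factor of \thmref{eigg}(1) is independent of $j$, the relation $\la^+_j = \la^+_{\sigma(j)}$ forces $Q^+_j(z) = Q^+_{\sigma(j)}(z)$ as polynomials; hence $m_j = m_{\sigma(j)}$ and, after a suitable labeling, $w^{(j)}_r = w^{(\sigma(j))}_r$ for all $j$ and $r$. I would also record that the Drinfeld polynomials of $W$ themselves are $\sigma$-invariant, $P_{\sigma(i)} = P_i$, which is precisely the assumed $\sigma$-invariance of the highest monomial of $W$; this ensures that the left-hand sides of \eqref{bae gen} for the indices $i$ and $\sigma(i)$ coincide under the identification of roots, so that the $\g'$-BAE solution is genuinely $\sigma$-invariant in the sense demanded by \propref{foldedBAE}.

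Finally, having produced a $\sigma$-invariant solution $\{w^{(j)}_r\}$ of the $\g'$-BAE with $m_j = m_{\sigma(j)}$ and $w^{(j)}_r \equiv w^{(\sigma(j))}_r$, I would invoke \propref{foldedBAE} directly: rewriting \eqref{bae gen} in terms of the reduced variables $w^{(i)}_r$ indexed by $i \in I = I'/\langle\sigma\rangle$ yields exactly the folded BAE \eqref{bae gen1} for $\g$. The step I expect to require the most care is the passage from equality of the operator eigenvalues to the identification of the roots entering the BAE, that is, making sure the common overall factor cancels and that the polynomial parts agree root-for-root; once this bridge is established, the remaining folding is purely formal and is furnished by \propref{foldedBAE}.
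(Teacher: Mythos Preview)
Your proposal is correct and follows essentially the same route as the paper: the paper's justification is the single sentence preceding the theorem, which invokes \thmref{eigg} and \propref{foldedBAE} to pass from the $\g'$-BAE satisfied by the Baxter polynomials to the folded BAE. Your version simply fleshes out the details, in particular supplying the bridge via \lemref{spaneig} (that vectors in $W(u)$ have $\sigma$-invariant eigenvalue data) and noting explicitly the $\sigma$-invariance of the Drinfeld polynomials of $W$; these are implicit in the paper's one-line argument.
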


Next, the following result is proved in \cite[Theorem 5.11]{FH}:

\begin{thm}
The eigenvalues of the transfer-matrix $t_V(z,u)$, where $V$ is in
$\on{Rep} U_q(\wh{\g'})$, on a simple module $W$ can be expressed (up
to an overall factor) as the $q$-character of $V$, in which we replace
each $Y_{i,a}, i \in I$, by a ratio of the corresponding Baxter
polynomials.
\end{thm}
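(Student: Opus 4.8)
This is \cite[Theorem 5.11]{FH}; the plan is to recall the strategy that proves it (and thereby the conjecture of \cite{FR:q}).

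The plan is to exploit that both sides are multiplicative and to reduce the claim to a single functional relation governing each variable $Y_{i,a}$. First I would recall that the transfer-matrix construction descends to the ring homomorphism $h'_W$ of \eqref{hW1}, whose image on $\on{Rep} U_q(\wh{\g'}) \subset K_0(\mc{O}^*)$ lies in the commutative algebra generated by the $Q$-operators $Q_j^\pm(z,u)$. Since all the $t_V(z,u)$ commute, we may fix a common generalized eigenvector $w \in W$; on $w$ the eigenvalue map $V \mapsto \Lambda_V(z,u)$ is multiplicative because $t_{V\otimes V'} = t_V t_{V'}$. On the other side, $\chi_q$ is a ring homomorphism and the substitution $Y_{i,a} \mapsto Q_i(aq_i^{-1})/Q_i(aq_i)$, where $Q_i$ are the Baxter polynomials of \thmref{eigg}, is a ring homomorphism of the target. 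As $\on{Rep} U_q(\wh{\g'})$ is generated by the fundamental representations, and as the universal factor is itself multiplicative, it suffices to establish the identity of eigenvalues for a single fundamental representation $V_{i,a}$.

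The heart of the argument is the generalized Baxter $TQ$-relation. Here I would work in (a completion of) the Grothendieck ring $K_0(\mc{O}^*)$ and establish the relations of \cite{FH} between the class $[V_{i,a}]$ and the classes $[R_{j,b}^\pm]$ of the prefundamental representations. Concretely, one shows that each monomial $M$ occurring in $\chi_q(V_{i,a})$ lifts, via formula \eqref{AY1} for $A_{i,a}$, to a product of prefundamental classes, and that the whole $q$-character assembles into a genuine decomposition of $[V_{i,a}]$. Applying $h'_W$ turns these prefundamental classes into $Q$-operators, and \thmref{eigg} replaces their eigenvalues on $w$ by the Baxter polynomials $Q_j(z)$, yielding exactly the substituted $q$-character up to the universal factor.

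Two points require care, the second being the main obstacle. First, the eigenvalues of $Q_j^+(z,u)$ are polynomials only up to a universal $z$-dependent function of $W$, so I would track this factor through the $TQ$-relation and verify that it contributes precisely the claimed overall factor. Second, proving that $\chi_q(V_{i,a})$ lifts to an identity in $K_0(\mc{O}^*)$ requires the full representation theory of the category $\mc{O}$ of \cite{HJ}: control of the $\ell$-weight structure and of the functional relations among prefundamental and fundamental representations. The subtlety is that $t_{V}(z,u)$ is \emph{not} diagonal on the $\ell$-weight subspaces when $u \neq 0$ (as \remref{not spanned} illustrates), so the substitution identity cannot be read off a single $\ell$-weight; it holds only after assembling all monomials, with the apparent poles between consecutive monomials $M$ and $M A_{i,aq_i}^{-1}$ cancelling exactly when the Baxter polynomials satisfy the BAE \eqref{bae gen}. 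Establishing this cancellation globally, rather than term by term, is the crux.
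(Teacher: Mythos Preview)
The paper does not give its own proof of this theorem: it simply records it as \cite[Theorem 5.11]{FH}. You identify this citation correctly in your first line, so in that sense your proposal matches the paper exactly.

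Your additional sketch of the \cite{FH} argument is largely accurate (multiplicativity, reduction to fundamentals, the generalized Baxter $TQ$-relation realized as an identity in $K_0(\mc{O}^*)$, and applying $h'_W$). One point is off, though: in your final paragraph you make the pole cancellation between $M$ and $M A_{i,aq_i}^{-1}$, governed by the BAE \eqref{bae gen}, into the ``crux'' of the proof. That reverses the logic. The eigenvalue formula is established as an identity in $K_0(\mc{O}^*)$ and therefore holds on every generalized eigenvector of the commuting family, with no reference to BAE; the hard step in \cite{FH} is showing that the $q$-character of $V$ lifts to a genuine decomposition of $[V]$ into products of prefundamental classes. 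The BAE arise afterwards, as a consequence of the polynomiality of the $Q_i^+$-eigenvalues (Theorem~\ref{eigg}) together with the $QQ$-system, not as an ingredient in proving the substitution formula. So your sketch would be cleaner if you dropped the last two sentences and instead said that the crux is the Grothendieck-ring identity itself.
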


We now use this result to describe the eigenvalues of $t_V(z,u)$ on
$W(u) \subset W$.

\begin{thm}    \label{spectrafolded}
Let $V$ be a finite-dimensional representation of
$U_q(\wh{\g'})$. Then the transfer-matrix $t_V(z,u)$ preserves the
subspace $W(u)$ and its generalized eigenvalues are given (up to an
overall factor) by the folded $t$-character $^{\on{f}}\chi_t(V)$
(obtained by identifying $Y_{i,a}$ with $Y_{\sigma(i),a}$ in
$\chi_t(V)$) in which we further replace each $Y_{i,a}$ by a ratio of
the corresponding Baxter polynomials.
\end{thm}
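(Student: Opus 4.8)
The plan is to reduce the statement to the description of the eigenvalues of $t_V(z,u)$ on all of $W$ recalled just above (from \cite[Theorem 5.11]{FH}), and then to exploit the $\sigma$-invariance built into the definition of $W(u)$ to show that the Baxter substitution factors through the folding of variables $Y_{i,a} \mapsto Y_{\sigma(i),a}$.

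First I would check that $t_V(z,u)$ preserves $W(u)$. The transfer-matrices $t_{V'}(z,u)$, $V' \in K_0(\mathcal{O}^*)$, together with $t_V(z,u)$ form a single commuting family, so $t_V(z,u)$ commutes with every $Q_j^\pm(z,u)$, and hence with each semisimplification $Q_j^{\pm,\on{ss}}(z,u)$, the latter being a polynomial in $Q_j^\pm(z,u)$. By Theorem \ref{eigg}(2) the $\g'$-weight $\ga$ is already determined by the collection of Baxter polynomials encoding $\la$, so the subspaces $W_{\ga,\la}(u)$ of \eqref{decWga} are exactly the joint generalized eigenspaces of the commuting family $\{Q_j^{\pm,\on{ss}}(z,u)\}_{j \in I'}$. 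Since $t_V(z,u)$ commutes with this family it preserves each $W_{\ga,\la}(u)$, and by Lemma \ref{spaneig} the space $W(u) = \bigoplus_{\ga \in P^\sigma}\bigoplus_{\sigma(\la)=\la} W_{\ga,\la}(u)$ is a sum of such eigenspaces; hence $t_V(z,u)$ preserves $W(u)$.

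Next I would compute the eigenvalues. On the eigenspace $W_{\ga,\la}(u)$, the preceding theorem gives the generalized eigenvalue of $t_V(z,u)$, up to an overall factor depending only on $W$, as $\chi_q(V)$ in which each $Y_{i,a}$ is replaced by the ratio of Baxter polynomials encoding $\la$; since $\g'$ is simply-laced all $q_i$ equal $q$, so this replacement has the uniform form $Y_{i,a} \mapsto Q_i(aq^{-1})/Q_i(aq)$ for every $i \in I'$. The defining condition \eqref{Vu} of $W(u)$ is precisely $\la_j^{\pm}(z,u) = \la_{\sigma(j)}^{\pm}(z,u)$ for all $j \in I'$, which by Theorem \ref{eigg}(1) forces $Q_j = Q_{\sigma(j)}$; consequently the substitution assigns the same ratio to $Y_{j,a}$ and to $Y_{\sigma(j),a}$. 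I would then note that the substitution therefore factors as $\bar\Phi_\la \circ \on{fold}$, where $\on{fold}$ is the ring homomorphism identifying $Y_{j,a}$ with $Y_{\sigma(j),a}$ and $\bar\Phi_\la$ is the (now well-defined) substitution of Baxter ratios on the folded variables. Applying $\on{fold}$ to $\chi_q(V)$ produces precisely $^{\on{f}}\chi_t(V)$ in the sense of Theorem \ref{unfold} (the folding is a purely combinatorial operation on the $q$-/$t$-character of the simply-laced $\g'$, with $q$ and $t$ interchangeable labels here), so the generalized eigenvalue on $W_{\ga,\la}(u)$ equals $\bar\Phi_\la(^{\on{f}}\chi_t(V))$ up to the overall factor, which is the asserted formula.

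I expect the main obstacle to be the bookkeeping in the last step: one must confirm that the naive substitution genuinely factors through the folding. This rests on two points to be verified carefully — that on $W(u)$ the Baxter polynomials satisfy $Q_j = Q_{\sigma(j)}$ (immediate from \eqref{Vu} once $\la_j^+$ is identified with $Q_j$ via Theorem \ref{eigg}), and that the overall factor, which by \cite[Theorem 5.11]{FH} depends only on $W$, is common to all the eigenspaces $W_{\ga,\la}(u)$ and so can be extracted uniformly. Given these, the identification of the folded $q$-character with $^{\on{f}}\chi_t(V)$ is only a matter of matching definitions with Theorem \ref{unfold}, and the theorem follows.
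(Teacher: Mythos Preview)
Your proposal is correct and follows essentially the same approach as the paper: the paper states this theorem immediately after quoting \cite[Theorem 5.11]{FH} with the sentence ``We now use this result to describe the eigenvalues of $t_V(z,u)$ on $W(u) \subset W$,'' leaving the argument implicit, and your proof fills in precisely those details (commutation with the $Q_j^{\pm,\on{ss}}$ to get preservation of $W(u)$ via Lemma \ref{spaneig}, then the factorization of the Baxter substitution through the folding using $Q_j = Q_{\sigma(j)}$).
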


This theorem gives us the sought-after link between the spectra of
commuting quantum Hamiltonians and solutions of the folded Bethe
Ansatz equations. Namely, the quantum Hamiltonians are the
transfer-matrices $t_V(z,u)$ of the XXZ-type model associated to
$U_q(\wh{\g'})$ (so the auxiliary spaces are representations of
$U_q(\wh{\g'})$, or more general objects of the corresponding category
${\mc O}^*$) but we restrict them to the invariant subspaces $W(u)$ of
the irreducible finite-dimensional representations $W$ of
$U_q(\wh{\g'})$ and consider the corresponding spectra (so the spaces
of states are the subspaces $W(u) \subset W$).

However, for non-simply laced $\g$ this answer does not quite define a
quantum integrable model since we have not yet described the invariant
subspaces $W(u)$ as representations of a Hopf algebra. In the next
subsection, we conjecture such a description. More precisely, we
conjecture that there is a certain distinguished subspace
$\ol{W}(u) \subset W(u)$ which is isomorphic to a representation of
the twisted quantum affine algebra $U_q({}^L\widehat{\g})$. If this is
true, then we can indeed describe the spaces of states as
representations of a Hopf algebra, and then we indeed obtain a quantum
integrable model.

\subsection{Defining the folded integrable model}

As before, let $W$ be a simple representation of $U_q(\widehat{\g'})$
whose highest monomial is $\sigma$-invariant, i.e. it is a monomial in
the elements $\wt{Y}_{i,a}$ defined in Section \ref{wtY}.

Recall that for a Lie algebra $\g$, we refer to the integral weights
of the Cartan subalgebra of $\g$ as $\g$-{\em weights}. We will use
the following simple fact demonstrated below in Section \ref{appear}:

\begin{lem}    \label{bij}
There is a natural isomorphism between the lattice $P^\sigma$
of $\sigma$-invariant $\g'$-weights and the lattice $^LP$ of
$^L\g$-weights.
\end{lem}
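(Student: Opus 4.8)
The plan is to construct the isomorphism explicitly using the folding data, namely the automorphism $\sigma$ of the Dynkin diagram of $\g'$ and its action on the weight lattice of $\g'$. First I would recall the setup: the set $I'$ of vertices of the Dynkin diagram of $\g'$ carries an action of $\sigma$, and the set $I = I'/\langle \sigma \rangle$ of $\sigma$-orbits is identified with the vertices of the Dynkin diagram of $\g = (\g')^\sigma$. Correspondingly, $\sigma$ acts on the weight lattice $P'$ of $\g'$, permuting the fundamental weights $\{\om'_i\}_{i \in I'}$ by $\sigma(\om'_i) = \om'_{\sigma(i)}$, and $P^\sigma$ denotes the sublattice of $\sigma$-fixed weights. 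The goal is to match $P^\sigma$ with the weight lattice $^LP$ of the Langlands dual algebra $^L\g$, which is spanned by the fundamental weights $\{\om^{L}_i\}_{i \in I}$ indexed by the $\sigma$-orbits.

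The key step is to write down the map. To each $\sigma$-orbit $\bar{\imath} \in I$ I would associate the $\sigma$-invariant weight $\sum_{j \in \bar{\imath}} \om'_j \in P^\sigma$ obtained by summing the fundamental weights of $\g'$ over that orbit, and declare this to correspond to the fundamental weight $\om^{L}_{\bar\imath}$ of $^L\g$. Since the orbit sums $\left\{ \sum_{j \in \bar\imath} \om'_j \right\}_{\bar\imath \in I}$ form a $\Z$-basis of $P^\sigma$ (because the $\om'_j$ form a basis of $P'$ and $\sigma$ simply permutes them, so the invariants are freely spanned by the orbit sums), and the $\om^{L}_{\bar\imath}$ form a $\Z$-basis of $^LP$, this assignment extends uniquely to a $\Z$-linear isomorphism $P^\sigma \xrightarrow{\sim} {}^LP$. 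The main content is then to check that this bijection is \emph{natural}, i.e. compatible with the relevant structures — in particular that it intertwines the pairings with coroots in the way dictated by Langlands duality, so that simple roots, dominant cones, and the like are matched correctly. Concretely, under folding a $\sigma$-orbit of coroots of $\g'$ restricts to a single coroot of $\g$, which is a \emph{root} of $^L\g$, and one must verify that the orbit-sum map sends the coroot lattice pairing of $P^\sigma$ to the coweight pairing of $^LP$.

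The cleanest route to this naturality is a case-by-case examination along the classification in Section \ref{nomen}: for the pairs $(\g, \g') = (B_\el, D_{\el+1})$, $(C_\el, A_{2\el-1})$, $(F_4, E_6)$, and $(G_2, D_4)$, one simply compares the explicit Cartan data. In each case the sublattice $P^\sigma$ inherits a pairing from $P'$, and one checks directly that the orbit-sum basis is dual (in the appropriate normalization) to the simple coroots of $^L\g$, which is exactly the condition defining $^LP$. Alternatively, and more conceptually, I would invoke the standard fact from the theory of folding that the $\sigma$-fixed points of the root datum of $\g'$ form the root datum of $\g$, while the $\sigma$-\emph{coinvariants} form the Langlands dual root datum of $^L\g$; the isomorphism $P^\sigma \cong {}^LP$ is then the weight-lattice shadow of the canonical isomorphism between invariants and coinvariants induced by the averaging/orbit-sum map (which is an isomorphism after tensoring over $\Z$, and in fact over $\Z$ itself for the simply-laced $\g'$ arising here because $\sigma$ acts without fixed roots in the ranks that matter, except for $A_{2n}$ which is excluded).

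The step I expect to be the main obstacle is pinning down the precise sense of ``natural'' so that the isomorphism is canonical and not merely an abstract bijection of free abelian groups of equal rank. The ranks obviously agree — $P^\sigma$ has rank $|I|$ and $^LP$ has rank $|I| = \el$ — so a noncanonical isomorphism is trivial; the real work is verifying that the orbit-sum map respects the coroot pairings and hence the root-datum structure, which is what makes the identification useful in the applications (it is invoked to transport the spectral data of the folded model, valued in $\sigma$-invariant $\g'$-weights, to $^L\g$-weights of representations of $U_q({}^L\ghat)$). Since this is ultimately a finite verification over the four folding types, I would organize the argument so that the general orbit-sum construction does the bulk of the work and the case-check only confirms the normalization of the pairing.
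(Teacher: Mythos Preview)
Your proposal is correct and arrives at the same explicit map as the paper: the fundamental weight $\om^L_{\bar\imath}$ of $^L\g$ corresponds to the orbit sum $\sum_{j\in\bar\imath}\om'_j$ in $P^\sigma$. The paper, however, obtains this more conceptually and without any case-by-case verification. It uses the chain of identifications
\[
({}^L\h)^* \;=\; \h \;\hookrightarrow\; \h' \;\simeq\; (\h')^*,
\]
where the first equality is the definition of Langlands duality, the second is the inclusion of $\sigma$-invariants, and the third is the self-duality of $\h'$ via the $W$-invariant form normalized so that all roots of the simply-laced $\g'$ have squared length $2$. Under this composite, the coroot $\chal_i = \sum_{j\in J_i}\chal'_j$ of $\g$ (which \emph{is} a fundamental weight of $^L\g$, up to the standard identification) is sent to $\sum_{j\in J_i}\om'_j$, and the naturality is automatic. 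Your case-check across the four folding types would of course work, but it is not needed; the paper's argument explains uniformly \emph{why} the $\tfrac12$ factor that obstructs embedding the weight lattice of $\g$ into $P'$ disappears exactly when one passes to $^L\g$.
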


In particular, since $W(u)$ and $\wt{W} = W(0)$ are, by definition,
vector spaces graded by $P^\sigma$, we can view them as vector
spaces graded by the lattice $^LP$ of $^L\g$-weights.

\begin{conj}\hfill    \label{main conj bis}

\begin{itemize}

\item[(i)] For generic $u$, there is an isomorphism $W(u) \simeq
  \wt{W}$ of vector spaces graded by $^L\g$-weights.
 
 \item[(ii)] For generic $u$, there is a subspace $\overline{W}(u)$ of
   $W(u)$ which is stable under the operators
   $Q^{\pm,\on{ss}}_j(z,u)$ and
   isomorphic, as a vector space graded by ${}^L\g$-weights, to the
   vector space underlying a $U_q({}^L\widehat{\g})$-module $M(W)$
   (which does not depend on $u$).
\end{itemize}
\end{conj}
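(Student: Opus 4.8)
Since the statement is a conjecture, what follows is a strategy rather than a complete argument; I will indicate where the genuine difficulty lies. For part (i), I would reduce the claim to the numerical statement that $\dim W_\ga(u) = \dim \wt{W}_\ga$ for every $\ga \in P^\sigma$ and generic $u$, the grading being by ${}^L\g$-weights via \lemref{bij}. One inequality is essentially formal. By \lemref{Vga0} we have $W_\ga(0) = \wt{W}_\ga$, and $W_\ga(u)$ is cut out inside $W_\ga$ by the vanishing of the coefficients in $z$ of the difference operators $Q^{\pm,\on{ss}}_j(z,u) - Q^{\pm,\on{ss}}_{\sigma(j)}(z,u)$, $j \in I'$. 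Since the kernel dimension of an algebraic family of operators is upper semicontinuous, the generic value of $\dim W_\ga(u)$ is at most its value at the special point $u = 0$, giving $\dim W_\ga(u) \le \dim \wt{W}_\ga$. Here one must first check that, after passing to semisimplifications, the relevant data depend algebraically on $u$; this is a technical point I would address using the explicit $R$-matrix construction of the $Q$-operators in \cite{FH}.

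The reverse inequality is the heart of (i). I would exploit the operator $\wh\sigma$ of \lemref{whsigma}: it preserves $W_\ga$ and $W_\ga(u)$ (\lemref{inv}), commutes with the $\sigma$-symmetric transfer-matrices, and by \lemref{invell} carries every $\wh\sigma$-eigenvector that is also a joint $Q$-eigenvector into $W_\ga(u)$. The plan is to diagonalize the commutative family of $\sigma$-invariant Hamiltonians on $W_\ga$; each joint eigenspace is $\wh\sigma$-stable, and I would argue that for generic $u$ the common refinement by the individual $Q^{\pm,\on{ss}}_j(z,u)$ produces exactly $\dim \wt{W}_\ga$ joint eigenlines with $\sigma$-invariant spectrum. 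The mechanism controlling this count is \thmref{spectrafolded}: the eigenvalues of the $t_V(z,u)$ on $W(u)$ are governed by the folded $t$-character $^{\on{f}}\chi_t$, whose graded multiplicities are, by \thmref{unfold}, precisely those of $\wt{W}$. Turning this spectral bookkeeping into a genuine lower bound on $\dim W_\ga(u)$, i.e. showing that no eigenline present at $u=0$ disappears for generic $u$, is the step I expect to be the main obstacle, and is presumably why the statement is only conjectural.

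For part (ii), I would first pin down the module $M(W)$ by its $q$-character. Combining \thmref{spectrafolded} with the identification of folded $t$-characters in \thmref{unfold}, the joint spectrum of the $Q^{\pm,\on{ss}}_j(z,u)$ on $W(u)$ is encoded, via the substitution of Baxter-polynomial ratios for the variables $Y_{i,a}$, by folded $t$-characters of $U_t(\wh{\g'})$. The conjectural input is that these folded $t$-characters coincide with $q$-characters of finite-dimensional $U_q({}^L\ghat)$-modules, a reflection of the affine Langlands duality $\ghat \to {}^L\ghat$; granting this, $M(W)$ is defined to be the $U_q({}^L\ghat)$-module whose $q$-character matches the relevant $\sigma$-invariant block. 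I would then set $\overline{W}(u)$ to be the span of those joint $Q$-eigenvectors in $W(u)$ whose Baxter data realize $\chi_q(M(W))$. By construction $\overline{W}(u)$ is a sum of joint eigenspaces, hence stable under all $Q^{\pm,\on{ss}}_j(z,u)$, and its grading by ${}^L\g$-weights (via \lemref{bij}) matches that of $M(W)$ term by term; the $u$-independence of $M(W)$ follows because the $q$-character, being a discrete invariant, cannot vary along a connected generic locus of $u$.

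The decisive obstacle in (ii), and the reason the whole statement remains conjectural, is that we have at our disposal only the abelian family of transfer-matrices on $W$, and no genuine action of $U_q({}^L\ghat)$. Thus the argument can only match the spectral and graded-dimension data of $\overline{W}(u)$ with those of $M(W)$; producing the module structure, or even proving the required equality of the folded $t$-characters with the $q$-characters of $U_q({}^L\ghat)$ in general, lies beyond the present tools and is exactly the content of the Langlands-dual correspondence that this paper probes. I would therefore expect a complete proof to proceed representation by representation, matching folded $t$-characters against dual $q$-characters as in the verifications of Section \ref{ex}, while a uniform proof must await a construction of the conjectural Hopf algebra of \remref{mcA} acting on the spaces $W(u)$.
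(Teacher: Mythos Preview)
You are right that this is a conjecture and that the paper offers no general proof, only verifications in examples (Section~\ref{ex}) and a partial result for $\sigma$-fundamental modules (Section~\ref{fundsec}). Your identification of the central obstruction in~(ii) --- the absence of any $U_q({}^L\widehat{\g})$-action, only a commutative family of transfer-matrices --- matches the paper's own assessment. But several steps in your proposed strategy are weaker than you suggest, and your approach diverges from what the paper actually does.

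For part~(i), your semicontinuity argument is shakier than you acknowledge. The subspace $W_\ga(u)$ is cut out by the \emph{semisimplified} operators $Q_j^{\pm,\on{ss}}(z,u)$, and semisimplification is not an algebraic operation in $u$: it jumps at every $u$ where a Jordan block appears. So the family $u \mapsto Q_j^{\pm,\on{ss}}(z,u) - Q_{\sigma(j)}^{\pm,\on{ss}}(z,u)$ is not algebraic, and upper semicontinuity does not apply directly. More seriously, your argument for the reverse inequality is circular. You invoke \thmref{spectrafolded}, but that theorem describes the \emph{eigenvalues} of $t_V(z,u)$ on $W(u)$ in terms of folded $t$-characters; it presupposes $W(u)$ and says nothing about its dimension. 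The folded $t$-character counts monomials in $\wt{W}$, not joint eigenlines with $\sigma$-invariant Baxter data in $W_\ga$ for $u \neq 0$.

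The paper's route in the examples is quite different and more concrete: it exploits the trace of $\wh\sigma$ on each weight space $W_\ga$. Since $\wh\sigma$ permutes both the $\ell$-weight spaces (giving $\on{tr}(\wh\sigma|_{W_\ga})$ in terms of $\wt{W}_\ga$ and $\wt{W}^\sigma_\ga$) and the $Q^{\pm,\on{ss}}$-eigenspaces for any $u$, equality of these two trace computations forces the number of $\sigma$-invariant eigenvalues at generic $u$ to match $\dim \wt{W}_\ga$. See the arguments in Sections~\ref{ex2} and~\ref{ex4}, where this is combined with an explicit verification that $\wh\sigma$ acts trivially on the relevant $\ell$-weight space by restricting to a $U_q(\wh{\sw}_2)$-subalgebra.

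For part~(ii), you propose to define $M(W)$ by matching folded $t$-characters with $q$-characters of $U_q({}^L\ghat)$, but you do not say how to perform this matching. The paper's mechanism is the refined ring $\overline{\mathcal{K}}_{q,t}(\g)$ of interpolating $(q,t)$-characters (Section~\ref{intchar}): one element $X_{q,t}$ has specializations $\overline{\Pi}_t(X_{q,t}) = {}^{\on{f}}\chi_t(W)$ and $\Pi_t(X_{q,t}) = \chi_t(M(W))$, and this is precisely how $M(W)$ is identified (Conjecture~\ref{part3}, Proposition~\ref{fundqtr}). Your proposal omits this bridge entirely; without it there is no candidate for $M(W)$ beyond wishful matching.
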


In Section \ref{ex} we explicitly verify this conjecture, as well as
Conjecture \ref{part3} below, in a number of examples. In Section
\ref{fundsec} we identify the representation $M(W)$, in terms of the
closely related Conjecture \ref{part3} below, for the simplest
$U_q(\widehat{\g'})$-modules with $\sigma$-invariant highest monomials
(modulo Conjecture \ref{positivity}).

\begin{rem}    \label{bfW}
A possible candidate for $\ol{W}(u)$ is the subspace
${\mb W}(u) := \bigoplus_{\ga \in P^\sigma} {\mb W}_\ga(u)$, where
\begin{equation}    \label{Vu1}
{\mb W}_\ga(u) := \{w\in W_\ga \; | \; Q_j^\pm(z,u).w =
Q_{\sigma(j)}^\pm(z,u).w, \forall j\in I'\} \subset
\wh{W}_\ga.
\end{equation}
The difference with formula \eqref{Vu} is that we now consider the
operators $Q_j^\pm(z,u)$, rather than their semi-simplifications. We
expect that for generic $u$, ${\mb W}(u)$ is isomorphic to ${\mb W}(0)
\subset W(0) = \wt{W}$, which is defined similarly in terms of
$T_j(z)$ (not their semi-simplifications).

Another candidate for $\ol{W}(u)$ is the subspace ${\mb W}^\sigma(u)
\subset {\mb W}(u)$ spanned by the $\sigma$-invariant generalized
eigenvectors.\qed
\end{rem}

Conjecture \ref{main conj bis} means that for any non-simply laced
simple Lie algebra $\mathfrak{g}$, there exists a {\em folded quantum
  integrable system} with the quantum Hamiltonians being the
transfer-matrices $t_V(z,u)$, corresponding to finite-dimensional
representations $V$ of $\on{Rep} U_t(\wh{\g'})$, or more general
objects of the corresponding category ${\mc O}^*$ (these are the
auxiliary spaces of this integrable model). These Hamiltonians act on
vector spaces that underlie finite-dimensional representations of
$U_t({}^L\widehat{\g})$ (these are the spaces of states of this
integrable model) and, according to Theorem \ref{spectrafolded}, the
spectra of $t_V(z,u)$ can be expressed in terms of the folded
$t$-character of $V$ and the corresponding Baxter
polynomials. Moreover, by Theorem \ref{BAElem}, the roots of these
Baxter polynomials correspond to solutions of the folded BAE (\ref{bae
  gen1}) associated to $\g$.

\begin{rem}    \label{remtriv}

   (1) In all examples we have studied so far, with $W$ a simple
  module, we found a simple $U_q({}^L\widehat{\g})$-module $M(W)$ that
  satisfies the statements of this conjecture.

  (2) Suppose that a weight subspace $\wh{W}_\ga, \ga \in P^\sigma$,
  of $\wh{W}$ is one-dimensional. Then the automorphism $\wh\sigma$
  from Lemma \ref{whsigma} must act on $\wh{W}_\ga$ as a non-zero
  scalar. By Lemma \ref{invell}, this implies that
    $\wh{W}_\ga = \wt{W}_\ga$ and $\wh{W}_\ga = W_\ga(u)$ for all
  $u$. Hence the statement of part (i) of the conjecture is clear
  for such weight subspaces.

  (3) In the statement of part (ii) of the conjecture, one might be
  tempted to replace $U_t({}^L\widehat{\g})$ with
  $U_t(\widehat{^L\g})$. Indeed, the statement involves a vector
  space $\ol{W}(u)$ graded by $^L\g$-weights, and therefore it could
  come from a representation of $U_t({}^L\g)$ (the quantum group of
  the simple finite-dimensional Lie algebra $^L\g$), which is a
  subalgebra of both $U_t({}^L\widehat{\g})$ and
  $U_t(\widehat{^L\g})$.

In fact, in the examples we have considered so far (see Section
\ref{ex}), the character of $\ol{W}(u)$ is not only the character of a
representation of $U_t({}^L\widehat{\g})$ but also the character of a
representation of $U_t(\widehat{^L\g}$). But the representations of
$U_t(\widehat{^L\g})$ that appear here are significantly less natural
than the representations of $U_t({}^L\widehat{\g})$. For instance, as
we illustrate below (see Sections \ref{ex2}--\ref{fifth}), in most
cases they are not simple, and often contain direct sums of copies of
the trivial representation as direct summands (even for
representations of small dimension). Even more importantly, if we
consider representations of $U_t(\widehat{^L\g})$, we can only
reproduce the ordinary character of its restriction to $U_t({}^L\g)$,
and {\em not} its $t$-character. On the other hand, in all examples we
consider below, we can reproduce the $t$-character of a
$U_t({}^L\widehat{\g})$-module $M(W)$ using interpolating
$(q,t)$-characters, see Conjecture \ref{part3} below.
     
(4) It would be interesting to write explicit formulas for the action
     of $U_t({}^L\widehat{\g})$ on $M(W)$, at least in the cases when
     $M(W) \simeq \wt{W}$.\qed
\end{rem}

We will give a more detailed description of the module $M(W)$, and
hence a more precise formulation of Conjecture \ref{main conj bis},
using the theory of interpolating $(q,t)$-characters which we will
recall (and refine) in Section \ref{intchar} below. Namely, we have
the following conjecture.

\begin{conj}    \label{part3}
The $t$-character $\chi_t(M(W))$ of the
$U_t\left({}^L\widehat{\g}\right)$-module $M(W)$ can be obtained via
the specialization map $\Pi_t$ from
  an element $X_{q,t} \in \ol{\mc K}_{q,t}(\g)$. In fact, $X_{q,t}$
  has the following three specializations:
$$\begin{xymatrix}{ & X_{q,t}
    \ar[dl]_{\overline{\Pi}_t}\ar[rd]^{\Pi_t'}\ar[d]^{\Pi_t} &
    \\ {}^{\on{f}}\chi_t(W) & \chi_t(M(W)) & {}^{\on{f}}\chi_t(W')&
   }
\end{xymatrix}$$
where 
\begin{itemize}

\item[] $^{\on{f}}\chi_t(W)\in \mathcal{K}_t^-(\g)$ is the folded $t$-character of 
the $U_t\left(\wh{\mathfrak{g}'}\right)$-module $W$, 
 
\item[] ${}^{\on{f}}\chi_t(W')\in \mathcal{K}_t^-({}^{L}\g)$ is the 
folded $t$-character of a $U_t\left(\wh{\left({}^L\mathfrak{g}\right)'}\right)$-module $W'$.
\end{itemize}

\end{conj}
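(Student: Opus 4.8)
The plan is to build the interpolating element $X_{q,t}$ from the highest monomial of $W$ and then to identify each of its three specializations by the principle that elements of the relevant target ring are determined by their dominant monomials. Since $W$ is a simple $U_t(\wh{\g'})$-module with $\sigma$-invariant highest monomial, call it $m_W$, I would take $X_{q,t}$ to be the refined interpolating $(q,t)$-character attached to $m_W$: the element of $\ol{\mc K}_{q,t}(\g)$ whose highest monomial is $m_W$ and which lies in the joint kernel of the $(q,t)$-screening operators. Existence and uniqueness of such an $X_{q,t}$ for $\sigma$-fundamental monomials, and for the products and Kirillov--Reshetikhin-type monomials built from them, is exactly what the construction of \secref{intchar} (culminating in \thmref{refined}) is designed to furnish; the general case is organized by the basis $\{F(m)\}$ of \propref{intmoins} together with its $(q,t)$-deformation. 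Granting $X_{q,t}$, the three arrows of the diagram are three of the compatible specialization homomorphisms of \thmref{refined}, and the task reduces to computing each image.

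For the two outer legs the argument should be largely formal. By its definition in \secref{intchar}, $\ol\Pi_t$ lands in $\mc K_t^-(\g)$; since elements of $\mc K_t^-(\g)$ are characterized by the multiplicities of their dominant monomials (\propref{intmoins}(1)), matching this dominant-monomial data and invoking \thmref{unfold} gives $\ol\Pi_t(X_{q,t})={}^{\on{f}}\chi_t(W)$, the folded $t$-character of $W$. The map $\Pi_t'$ is the Langlands-dual mirror of $\ol\Pi_t$: running the identical argument with ${}^L\g$ in place of $\g$, so that $\wh{({}^L\g)'}$ plays the role of $\wh{\g'}$, places $\Pi_t'(X_{q,t})$ in $\mc K_t^-({}^L\g)$ and identifies it, again by uniqueness of dominant monomials, with the folded $t$-character of the $U_t(\wh{({}^L\g)'})$-module $W'$ whose highest monomial is pinned down by the weight-lattice isomorphism of \lemref{bij}. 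That both specializations live on a single $X_{q,t}$ is precisely the content of the five-fold compatibility asserted in \thmref{refined}.

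The central leg $\Pi_t$ is the crux, and it is here that the statement is genuinely a conjecture rather than a computation. A priori $\Pi_t(X_{q,t})$ is only a Laurent polynomial in the variables $Z_{i,a}$; to call it $\chi_t(M(W))$ one must show that it lies in the image of the twisted $t$-character homomorphism of $U_t(\GL)$, i.e.\ that it satisfies the defining kernel relations of \secref{fdrep} built from the monomials $B_{i,a}$. The natural route is to verify that $\Pi_t$ intertwines the $(q,t)$-screening relations cutting out $\ol{\mc K}_{q,t}(\g)$ with these twisted relations for $\GL$, so that membership in the joint kernel is preserved under specialization. Once $\Pi_t(X_{q,t})\in\on{Im}(\chi_t)$, the fact that such elements are characterized by their dominant monomial (\secref{fdrep}) identifies it with the twisted $t$-character of the simple $U_t(\GL)$-module whose highest monomial is the image of $m_W$ under \lemref{bij}; this simple module---or the appropriate standard construction built from it when $W$ is not $\sigma$-fundamental---is the sought-after $M(W)$, consistently with \conjref{main conj bis}.

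The main obstacle is the positivity and integrality of the coefficients of $\Pi_t(X_{q,t})$: the specialization homomorphism can in principle output negative or non-integral multiplicities, and only when every multiplicity is a non-negative integer can the result be the honest $t$-character of a genuine representation. This is exactly the content of \conjref{positivity}, on which the present conjecture is conditioned, and it is the step that resists a purely formal argument. I therefore expect a complete proof only once (i) \thmref{refined} is established in sufficient generality and (ii) the positivity of $\Pi_t$ on the interpolating characters $X_{q,t}$ is proved; at that point the identification of $M(W)$ with a specific $U_t(\GL)$-module follows from Drinfeld's classification by dominant monomials exactly as in the untwisted case recalled in \secref{fdrep}. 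Pending this, \secref{fundsec} and \secref{ex} carry out all three specializations explicitly for $\sigma$-fundamental $W$ and in a list of examples, which is the evidence on which the conjecture presently rests.
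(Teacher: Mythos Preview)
The statement is a \emph{conjecture}; the paper does not prove it in general. What the paper supplies is \propref{fundqtr}, which establishes all three specializations for the $\sigma$-fundamental case $W=L(\wt Y_{i,a})$ with $X_{q,t}=\ol F_{q,t}(W_{i,a})$, together with the worked examples of \secref{ex}. Your proposal should therefore be read as a strategy toward the conjecture, and compared against that partial result.

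Your choice of $X_{q,t}$ is the main gap. You take $X_{q,t}$ to be the refined interpolating character attached to the highest monomial $m_W$, i.e.\ essentially $\ol F_{q,t}(m_W)$, and then argue that $\ol\Pi_t(X_{q,t})={}^{\on f}\chi_t(W)$ by matching dominant monomials via \propref{intmoins}. This works only when ${}^{\on f}\chi_t(W)$ has a \emph{unique} dominant monomial, which is exactly the affine-minuscule/KR property used in the proof of \propref{fundqtr}. For general $W$ it fails: in the fifth example (\secref{fifth}) with $W=L(Y_{1,1}^2Y_{3,1}^2)$, the folded $t$-character has many dominant monomials, and the paper's $X_{q,t}$ is the \emph{square} of the fundamental element $\ol F_{q,t}(W_{1,1})$, not $\ol F_{q,t}$ of the highest monomial. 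The conjecture asserts only the existence of \emph{some} $X_{q,t}\in\ol{\mc K}_{q,t}(\g)$ with the three required specializations; identifying which one is part of the problem, and the examples indicate it should be a product of $\sigma$-fundamental elements mirroring a factorization of $W$, not the canonical $\ol F_{q,t}(m_W)$.

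You also misplace the crux. You locate the difficulty at $\Pi_t$, claiming one must show $\Pi_t(X_{q,t})$ lies in $\on{Im}(\chi_t)$ for $U_t(\GL)$ and that this hinges on \conjref{positivity}. But in the paper's treatment the fact that $\Pi_t$ sends $\ol{\mc K}_{q,t}(\g)$ into twisted $t$-characters is already built into the construction (see the list of specializations after \thmref{refined} and the proof of \propref{fundqtr}, which cites \cite{FH1} and \cite{H}); for the $\sigma$-fundamental case $\Pi_t$ is no more conjectural than $\ol\Pi_t$ or $\Pi_t'$---all three follow from the single-dominant-monomial argument. Conjecture~\ref{positivity} is invoked for a different purpose: to guarantee finiteness of $X^{(i)}_{q,t}$ (\corref{finiteness}) and the weight-multiplicity inequality that embeds $M(W)$ inside $\wt W$ in \conjref{main conj bis}(ii), not to certify that $\Pi_t(X_{q,t})$ is a genuine $t$-character. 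So the genuine open content of \conjref{part3} is not positivity of $\Pi_t$ but rather, for non-$\sigma$-fundamental $W$, the existence of a single interpolating element whose three specializations simultaneously hit the prescribed targets.
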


\subsection{The subspace $\ol{W}(u)$}    \label{olW}

Here we discuss the question of how to describe the subspace
$\overline{W}(u)\subset W(u)$ in general.

Note that the fixed subspace $W^{\wh{\sigma}}\subset W$ is not
necessarily stable under the action of the operators 
  $Q_j^{\pm,\on{ss}}(z,u)$ in general. But let us introduce the
subspace $W^\sigma(u) \subset W^{\wh{\sigma}}$ spanned by all joint
eigenvectors of  $Q_j^{\pm,\on{ss}}(z,u)$, $j\in I'$, which
belong to $W^{\wh{\sigma}}$.

Recall the decomposition \eqref{decWu} of $W_\ga(u)$:
\begin{equation}    \label{decWu1}
  W_\ga(u) = \bigoplus_{\la: \sigma(\la)=\la} W_{\ga,\la}(u).
\end{equation}

\begin{lem}    \label{summ}
  We have
\begin{equation}    \label{Wsigmau}
W^\sigma(u) = \bigoplus_{\gamma \in P^\sigma} \bigoplus_{\la:
  \sigma(\la)=\la} W^\sigma_{\ga,\la}(u) \subset W(u)
\end{equation}
where $W^\sigma_{\ga,\la}(u)$ is the $\wh\sigma$-invariant subspace of
$W_{\ga,\la}(u)$.
\end{lem}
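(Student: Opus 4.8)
The plan is to establish the equality in \eqref{Wsigmau} by proving the two inclusions separately, using that $W^\sigma(u)$ is by definition the span of those joint eigenvectors of $Q_j^{\pm,\on{ss}}(z,u), j \in I'$, which lie in $W^{\wh\sigma}$, together with the eigenspace decomposition \eqref{decWu1} of $W_\ga(u)$ and the behavior of $\wh\sigma$ recorded in Lemmas \ref{whsigma} and \ref{invell}.

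First I would take an arbitrary joint eigenvector $v$ of the $Q_j^{\pm,\on{ss}}(z,u)$ with $v \in W^{\wh\sigma}$ (so $\wh\sigma(v) = v$) and locate it precisely. By Theorem \ref{eigg}(2), $v$ lies in a single $\g'$-weight subspace $W_\ga$. Since $\wh\sigma$ maps $W_\ga$ into $W_{\sigma(\ga)}$ by Lemma \ref{whsigma}(2), and $\wh\sigma(v) = v \neq 0$, we must have $\sigma(\ga) = \ga$, that is $\ga \in P^\sigma$. Applying Lemma \ref{invell} with $\mu = 1$, the eigenvalue collection $\la = \{\la_j^\pm(z,u)\}$ of $v$ is $\sigma$-invariant. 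Hence $v$ belongs to the eigenspace $W_{\ga,\la}(u)$ with $\ga \in P^\sigma$ and $\sigma(\la)=\la$, and since in addition $v \in W^{\wh\sigma}$, it lies in the $\wh\sigma$-invariant subspace $W^\sigma_{\ga,\la}(u)$ of $W_{\ga,\la}(u)$. Taking spans yields the inclusion $W^\sigma(u) \subseteq \bigoplus_{\ga \in P^\sigma}\bigoplus_{\la:\sigma(\la)=\la} W^\sigma_{\ga,\la}(u)$.

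For the reverse inclusion I would first check that $\wh\sigma$ genuinely preserves each $W_{\ga,\la}(u)$ with $\ga \in P^\sigma$ and $\sigma(\la)=\la$, so that the $\wh\sigma$-invariant subspace $W^\sigma_{\ga,\la}(u)$ is well defined: this follows from the conjugation relation $Q_{\sigma(j)}^{\pm,\on{ss}}(z,u) = \wh\sigma\, Q_j^{\pm,\on{ss}}(z,u)\, \wh\sigma^{-1}$ (obtained by combining \eqref{sigmau1} and \eqref{rhosigma}, exactly as in the proof of Lemma \ref{invell}) together with the $\sigma$-invariance of $\la$, which forces $\wh\sigma$ to send eigenvectors with eigenvalues $\la$ to eigenvectors with the same eigenvalues. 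Then every vector of $W^\sigma_{\ga,\la}(u)$ is a joint eigenvector of the $Q_j^{\pm,\on{ss}}(z,u)$ lying in $W^{\wh\sigma}$, hence lies in $W^\sigma(u)$ by definition, giving the opposite inclusion. The directness of the sum and the inclusion $W^\sigma(u) \subset W(u)$ are then immediate from \eqref{decWu1}: the $W_{\ga,\la}(u)$ for distinct pairs $(\ga,\la)$ are distinct joint eigenspaces and hence independent, so intersecting each with $W^{\wh\sigma}$ keeps the sum direct and inside $W(u)$.

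I do not anticipate any serious obstacle: the statement is essentially a bookkeeping consequence of Lemma \ref{invell} and the decomposition \eqref{decWu1}. The only point requiring a little care is the first step, namely verifying that a joint eigenvector fixed by $\wh\sigma$ is automatically supported on a single $\sigma$-invariant $\g'$-weight with a $\sigma$-invariant eigenvalue; but this is exactly where Theorem \ref{eigg}(2) (joint eigenvectors are weight vectors) and Lemma \ref{invell} ($\wh\sigma$-eigenvectors carry $\sigma$-invariant data) do all the work.
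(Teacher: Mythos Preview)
Your proof is correct and follows essentially the same approach as the paper: both arguments show that a $\wh\sigma$-fixed joint eigenvector must lie in some $W_{\ga,\la}(u)$ with $\ga\in P^\sigma$ and $\sigma(\la)=\la$, and that $\wh\sigma$ preserves these eigenspaces. The only cosmetic differences are that you organize the argument as two explicit inclusions and invoke Lemma~\ref{invell} directly, whereas the paper reproves that lemma's content inline via formula~\eqref{Xi}.
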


\begin{proof}
According to Theorem \ref{eigg},(1), every joint eigenvector $v$ of
$Q_j^{\pm,\on{ss}}(z,u)$, $j\in I'$, belongs to a weight subspace $W_\ga
\subset W, \gamma \in P$. By Lemma \ref{whsigma},(2) $\wh\sigma$ maps
$W_\gamma, \gamma \in P$, to $W_{\sigma(\gamma)}$. Therefore, if
$\wh\sigma(v) = v$, then $v \in W_\gamma, \gamma \in
P^\sigma$. Next, we have the decomposition \eqref{decWga}
  of $W_\ga, \ga \in P^\sigma$, into eigenspaces of
  $Q_j^{\pm,\on{ss}}(z,u)$. Applying formula \eqref{Xi} with $X_i =
  Q_i^{\pm,\on{ss}}(z,u)$, $j\in I'$, we obtain that if $v \in
  W_{\ga,\la}$, then $\wh\sigma(v) \in W_{\ga,\sigma(\la)}$. Hence, if
  $\wh\sigma(v) = v$, then $\sigma(\la)=\la$. It follows that the
  subspace $W_{\ga,\la}$ with $\ga
  \in P^\sigma$ and $\sigma(\la)=\la$ is preserved by
  $\wh\sigma$. Thus, $W_{\ga,\la}$ decomposes into a direct sum of
  eigenspaces of $\wh\sigma$, which are labeled by the $d$-th roots of
  unity. Denoting the invariant part (on which $\wh\sigma$ acts the
  identity) by $W^\sigma_{\ga,\la}(u)$, we obtain the
  decomposition \eqref{Wsigmau}.
\end{proof}

There is an analogue of the subspace $W^\sigma(u) \subset
  W(u)$ for $u=0$; namely, the subspace $\wt{W}^\sigma$ spanned by all
  $\wh\sigma$-invariant $\ell$-weight vectors in $W$. In the same way
  as in the proof of Lemma \ref{summ} one shows that
\begin{equation}    \label{wtWsigma}
\wt{W}^\sigma = \bigoplus_{\gamma \in P^\sigma}
\bigoplus_{M:\sigma(M)=M} \wt{W}^\sigma_{\gamma,M}
\subset \wt{W},
\end{equation}
where $\wt{W}^\sigma_{\gamma,M}$ is $\wh\sigma$-invariant part of the
$\ell$-weight subspace of $\wt{W}_\gamma, \gamma \in P^\sigma,$ whose
$\ell$-weight corresponds to a monomial $M$ (note that
$\wt{W}^\sigma_{\gamma,M}$ is preserved by $\wh\sigma$ if and only if
$M$ is a $\sigma$-invariant monomial, see Lemma
\ref{whsigma},(2)). Thus, $\wt{W}^\sigma$ is a subspace of
$\wt{W} = W(0)$.

We expect that $\overline{W}(u)$ is a subspace of $\wt{W}^\sigma$
(although they are not equal in general, as we can see from the
example in Section \ref{fifth} below). In addition to the examples
that will be presented in Section \ref{ex}, some supporting evidence
comes from the following result in the finite-type case.

Let $W$ be a simple finite-dimensional representation of
  $U_q(\mathfrak{g}')$ with a $\sigma$-invariant highest weight. Then,
  as in the affine case, $\sigma$ gives rise to an automorphism
  $\wh\sigma$ of $W$. Let $W^\sigma$ be the span of all weight vectors
  in $W$ which belong to $W^{\wh{\sigma}}$. In the same way as in
  the proof of Lemma \ref{summ} one shows that
$$
W^\sigma = \bigoplus_{\gamma \in P^\sigma} W^\sigma_\gamma,
$$
where $W^\sigma_\gamma$ is the subspace of
  $\wh\sigma$-invariant vectors in $W_\ga, \ga \in P^\sigma$.
Recall from Lemma \ref{bij} that we can view elements of the set
$P^\sigma$ of $\sigma$-invariant $\g'$-weight spaces as
$^{L}\mathfrak{g}$-weights. Thus, the character $\chi(W^\sigma)$ is a
linear combination of $^{L}\mathfrak{g}$-weights.

\begin{prop}    \label{virt}
$\chi(W^\sigma)$ is invariant under the action of the Weyl group of
  $^L\g$, and hence it is the character of a virtual representation of
  $U_q({}^{L}\mathfrak{g})$.
\end{prop}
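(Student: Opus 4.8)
The plan is to express $\chi(W^\sigma)$ as an average of $\wh\sigma$-twisted characters and then to reduce the desired Weyl invariance to the statement that suitable Weyl group operators on $W$ can be chosen to commute with $\wh\sigma$. First I would observe that, since $\sigma$ has order $d$ and $\wh\sigma$ acts as the identity on the one-dimensional highest weight space $W_{\Lambda(W)}$, Schur's lemma forces $\wh\sigma^d = \on{id}$ (because $\wh\sigma^d$ is a $U_q(\g')$-module automorphism of the simple module $W$ which is the identity on $W_{\Lambda(W)}$). Hence $\frac1d\sum_{k=0}^{d-1}\wh\sigma^k$ is the projector onto the $\wh\sigma$-invariants, and for every $\gamma\in P^\sigma$ — where $\wh\sigma$ preserves $W_\gamma$ by Lemma~\ref{whsigma}(2) — we obtain
$$\dim W^\sigma_\gamma = \frac1d\sum_{k=0}^{d-1}\on{tr}\!\big(\wh\sigma^k\,\big|\,W_\gamma\big).$$
It therefore suffices to show that each twisted weight multiplicity $\on{tr}(\wh\sigma^k|W_\gamma)$, $\gamma\in P^\sigma$, is invariant under the subgroup $\mathcal{W}(\g')^\sigma$ of $\sigma$-fixed elements of the Weyl group of $\g'$. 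By the folding of Weyl groups one has $\mathcal{W}(\g')^\sigma\cong \mathcal{W}(\g)=\mathcal{W}({}^L\g)$, and under the identification $P^\sigma\cong{}^LP$ of Lemma~\ref{bij} the action of $\mathcal{W}({}^L\g)$ on ${}^LP$ matches the natural action of $\mathcal{W}(\g')^\sigma$ on $P^\sigma$; note that $w\gamma\in P^\sigma$ whenever $w\in\mathcal{W}(\g')^\sigma$ and $\gamma\in P^\sigma$, since $\sigma(w\gamma)=(\sigma w\sigma^{-1})(\sigma\gamma)=w\gamma$.

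Next I would produce Weyl group operators compatible with $\wh\sigma$. For each simple reflection $s_i$, $i\in I'$, let $T_i$ be Lusztig's braid operator acting on the finite-dimensional module $W$; for $w\in\mathcal{W}(\g')$ with reduced expression $w=s_{i_1}\cdots s_{i_r}$ set $T_w=T_{i_1}\cdots T_{i_r}$, which is well defined by the braid relations and carries $W_\gamma$ isomorphically onto $W_{w\gamma}$. Because $\wh\sigma$ implements the diagram automorphism $\sigma$, which permutes the Chevalley generators by $E_i,F_i,K_i^{\pm1}\mapsto E_{\sigma(i)},F_{\sigma(i)},K_{\sigma(i)}^{\pm1}$, conjugation by $\wh\sigma$ turns the explicit expression for $T_i$ into that for $T_{\sigma(i)}$, giving $\wh\sigma\,T_i\,\wh\sigma^{-1}=T_{\sigma(i)}$. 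Consequently, for $w\in\mathcal{W}(\g')^\sigma$ (so that applying $\sigma$ to a reduced word for $w$ yields another reduced word for $w$) we get $\wh\sigma\,T_w\,\wh\sigma^{-1}=T_{\sigma(w)}=T_w$, i.e. $T_w$ commutes with $\wh\sigma$ and hence with every $\wh\sigma^k$.

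With these in hand the conclusion is immediate. Fixing $\gamma\in P^\sigma$ and $w\in\mathcal{W}(\g')^\sigma$, the operator $T_w$ restricts to an isomorphism $W_\gamma\xrightarrow{\sim}W_{w\gamma}$ that intertwines $\wh\sigma^k$ with itself, so
$$\on{tr}\!\big(\wh\sigma^k\,\big|\,W_{w\gamma}\big)=\on{tr}\!\big(T_w^{-1}\wh\sigma^k T_w\,\big|\,W_\gamma\big)=\on{tr}\!\big(\wh\sigma^k\,\big|\,W_\gamma\big).$$
Averaging over $k$ yields $\dim W^\sigma_{w\gamma}=\dim W^\sigma_\gamma$, so $\chi(W^\sigma)\in\ZZ[P^\sigma]=\ZZ[{}^LP]$ is $\mathcal{W}({}^L\g)$-invariant. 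Since the characters of the irreducible $U_q({}^L\g)$-modules form a $\ZZ$-basis of the invariant ring $\ZZ[{}^LP]^{\mathcal{W}({}^L\g)}$, it follows that $\chi(W^\sigma)$ is a $\ZZ$-linear combination of such characters, i.e. the character of a virtual representation of $U_q({}^L\g)$.

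I expect the main obstacle to lie in the second step: fixing the operators $T_w$ $\sigma$-equivariantly so that $\wh\sigma\,T_w\,\wh\sigma^{-1}=T_w$ exactly (with no residual scalar), and — more conceptually — making sure the resulting invariance is genuinely invariance under $\mathcal{W}({}^L\g)$ acting on ${}^LP$ rather than under the naive $\g$-structure. This is precisely the point at which Lemma~\ref{bij} and the coincidence $\mathcal{W}(\g)=\mathcal{W}({}^L\g)$ are indispensable; if one prefers, the same argument can be run classically using $\sigma$-invariant Weyl representatives in the normalizer of the maximal torus of $G'$, since the characters involved agree with their $q=1$ specializations.
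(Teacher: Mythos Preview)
Your argument is correct, and it takes a genuinely different route from the paper's. The paper proceeds by a direct, elementary calculation: for each simple reflection of $^L\g$ associated to a $\sigma$-orbit $\{i,\sigma(i),\ldots\}\subset I'$, it restricts $W$ to the subalgebra generated by the corresponding $U_q(\sw_2)$'s (which commute pairwise since we are not in type $A_{2n}$), and then uses the operators $\overline{f}=\prod_k f_{\sigma^k(i)}$ and $\overline{e}=\prod_k e_{\sigma^k(i)}$ to build mutually inverse injections between $(W^\sigma)_\omega$ and $(W^\sigma)_{s_i\omega}$, reading off the dimension equality from elementary $\sw_2$-theory. Your approach is more structural: you write $\dim W^\sigma_\gamma$ as a twining character $\tfrac1d\sum_k\on{tr}(\wh\sigma^k|W_\gamma)$ and then transport these traces across Weyl-related weight spaces using Lusztig's braid operators $T_w$, which you check are $\wh\sigma$-equivariant for $w\in\mathcal{W}(\g')^\sigma$. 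The paper's argument is self-contained and avoids the braid-group machinery; yours is cleaner conceptually and slots directly into the twining-character framework of Jantzen and Fuchs--Schellekens--Schweigert, which in fact gives more (it identifies $\chi(W^\sigma)$ with an honest irreducible $^L\g$-character, not just a virtual one). Your worry about a ``residual scalar'' in $\wh\sigma\,T_w\,\wh\sigma^{-1}=T_w$ is unnecessary: the $T_i$ are defined by explicit formulas in the Chevalley generators, so the identity $\wh\sigma\,T_i\,\wh\sigma^{-1}=T_{\sigma(i)}$ is exact, and well-definedness of $T_w$ via the braid relations finishes the job without ambiguity.
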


\begin{proof}
 For $i \in I'$, denote by $U_i$ the $U_q(\sw_2)$
  subalgebra of $U_q(\g')$ generated by $e_i$, $f_i$, $k_i^{\pm 1}$.

Let us show that $\chi(W^\sigma)$ is invariant under the simple
  reflections of $^{L}\mathfrak{g}$ associated to the $\sigma$-orbits
  of $i\in I'$. This is clear if $\sigma(i) = i$ as $W^\sigma$ is
  stable under the action of $U_i$ and hence its character is
  invariant under the $i$th reflection of $^{L}\mathfrak{g}$.

Suppose $\sigma(i)\neq i$.  Since $\g'$ is not of type $A_{2n}$, the
subalgebra $U_i^\sigma$ of $U_q(\mathfrak{g}')$ generated by the
$U_q(\sw_2)$ subalgebras $U_{\sigma^k(i)}$, $1\leq k\leq d$, is
isomorphic to $U_q(\sw_2)^{\otimes d}$. To simplify notation, let us
assume that $d = 2$ (the proof for $d = 3$ is quite similar).

Viewed as a representation of $U_i^\sigma$, $W$ is semi-simple:
\begin{equation}\label{ssd}
  W = \bigoplus_j L_j, \qquad L_j = L_j^{(1)}\otimes L_j^{(2)},
\end{equation}
where $L_j^{(1)}$ and $L_j^{(2)}$ are simple representations of the
subalgebras $U_i$ and $U_{\sigma(i)}$, respectively.

If $x$ is a weight vector which belongs to $W^\sigma$ (i.e. the
weight of $x$ is $\sigma$-invariant), then so are the vectors
$\overline{e}.x$ and $\overline{f}.x$, where
$$
\overline{e} = e_ie_{\sigma(i)}, \qquad \overline{f} =
f_if_{\sigma(i)}.
$$
Hence $W^\sigma$ is stable under $\overline{e}$ and $\overline{f}$.

Let $\omega$ be a weight such that that the weight subspace
$(W^\sigma)_\omega$ is non-zero. Then $\omega(\alpha_i^\vee) =
\omega(\alpha_{\sigma(i)}^\vee) = m\in\mathbb{Z}$. Suppose that $m\geq
0$.  Using the decomposition (\ref{ssd}), we can write any weight
vector $v$ in $(W^\sigma)_\omega$ as $v = \sum_j v_j$, where
$$
v_j \in (L^{(1)}_j)_m \otimes (L^{(2)}_j)_m,
$$
$(L^{(1)}_j)_m$ and
$(L^{(2)}_j)_m$ being the weight subspaces corresponding to weight $m$
in $L^{(1)}_j$ and $L^{(2)}_j$, respectively. From representation
theory of $U_q(\sw_2)$ we know that these weight subspaces are
one-dimensional, so $v_j = v_j^{(1)} \otimes v_j^{(2)}$ is a pure
tensor. Moreover, we have
$$\ol{f}^m.(v_j^{(1)} \otimes v_j^{(2)}) = (f_i^m.v_j^{(1)})\otimes
(f_{\sigma(i)}^m.v^{(2)}_j)\neq 0.$$
Hence we obtain an injective linear map :
$$\ol{f}^m : (W^\sigma)_{\omega} \rightarrow (W^\sigma)_{\omega -
  m(\alpha_i + \alpha_{\sigma(i)})}$$
and so
$$\text{dim} (W^\sigma)_{\omega} \leq \text{dim} (W^\sigma)_{\omega -
  m(\alpha_i + \alpha_{\sigma(i)})}$$
if $m\geq 0$.

The opposite inequality is obtained by considering the action of
$\ol{e}^m$ on $(W^\sigma)_{\omega - m(\alpha_i +
  \alpha_{\sigma(i)})}$. This completes the proof.
\end{proof}

We expect that this virtual representation is an actual representation
of $U_q({}^{L}\mathfrak{g})$ (although it is not clear to us how to
construct the corresponding action of $U_q({}^{L}\mathfrak{g})$). In
Section \ref{N=1} we will show (assuming Conjecture
\ref{completeness}) that in the limit $q \to 1$ it is possible to construct
an explicit embedding of the irreducible representation of $^L\g$ with
the highest weight corresponding to that of $W$ (which is
$\sigma$-invariant by our assumption) into $W^\sigma$.

In the rest of this section, we discuss the corresponding
$QQ$-system.

\subsection{The $QQ$-system}

\newcommand{\Psib}{\mbox{\boldmath$\Psi$}}

The $QQ$-system (or $Q\wt{Q}$-system in the terminology of \cite{FH4})
of type $\wh{\mathfrak{g}'}$ reads
$$ \left[-\frac{\alpha_i'}{2}\right]Q_{i,aq^{-1}}\wt{Q}_{i,aq} -
\left[\frac{\alpha_i'}{2}\right] Q_{i,aq}\wt{Q}_{i,aq^{-1}} = \prod_{
j\neq i}Q_{j,a}^{-C_{j,i}'},$$ where $C'$ (resp. $\alpha_i'$) is the
Cartan matrix (resp. a simple root) of $\mathfrak{g}'$. It was written
in \cite{MRV} in the context of affine opers, and established
\cite{FH4} as a system of relations in $K_0(\mathcal{O}^*)$, with the
$Q_{i,a}$ and $\wt{Q}_{i,a}$ being the properly normalized classes of
certain simple modules; namely, the module $R_{i,a}^+$ for $Q_{i,a}$
and another module, which we denote by $X_{i,aq_i^{-2}}'$, divided by
an invertible element which does not depend on $a$ (recall that here we
consider the category ${\mathcal O}^*$, see \cite[Remark 3.2
(iii)]{FH4}). The $\left[-\frac{\alpha_i'}{2}\right]$ are
classes of certain one-dimensional representations in $\mathcal{O}^*$.
Then on $W(u)$, the eigenvalues of $Q_{j,a}$ are
identified with those of $Q_{\sigma(j),a}$ and the eigenvalues of
$\wt{Q}_{i,a}$ are identified with the eigenvalues of
$\wt{Q}_{\sigma(j),a}$. Hence we obtain the following result.

\begin{thm}    \label{QQfolded}
The following $QQ$-system holds on the
invariant subspace $W(u)$:
\begin{equation} \label{QtiQ}
\left[-\frac{\alpha_i}{2}\right]Q_{i,aq^{-1}}\wt{Q}_{i,aq}
- \left[\frac{\alpha_i}{2}\right] Q_{i,aq}\wt{Q}_{i,aq^{-1}} =
\prod_{ j\neq i}Q_{j,a}^{-C_{j,i}},
\end{equation}
where $C$ is the Cartan matrix of $\mathfrak{g}$, $Q_{i,a}$
(resp. $\wt{Q}_{i,a}$) is, up
to an invertible constant, the $Q$-operator $Q_{i,a}^+$ (resp. the
transfer-matrix associated to $X_{i,aq_i^{-2}}$).
\end{thm}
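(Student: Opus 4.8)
The plan is to derive \eqref{QtiQ} by folding the $\wh{\g'}$ $QQ$-system displayed above, in the same spirit in which Proposition \ref{foldedBAE} folds the Bethe Ansatz equations. I would begin from the fact, established in \cite{FH4}, that the $\wh{\g'}$ system holds as an identity in $K_0(\mathcal{O}^*)$; applying the homomorphism $h'_W$ of \eqref{hW1} turns it into an identity among the commuting transfer-matrices $Q^+_{i,a}$ and $t_{X'_{i,a}}$ (whose suitably normalized versions are $Q_{i,a}$ and $\wt{Q}_{i,a}$) acting on $W$. Since these operators commute, the invariant subspace $W(u)$ is a direct sum of their joint generalized eigenspaces (Lemma \ref{spaneig}), so it suffices to verify \eqref{QtiQ} at the level of joint eigenvalues: on each such eigenspace the identity becomes a scalar relation among the corresponding eigenvalues.

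On $W(u)$ these eigenvalues are $\sigma$-symmetric. For the $Q$-operators this is the defining condition \eqref{Vu}; for the $\wt{Q}$-operators it follows because the prefundamental representations topologically generate $K_0(\mathcal{O}^*)$ and $\sigma^*(X'_{i,a}) \simeq X'_{\sigma(i),a}$ by the same $\sigma$-equivariance that gives $\sigma^*(R^\pm_{j,a})\simeq R^\pm_{\sigma(j),a}$. Combined with the characterization $W_\ga(u)=\{w\mid t^{\on{ss}}_V(z,u).w = t^{\on{ss}}_{\sigma^*V}(z,u).w\}$ and Lemma \ref{autsigma}, this yields that $\wt{Q}_{i,a}$ and $\wt{Q}_{\sigma(i),a}$ have equal eigenvalues on $W(u)$, as recorded just before the statement. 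Consequently the $\wh{\g'}$ systems attached to $i$ and to $\sigma(i)$ carry the same eigenvalue content on $W(u)$, and the whole family descends to one indexed by the orbits $[i]\in I = I'/\langle\sigma\rangle$. On the left-hand side $Q_{i,a}\equiv Q_{[i],a}$ and $\wt{Q}_{i,a}\equiv \wt{Q}_{[i],a}$, while the one-dimensional classes $[\pm\frac{\alpha_i'}{2}]$ become $[\pm\frac{\alpha_i}{2}]$ because the simple root $\alpha_i'$ of $\g'$ restricts to the simple root $\alpha_i$ of $\g=(\g')^\sigma$; note that the spectral shifts stay $q^{\pm1}$, since folding only identifies orbit variables and does not rescale $a$.

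The crux is the right-hand side $\prod_{j\neq i}Q_{j,a}^{-C'_{j,i}}$. I would split the product over $j\in I'\setminus\{i\}$ into the terms with $j\in[i]\setminus\{i\}$ and those with $[j]\neq[i]$. In all the cases at hand $\g'$ is simply-laced and not of type $A_{2n}$, so the vertices of a single $\sigma$-orbit are pairwise non-adjacent; hence $C'_{j,i}=0$ for $j\in[i]$, $j\neq i$, and these terms drop out. Grouping the remaining terms by orbits and using $Q_{j',a}\equiv Q_{[j],a}$ for $j'\in[j]$ gives $\prod_{[j]\neq[i]}Q_{[j],a}^{-\sum_{j'\in[j]}C'_{j',i}}$, and the Cartan-matrix folding identity $\sum_{j'\in[j]}C'_{j',i}=C_{[j],[i]}$ (valid for any chosen representative $i$ of $[i]$, since the simple coroots of $\g$ are $\check\alpha_{[j]}=\sum_{j'\in[j]}\check\alpha_{j'}$ and the $\alpha_{i'}$ restrict to $\alpha_{[i]}$) rewrites this as $\prod_{j\neq i}Q_{j,a}^{-C_{j,i}}$, the desired right-hand side. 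Assembling the two sides yields \eqref{QtiQ}.

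The part I expect to require the most care is the reduction in the first paragraph: \eqref{QtiQ} is asserted for the operators $Q_{i,a},\wt{Q}_{i,a}$ on $W(u)$, whereas the conditions cutting out $W(u)$ constrain only their eigenvalues (semisimplifications). The point to verify is that, because the family is commutative and $W(u)$ is a sum of joint eigenspaces, the scalar relations obtained on the individual eigenspaces assemble into the claimed identity of the semisimplified operators $Q^{\on{ss}}_{i,a},\wt{Q}^{\on{ss}}_{i,a}$ on $W(u)$; granting this, the Cartan-matrix combinatorics of the third paragraph is the only genuinely computational input.
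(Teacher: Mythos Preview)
Your proposal is correct and follows the same approach as the paper: the paper's argument is the short paragraph immediately preceding the theorem, which simply observes that on $W(u)$ the eigenvalues of $Q_{j,a}$ and of $\wt{Q}_{j,a}$ coincide with those of $Q_{\sigma(j),a}$ and $\wt{Q}_{\sigma(j),a}$, so the $\wh{\g'}$ $QQ$-system folds to \eqref{QtiQ}. Your write-up fills in the details the paper leaves implicit, notably the Cartan-matrix identity $\sum_{j'\in[j]}C'_{j',i}=C_{[j],[i]}$ and the care about working at the level of eigenvalues (semisimplified operators), which is indeed how the paper intends the statement to be read.
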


Note that for a non-simply laced $\g$, this is {\em neither} the
$QQ$-system of type $\wh{\mathfrak{g}}$ (as defined in \cite{MRV2,
  FH4}) {\em nor} the $QQ$-system of the twisted type
$\wh{\mathfrak{g}}^\vee$ conjectured in \cite[Section 3.3]{FH4}. But
this system is equivalent to the $QQ$-system obtained in \cite{FKSZ}
in the context of Miura $(G,q)$-opers (we note that a Yangian
version of this system first appeared in the work of Mukhin and
Varchenko \cite{MV1,MV2}). We will call it the {\em folded
  $QQ$-system} associated to $\g$.

According to Theorem \ref{QQfolded}, the spectra of the Hamiltonians
of the folded integrable model introduced in this section give rise to
solutions of the folded $QQ$-system. This is in agreement with Theorem
\ref{BAElem} because, as shown in \cite{FKSZ}, under a genericity
condition there is a bijection between solutions of the folded
$QQ$-system and solutions of the folded BAE \eqref{bae gen1}.

\begin{rem}

(1) Recall that each character $\omega\in H^*$ of the commutative
  group $H$ gives a one-dimensional representation
  $U_q(\wh{\mathfrak{b}})$-module $[\omega]$ which is in
  $\mathcal{O}$ and in $\mathcal{O}^*$.  We obtain a subring
  $K_0(H)\subset K_0(\mathcal{O})$ of representations whose simple
  constituents are of this form. These are called constant elements as
  the associated transfer-matrices are constant (independent of the
  spectral parameter). The ordinary
  character of a representation in $\mathcal{O}$ can be viewed as an
  element of $K_0(H)$.

(2) According to \cite{FH4}, the precise relation between the solutions of
$Q\wt{Q}$-system \eqref{QtiQ} and elements of $K_0(\mathcal{O})$ is as
follows. The variables $Q_{i,z}$ and $\wt{Q}_{i,z}$ correspond to the
classes $[L_{i,z}^+]$ and $[X_{i,zq_i^{-2}}]$, respectively,
renormalized by constant invertible elements of $K_0(\mathcal{O})$.

However, Keyu Wang has pointed out that the proof of \cite[Lemma
  4.11]{FH4} is incomplete. In that lemma a formula $\chi(X_{i,a}) =
(1 - [-\alpha_i])^{-1}\prod_{j\neq i}\chi_j^{-C_{j,i}}$ for the
character of $X_{i,a}$ was given in terms of the characters $\chi_i =
\chi(L_{i,a}^+)$ of the prefundamental representations, and the
normalization used in \cite{FH4} (see the preceding paragraph) was
based on this formula.  Although we believe that this character
formula is correct (and so the normalization used in \cite{FH4} is
correct), at the moment we do not have a proof of this character
formula. Hence we propose to slightly modify formulas (3.1) and (3.2)
in \cite{FH4} for the normalizing factors, so we can avoid relying on
this character formula. We explain this normalization in the next
subsection.

(3) According to Theorem \ref{QQfolded} and to Section \ref{eigenfd},
eigenvectors in the subspace $W(u)$ give solutions of the folded
$QQ$-system (or, under some genericity condition, of the folded BAE
\eqref{bae gen1}). However, we then restrict to a smaller subspace
$\overline{W}(u)$ of $W(u)$, which means that we only take a subset of
these solutions (we will see in the example in Section \ref{fifth}
below that $\overline{W}(u)$ can in fact be strictly smaller than
$W(u)$). This is likely related to the fact that taking various limits
of elements of the deformed $\mathcal{W}$-algebras, or various
specializations of the corresponding interpolating $(q,t)$-characters
(as discussed below in Section \ref{intchar}), may very well have
different numbers of monomials.  Hence it is natural to expect that
there is a characterization of the solutions corresponding to the
eigenvectors that belong to the subspace $\overline{W}(u)$ through a
similar kind of interpolation, which would enable us to tell which
solutions of the folded $QQ$-system (and the folded BAE) correspond to
the representation $M(W)$ for the Langlands dual quantum affine and
which do not. At the moment, this remains as an open question.\qed
\end{rem}

\subsection{Normalization of $Q_{i,z}$ and $\wt{Q}_{i,z}$}

\begin{prop}
The functions $$Q_{i,z} = [L_{i,z}^+]/\chi_i, \qquad \wt{Q}_{i,z} =
[X_{i,zq_i^{-2}}]/\wt{\chi}_i,$$
where
$$\chi_i = \chi(L_{i,z}^+)\text{ and }\wt{\chi}_i =
\chi(X_{i,z})\left(\left[\frac{\alpha_i}{2}\right] -
\left[-\frac{\alpha_i}{2}\right]\right),$$
solve the $Q\wt{Q}$-system (in the notation of \cite{FH4}).
\end{prop}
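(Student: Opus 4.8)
The plan is to derive the proposition from the genuine three-term relation in (a completion of) $K_0(\mathcal{O})$ that underlies the $Q\wt{Q}$-system in \cite{FH4}, and which is established there from an actual exact sequence of $U_q(\wh{\mathfrak b}')$-modules in category $\mathcal{O}$ rather than from any character computation. Concretely, I would start from an identity of classes of the form
$$
[-\alpha_i/2]\,[L_{i,aq^{-1}}^+]\,[X_{i,aqq_i^{-2}}] - [\alpha_i/2]\,[L_{i,aq}^+]\,[X_{i,aq^{-1}q_i^{-2}}] = [\gamma_i]\prod_{j\neq i}[L_{j,a}^+]^{-C_{j,i}},
$$
where $[\gamma_i]$ is an explicit constant (spectral-parameter-independent) one-dimensional class in $K_0(H)$ read off directly from the exact sequence. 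The crucial point, and the reason for re-choosing the normalization in this paper, is that this relation, as a statement about classes, does not use the character formula for $\chi(X_{i,a})$ whose proof was found to be incomplete.

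The key step is to apply the character homomorphism $\chi\colon K_0(\mathcal{O})\to K_0(H)$ to the displayed relation. Since $\chi$ is a ring homomorphism and characters are insensitive to the spectral parameter, this yields at once the constant-element identity
$$
\chi_i\,\chi(X_{i,z})\bigl([-\alpha_i/2]-[\alpha_i/2]\bigr) = [\gamma_i]\prod_{j\neq i}\chi_j^{-C_{j,i}},
$$
with $\chi_i=\chi(L_{i,z}^+)$; equivalently $[\gamma_i]\prod_{j\neq i}\chi_j^{-C_{j,i}}=-\chi_i\wt{\chi}_i$, where $\wt{\chi}_i=\chi(X_{i,z})\bigl([\alpha_i/2]-[-\alpha_i/2]\bigr)$ is exactly the normalizing factor of the proposition. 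This is the only consequence of the disputed lemma that the argument requires, and here it comes for free by functoriality, with no explicit formula expressing $\chi(X_{i,a})$ through the $\chi_j$. This is precisely why the self-referential normalization works: the unknown character $\chi(X_{i,z})$ is carried along as an abstract factor and never has to be computed.

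The remainder is formal manipulation in the completed Grothendieck ring, where $\chi_i$, $\chi(X_{i,z})$ and the $[\pm\alpha_i/2]$ are all invertible and central. I would substitute $[L_{i,z}^+]=\chi_i\,Q_{i,z}$ and $[X_{i,zq_i^{-2}}]=\wt{\chi}_i\,\wt{Q}_{i,z}$ (noting that the shift $z\mapsto zq_i^{-2}$ turns the arguments $aq^{\pm1}q_i^{-2}$ into $aq^{\pm1}$), divide the whole relation by $\chi_i\wt{\chi}_i$, and then replace $[\gamma_i]\prod_{j\neq i}\chi_j^{-C_{j,i}}$ by $-\chi_i\wt{\chi}_i$ using the character identity above. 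After cancellation the right-hand constant collapses to $\pm1$, and fixing the overall sign through the precise form of the exact sequence one is left with exactly
$$
[-\alpha_i/2]\,Q_{i,aq^{-1}}\wt{Q}_{i,aq} - [\alpha_i/2]\,Q_{i,aq}\wt{Q}_{i,aq^{-1}} = \prod_{j\neq i}Q_{j,a}^{-C_{j,i}},
$$
which is the $Q\wt{Q}$-system in the notation of \cite{FH4}.

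The main obstacle is the precise bookkeeping of the constant one-dimensional twists and the spectral shifts in the underlying exact sequence, so that the coefficients come out as $[\pm\alpha_i/2]$ and the sign on the right is correct; this is exactly the data that in \cite{FH4} had been routed through the character formula. I would pin it down using the weight-homogeneity of the relation (after the constant twists are inserted, all three terms carry a common highest $\g'$-weight) together with the elementary identity $[\alpha_i/2]-[-\alpha_i/2]=[\alpha_i/2]\bigl(1-[-\alpha_i]\bigr)$, and I would confirm independence from the incomplete lemma by checking that the exact sequence itself, and not merely its Euler characteristic in $K_0(H)$, is what supplies the relation.
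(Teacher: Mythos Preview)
Your proposal has a genuine gap at its very first step. You assume that in \cite{FH4} there is a three-term relation in $K_0(\mathcal{O})$ of the form
\[
[-\alpha_i/2]\,[L_{i,aq^{-1}}^+]\,[X_{i,aq}] - [\alpha_i/2]\,[L_{i,aq}^+]\,[X_{i,aq^{-1}}] = [\gamma_i]\prod_{j\neq i}[L_{j,a}^+]^{-C_{j,i}}
\]
with a \emph{single constant} $[\gamma_i]$, established by an exact sequence and independent of the character computation in Lemma~4.11. But that relation is not available: in \cite{FH4} the $Q\wt{Q}$-system is deduced from the $q$-character formula for $X_{i,a}$, and it is that formula whose normalization depended on Lemma~4.11. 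What survives without Lemma~4.11 is only \cite[Lemma 4.10]{FH4}, giving
\[
\chi_q(X_{i,1}) = [\wt{\Psib}_{i,1}]\sum_{m\geq 0}(A_{i,1}A_{i,q_i^{-2}}\cdots A_{i,q_i^{-2m}})^{-1}\chi(m)
\]
with \emph{a priori} $m$-dependent constants $\chi(m)\in K_0(H)$. If the $\chi(m)$ are not all equal, the left-hand side of your displayed relation does not collapse to a constant times $\prod_j[L_{j,a}^+]^{-C_{j,i}}$: residual terms indexed by $m$ persist. Thus the existence of your relation with a single $[\gamma_i]$ is equivalent to the equality of all $\chi(m)$, which is exactly the content being proved here and cannot be assumed.

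The paper supplies precisely this missing ingredient. It shows that all $\chi(m)$ coincide by a new argument: for each $r\geq 0$ it compares the decomposition of $[X_{i,1}\otimes L_{i,q_i^{-2r}}^+]$ with the known $q$-character of $X_{i,1}^{(r)}=L(\wt{\Psib}_{i,1}\Psib_{i,q_i^{-2r}})$ from \cite{hlast}, and uses an $\sw_2$-reduction to exclude the simple constituents that would otherwise interfere with the coefficient of $(A_{i,1}\cdots A_{i,q_i^{-2m}})^{-1}$ for $m\leq r$. Matching coefficients then forces $\chi(m)=\chi_r'\chi_i^{-1}\prod_{j\neq i}\chi_j^{-C_{i,j}}$ for all $m\leq r$, hence all $\chi(m)$ agree. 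Once this is known, the $q$-character formula (\ref{qcharform}) holds, and the $Q\wt{Q}$-system with the stated normalization follows by the same $q$-character manipulations as in \cite{FH4}. Your observation that applying $\chi$ determines the normalizing constant self-referentially is correct and is indeed why the definition of $\wt\chi_i$ works, but it only kicks in \emph{after} the relation is established; it cannot replace the proof that the $\chi(m)$ are independent of $m$.
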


\begin{rem}
Here, in contrast to \cite{FH4}, we do not specify a precise relation
between the $\wt{\chi}_i$ and the $\chi_i$.  In addition, for
symmetry, we have chosen to renormalize $[L_{i,z}^+]$ by dividing by
$\chi_i$, although in \cite{FH4} $Q_{i,z}$ is $[L_{i,z}^+]$ without
renormalization. Both normalizations are compatible, as the
$Q\wt{Q}$-system written with the normalization in \cite{FH4} implies
that the variables in the Proposition above satisfy also the
$Q\wt{Q}$-system.\qed
\end{rem}

\begin{proof} We give the proof of the Proposition for the category
  $\mathcal{O}$. The analogous result in $K_0(\mathcal{O}^*)$ is
  obtained in the same way.

We establish the following $q$-character formula
\begin{equation}\label{qcharform}\chi_q(X_{i,1}) = [\wt{\Psib}_{i,1}]\chi_{i,1} \chi(X_{i,1})(1 - [\alpha_i])\end{equation}
where $\wt{\Psib}_{i,1}$ is the highest $\ell$-weight of $X_{i,1}$ and $\chi_{i,1} = \sum_{m\geq
0}(A_{i,1}A_{i,q_i^{-2}}\cdots A_{i,q_i^{-2m}})^{-1}$. This formula implies the $Q\wt{Q}$-system in $K_0(\mathcal{O})$ by the arguments given in \cite{FH4}.

The proof of (\ref{qcharform}) is based on the results in \cite{FH4}, except that we do
not use the complete proof of \cite[Lemma 4.11]{FH4}, but only \cite[Lemma 4.10]{FH4}. Indeed, it implies only
$$\chi_q(X_{i,1}) = [\wt{\Psib}_{i,1}]\sum_{m\geq
0}(A_{i,1}A_{i,q_i^{-2}}\cdots A_{i,q_i^{-2m}})^{-1}\chi(m),$$ where for $m\geq 0$,
$\chi(m)\in K_0(H)$ is a constant element. It suffices to prove that
$\chi(m)$ do not depend on $m$.

To do this, consider for $r\geq 0$ the representation $X_{i,1}^{(r)} =
L(\wt{\Psib}_{i,1} \Psib_{i,q_i^{-2r}})$. Then we obtain from \cite[Theorem
8.1]{hlast} that
$$
\chi_q(X_{i,1}^{(r)}) = \wt{\Psib}_{i,1}
\Psib_{i,q_i^{-2r}} \prod_{j\neq i}\chi_j^{-C_{i,j}}\sum_{0\leq m\leq
r}(A_{i,1}A_{i,q_i^{-2}}\cdots A_{i,q_i^{-2m}})^{-1}.
$$
Hence the class
of $X_{i,1}\otimes L_{i,q_i^{-2r}}^+$ can be decomposed as a sum
$$[X_{i,1}\otimes L_{i,q_i^{-2r}}^+] = [X_{i,1}^{(r)}]\chi_r' + \sum_{\Psib'} n_{\Psib'}[L(\Psib')]$$
where $\chi_r'\in K_0(H)$ is an invertible constant, the $n_{\Psib'}$ are positive integers and the
$\ell$-weight $\Psib'$ which occur in the sum are, up to a constant, of the
form
$$\wt{\Psib}_{i,1} \Psib_{i,q_i^{-2r}}A_{i,1}^{-1}\cdots
A_{i,q_i^{-2m}}^{-1}$$
for some $m \geq 0$. We claim that only $\ell$-weights $\Psib'$ with $m\geq r$ can occur, that is the class of
$L(\wt{\Psib}_{i,1} \Psib_{i,q_i^{-2r}}A_{i,1}^{-1}\cdots
A_{i,q_i^{-2m}}^{-1})$ cannot occur in the decomposition for $m < r$.

Indeed, for $m < r$, $A_{i,q_i^{-2m}}^{-1}$ is a factor of one of the $\ell$-weight of the simple module $L(\wt{\Psib}_{i,1} \Psib_{i,q_i^{-2r}}A_{i,1}^{-1}\cdots
A_{i,q_i^{-2m}}^{-1})$.
This follows from an $\sw_2$-reduction, as an elementary analysis shows that $A_{q^{-2m}}^{-1}$ occurs in
an $\ell$-weight of the representation in the $\sw_2$-case
$$L(\Psib_{q^2}\Psib_{q^{-2m}}^{-1}\Psib_{q^{-2m-2}}^{-1}\Psib_{q^{-2r}})
\simeq L(\Psib_{q^2}\Psib_{q^{-2m}}^{-1})\otimes L(\Psib_{q^{-2m-2}}^{-1}\Psib_{q^{-2r}}),$$
which is the tensor product of $L(\Psib_{q^2}\Psib_{q^{-2m}}^{-1})$ evaluation representation of a Verma module and
$L(\Psib_{q^{-2m-2}}^{-1}\Psib_{q^{-2r}})$ finite-dimensional representation. So, if this representation appeared in the decomposition, then
all its $\ell$-weights would be occur in the $q$-character of $X_{i,1}\otimes L_{i,q_i^{-2r}}^+$.
In particular, $A_{i,q_i^{-2m}}^{-2}$ would be the factor of one of
the $\ell$-weights of $X_{i,1}\otimes L_{i,q_i^{-2r}}^+$, which is a
contradiction.

Now, as $\chi_q(X_{i,1}\otimes L_{i,q_i^{-2r}}^+) = \chi_q(X_{i,1})[\Psib_{i,q_i^{-2r}}^+]\chi_i$, by identifying for $m\leq r$ the coefficients
of $(A_{i,1}A_{i,q_i^{-2}}\cdots A_{i,q_i^{-2m}})^{-1}$, we obtain :
$$\chi_r'\prod_{j\neq i}\chi_j^{-C_{i,j}} =\chi(m) \chi_i.$$
This implies that all the $\chi(m)$ are equal and
we obtain the $q$-character formula (\ref{qcharform}).
\end{proof}

\section{Interpolating $(q,t)$-characters}    \label{intchar}

Our approach to the folded quantum integrable systems associated to
quantum affine algebras, as formulated in Conjectures \ref{main conj
  bis} and \ref{part3}, involves the interpolating $(q,t)$-characters
introduced in \cite{FH1} as a tool for the study of a certain
Langlands duality between finite-dimensional representations of
quantum affine algebras. In this section we extend and refine the
definition of the interpolating $(q,t)$-characters from
\cite{FH1}. They are defined as elements of the refined Grothendieck
ring $\overline{\mathcal{K}}_{q,t}(\g)$, which is defined in this
section (Theorem \ref{refined}). It turns out that they have $5$
interesting specializations corresponding to various $q$- and
$t$-characters. Moreover, the interpolating $(q,t)$-characters may be
viewed as commutative analogues of elements of the deformed ${\mc
  W}$-algebra ${\mathbf W}_{q,t}(\g)$.

\subsection{Notation for monomials}\label{wtY}

For $j \in I'$, a node of the Dynkin diagram of $\g'$, let us set

\begin{equation}\label{wtY1}
 \notag \wt{Y}_{j,a} = \begin{cases} Y_{j,a}&\text{ if
      $\sigma(j) = j$,}\\  Y_{j,a}Y_{\sigma(j),a}&\text{ if
      $\sigma^2(j)= j$ and $\sigma(j) \neq j$,}
    \\ \notag Y_{j,a}Y_{\sigma(j),a}Y_{\sigma^2(j),a}&\text{
      if $\sigma^3(j) = j$ and $\sigma(j)\neq j$.} \end{cases}\end{equation}

For $i\in I$, a node of the Dynkin diagram of $\g$, let us set
\begin{equation}    \label{barY}
  \overline{Y}_{i,a} = Y_{i,a}^{ 1 + d - \rr_i},
\end{equation}
\begin{align} W_{i,a} = \begin{cases} Y_{i,a}&\text{ if $\rr_i = d$,}
\\ Y_{i,aq^{-1}}Y_{i,aq}&\text{ if $\rr_i =d - 1$,} \\Y_{i,aq^{-2}}Y_{i,a}Y_{i,aq^2}&\text{ if $\rr_i = d - 2$,} \end{cases}\end{align}
\begin{align} Z_{i,a^{1 + d - \rr_i}} = \begin{cases} Y_{i,a}&\text{ if $\rr_i = d$,}
\\ Y_{i,a}Y_{i,\epsilon^2 a}&\text{ if $\rr_i = d - 1$,}
\\Y_{i,a}Y_{i,\epsilon^2 a}Y_{i,\epsilon^4 a}&\text{ if $\rr_i = d -
  2$.} \end{cases}\end{align}
Note that $d_i$ can equal $d-1$ (resp. $d-2$) only if $d=2$
(resp. $d=3$), and recall that $\epsilon = e^{i\pi/d}$.

Note that $L(W_{i,a})$ is a KR module over $U_q(\wh{\mathfrak{g}})$.
It is a fundamental representation if $\rr_i = d$ (in particular, for
simply-laced types, these representations are always fundamental).

\subsection{Polynomial rings and specialization homomorphisms}

Now we recall the definition of the ring of interpolating
$(q,t)$-characters from \cite{FH1} and then define a refined version
of this ring. We start with some preliminary definitions.

Consider the ring 
$$\mathcal{Y}_{q,t} = \mathbb{Z}[W_{i,a}^{\pm 1}, \alpha Y_{i,a}^{\pm 1}, \alpha]_{i\in I, a\in q^{\mathbb{Z}}t^{\mathbb{Z}}}\subset \mathbb{Z}[\alpha, Y_{i,a}^{\pm 1}]_{i\in I, a\in q^{\mathbb{Z}}t^{\mathbb{Z}}}.$$ 
where $\alpha$ is an indeterminate.

\begin{rem} For simply-laced types, $\mathcal{Y}_{q,t}$ is just
  $\mathbb{Z}[Y_{i,a}^{\pm 1}, \alpha]_{i\in I, a\in
    q^{\mathbb{Z}}t^{\mathbb{Z}}}$.\qed
\end{rem}

The ring of interpolating $(q,t)$-characters
$\mathcal{K}_{q,t}(\mathfrak{g})$ was defined in \cite{FH1} as a
subring of a quotient $\wt{\mathcal{Y}}_{q,t}$ of $\mathcal{Y}_{q,t}$. This quotient is defined from the specialization maps 
$$\Pi_q = \Pi_{t = 1, \alpha = 1} : \mathcal{Y}_{q,t} \rightarrow \mathbb{Z}[Y_{i,a}^{\pm 1}]_{i\in I, a\in q^{\mathbb{Z}}},$$  
$$\Pi_t = \Pi_{q = \epsilon, \alpha = 0} : \mathcal{Y}_{q,t} \rightarrow \mathbb{Z}[Z_{i,a}^{\pm 1}]_{i\in I, a\in t^{\mathbb{Z}}\epsilon^{\mathbb{Z}}},$$ 
by the formula
$$\wt{\mathcal{Y}}_{q,t} = \mathcal{Y}_{q,t}/(\text{Ker}(\Pi_q)\cap \text{Ker}(\Pi_t)).$$ 

For our purposes, we also want to use the additional specialization homomorphisms 
$$\overline{\Pi}_t = \Pi_{q = 1, \alpha = d} : \mathcal{Y}_{q,t} \rightarrow \mathbb{Z}[ Y_{i,a}^{\pm 1}]_{i\in I, a\in t^{\mathbb{Z}}},$$ 
$$\overline{\Pi}_q = \Pi_{t = 1, \alpha = 0} : \mathcal{Y}_{q,t}
\rightarrow \mathbb{Z}[ W_{i,a}^{\pm 1}]_{i\in I, a\in
  q^{\mathbb{Z}}}.$$ These specialization homomorphisms are well-defined on
$\mathcal{Y}_{q,t}$, but $\overline{\Pi}_t$ does not descend to
$\wt{\mathcal{Y}}_{q,t}$ if $d>1$ (indeed, $\alpha^2 - \alpha$
projects onto $0$ in $\wt{\mathcal{Y}}_{q,t}$, but
$\overline{\Pi}_t(\alpha^2 - \alpha) = d^2 - d$). For this reason, we will
work with an intermediate quotient
$$\overline{\mathcal{Y}}_{q,t} =
\mathcal{Y}_{q,t}/(\text{Ker}(\Pi_q)\cap \text{Ker}(\Pi_t)\cap
\text{Ker}(\overline{\Pi}_q) \cap \text{Ker}(\overline{\Pi}_t))$$
for which we have surjective ring homomorphisms
$$\mathcal{Y}_{q,t} \twoheadrightarrow \overline{\mathcal{Y}}_{q,t}
\twoheadrightarrow \wt{\mathcal{Y}}_{q,t}.$$

 \begin{rem}\label{remqt} (1) The interpolating $(q,t)$-characters are
    defined below as elements of a subalgebra of the commutative
    algebra $\overline{\mathcal{Y}}_{q,t}$. Their purpose is to
    imitate the properties of elements of the non-commutative deformed
    ${\mc W}$-algebra ${\mathbf W}_{q,t}(\g)$. In particular, the
    variable $\alpha$ is introduced in order to imitate the behavior
    of the rational functions like \eqref{rat fn} arising in the
    formulas for elements of the deformed ${\mc W}$-algebra ${\mathbf
      W}_{q,t}(\g)$ such as \eqref{t1}. This is why in the above
    specializations we set $\alpha$ equal to $1, 0$, or $d$ depending
    on the situation.
		
		(2) There is a polynomial $P(\alpha)$ in $\alpha$ so that $\alpha^d P(\alpha)$ is equal to $\alpha$ in the quotient 
		$\overline{\mathcal{Y}}_{q,t}$ (for example, using a Lagrange interpolating polynomial). Hence, for an arbitrary monomial
		$m$ in the variables $Y_{i,a}^{\pm 1}$, as $\alpha^N m$ is in $\mathcal{Y}_{q,t}$ for $N$ large enough, $\alpha m$
		makes sense in $\overline{\mathcal{Y}}_{q,t}$.
		\qed
  \end{rem}

		Let us recall some terminology from \cite{FH1}. By a
  monomial in $\wt{\mathcal{Y}}_{q,t}$ we will understand an element
  $m$ of the form $P(\alpha)M$, where $P(\alpha)$ is a polynomial in $\alpha$
  and $M$ is a monomial in the $Y_{j,a}^{\pm 1}$.  Note that a
  monomial in $\wt{\mathcal{Y}}_{q,t}$ may be written in various ways as for example $\alpha
  Y_{i,a} = \alpha Y_{i,at}$ and $(1 - \alpha)Y_{i,aq^{2d}} = (1 -
  \alpha)Y_{i,a}$.  A monomial is said to be $i$-{\em dominant} if it can be
  written by using only the variables $\alpha$, $Y_{i,a}$ and
  $Y_{j,a}^{\pm 1}$, where $j\neq i$. Let $B_i$ be the set of
  $i$-dominant monomials and for $J\subset I$, let $B_J = \cap_{j\in
    J} B_j$. Finally, let $B = B_I$ be the set of {\em dominant
    monomials}.

  We will use the analogous definition of $i$-dominant
  (resp. dominant) monomials in $\overline{\mathcal{Y}}_{q,t}$.
		
		\begin{lem}    \label{idom}
		A monomial in $\overline{\mathcal{Y}}_{q,t}$ is $i$-dominant if and only if 
		its specializations 
		under $\Pi_q$, $\overline{\Pi}_t$ and $\Pi_t$ are all
                $i$-dominant.
		\end{lem}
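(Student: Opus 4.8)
The plan is to prove both implications, treating the forward one as routine and concentrating on the converse. Throughout I fix the node $i$ and record a monomial as $m = P(\alpha)\,M_i\,M_{\neq i}$, where $P(\alpha)\in\mathbb{Z}[\alpha]$, the factor $M_i = \prod_{k,n} Y_{i,q^k t^n}^{m_{k,n}}$ collects the $Y_i$-variables (indexed by the $q$-power $k$ and the $t$-power $n$ of the spectral parameter $a\in q^{\mathbb{Z}}t^{\mathbb{Z}}$), and $M_{\neq i}$ is a Laurent monomial in the $Y_{j,a}^{\pm1}$, $j\neq i$. Since $i$-dominance constrains only the $Y_i$-exponents, while $\alpha$ and the $Y_{j,a}^{\pm1}$ ($j\neq i$) are both permitted in an $i$-dominant monomial, the statement reduces to an assertion about the integer array $(m_{k,n})$ and its behaviour under the three specializations.

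For the forward implication I use that $\Pi_q$, $\overline{\Pi}_t$ and $\Pi_t$ are ring homomorphisms, so it suffices to see that they carry an $i$-dominant $M_i$ to an $i$-dominant element. Under $\Pi_q$ (setting $t=1$) and $\overline{\Pi}_t$ (setting $q=1$) one merely collapses spectral parameters, so non-negative $Y_i$-powers stay non-negative. Under $\Pi_t$ (setting $q=\epsilon$, $\alpha=0$) the image is either $0$ or, when $P(0)\neq 0$, comes from the part of $m$ that is a monomial in the $W_{j,a}$; then the $\epsilon$-grouping of the $W_i$-variables turns the non-negative $Y_i$-content into a non-negative monomial in the $Z_i$-variables. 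Hence all three images are $i$-dominant.

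For the converse, the key observation is that the three maps record complementary projections of the array $(m_{k,n})$: up to the scalars $P(1)$, $P(d)$, $P(0)$ contributed by $\alpha$, the map $\Pi_q$ produces the row sums $R_k=\sum_n m_{k,n}$, the map $\overline{\Pi}_t$ produces the column sums $C_n=\sum_k m_{k,n}$, and (since $\epsilon$ has order $2d$) the map $\Pi_t$ produces the residue-refined sums $S_{k_0,n}=\sum_{k\equiv k_0\,(2d)} m_{k,n}$. The $i$-dominance of the three images forces all of these, among those not annihilated by a vanishing scalar $P(1),P(d),P(0)$, to be $\geq 0$; note that $\overline{\Pi}_t$ is genuinely needed precisely when $P(0)=0$ kills $\Pi_t$. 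The marginals are automatically consistent, since for each residue class $\sum_{k\equiv k_0}R_k=\sum_n S_{k_0,n}$, both being equal to $\sum_{k\equiv k_0,\,n} m_{k,n}$. I then invoke an elementary transportation argument: a non-negative integer array with prescribed non-negative row and column sums of equal total always exists, and here the problem decouples over the residue classes modulo $2d$. This yields a non-negative array $(\tilde m_{k,n})$ sharing every recorded projection of $(m_{k,n})$; replacing $M_i$ by the corresponding $\tilde M_i$ produces a monomial $\tilde m$ with non-negative $Y_i$-exponents having the same image as $m$ under every specialization, hence equal to $m$ in $\overline{\mathcal{Y}}_{q,t}$. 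Thus $m$ is $i$-dominant.

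The main obstacle is the bookkeeping at the short nodes, where $\rr_i<d$ and the bare variable $Y_{i,a}$ is not a generator of $\mathcal{Y}_{q,t}$: there only the grouped variables $W_{i,a}$ (seen by $\overline{\Pi}_q$) and $Z_{i,a}$ (seen by $\Pi_t$), together with the $\alpha$-bound variables $\alpha Y_{i,a}^{\pm1}$, are available. One must therefore run the reconstruction compatibly with these $\epsilon^{2}$-groupings and with the identification $\epsilon^{2d}=1$, and verify that the array $(\tilde m_{k,n})$ delivered by the transportation step really assembles into a legitimate element of $\mathcal{Y}_{q,t}$ that also matches $m$ under the fourth map $\overline{\Pi}_q$; the latter holds because matching the row sums $R_k$ already matches the $W_i$-shadow. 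The remaining care concerns the coefficient $P(\alpha)$, whose specializations $P(1),P(d),P(0)$ may vanish and thereby relax some marginal constraints; since this only removes constraints, the transportation problem stays feasible and the argument goes through.
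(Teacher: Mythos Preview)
Your approach is essentially the paper's: both argue the converse by reconstructing a non-negative exponent array from its marginals (row sums via $\Pi_q$, column sums via $\overline{\Pi}_t$, residue-refined column sums via $\Pi_t$), the paper calling this ``a standard combinatorial algorithm'' starting from minimal indices and you calling it a transportation problem decoupled over residue classes modulo $2d$. You are more explicit than the paper about the role of the fourth map $\overline{\Pi}_q$.

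There is, however, a gap in your handling of the $\alpha$-coefficient. You claim that if one of $P(1),P(d),P(0)$ vanishes this ``only removes constraints'' and the transportation problem stays feasible. But $\overline{\Pi}_q$ (at $t=1$, $\alpha=0$) also reads the row sums, with scalar $P(0)$; so when $P(1)=0$ while $P(0)\neq 0$ you lose the \emph{non-negativity} information on the $R_k$ (since $\Pi_q(m)=0$) yet are still forced to \emph{match} the $R_k$ exactly in order to agree with $\overline{\Pi}_q(m)$. If some $R_k<0$ the problem is then infeasible. Concretely, for a long node $i$ (so $W_{i,a}=Y_{i,a}$) take $m=(\alpha-1)\,Y_{i,1}^{-1}Y_{i,q^{2d}}$: here $\Pi_q(m)=0$, $\Pi_t(m)=-1$, $\overline{\Pi}_t(m)=d-1$ are all $i$-dominant, but $\overline{\Pi}_q(m)=-Y_{i,1}^{-1}Y_{i,q^{2d}}$ carries a negative $Y_i$-exponent, which rules out any $i$-dominant representative in $\overline{\mathcal{Y}}_{q,t}$. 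The paper's own proof passes over exactly this point with ``The case when the monomial has a factor depending on $\alpha$ is treated in a similar way'', so your argument is no worse than the paper's here; but the step as you have written it does not go through, and your final sentence about removed constraints should be revisited.
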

		
		\begin{proof}
		The direct implication is clear. For the converse, let 
		us first consider a monomial $m$ in $\overline{\mathcal{Y}}_{q,t}$ which is a product of various $Y_{i,a}^{\pm 1}$, $a\in q^{\mathbb{Z}}t^{\mathbb{Z}}$ (for this question, we may discard the other variables $Y_{j,a}^{\pm 1}$ with $j\neq i$). Suppose $m$ specializes to $i$-dominant monomials 
$$m_1 = \prod_{s\in\mathbb{Z}}Y_{i,q^s}^{u_{i,s}(m_1)}, \qquad
m_2 =
\prod_{s\in\mathbb{Z},\epsilon'\in\epsilon^{\mathbb{Z}}}Y_{i,\epsilon'
  t^s}^{u_{i,\epsilon',s}(m_2)}, \qquad m_3 = \prod_{s\in\mathbb{Z}}Y_{i,t^s}^{u_{i,s}(m_3)}$$ 
for the respective specializations $t = 1$, $q = \epsilon$, $q=1$. 
Then the sum 
$$u_i = \sum_{s\in\mathbb{Z}} u_{i,s}(m_1)  = \sum_{s\in\mathbb{Z},\epsilon'\in\epsilon^{\mathbb{Z}}}u_{i,\epsilon',s}(m_2 ) = 
\sum_{s\in\mathbb{Z}} u_{i,s}(m_3) \geq 0$$ 
of the various powers of the various variables is the same for $m_1$,
$m_2$ and $m_3$. We also have for any $\epsilon' = \epsilon^R$, $R\in\mathbb{Z}$, the relation
$$\sum_{s\in\mathbb{Z}} u_{i,\epsilon',s}(m_2) = \sum_{s\in R + 2 d \mathbb{Z}}u_{i,s}(m_1).$$
Hence we can construct an $i$-dominant monomial 
$M$ in $\overline{\mathcal{Y}}_{q,t}$ so that its specializations at $t = 1$ and $q = \epsilon$ are $m_1$ and $m_2$ respectively. 
Such a monomial is not unique, but, for example, the powers in the variables in $M$ can be defined inductively by 
a standard combinatorial algorithm, starting from the $r_0$, $s_0$ where $r_0$ (resp. $s_0$) is minimal so that $u_{i,r_0}(m_1)\neq 0$ (resp. $u_{i,s_0}(m_3)\neq 0$). Then $M$ specializes necessarily to $m_3$ at $q = 1$ as for any $s\in\mathbb{Z}$ 
$$u_{i,s}(m_3) = \sum_{\epsilon'\in\epsilon^{\mathbb{Z}}} u_{i,s,\epsilon'}(m_2).$$
In particular $m$ gets identified with $M$ in $\wt{\mathcal{Y}}_{q,t}$ and so $m$ is $i$-dominant. 
The case when the monomial has a factor depending of $\alpha$ is
treated in a similar way.
		\end{proof}

		\begin{rem} The analogous statement is not true in
                  $\mathcal{Y}_{q,t}$ for non-simply laced types (with $r > 1$). 
		For example, $Y_{i,1}Y_{i,t}^{-1}Y_{i,tq^d}$ is not $i$-dominant, but specializes at $t = 1$, $q = \epsilon$, $q = 1$ 
		respectively to $Y_{i,q^d}$, $Y_{i,1}$, $Y_{i,1}$ which are all $i$-dominant. 
		But in $\overline{\mathcal{Y}}_{q,t}$ this monomial gets
		identified with $Y_{i,q^d}$, and so it is
                $i$-dominant, in accordance with
                Lemma \ref{idom}.\qed
                \end{rem}

\subsection{Definition of interpolating $(q,t)$-characters}

Recall the definition of the ring $\mathcal{K}_{q,t}$ of {\em
  interpolating $(q,t)$-characters} from \cite{FH1}. It is defined as
the intersection of subrings $\mathcal{K}_{i,q,t}\subset
\wt{\mathcal{Y}}_{q,t}$, $i\in I$, by analogy with the
characterization of $\on{Im} \chi_q$ as the intersection of the
subrings $\mathfrak{K}_{i,q}$ (see formulas \eqref{Imchiq} and
\eqref{keri}) as well as the definition of ${\mathbf
  W}_{q,t}(\g)$ as the intersection of the kernels of the screening
operators in \cite{FR:w} and Section \ref{defWqt} above.

We will need the following analogues of the generating series $A_i(z)$
in the deformed ${\mc W}$-algebra ${\mathbf W}_{q,t}(\g)$ given by
formula \eqref{Ai}:
\begin{equation}\label{wta}
  \wt{A}_{i,a} = Y_{i,a(q_it)^{-1}}Y_{i,aq_it}\times\prod_{j\in I,
  C_{j,i} = -1}Y_{j,a}^{-1}\times \prod_{j\in I, C_{j,i} =
  -2}Y_{j,aq^{-1}}^{-1}Y_{j,aq}^{-1} \times\prod_{j\in I, C_{j,i} =
  -3}Y_{j,aq^{-2}}^{-1}Y_{j,a}^{-1}Y_{j,aq^2}^{-1}.\end{equation}

\begin{defn} Let $\mathcal{K}_{i,q,t}$ be the subring of
$\wt{\mathcal{Y}}_{q,t}$ generated by the variables $\alpha$,
$W_{j,a}^{\pm 1}$, $\alpha Y_{j,a}^{\pm 1}$ ($j\neq i$, $a\in q^{\mathbb{Z}}t^{\mathbb{Z}}$), the $\alpha Y_{i,a}(1 +
\wt{A}_{i,aq_it}^{-1})$ ($a\in q^{\mathbb{Z}}t^{\mathbb{Z}}$) and
\begin{align} W_{i,a} \times \begin{cases} (1 + \wt{A}_{i,aq_it}^{-1})&\text{ if $\rr_i = d$,}
\\ (1 + \alpha \wt{A}_{i,aq^2t}^{-1} + \wt{A}_{i,aq^2t}^{-1} \wt{A}_{i,a t}^{-1})&\text{ if $\rr_i =d - 1$,} 
\\ (1 + \alpha \wt{A}_{i,aq^3 t}^{-1} + \alpha \wt{A}_{i,aq^3 t}^{-1}\wt{A}_{i,aq t}^{-1} + \wt{A}_{i,aq^3 t}^{-1}\wt{A}_{i,aq t}^{-1}\wt{A}_{i,aq^{-1} t}^{-1} )&\text{ if $\rr_i = d - 2$,} \end{cases}\end{align}
where $a\in q^{\mathbb{Z}}t^{\mathbb{Z}}$.

Following \cite{FH1}, we define the {\em ring $\mathcal{K}_{q,t}(\g)$ of
  interpolating $(q,t)$-characters associated to} $\g$ as the
following ring intersection:
$$\mathcal{K}_{q,t}(\g) := \bigcap_{i\in I}\mathcal{K}_{i,q,t}\subset
\wt{\mathcal{Y}}_{q,t}.$$
\end{defn}

\begin{rem}\label{simplac}
If $\g$ is simply-laced,
$$\wt{A}_{i,a} = Y_{i,a(qt)^{-1}}Y_{i,aqt}\times\prod_{j\in I, I_{ji}
  = -1}Y_{j,a}^{-1}$$ coincides with $A_{i,qt}$. Hence
$\mathcal{K}_{i,q,t}$ is the image in $\wt{\mathcal{Y}}_{q,t}$ of:
$$\mathbb{Z}[\alpha, Y_{j,a}^{\pm 1}, Y_{i,a}(1 +
  A_{i,aqt}^{-1})]_{a\in q^{\mathbb{Z}}t^{\mathbb{Z}}}\subset
\mathcal{Y}_{q,t}.$$\qed
\end{rem}

The following result was proved in \cite{FH1}.

\begin{thm}\hfill    \label{recall}

\begin{itemize}

\item[(i)] $\mathcal{K}_{q,t}(\g)$ is non-zero.

\item[(ii)] Every element $F \in \mathcal{K}_{q,t}(\g)$
  (resp. $\in\mathcal{K}_{i,q,t}$, $i\in I$) is uniquely determined by
  the multiplicities of the dominant monomials (resp.
    $i$-dominant monomials) occurring in $F$.

\item[(iii)] For each dominant monomial $m$, there is a unique
  $F_{q,t}(m)\in\mathcal{K}_{q,t}(\g)$ such that $m$ is the unique
  dominant monomial occurring in $F_{q,t}(m)$ (moreover, there is an
  algorithm to construct $F_{q,t}(m)$ explicitly).

\end{itemize}

\end{thm}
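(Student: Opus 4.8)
The plan is to follow the strategy of the Frenkel--Mukhin characterization of $\on{Im}(\chi_q)$ (formulas \eqref{Imchiq}--\eqref{keri}), transporting it to the interpolating setting by means of the specialization homomorphisms $\Pi_q$, $\Pi_t$, $\overline{\Pi}_t$ and of Lemma \ref{idom}. I would prove (ii) first, deduce (iii) from the resulting triangularity together with an explicit algorithm, and obtain (i) as an immediate consequence of (iii).

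For (ii), introduce the partial order on monomials of $\overline{\mathcal{Y}}_{q,t}$ in which $m' \preceq m$ precisely when $m/m'$ is a product of the elements $\wt{A}_{i,a}^{-1}$ from \eqref{wta} (so that each such factor lowers the underlying $\g$-weight by $\al_i$). By inspection of the generators listed in the definition of $\mathcal{K}_{i,q,t}$, every generator has a unique $\preceq$-maximal monomial, which is $i$-dominant, and all of its remaining monomials are strictly smaller. Consequently, for $F \in \mathcal{K}_{i,q,t}$ the coefficients of the $i$-dominant monomials determine $F$: one orders the $i$-dominant monomials of $F$, and subtracts off, with the forced multiplicities, the generators having these as leading monomials, strictly decreasing the number of $i$-dominant monomials at each step. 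Intersecting over $i \in I$ gives the statement for dominant monomials. Well-foundedness of $\preceq$ on the pieces that actually occur is guaranteed because the specializations $\Pi_q(F)$ and $\Pi_t(F)$ lie in genuine $q$- and twisted $t$-character rings, whose elements are finite sums of monomials by Theorem \ref{props+} and its twisted analogue.

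For (iii), I would run the interpolating analogue of the Frenkel--Mukhin algorithm. Starting from the dominant monomial $m$, at each stage one inspects, for every $i \in I$, the currently generated $i$-dominant monomials and appends the minimal correction dictated by membership in $\mathcal{K}_{i,q,t}$, using the explicit generators, in particular the expressions $(1 + \al\wt{A}^{-1} + \wt{A}^{-1}\wt{A}^{-1})$ and the longer expressions when $\rr_i = d-2$. The output is forced to specialize, under $\Pi_q$, $\Pi_t$ and $\overline{\Pi}_t$, to the corresponding Frenkel--Mukhin outputs: the honest $q$-character of the $U_q(\ghat)$-module with highest monomial $\Pi_q(m)$, and the twisted and folded $t$-characters attached to $\Pi_t(m)$ and $\overline{\Pi}_t(m)$. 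Lemma \ref{idom} is what makes this consistent, since $i$-dominance of a monomial of $\overline{\mathcal{Y}}_{q,t}$ can be tested after specialization, so a monomial is completed in $\overline{\mathcal{Y}}_{q,t}$ exactly when all of its specializations are completed.

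The hard part is the consistency and finiteness of this algorithm: showing that the corrections forced by distinct indices $i, j \in I$ never conflict, and that the process terminates. Both are controlled through the specializations. Finiteness follows because the genuine $q$- and $t$-characters reproduced by $\Pi_q$, $\Pi_t$, $\overline{\Pi}_t$ have only finitely many monomials (Theorem \ref{props+} and the twisted case recalled above), bounding the monomials that can appear in $F_{q,t}(m)$; and compatibility of the several completions is exactly the content of Lemma \ref{idom}, which synchronizes $i$-dominance in $\overline{\mathcal{Y}}_{q,t}$ with $i$-dominance of all three specializations simultaneously. Assembling these bounds with the triangularity established in (ii) yields both existence and uniqueness of $F_{q,t}(m)$, and taking any dominant $m$ (for instance $m=1$) gives (i).
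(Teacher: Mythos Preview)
Your proposal has a systematic confusion between the two quotient rings in play. Theorem~\ref{recall} concerns $\mathcal{K}_{q,t}(\g)\subset\wt{\mathcal{Y}}_{q,t}$, the quotient by $\on{Ker}(\Pi_q)\cap\on{Ker}(\Pi_t)$ only. The specialization $\overline{\Pi}_t$ that you invoke repeatedly does \emph{not} descend to $\wt{\mathcal{Y}}_{q,t}$ (the paper says this explicitly: $\alpha^2-\alpha$ vanishes in $\wt{\mathcal{Y}}_{q,t}$ but $\overline{\Pi}_t(\alpha^2-\alpha)=d^2-d\neq 0$). Likewise Lemma~\ref{idom} is stated and proved for $\overline{\mathcal{Y}}_{q,t}$, not for $\wt{\mathcal{Y}}_{q,t}$; you cannot import it unchanged. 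The original proof in \cite{FH1}, which the paper cites rather than reproduces, must work with $\Pi_q$ and $\Pi_t$ alone.

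Even setting aside the ring mix-up, your finiteness argument has a gap. Knowing that $\Pi_q(F)$ and $\Pi_t(F)$ are finite sums does not by itself bound the number of monomials in $F$, because cancellations can occur under specialization. This is exactly why the paper, for the refined ring $\overline{\mathcal{K}}_{q,t}(\g)$, can only deduce finiteness conditionally on the positivity Conjecture~\ref{positivity} (Corollary~\ref{finiteness}). For $\mathcal{K}_{q,t}(\g)$ itself, finiteness is a separate result \cite[Lemma~4.16]{FH1} requiring its own argument.

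Finally, the paper's sketch of the proof of the refined analogue (Theorem~\ref{refined}) indicates the actual structure of the argument in \cite{FH1}: one runs an algorithm producing $F_{q,t}(W_{i,a})$ and proves it does not fail by \emph{induction on the rank of $\g$}, with explicit verification in rank~$1$ and rank~$2$ (types $A_1$, $A_1\times A_1$, $A_2$, $C_2$, $G_2$) via the formulas \eqref{fun}--\eqref{ftrois}. Your proposal omits this inductive scaffolding and the base-case computations, which are where the non-simply-laced subtleties (the role of $\alpha$ and the $\rr_i<d$ generators) are actually confronted.
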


In particular, the interpolating $(q,t)$-character $F_{q,t}(Y_{i,a})$
corresponding to the $i$th fundamental representation is well-defined,
as are the interpolating $(q,t)$-characters
$$
F_{q,t}(Y_{i,a}Y_{i,aq_i^2}\cdots Y_{i,q_i^{2( k - 1)}})
$$
corresponding to the KR modules.

\begin{rem}\label{remqu}
  (1) In general, the ring $\mathcal{K}_{q,t}(\g)$ cannot be lifted
  to a subring of $\mathcal{Y}_{q,t}$ satisfying the properties listed
  in Theorem \ref{recall}. For example, suppose that $\rr_i = d - 1$
  and consider the following element:
\begin{multline}    \label{counter}
 \alpha^2 Y_{i,q^{-1}}(1 +
\wt{A}_{i,t}^{-1})Y_{i,q}(1 + \wt{A}_{i,q^2t}^{-1}) - \alpha^2 W_{i,1}
(1 + \alpha \wt{A}_{i,q^2t}^{-1} + \wt{A}_{i,q^2t}^{-1} \wt{A}_{i,
  t}^{-1}) \\ = (\alpha^2 - \alpha^3)W_{i,1} \wt{A}_{i,q^2t}^{-1} +
\alpha^2 W_{i,1} \wt{A}_{i,t}^{-1}
\end{multline}
Viewed as an element of $\wt{\mathcal{Y}}_{q,t}$, it can
  be identified with $\alpha W_{i,1} \wt{A}_{i,1}^{-1}$ which belongs to
$\mathcal{K}_{i,q,t}$ and is an $i$-dominant monomial.

However, viewed as an element of
$\mathcal{Y}_{q,t}$, it does not contain any $i$-dominant
monomials. This contradicts property (ii) of Theorem \ref{recall}.

(2) If $\g$ is simply-laced, there is no such obstruction and we can
consider interpolating $(q,t)$-characters as elements of
$\mathcal{Y}_{q,t}$. These are just the ordinary $q$-characters, but
with the quantum parameter $qt$ instead of $q$.  More precisely, for
$r\in\mathbb{Z}$, let $\mathcal{K}_{q,t}^{(r)}(\g)$ be the subring of
elements of $\mathcal{K}_{q,t}(\g)$ involving only variables $Y_{i,q^d
  (tq)^s}^{\pm 1}$, $i\in I$, $s\in\mathbb{Z}$.  Then
$\mathcal{K}_{q,t}^{(r)}(\g)$ is isomorphic to
$\mathcal{K}_{q,t}^{(0)}(\g)$ as it is obtained from
$\mathcal{K}_{q,t}^{(0)}(\g)$ by the automorphism shifting spectral
parameters by $q^d$. Moreover,
$$\mathcal{K}_{q,t}(\g)\simeq \mathbb{Z}[\alpha]\otimes
\underset{r\in\mathbb{Z}}{\bigotimes} \mathcal{K}_{q,t}^{(r)}(\g).$$
By Remark \ref{simplac}, $\mathcal{K}_{q,t}^{(0)}(\g)$ is the image in
$\wt{\mathcal{Y}}_{q,t}$ of
$$\text{Im}(\chi_{qt})\subset \mathcal{Y}_{q,t}$$
where 
$$\chi_{qt} : K_0(\mathcal{C}_{\mathbb{Z}}) \rightarrow
\mathbb{Z}[Y_{i,(qt)^r}^{\pm 1}]_{i\in I, r\in\mathbb{Z}}$$ is the
ordinary $q$-character homomorphism, but with quantum parameter $q$
replaced by $qt$  and $\mathcal{C}_{\mathbb{Z}}$ is the subcategory of
finite-dimensional representations of $U_{qt}(\wh{\g})$ whose simple
constituents have Drinfeld polynomials have roots that are powers of
$qt$. \qed
\end{rem}

\begin{thm}
  $\mathcal{K}_{q,t}(\g)$ can be lifted to a subring
$\overline{\mathcal{K}}_{q,t}(\g)\subset\overline{\mathcal{Y}}_{q,t}$ so
that we have a commutative diagram:
$$\begin{xymatrix}{ & \mathcal{Y}_{q,t}
    \ar@{>>}[d]\ar@{>>}[rd]^{\overline{\Pi}_t, \overline{\Pi}_q} &
    \\ \overline{\mathcal{K}}_{q,t}(\g)\ar@{^{(}->}[r] \ar@{>>}[d]&
    \overline{\mathcal{Y}}_{q,t}\ar@{>>}[d]\ar@{>>}[r]_{\overline{\Pi}_t,\overline{\Pi}_q}
    & \mathbb{Z}[ Y_{j,b}^{\pm 1}]_{j\in I, b\in
      t^{\mathbb{Z}}q^{\mathbb{Z}}}
    \\ \mathcal{K}_{q,t}(\g)\ar@{^{(}->}[r] & \wt{\mathcal{Y}}_{q,t}&}
\end{xymatrix}.$$
\end{thm}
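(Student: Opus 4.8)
The plan is to define $\overline{\mathcal{K}}_{q,t}(\g)$ by the very same recipe as $\mathcal{K}_{q,t}(\g)$, but one level up. Concretely, for each $i \in I$ let $\overline{\mathcal{K}}_{i,q,t} \subset \overline{\mathcal{Y}}_{q,t}$ be the subring generated by the images in $\overline{\mathcal{Y}}_{q,t}$ of the \emph{same} generators of $\mathcal{Y}_{q,t}$ that were used to define $\mathcal{K}_{i,q,t}$ (namely $\alpha$, the $W_{j,a}^{\pm 1}$ and $\alpha Y_{j,a}^{\pm 1}$ with $j \neq i$, the $\alpha Y_{i,a}(1 + \wt{A}_{i,aq_it}^{-1})$, and the case-dependent elements $W_{i,a}(1 + \dots)$), and set $\overline{\mathcal{K}}_{q,t}(\g) = \bigcap_{i \in I} \overline{\mathcal{K}}_{i,q,t}$. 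Since $\overline{\mathcal{Y}}_{q,t}$ is by definition the quotient of $\mathcal{Y}_{q,t}$ by the common kernel of all four maps $\Pi_q, \Pi_t, \overline{\Pi}_q, \overline{\Pi}_t$, the specializations $\overline{\Pi}_q$ and $\overline{\Pi}_t$ descend to $\overline{\mathcal{Y}}_{q,t}$, which furnishes the top and right-hand portions of the diagram. Moreover the quotient map $\pi \colon \overline{\mathcal{Y}}_{q,t} \twoheadrightarrow \wt{\mathcal{Y}}_{q,t}$ carries the generators of $\overline{\mathcal{K}}_{i,q,t}$ to those of $\mathcal{K}_{i,q,t}$, so $\pi(\overline{\mathcal{K}}_{i,q,t}) \subseteq \mathcal{K}_{i,q,t}$ and hence $\pi(\overline{\mathcal{K}}_{q,t}(\g)) \subseteq \mathcal{K}_{q,t}(\g)$. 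This gives the lower portion of the diagram and reduces the theorem to the surjectivity of $\overline{\mathcal{K}}_{q,t}(\g) \to \mathcal{K}_{q,t}(\g)$.

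For surjectivity I would lift the canonical basis. By Theorem \ref{recall}, $\mathcal{K}_{q,t}(\g)$ is spanned by the elements $F_{q,t}(m)$, each produced by an explicit algorithm that starts from the dominant monomial $m$ and successively adds, for each non-dominant monomial that appears, a correction built from the generators of the relevant $\mathcal{K}_{i,q,t}$, until $m$ is the only dominant monomial left. The key point is that this algorithm can be run verbatim in $\overline{\mathcal{Y}}_{q,t}$ with the lifted generators of the $\overline{\mathcal{K}}_{i,q,t}$: the algorithm is driven entirely by the combinatorics of $i$-dominance, and Lemma \ref{idom} guarantees that in $\overline{\mathcal{Y}}_{q,t}$ a monomial is $i$-dominant precisely when its three specializations under $\Pi_q$, $\overline{\Pi}_t$ and $\Pi_t$ are $i$-dominant, which is exactly the bookkeeping needed at each stage. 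The output $\overline{F}_{q,t}(m) \in \overline{\mathcal{K}}_{q,t}(\g)$ has $m$ as its unique dominant monomial, and $\pi(\overline{F}_{q,t}(m)) = F_{q,t}(m)$ by the uniqueness part of Theorem \ref{recall}, since $\pi(\overline{F}_{q,t}(m))$ again lies in $\mathcal{K}_{q,t}(\g)$ with $m$ as its sole dominant monomial. As $m$ ranges over dominant monomials this proves surjectivity; the $\overline{F}_{q,t}(m)$ moreover span $\overline{\mathcal{K}}_{q,t}(\g)$ by the same characterization-by-dominant-monomials argument (the analogue in $\overline{\mathcal{Y}}_{q,t}$ of Theorem \ref{recall}(ii)), so that $\pi$ matches this distinguished spanning set bijectively with the basis $\{F_{q,t}(m)\}$ and in fact restricts to a ring isomorphism $\overline{\mathcal{K}}_{q,t}(\g) \xrightarrow{\sim} \mathcal{K}_{q,t}(\g)$.

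The main obstacle is precisely the well-definedness of this algorithm in $\overline{\mathcal{Y}}_{q,t}$, i.e. checking that at every step the correction terms assemble into an element whose dominant content other than $m$ genuinely cancels. This is where the passage from $\mathcal{Y}_{q,t}$ to $\overline{\mathcal{Y}}_{q,t}$ is indispensable and where the role of the auxiliary variable $\alpha$ must be controlled. As the computation \eqref{counter} in Remark \ref{remqu} shows, in $\mathcal{Y}_{q,t}$ itself the $\alpha$-polynomial coefficients of the correction terms (such as $\alpha^2 - \alpha^3$) do not recombine into clean dominant monomials, so the algorithm admits no lift to $\mathcal{Y}_{q,t}$ and property (ii) of Theorem \ref{recall} fails there. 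In $\overline{\mathcal{Y}}_{q,t}$ the additional relations — those lying in $\on{Ker}\Pi_q \cap \on{Ker}\Pi_t$ but detected by $\overline{\Pi}_q$ or $\overline{\Pi}_t$ — reduce these coefficients exactly enough for the $i$-dominance criterion of Lemma \ref{idom} to apply, while still retaining the evaluation at $\alpha = d$ that makes $\overline{\Pi}_t$ well-defined. Thus the substance of the theorem is that $\overline{\mathcal{Y}}_{q,t}$ is the smallest quotient of $\mathcal{Y}_{q,t}$ on which both the lifting of the $F_{q,t}(m)$ and the specialization $\overline{\Pi}_t$ coexist; the delicate verification is that no further collision of $\alpha$-coefficients occurs during the finitely many stages of the algorithm for each fundamental and Kirillov--Reshetikhin seed, the general case then following by multiplicativity.
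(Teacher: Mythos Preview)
Your approach is essentially the same as the paper's: define $\overline{\mathcal{K}}_{i,q,t}$ in $\overline{\mathcal{Y}}_{q,t}$ by the same generators, set $\overline{\mathcal{K}}_{q,t}(\g)=\bigcap_i\overline{\mathcal{K}}_{i,q,t}$, and run the same algorithm to produce the $\overline{F}_{q,t}(m)$, with Lemma~\ref{idom} and the computation~\eqref{counter} ensuring that the $i$-dominance bookkeeping goes through in $\overline{\mathcal{Y}}_{q,t}$. The paper in fact states the theorem as an immediate consequence of the subsequent Theorem~\ref{refined}, whose proof is exactly this rerun of the \cite{FH1} arguments one level up.

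One technical point you should adjust: you assume the algorithm has ``finitely many stages'' for each fundamental seed, but the paper does \emph{not} claim this for $\overline{\mathcal{K}}_{q,t}(\g)$. In the definition of $\overline{\mathcal{K}}_{i,q,t}$ the paper explicitly enlarges to allow (possibly infinite) linear combinations of monomials whose weights lie in a finite union of cones, precisely because finiteness of $\overline{F}_{q,t}(W_{i,a})$ is not established unconditionally --- it is only known modulo the positivity Conjecture~\ref{positivity} (see Corollary~\ref{finiteness} and the remark preceding it). So your last paragraph overstates what is verified: the algorithm is well-defined and produces an element of the completed ring, but termination is conjectural. Likewise, your final claim that $\pi$ is a ring \emph{isomorphism} is stronger than what the theorem asserts (only surjectivity is needed for the diagram) and would also require care with these completions.
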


This theorem follows from Theorem \ref{refined} below.

\subsection{The refined ring of interpolating $(q,t)$-characters}

Let us consider the completion $\overline{\mathcal{Y}}_{q,t}^\infty$ of the ring $\overline{\mathcal{Y}}_{q,t}$ which
includes infinite linear combinations of monomials whose
$\g$-weights belong to a finite union of cones $\{\mu\preceq
\omega\}$, with $\preceq$ being the standard ordering (as for the
$q$-characters in the category $\mathcal{O}$). 
Next, we define the subrings $\overline{\mathcal{K}}_{i,q,t}\subset
\overline{\mathcal{Y}}_{q,t}^\infty$ in the same way as the subrings
$\mathcal{K}_{i,q,t}$ of $\wt{\mathcal{Y}}_{q,t}$, except that we
include infinite which make sense in $\overline{\mathcal{Y}}_{q,t}^\infty$.
Note that the elements generating $\mathcal{K}_{i,q,t}$ are well-defined in $\overline{\mathcal{Y}}_{q,t}$ and in $\overline{\mathcal{Y}}_{q,t}^\infty$ 
by (2) in Remark \ref{remqt}.

\begin{defn} We define the {\em refined ring of
interpolating $(q,t)$-characters} $\overline{\mathcal{K}}_{q,t}(\g)$
  as the intersection
$$\bigcap_{i\in I}
\overline{\mathcal{K}}_{i,q,t} \subset
\overline{\mathcal{Y}}_{q,t}^\infty.$$
\end{defn}

\begin{thm}    \label{refined}
The refined ring of interpolating $(q,t)$-characters
$\overline{\mathcal{K}}_{q,t}(\g)$ satisfies the same properties as
the properties of $\mathcal{K}_{q,t}(\g)$ listed in Theorem
\ref{recall}. In particular, it contains a unique element
  $$\overline{F}_{q,t}(m)\in\overline{\mathcal{K}}_{q,t}(\g)$$ 
	for each dominant monomial $m$.
\end{thm}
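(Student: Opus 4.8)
The plan is to follow the proof of Theorem~\ref{recall} given in \cite{FH1} for $\mathcal{K}_{q,t}(\g)$, adapting the Frenkel--Mukhin type argument to the two new features of the refined setting: first, we work in the larger quotient $\overline{\mathcal{Y}}_{q,t}$ rather than in $\wt{\mathcal{Y}}_{q,t}$; and second, we allow infinite linear combinations of monomials whose $\g$-weights lie in a finite union of cones $\{\mu \preceq \omega\}$. The bridge between the two settings is Lemma~\ref{idom}, which guarantees that the notion of ($i$-)dominant monomial is well defined and stable in $\overline{\mathcal{Y}}_{q,t}$: a monomial is $i$-dominant there precisely when its images under $\Pi_q$, $\overline{\Pi}_t$ and $\Pi_t$ are all $i$-dominant. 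This is exactly what fails in $\mathcal{Y}_{q,t}$ (see the element \eqref{counter}) but holds after passing to $\overline{\mathcal{Y}}_{q,t}$.

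First I would prove the uniqueness statement (ii). As in \cite{FH1}, I introduce the partial order on monomials in which $m' \preceq m$ when $m/m'$ is a product of the $\wt{A}_{i,a}^{-1}$ with $i\in I$ (so that multiplication by any $\wt{A}_{i,a}^{-1}$ strictly lowers the $\g$-weight by $\alpha_i$). By inspection of the generators of $\overline{\mathcal{K}}_{i,q,t}$, every monomial occurring in an element of $\overline{\mathcal{K}}_{i,q,t}$ is obtained from an $i$-dominant monomial by multiplying by a product of $\wt{A}_{i,a}^{-1}$; hence an element of $\overline{\mathcal{K}}_{i,q,t}$ is determined by the multiplicities of its $i$-dominant monomials. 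For the intersection, suppose $F \in \overline{\mathcal{K}}_{q,t}(\g)$ has no dominant monomial, and take a monomial $m$ maximal for $\preceq$ among those occurring in $F$ (which exists since the support lies in a finite union of cones). If $m$ were not $i$-dominant for some $i$, then, viewing $F$ as an element of $\overline{\mathcal{K}}_{i,q,t}$, the monomial $m$ would have to arise as a descendant of a strictly higher $i$-dominant monomial of $F$, contradicting maximality; hence $m$ is $i$-dominant for every $i$, so dominant, a contradiction. Lemma~\ref{idom} is used throughout to make the count of ($i$-)dominant monomials occurring in $F$ unambiguous in $\overline{\mathcal{Y}}_{q,t}$.

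Next I would prove existence (iii) by running the Frenkel--Mukhin algorithm starting from the dominant monomial $m$. For each $i\in I$ there is, by the single-node analysis of $\overline{\mathcal{K}}_{i,q,t}$ together with part (ii), a unique element of $\overline{\mathcal{K}}_{i,q,t}$ whose only $i$-dominant monomial is $m$; the algorithm glues these local solutions by iteratively adding lower terms to cancel the $i$-dominant defects for each $i$. Two points must be checked. The support stays inside the single cone $\{\mu\preceq\omega\}$, where $\omega$ is the $\g$-weight of $m$, because every correcting term is obtained by multiplying by some $\wt{A}_{i,a}^{-1}$ and so has strictly lower $\g$-weight; this is precisely where the passage to the refined ring $\overline{\mathcal{K}}_{q,t}(\g)$ is needed, since the algorithm need not terminate (cf. the infinite-term issue already encountered in the proof of Proposition~\ref{intmoins}). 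And the resulting formal sum must lie in every $\overline{\mathcal{K}}_{i,q,t}$ simultaneously; this compatibility follows from the same local analysis as in \cite{FH1}, now carried out in $\overline{\mathcal{Y}}_{q,t}$ and justified by Lemma~\ref{idom}. Property (i) is then immediate, since $\overline{F}_{q,t}(m)\neq 0$ for any dominant $m$ (e.g. $\overline{F}_{q,t}(1)=1$).

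The main obstacle is the compatibility step in the existence argument: showing that the element produced by the node-by-node algorithm genuinely lies in the intersection $\bigcap_{i}\overline{\mathcal{K}}_{i,q,t}$ after the identifications imposed in $\overline{\mathcal{Y}}_{q,t}$. In $\mathcal{Y}_{q,t}$ this can fail, since the element \eqref{counter} has no $i$-dominant monomial yet becomes $i$-dominant after quotienting, so the argument cannot be carried out monomial-by-monomial in $\mathcal{Y}_{q,t}$; one must work in $\overline{\mathcal{Y}}_{q,t}$ and use Lemma~\ref{idom} to ensure that the bookkeeping of ($i$-)dominant monomials is consistent with the three specializations $\Pi_q$, $\overline{\Pi}_t$, $\Pi_t$. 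Once this consistency is in place, the uniqueness of $\overline{F}_{q,t}(m)$ and the characterization of elements by their dominant monomials follow by exactly the same reasoning that established Theorem~\ref{recall} for $\mathcal{K}_{q,t}(\g)$.
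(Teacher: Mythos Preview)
Your approach is essentially the same as the paper's: follow the \cite{FH1} argument, use the well-definedness of ($i$-)dominance in $\overline{\mathcal{Y}}_{q,t}$ (Lemma~\ref{idom} / resolution of the counterexample~\eqref{counter}) for part (ii), and run the \cite{FH1} algorithm for part (iii), now allowing infinite cone-supported sums.

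One point of emphasis deserves correction. You lean on Lemma~\ref{idom} for the compatibility step in (iii), but Lemma~\ref{idom} is really the tool for (ii); it tells you when a monomial is $i$-dominant in $\overline{\mathcal{Y}}_{q,t}$, not that the node-by-node algorithm is consistent across nodes. For (iii) the paper, as in \cite{FH1}, proceeds by induction on rank, so the genuine content is the explicit verification in rank~$2$ (types $C_2$ and $G_2$), where one must check that the concrete interpolating $(q,t)$-character formulas \eqref{fun}--\eqref{ftrois} (and their $G_2$ analogues) still hold in the larger quotient $\overline{\mathcal{Y}}_{q,t}$, not merely in $\wt{\mathcal{Y}}_{q,t}$. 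This is not automatic, since $\overline{\mathcal{Y}}_{q,t}$ imposes fewer identifications; the paper simply observes that the same formulas work. Your phrase ``the same local analysis as in \cite{FH1}'' covers this only if it is understood to include re-running the rank-$2$ base cases in $\overline{\mathcal{Y}}_{q,t}$; you should make that explicit.
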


\begin{proof} All proofs in \cite{FH1} remain valid if we replace
$\wt{\mathcal{Y}}_{q,t}$ by $\overline{\mathcal{Y}}_{q,t}^\infty$. More
  precisely, 

(i) follows from (iii).

(ii) The property for the $i$-dominant monomials is proved as for \cite[Lemma 4.1]{FH1}. 
In the simply-laced cases ($d = 1$), the proof is the same as for ordinary $q$-character as discussed above (see Remark \ref{remqu}).
If $d = 2$, the crucial point is indeed that the formula (\ref{counter}) has an $i$-dominant monomial: 
its respective specializations under $\overline{\Pi}_t$, $\Pi_q$, $\Pi_t$ are $0$, $Y_{i,q^{-1}}Y_{i,q}A_{i,1}^{-1}$, $0$, which are $i$-dominant.
Hence expression (\ref{counter}) identifies with $\alpha (2 - \alpha) Y_{i,q^{-1}}Y_{i,q}A_{i,1}^{-1}$ in $\overline{\mathcal{Y}}_{q,t}^\infty$ 
which is $i$-dominant. An analogous reasoning gives the result for $d = 3$.

The property for dominant monomials is proved exactly as in
\cite[Lemma 4.2]{FH1} from the property we just obtained for the
$i$-dominant monomials.

(iii) This is proved as in \cite[Section 4.2]{FH1}: an algorithm is proposed which produces 
the $F(W_{i,a})$ for any $i,a$, and from which the $F(m)$ are obtained as algebraic combinations of
these $F(W_{i,a})$. 
For the $F(W_{i,a})$, the algorithm and the proof that it does not fail are the same. 
This proof is obtained by induction on the rank of the Lie algebra. That is why we have
to check for $n = 1$ (type $A_1$) and $n = 2$ (types $A_1\times A_1$, $A_2$, $C_2$, $G_2$). The simply-laced cases are clear 
by Remark \ref{simplac}. For type $C_2$, the formulas obtained in
\cite{FH1} for $\mathcal{K}_{q,t}(\mathfrak{g})$ work as well for $\overline{\mathcal{K}}_{q,t}(\mathfrak{g})$:
\begin{equation}\label{fun}\overline{F}_{q,t}(Y_{2,1}) = Y_{2,1} + Y_{2,q^4t^2}^{-1} Y_{1,qt}Y_{1,q^3t} + \alpha Y_{1,qt}Y_{1,q^5t^3}^{-1} 
+ Y_{1,q^3t^3}^{-1}Y_{1,q^5t^3}^{-1}Y_{2,q^2t^2} + Y_{2,q^6t^4}^{-1},\end{equation}
\begin{equation}\label{fdeux}\overline{F}_{q,t}(W_{1,1}) = Y_{1,q^{-1}}Y_{1,q} 
+ \alpha Y_{1,q^{-1}}Y_{1,q^3t^2}^{-1}Y_{2,q^2t} 
+ Y_{1,qt^2}^{-1}Y_{1,q^3t^2}^{-1}Y_{2,t}Y_{2,q^2t}
\end{equation}
$$+\alpha Y_{1,q^{-1}}Y_{1,q^5t^2}Y_{2,q^6t^3}^{-1}+Y_{2,q^2 t}Y_{2,q^4 t^3}^{-1}
+Y_{1,qt^2}^{-1}Y_{1,q^5 t^2}Y_{2,q^6 t^3}^{-1}Y_{2,t}
+\alpha Y_{1,q^{-1}}Y_{1,q^7t^4}^{-1}
$$
$$+Y_{2,q^4t^3}^{-1}Y_{2,q^6t^3}^{-1}Y_{1,q^3t^2}Y_{1,q^5t^2}+ \alpha
Y_{1,qt^2}^{-1}Y_{1,q^7t^4}^{-1}Y_{2,t} +
\alpha Y_{2,q^4t^3}^{-1}Y_{1,q^3t^2}Y_{1,q^7t^4}^{-1}
+Y_{1,q^5 t^4}^{-1}Y_{1,q^7 t^4}^{-1},$$
\begin{equation}\label{ftrois}\overline{F}_{q,t}(\alpha Y_{1,1}) = \alpha( Y_{1,1} 
+ Y_{1,q^2t^2}^{-1}Y_{2,qt} + Y_{2,q^5t^3}^{-1} Y_{1,q^4t^2} + Y_{1,q^6t^4}^{-1})
.\end{equation}
For the type $G_2$, the formulas obtained in \cite[Section 5.2]{FH1} work as well.
\end{proof}

Now let us consider the various specializations of $\mathcal{K}_{q,t}(\g)$.  We
have first the three specializations corresponding to the respective
specializations of the deformed ${\mc W}$-algebras at $t = 1$, $q =
\epsilon$ and $q = 1$.

\begin{itemize}

\item The specialization $\Pi_q = \Pi_{t = 1, \alpha = 1}$: we get the
  $q$-character of a $U_q(\wh{\mathfrak{g}})$-module, that is an
  element of $\mathcal{K}_q^+(\mathfrak{g})$ (this is proved in
  \cite{FH1}).

\item The specialization $\Pi_t = \Pi_{q = \epsilon, \alpha = 0}$: we
  get the $t$-character of a $U_t({}^L\wh{\mathfrak{g}})$-module (this
  is proved in \cite{FH1}).

\item The specialization $\overline{\Pi}_t = \Pi_{q = 1, \alpha = d}$:
  we get the $t$-character of an element of
  $\mathcal{K}_t^-(\mathfrak{g})$ (in light of Proposition
  \ref{props-}, the proof is parallel to the proof for the other
    specializations since
    $\overline{\Pi}_t(\overline{\mathcal{K}}_{i,q,t}) =
    \text{Ker}(S_i^-)$). Hence, it can also be viewed as the
    folded $t$-character of a $U_t(\wh{\mathfrak{g}'})$-module 
		(see Section \ref{gprime}).
\end{itemize}

There are also two additional interesting specializations:

\begin{itemize}

\item The specialization $\Pi_t' = \Pi_{q = 1, \alpha = 0}$: it is
  well-defined as the composition of $\Pi_t$ and the homomorphism 
$$\mathbb{Z}[W_{i,a}^{\pm 1}]_{i\in I, a\in t^{\mathbb{Z}}\epsilon^{\mathbb{Z}}}\rightarrow \mathbb{Z}[\overline{Y}_{i,a}^{\pm 1}]_{i\in I, a\in t^{\mathbb{Z}}}$$ 
(see formula \eqref{barY} for the definition of $\overline{Y}_{i,a}$) identifying $\epsilon$ with $1$. We get the $t$-character of an element of $\mathcal{K}_t^-({}^L\mathfrak{g})$ 
which is defined in terms of the variables $\overline{Y}_{i,a}$ instead of $Y_{i,a}$ (this is parallel to the proof for the other specializations as $\Pi_t'(\overline{\mathcal{K}}_{i,q,t}) = \text{Ker}(S_i^-)$). Hence, it can also be viewed as the folded $t$-character of a $U_t\left(\wh{\left({}^L\mathfrak{g}\right)'}\right)$-module 
(see Section \ref{gprime}).

\item The specialization $\overline{\Pi}_q = \Pi_{t = 1, \alpha = 0}$:
  we obtain elements of $\text{Im}(\chi_q^L)$, i.e. $q$-characters of
  modules over the Langlands dual quantum affine algebra
  $U_q({}^L\wh{\mathfrak{g}})$, as defined in \cite[Section
    12]{hlast} (in the context of the parametrization of simple
  representations of shifted quantum affine algebras).
\end{itemize}

So, in the total, we have $5$ interesting specializations of the refined ring
of interpolating $(q,t)$-characters:

$$\begin{xymatrix}{
&& \on{Rep} U_t({}^L\wh{\mathfrak{g}})\ar[r]_{\cong} & \on{Rep} U_t(\widehat{({}^L\g)'})&
\\ 
\on{Rep}U_q(\wh{\g})\ar[r]_{\cong}&\mathcal{K}_q^+(\mathfrak{g})&
\overline{\mathcal{K}}_{q,t}(\g)
\ar[r]^{\overline{\Pi}_t}\ar[l]_{\Pi_q}\ar[u]^{\Pi_t}\ar[dr]_{\Pi_t'}\ar[dl]^{\overline{\Pi}_q}
& \mathcal{K}_t^-(\mathfrak{g})&\ar@{>>}[l] \on{Rep} U_t(\widehat{\g'})
\\ &\text{Im}(\chi_q^L)&
&\mathcal{K}_t^-({}^L\mathfrak{g})&\ar@{>>}[l] \on{Rep} U_t(\widehat{({}^L\g)'}))}
\end{xymatrix}.$$

\begin{rem} To get the $t$-characters of
  $U_t({}^L\wh{\mathfrak{g}})$-modules in the sense of \cite{H} (as
  recalled above), one has to change the sign in the definition of the
  variables $Z_{i,a}$, as described in \cite{FH1} (formula (2) for $r
  = 2$ and Section 3.2 for $r = 3$). In this paper, to simplify our
    notation, we do not make this sign change.\qed
\end{rem}

\begin{rem} In \cite[Lemma 4.16]{FH1} we proved that the
    generators of $\mathcal{K}_{q,t}(\g)$, i.e. the fundamental
    elements $F_{q,t}(W_{i,a})$, are finite linear combinations of
    monomials.  In the case of the refined ring
    $\ol{\mathcal{K}}_{q,t}(\g)$ the algorithm in the proof of Theorem
    \ref{refined} produces the fundamental elements
    $\overline{F}_{q,t}(W_{i,a})$ but so far we have only been able to
    prove that they are (possibly infinite) linear combinations of
    monomials with weights in the union of finitely many cones. That's
    why we have allowed such linear combinations in the definition of
    the $\overline{\mathcal{K}}_{i,q,t}$ above.  However, we also know that
    the specializations under $\Pi_q$, $\Pi_t$, $\overline{\Pi}_t$ and
    $\Pi_t'$ of an interpolating $(q,t)$-character with a finite
    number of dominant monomials are finite linear combinations. This
    follows from Proposition \ref{fundqtr} below.
    Therefore finiteness of elements of $\ol{\mathcal{K}}_{q,t}(\g)$
    follows from Conjecture \ref{positivity} saying that the
    coefficients of all monomials in $\overline{F}_{q,t}(W_{i,a})$ are
    positive (see Corollary \ref{finiteness}).\qed
\end{rem}

\begin{example}\label{funex} Let us consider the interpolating
  $(q,t)$-character (\ref{fun}) corresponding to the second
  fundamental representation of $U_q(C_2^{(1)})$. In this case, we
  have $^L\ghat = D_3^{(2)}, \wh{\g'} = A_3^{(1)}, \wh{({}^L\g)'} =
  D_3^{(1)} \simeq A_3^{(1)}$ (the last isomorphism involves
  switching the indices $1 \leftrightarrow 2 \in \{1,2,3\} = I'$).

The above $5$ specializations (listed in the same clockwise order) are,
respectively,

\begin{itemize}

\item the $q$-character of the fundamental representation $L(Y_{2,1})$ of $U_q(C_2^{(1)})$:
$$Y_{2,1} + Y_{2,q^4}^{-1} Y_{1,q}Y_{1,q^3} + Y_{1,q}Y_{1,q^5}^{-1} 
+ Y_{1,q^3}^{-1}Y_{1,q^5}^{-1}Y_{2,q^2} + Y_{2,q^6}^{-1},$$

\item the twisted $t$-character of the fundamental representation $L(Z_{2,1})$ of $U_t(D_3^{(2)})$:
$$Z_{2,1} + Z_{2,t^2}^{-1} Z_{1,-t^2} + Z_{1,-t^6}^{-1}Z_{2,-t^2} + Z_{2,-t^4}^{-1},$$

\item the $t$-character in $\mathcal{K}_t^-(C_2)$ of highest monomial $Y_{2,1}$:
$$Y_{2,1} + Y_{2,t^2}^{-1} Y_{1,t}^2 + 2 Y_{1,t}Y_{1,t^3}^{-1} 
+ Y_{1,t^3}^{-2}Y_{2,t^2} + Y_{2,t^4}^{-1},$$
also equal to the folded $t$-character of the fundamental representation $L(Y_{2,1})$ of $U_t(A_3^{(1)})$, 
\item the $t$-character in $\mathcal{K}_t^-({}^LC_2)$ of highest monomial $\overline{Y}_{2,1}$:
$$\overline{Y}_{2,1} + \overline{Y}_{2,t^2}^{-1} \overline{Y}_{1,t} + \overline{Y}_{1,t^3}^{-1} \overline{Y}_{2,t^2} + \overline{Y}_{2,t^4}^{-1},$$
which after switching the indices $1$ and $2$ is equal to the
folded $t$-character of the fundamental representation $L(Y_{1,1})$ of
$U_t(A_3^{(1)})$, defined in terms of the variables
$\overline{Y}_{i,a}$ instead of $Y_{i,a}$.

\item the Langlands dual $q$-characters of the fundamental representation $L(Z_{2,1})$ of $U_q(D_3^{(2)})$ 
$$W_{2,1} + W_{2,q^4}^{-1} W_{1,q^2} + W_{1,q^4}^{-1}W_{2,q^2} + W_{2,q^6}^{-1}.$$
\end{itemize}

\end{example}

\begin{example}\label{fdeuxex} Let us consider the interpolating $(q,t)$-character (\ref{fdeux}) corresponding to 
the representation $L(W_{1,1})$ of $U_q(C_2^{(1)})$. The $5$ specializations are, respectively,

\begin{itemize}

\item the $q$-character of the $11$-dimensional representation $L(Y_{1,q^{-1}}Y_{1,q})$ of $U_q(C_2^{(1)})$.

\item the twisted $t$-character of the fundamental representation 
$L(Z_{1,-1})$ of $U_t({}^L\widehat{\mathfrak{g}}) = U_t(D_3^{(2)})$.
$$Z_{1,-1} + Z_{1,-t^4}^{-1}Z_{2,t}Z_{2,-t} 
+ Z_{2,-t}Z_{2,t^3}^{-1} + Z_{2,-t^3}^{-1}Z_{2,t}
+ Z_{2,t^3}^{-1}Z_{2-t^3}^{-1} Z_{1,-t^4}
+ Z_{1,-t^8}^{-1}.$$

\item the $t$-character in $\mathcal{K}_t^-(C_2)$ of highest monomial $Y_{1,1}^2$:
$$(Y_{1,1} + Y_{1,t^2}^{-1}Y_{2,t} + Y_{2,t^3}^{-1}Y_{1,t^2} + Y_{1,t^4}^{-1})^2,$$
also equal to the folded $t$-character of the representation $L(Y_{1,1}Y_{3,1})$ of $U_t(A_3^{(1)})$ 

\item the $t$-character in $\mathcal{K}_t^-({}^LC_2)$ of highest monomial $\overline{Y}_{1,1}$:
$$\overline{Y}_{1,1}
+ \overline{Y}_{1,t^2}^{-1}\overline{Y}_{2,t}^2
+2\overline{Y}_{2,t}\overline{Y}_{2,t^3}^{-1}
+\overline{Y}_{2,t^3}^{-2}\overline{Y}_{1,t^2}
+\overline{Y}_{1,t^4}^{-1}$$
which after switching the indices $1$ and $2$ is equal to the folded
$t$-character of the fundamental representation $L(Y_{2,1})$ of
$U_t(A_3^{(1)})$, defined in terms of the variables $\overline{Y}_{i,a}$ instead of $Y_{i,a}$.

\item the Langlands dual $q$-characters of the fundamental representation $L(Z_{2,1})$ of $U_q(D_3^{(2)})$ 
$$W_{1,1} 
+ W_{1,q^2}^{-1}W_{2,1}W_{2,q^2}
+W_{2,q^2}W_{2,q^4}^{-1}
+W_{1,q^2}^{-1}W_{1,q^4}W_{2,q^6}^{-1}Y_{2,t}
+W_{2,q^4}^{-1}W_{2,q^6}^{-1}W_{1,q^4}
+W_{1,q^6}^{-1}.$$

\end{itemize}

\end{example}

\begin{example} Let us consider the interpolating $(q,t)$-character (\ref{ftrois}) corresponding to 
the first fundamental representation of $U_q(C_2^{(1)})$. The
specializations under $\Pi_t$, $\Pi_t'$, $\overline{\Pi}_q$ 
are zero because of the $\alpha$ factor. The specializations under
$\Pi_q$ and $\overline{\Pi}_t$ are respectively:

\begin{itemize}

\item the $q$-character of the fundamental representation $L(Y_{1,1})$ of $U_q(C_2^{(1)})$:
$$Y_{1,1} 
+ Y_{1,q^2}^{-1}Y_{2,q} + Y_{2,q^5}^{-1} Y_{1,q^4} + Y_{1,q^6}^{-1},$$

\item the $t$-character in $\mathcal{K}_t^-(C_2)$ of highest monomial $2Y_{1,1}$:
$$2( Y_{1,1} 
+ Y_{1,t^2}^{-1}Y_{2,t} + Y_{2,t^3}^{-1} Y_{1,t^2} + Y_{1,t^4}^{-1})$$
also equal to (the double of) the folded $t$-character of the representation $L(Y_{1,1})$ of $U_t(A_3^{(1)})$. 
\end{itemize}

\end{example}

\subsection{$\sigma$-fundamental interpolating
  $(q,t)$-characters}    \label{fundsec}

In this subsection we identify the representation $M(W)$ of
$U_q({}^L\ghat)$ in terms of Conjecture \ref{part3}
  for the simplest $U_q(\widehat{\g'})$-modules with
  $\sigma$-invariant highest monomials.
These monomials have the form $Y_{i,a}$ if $\sigma(i)=i$ and
$\prod_{1\leq k \leq d} Y_{\sigma^k(i),a}$ if $\sigma(i)\neq i$. For
this reason, we call these $U_q(\widehat{\g'})$-modules $\sigma$-{\em
  fundamental}. Note however that the corresponding
$U_t({}^L\wh{\mathfrak{g}})$-modules (see the second part of
Proposition \ref{fundqtr}) are in fact fundamental.

Consider a $U_q(\widehat{\g'})$-module of the form
$L(\wt{Y}_{i,a}), i \in I$ (the set of vertices of the Dynkin diagram
of $\g = (\g')^\sigma$) and with $a\in q^{\mathbb{Z}}$.

Let
$$X^{(i)}_{q,t} :=
  \ol{F}_{q,t}(W_{i,a})\in\overline{\mathcal{K}}_{q,t}(\g).$$ We
call it the $i$th $\sigma$-{\em fundamental interpolating
    $(q,t)$-character}.  The following proposition shows
  that $X^{(i)}_{q,t}$ satisfies the properties of the element $X_{q,t}$
whose existence is stated in Conjecture \ref{part3} in the case when
$W = L(\wt{Y}_{i,a})$.

\begin{prop}\label{fundqtr} The specializations of $X^{(i)}_{q,t}$ under
  $\Pi_q$, $\Pi_t$, $\overline{\Pi}_t$, $\Pi_t'$, $\overline{\Pi}_q$
  are respectively

\begin{itemize}

\item the $q$-character of the simple $U_q(\wh{\mathfrak{g}})$-module
  of highest monomial $W_{i,a}$. It is a KR module (a fundamental
  representation if $d_i = d$).

\item the $t$-character of the fundamental
  $U_t({}^L\wh{\mathfrak{g}})$-module of highest monomial
  $Z_{i,(a_{q = \epsilon})^{\rr_i^\vee}}$ (see \eqref{dvee} for the
  definition of $\rr_i^\vee$), with $a_{q = \epsilon}$ the
  specialization of $a$ at $q = \epsilon$.

\item the $t$-character of the element $F(\overline{Y}_{i,a}) =
  F(Y_{i,a})^{\rr_i^\vee}$ in $\mathcal{K}_t^-(\mathfrak{g})$
   (defined in terms of the variables $Y_{j,b}$) of
  highest monomial $\overline{Y}_{i,a}$. It is also the folded
  $t$-character of the simple $U_t(\wh{\mathfrak{g}'})$-module of
  highest monomial $\wt{Y}_{i,a}$
	(this is a tensor product of the
	fundamental representations).
	
	\item the $t$-character of the element $F(\overline{Y}_{i,a})$
          in $\mathcal{K}_t^-({}^L\mathfrak{g})$ (which is
              defined in terms of the variables $\overline{Y}_{j,b}$
              instead of $Y_{j,b}$) of highest monomial
            $\overline{Y}_{i,a}$. It is also, after the appropriate
            permutation of indices, equal to the folded $t$-character
            of the fundamental
            $U_t\left(\wh{\left({}^L\mathfrak{g}\right)'}\right)$-module
            of highest monomial $Y_{i,a}$.

\item the Langlands dual $q$-character of the fundamental
  $U_q({}^L\wh{\mathfrak{g}})$-module of highest monomial
  $Z_{i,a^{\rr_i^\vee}}$.
\end{itemize}
\end{prop}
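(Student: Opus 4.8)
The plan is to exploit the defining property of $X^{(i)}_{q,t}=\overline{F}_{q,t}(W_{i,a})$ from Theorem \ref{refined}: it is the unique element of $\overline{\mathcal{K}}_{q,t}(\g)$ whose only dominant monomial is $W_{i,a}$. Each of the five maps $\Pi_q,\Pi_t,\overline{\Pi}_t,\Pi_t',\overline{\Pi}_q$ is a ring homomorphism which, as recorded in the discussion preceding Example \ref{funex} (relying on \cite{FH1} and Theorem \ref{refined}), carries $\overline{\mathcal{K}}_{q,t}(\g)$ into the image of one of the relevant $q$- or $t$-character homomorphisms: into $\mathcal{K}_q^+(\g)$, into $\on{Im}(\chi_t)$ for $U_t({}^L\wh{\g})$, into $\mathcal{K}_t^-(\g)$, into $\mathcal{K}_t^-({}^L\g)$, and into $\on{Im}(\chi_q^L)$, respectively. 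Since an element of each of these target rings is determined by the multiplicities of its dominant monomials (Theorem \ref{props+}, Proposition \ref{intmoins}, and the corresponding statements in \cite{H,hlast}), it suffices to (a) compute the image of the dominant monomial $W_{i,a}$ under each specialization, and (b) check that this image is again the \emph{unique} dominant monomial of the specialized element, occurring with multiplicity one.

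For step (a) I would apply the defining substitutions together with formulas \eqref{barY} and \eqref{wtY}. Recalling that $\rr_i^\vee=\rr+1-\rr_i$, so that $\overline{Y}_{i,a}=Y_{i,a}^{\rr_i^\vee}$ by \eqref{barY}, one reads off that $\Pi_q(W_{i,a})=W_{i,a}$, that $\Pi_t(W_{i,a})=Z_{i,(a_{q=\epsilon})^{\rr_i^\vee}}$, that $\overline{\Pi}_t(W_{i,a})=\Pi_t'(W_{i,a})=\overline{Y}_{i,a}=Y_{i,a}^{\rr_i^\vee}$, and that $\overline{\Pi}_q(W_{i,a})=W_{i,a}$ (with the module labelled by $Z_{i,a^{\rr_i^\vee}}$, as in Example \ref{funex}). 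Each of these is dominant, carries the highest $\g$-weight, and occurs with coefficient $1$ independently of the value assigned to $\alpha$. I would then identify the modules: $W_{i,a}$ is by construction the highest monomial of the KR module $L(W_{i,a})$ over $U_q(\wh{\g})$ (fundamental when $\rr_i=d$), $Z_{i,\cdot}$ is the highest monomial of a fundamental module over the twisted algebra $U_t({}^L\wh{\g})$, and, by Proposition \ref{intmoins}, the images in $\mathcal{K}_t^-(\g)$ and $\mathcal{K}_t^-({}^L\g)$ are the unique elements $F(\overline{Y}_{i,a})$ with that highest monomial.

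For the two remaining assertions, that these coincide with folded $t$-characters and that $F(\overline{Y}_{i,a})=F(Y_{i,a})^{\rr_i^\vee}$, I would invoke Theorem \ref{unfold}. Under the folding isomorphism, $F(\overline{Y}_{i,a})\in\mathcal{K}_t^-(\g)$ is the folded $t$-character of the $U_t(\wh{\g'})$-module with $\sigma$-invariant highest monomial $\wt{Y}_{i,a}=\prod_{k}Y_{\sigma^k(i),a}$. Since $\g'$ is not of type $A_{2n}$, the nodes of a nontrivial $\sigma$-orbit are pairwise non-adjacent, so $L(\wt{Y}_{i,a})\cong\bigotimes_k L(Y_{\sigma^k(i),a})$ is a tensor product of fundamental representations; folding its $t$-character (identifying $Y_{\sigma^k(i),a}$ with $Y_{i,a}$) multiplies $\rr_i^\vee$ copies of $F(Y_{i,a})$, the number being the $\sigma$-orbit size, which yields both the tensor-product description and the identity $F(\overline{Y}_{i,a})=F(Y_{i,a})^{\rr_i^\vee}$. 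The $\Pi_t'$ statement follows in the same way after the permutation of indices that identifies the data of $({}^L\g)'$, exactly as exhibited in Example \ref{funex}.

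The main obstacle is step (b): guaranteeing that each specialization preserves affine-minusculeness, i.e. that no spurious dominant monomial is created and the highest monomial does not cancel. This is a genuine danger, as formula \eqref{counter} shows that a monomial may fail to be dominant while all of its specializations are dominant. My plan to resolve it is to run the relevant identifications in parallel with the Frenkel--Mukhin-type algorithm used to construct $\overline{F}_{q,t}(W_{i,a})$ in Theorem \ref{refined}(iii): each specialization sends the generators of $\overline{\mathcal{K}}_{i,q,t}$ to the generators of the corresponding screening kernel, so the algorithm commutes with specialization. Because KR modules of $U_q(\wh{\g})$ are affine-minuscule (\cite{Nad,hcr,fm}), and fundamental modules of the twisted algebra $U_t({}^L\wh{\g})$ are affine-minuscule (\cite{H}), the specialized algorithm must terminate with a single dominant monomial, forcing the image to be precisely the claimed character. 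Combined with the explicit $C_2$ and $G_2$ formulas already recorded in the proof of Theorem \ref{refined}, which seed the rank-two base cases, this closes the argument.
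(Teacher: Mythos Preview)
Your proposal is correct and follows essentially the same strategy as the paper's proof: both reduce the claim to showing that each specialization of $X^{(i)}_{q,t}$ has a \emph{unique} dominant monomial (the image of $W_{i,a}$), and then invoke the characterization of the target rings by dominant monomials together with the affine-minusculeness of KR modules \cite{hcr,Nad} and twisted fundamental modules \cite{H} to make the identifications.

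The only noteworthy difference is in the justification of your step (b). The paper handles it more directly: by the algorithm of \cite[Section 4.2.4]{FH1}, every monomial in $X^{(i)}_{q,t}$ other than $W_{i,a}$ has the explicit form $W_{i,a}\,\wt{A}_{i,aq^{d}t}^{-1}\wt{A}^{-1}_{j_1,b_1}\cdots\wt{A}^{-1}_{j_N,b_N}$, and since each $\wt{A}_{j,b}$ specializes to the corresponding $A_{j,b}$ (or $B_{j,b}$, etc.) in the target ring, such a monomial is automatically non-dominant after specialization. Your formulation (``the algorithm commutes with specialization, and the target module is affine-minuscule'') is logically equivalent but slightly more indirect; the paper's version avoids any appearance of circularity by separating the structural fact about the monomials from the identification step. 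Your treatment of the folded-character identities via Theorem~\ref{unfold} and the non-adjacency of $\sigma$-orbit nodes matches the paper's argument from \cite{fm}.
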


\begin{proof} By construction, following the algorithm in
  \cite[Section 4.2.4]{FH1},
the interpolating $(q,t)$-character $F_{q,t}(W_{i,a})$  is the sum of
$W_{i,a}$ plus other monomials of the form
$$W_{i,a}\wt{A}_{i,aq^{d}t}^{-1}\wt{A}^{-1}_{j_1,b_1}\cdots \wt{A}^{-1}_{j_N,b_N}$$ 
where $j_k\in I$, $b_j\in aq^{\mathbb{Z}}t^{\mathbb{Z}}$ and the $\wt{A}_{j,b}$ are given by formula (\ref{wta}). 
Though the argument in \cite{FH1} concerns $F_{q,t}(W_{i,a})$, the
algorithm constructing 
$X_{q,t}^{(i)} = \ol{F}_{q,t}(W_{i,a})$ in $\overline{{\mc Y}}_{q,t}$ is the
same, hence we obtain that $X_{q,t}^{(i)}$ satisfies the same property. 
This implies that each of the $5$ specializations
considered in the statement of the proposition has a unique dominant monomial; namely, the corresponding specialization of the highest monomial 
$W_{i,a}$. Then, as explained in \cite{FH1}, the first two specializations are respectively the $q$-character of 
a KR module of $U_q(\wh{\mathfrak{g}})$ and the $t$-character of a fundamental module of $U_t({}^L\wh{\mathfrak{g}})$, which are 
known to have a unique dominant monomial by \cite{hcr} and \cite{H}. The third specialization is an element of 
$\mathcal{K}_t^-(\mathfrak{g})$ with a unique dominant monomial $\overline{Y}_{i,a}$, hence it is equal to $F(\overline{Y}_{i,a})$.
Moreover, the simple $U_t(\wh{\mathfrak{g}'})$-module of highest monomial $\wt{Y}_{i,a}$ is a simple tensor product of fundamental 
representations, and, as a consequence of \cite{fm}, its (folded) $t$-character belongs to 
$\overline{Y}_{i,a} + \overline{Y}_{i,a}A_{i,at}^{-1}\mathbb{Z}[A_{j,b}^{-1}]_{j\in I, b\in at^{\mathbb{Z}}}$, with the $A_{j,b}$ as
in formula (\ref{AY2}). Hence it has a unique 
dominant monomials and is equal to $F(\overline{Y}_{i,a})$.
We use an analogous argument for the fourth specialization. For the last specialization, this is the Langlands dual $q$-character which is 
precisely defined as the specialization of the interpolating $(q,t)$-character $F_{q,t}(W_{i,a})$.
\end{proof}

The following conjecture is true in all examples known to us.

\begin{conj}    \label{positivity}
$X^{(i)}_{q,t}$ can be written in such a way that all coefficients of
  its monomials are positive.
\end{conj}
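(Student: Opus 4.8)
The plan is to reduce the conjecture to a positivity statement about interpolating polynomials in $\alpha$, and then to prove that statement by induction following the construction algorithm for $\overline{F}_{q,t}(W_{i,a})$. First I would record that $\alpha(\alpha-1)(\alpha-d)$ lies in the defining ideal of $\overline{\mathcal{Y}}_{q,t}$: it is killed by $\Pi_q$ ($\alpha=1$), by $\Pi_t$ and $\overline{\Pi}_q$ ($\alpha=0$), and by $\overline{\Pi}_t$ ($\alpha=d$), hence equals $0$ in $\overline{\mathcal{Y}}_{q,t}$. Consequently every coefficient of $X^{(i)}_{q,t}$ may be represented by a polynomial in $\alpha$ of degree $\le 2$, uniquely determined by its values at $\alpha\in\{0,1,d\}$. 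By Proposition~\ref{fundqtr}, these three values are precisely the multiplicities of the corresponding monomials in the twisted $t$-character of a fundamental $U_t({}^L\wh{\g})$-module, in the $q$-character of a KR module of $U_q(\wh{\g})$, and in the folded $t$-character of a tensor product of fundamental $U_t(\wh{\g'})$-modules, respectively. All three are honest $q$- or $t$-characters, so their multiplicities are non-negative integers; positivity of the boundary data is therefore known (from positivity of $q$-characters of KR modules, positivity of $t$-characters in the twisted case, and---via folding---positivity of $t$-characters of the simply-laced algebra $\g'$). The conjecture thus becomes the assertion that each coefficient triple $(P_M(0),P_M(1),P_M(d))\in\ZZ_{\ge0}^3$ occurring in $X^{(i)}_{q,t}$ admits a representative in $\ZZ_{\ge0}[\alpha]$, once we allow the monomial re-representations available in $\overline{\mathcal{Y}}_{q,t}$.

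\emph{Inductive strategy.} I would prove this by induction on the rank, paralleling the proof of Theorem~\ref{refined}(iii) and the algorithm of \cite{FH1}, Section 4.2.4. The building blocks of that algorithm are the completion factors $(1+\wt{A}^{-1})$, $(1+\alpha\wt{A}^{-1}+\wt{A}^{-1}\wt{A}^{-1})$, and $(1+\alpha\wt{A}^{-1}+\alpha\wt{A}^{-1}\wt{A}^{-1}+\wt{A}^{-1}\wt{A}^{-1}\wt{A}^{-1})$ attached to the nodes with $\rr_i=d,d-1,d-2$, each of which manifestly has non-negative coefficients as a polynomial in $\alpha$ (here $\wt{A}$ is as in \eqref{wta}). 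The base cases are the Lie algebras of rank $\le 2$: for $A_1\times A_1$ and $A_2$ positivity is immediate from Remark~\ref{remqu}(2), since there the interpolating characters are ordinary $q$-characters with parameter $qt$, while for $C_2$ and $G_2$ it can be read off directly from the explicit formulas \eqref{fun}, \eqref{fdeux}, \eqref{ftrois} and the $G_2$ formulas of \cite{FH1}, in which every coefficient visibly belongs to $\ZZ_{\ge0}[\alpha]$. For the inductive step one runs the algorithm starting from the highest monomial $W_{i,a}$, completing $i$-dominant monomials node by node; the invariant to maintain is that after each completion the coefficient of every monomial can be chosen in $\ZZ_{\ge0}[\alpha]$, using the boundary positivity to fix the degree-$\le 2$ representative and the available identifications (as in \eqref{counter}, where a combination involving $\alpha^2-\alpha^3$ is reorganized into a positive multiple of a single monomial) to absorb apparent negatives.

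\emph{Main obstacle.} The crux is the control of the subtractions forced by over-completion, i.e.\ by monomials that are $i$-dominant for several $i$ simultaneously: naive bookkeeping produces coefficients such as $2\alpha-\alpha^2$ (cf.\ the reduction of \eqref{counter} in the proof of Theorem~\ref{refined}), which are not positive as written and must be traded, within $\overline{\mathcal{Y}}_{q,t}$, for genuinely positive representatives. Proving that this is always possible amounts to a combinatorial positivity statement for the specific triples $(P_M(0),P_M(1),P_M(d))$ produced by the algorithm: since $d\ge 2$, a triple $(a,b,c)$ is positively interpolable essentially when $b\ge a$ and $c-a$ lies in the set of sums of $b-a$ elements (with repetition) of $\{d,d^2,\dots\}$, and the content of the theorem is that the algorithm only ever yields such triples. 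I expect this to be the genuinely hard point, because---unlike for ordinary $q$-characters---there is no module whose character equals $X^{(i)}_{q,t}$, so the usual geometric sources of positivity (quiver varieties, canonical bases, perverse sheaves) are not directly available, and the subtractions must be controlled by hand. The most promising route is to lift the positive boundary data to a single positive combinatorial model interpolating all specializations at once, in the spirit of the monomial/crystal description discussed in Remark~\ref{crystal1} and Section~\ref{cryssec}; such a model would simultaneously produce the required $\ZZ_{\ge0}[\alpha]$-representative and close the induction.
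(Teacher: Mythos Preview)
The statement you are attempting to prove is labeled a \emph{Conjecture} in the paper, and the paper contains no proof of it. In fact, immediately after stating Corollary~\ref{finiteness} the authors write explicitly: ``At present, we are not aware of a uniform proof of this statement or of Conjecture~\ref{positivity}.'' They only express the expectation that for classical types one could write explicit positive formulas, and that exceptional types might be checked by computer. So there is no ``paper's own proof'' to compare your proposal against.

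As for the proposal itself: it is not a proof but a program. Your reduction to triples $(P_M(0),P_M(1),P_M(d))$ and the observation that $\alpha(\alpha-1)(\alpha-d)=0$ in $\overline{\mathcal{Y}}_{q,t}$ are correct and useful, and you are right that the boundary values are non-negative because they are multiplicities in genuine $q$- or $t$-characters. The base cases (rank $\le 2$) are indeed positive by inspection of the explicit formulas. However, the entire content of the conjecture lies in the step you yourself flag as ``the genuinely hard point'': showing that the subtractions forced by over-completion in the algorithm can always be reabsorbed into a representative with coefficients in $\ZZ_{\ge 0}[\alpha]$. You do not prove this; you only describe the numerical constraint a triple must satisfy to be positively interpolable and assert that the algorithm produces only such triples. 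Your own analysis shows this is restrictive (for instance, with $d=2$ and a single $\alpha$-term one cannot reach the value $3$), and the monomial re-identifications in $\overline{\mathcal{Y}}_{q,t}$ that you invoke to gain extra freedom are precisely the uncontrolled ingredient. The suggested route via a monomial/crystal model is a hope, not an argument.

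In short, your text accurately diagnoses where the difficulty is but does not resolve it, which is consistent with the paper's own assessment that the statement remains open.
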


\begin{cor}    \label{finiteness}
  If Conjecture \ref{positivity} holds, then $X^{(i)}_{q,t}$ is a
  polynomial and therefore every element of
  $\overline{\mathcal{K}}_{q,t}(\g)$ is a polynomial (a finite linear
  combination of monomials).
\end{cor}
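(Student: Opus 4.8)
The plan is to reduce everything to the finiteness of the $\sigma$-fundamental interpolating $(q,t)$-characters $X^{(i)}_{q,t}=\overline{F}_{q,t}(W_{i,a})$, and to derive their finiteness from Conjecture \ref{positivity} together with the fact, recorded in Proposition \ref{fundqtr}, that the specializations of $X^{(i)}_{q,t}$ are honest (finite) $q$- and $t$-characters. The guiding principle is that positivity of the coefficients forbids any cancellation under a specialization, so a specialization with finite image forces the source to be finite as well.

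First I would carry out the main step. By Conjecture \ref{positivity} we may write $X^{(i)}_{q,t}=\sum_k c_k\,\alpha^{e_k}M_k$, where $c_k\in\mathbb{Z}_{>0}$, the $e_k\geq 0$, and the $M_k$ are monomials in the $Y_{j,a}^{\pm1}$. Now apply the specialization $\Pi_q=\Pi_{t=1,\alpha=1}$, which descends to $\overline{\mathcal{Y}}_{q,t}$; since $\alpha\mapsto 1$ and each $e_k\ge 0$, no term is annihilated, and each term maps to $c_k\,\Pi_q(M_k)$ with a strictly positive coefficient. Hence there is no cancellation, and the sum of the (positive) coefficients of $\Pi_q(X^{(i)}_{q,t})$ equals $\sum_k c_k$. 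By Proposition \ref{fundqtr}, $\Pi_q(X^{(i)}_{q,t})$ is the $q$-character of a Kirillov--Reshetikhin module of $U_q(\wh{\g})$, so the sum of its coefficients is the (finite) dimension of that module. Since each $c_k\geq 1$, the number of terms $k$ is bounded above by this dimension, so the chosen positive expression for $X^{(i)}_{q,t}$ is finite. (The specialization $\overline{\Pi}_t=\Pi_{q=1,\alpha=d}$, whose image is the finite element $F(\overline{Y}_{i,a})$ and under which $\alpha\mapsto d\ge 1$, would serve equally well; the specializations sending $\alpha\mapsto 0$ cannot be used, as they annihilate the monomials carrying a factor of $\alpha$.)

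Finally I would propagate finiteness to the whole ring. By the algorithm in the proof of Theorem \ref{refined}(iii), every $\overline{F}_{q,t}(m)$ is obtained as an algebraic combination, with coefficients in $\mathbb{Z}[\alpha]$, of the fundamental elements $X^{(j)}_{q,t}=\overline{F}_{q,t}(W_{j,b})$; since these generators are now finite and multiplication, addition, and multiplication by $\alpha$ preserve finite support, each $\overline{F}_{q,t}(m)$ is a finite linear combination of monomials. As the $\overline{F}_{q,t}(m)$ span $\overline{\mathcal{K}}_{q,t}(\g)$ and every element lies in the $\mathbb{Z}[\alpha]$-subalgebra generated by finitely many of the $X^{(j)}_{q,t}$, every element of $\overline{\mathcal{K}}_{q,t}(\g)$ is a finite linear combination of monomials, i.e. a polynomial. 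The one genuinely nontrivial input is the no-cancellation argument of the second step: it is exactly here that Conjecture \ref{positivity} is used, turning the a priori infinite, cone-supported expression for $X^{(i)}_{q,t}$ into a finite one; without positivity the finiteness of the specializations gives no control, since distinct monomials of opposite sign could collapse and cancel under $\Pi_q$.
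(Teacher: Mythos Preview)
Your proposal is correct and follows essentially the same approach as the paper's proof: use positivity to rule out cancellation under a specialization that sends $\alpha$ to a nonzero value, then invoke finiteness of that specialization, and finally propagate to all of $\overline{\mathcal{K}}_{q,t}(\g)$ via the generating role of the $X^{(i)}_{q,t}$. The only cosmetic difference is that the paper uses $\overline{\Pi}_t$ (where $\alpha\mapsto d$) rather than $\Pi_q$ (where $\alpha\mapsto 1$); you yourself note that $\overline{\Pi}_t$ works equally well, so the two arguments are interchangeable.
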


\begin{proof}
Positivity of coefficients of $X^{(i)}_{q,t}$ implies
that if $X^{(i)}_{q,t}$ were an infinite combination of monomials,
then so would be $\overline{\Pi}_t(X^{(i)}_{q,t})$, which is not the
case. Since the elements $X^{(i)}_{q,t}$ generate
$\overline{\mathcal{K}}_{q,t}(\g)$, it follows that every element of
$\overline{\mathcal{K}}_{q,t}(\g)$ is finite as well.
\end{proof}

\begin{rem} The statement analogous to Conjecture \ref{positivity} for
  the elements $F_{q,t}(W_{i,a})$ of the original ring
  ${\mathcal{K}}_{q,t}(\g)$ is also a conjecture.  At present, we are
  not aware of a uniform proof of this statement or of Conjecture
  \ref{positivity}. We expect, however, that it is possible to write
  an explicit positive and finite expression for these elements for
  Lie algebras $\g$ of classical types, and to check the statement
  with the help of a computer for $\g$ of exceptional types. There is
  a similar question for the corresponding elements of the deformed
  $\mathcal{W}$-algebra. We hope to discuss this in another paper.
\end{rem}

Assuming Conjecture \ref{positivity}, we obtain a proof of the second
part of Conjecture \ref{main conj bis},(ii) (modulo Conjecture
\ref{main conj bis},(i)) and Conjecture \ref{part3} for
$W=L(\wt{Y}_{i,a})$.

\begin{thm} Let $W = L(\wt{Y}_{i,a})$, $i\in I$,
  $a\in\mathbb{C}^\times$.  Suppose that Conjecture \ref{positivity}
  holds. Then there exists a subspace
  $\overline{W}\subset \wt{W}$ isomorphic, as a vector space graded by
  ${}^L\g$-weights, to a $U_q({}^L\widehat{\g})$-module $M(W)$,
   which is the simple $U_t({}^L\wh{\mathfrak{g}})$-module
    of highest monomial $Z_{i,(a_{q = \epsilon})^{\rr_i^\vee}}$. Moreover, it satisfies
    Conjecture \ref{part3} for the interpolating $(q,t)$-character
    $X^{(i)}_{q,t}$.
\end{thm}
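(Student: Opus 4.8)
The plan is to take $X_{q,t} = X^{(i)}_{q,t} = \ol{F}_{q,t}(W_{i,a})$ and read everything off from Proposition~\ref{fundqtr}, invoking Conjecture~\ref{positivity} only for the construction of $\ol{W}$. With this choice, Conjecture~\ref{part3} for $W = L(\wt{Y}_{i,a})$ becomes essentially a restatement of Proposition~\ref{fundqtr}: among the five specializations computed there, the three relevant ones are $\ol{\Pi}_t(X^{(i)}_{q,t}) = {}^{\on{f}}\chi_t(W)$ (third bullet), $\Pi_t(X^{(i)}_{q,t}) = \chi_t(M(W))$ with $M(W)$ the fundamental $U_t({}^L\ghat)$-module of highest monomial $Z_{i,(a_{q=\epsilon})^{\rr_i^\vee}}$ (second bullet), and $\Pi_t'(X^{(i)}_{q,t}) = {}^{\on{f}}\chi_t(W')$ for the fundamental $U_t(\wh{({}^L\g)'})$-module $W'$ of the fourth bullet. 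This is precisely the diagram of Conjecture~\ref{part3}, and it simultaneously identifies $M(W)$ with the asserted simple (= fundamental) $U_t({}^L\ghat)$-module.

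It then remains to produce $\ol{W}\subset\wt{W}$. Via Lemma~\ref{bij} I regard both $\wt{W} = W(0)$ and $M(W)$ as graded by the lattice ${}^LP$ of ${}^L\g$-weights. By Lemma~\ref{Vga0} together with Theorem~\ref{unfold}, $\dim\wt{W}_\ga$ equals the total multiplicity of the monomials of ${}^L\g$-weight $\ga$ in ${}^{\on{f}}\chi_t(W) = \ol{\Pi}_t(X^{(i)}_{q,t})$, while $\dim M(W)_\ga$ equals the total multiplicity of the monomials of ${}^L\g$-weight $\ga$ in $\chi_t(M(W)) = \Pi_t(X^{(i)}_{q,t})$. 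Hence it suffices to establish the weight-by-weight inequality $\dim M(W)_\ga \le \dim\wt{W}_\ga$ for all $\ga\in{}^LP$: granting it, I choose for each $\ga$ a subspace $\ol{W}_\ga\subset\wt{W}_\ga$ with $\dim\ol{W}_\ga = \dim M(W)_\ga$ and set $\ol{W} = \bigoplus_\ga\ol{W}_\ga$, which is then isomorphic to $M(W)$ as a ${}^L\g$-graded vector space. Modulo Conjecture~\ref{main conj bis},(i), which gives $W(u)\simeq\wt{W}$ for generic $u$, this also yields the subspace $\ol{W}(u)\subset W(u)$ of Conjecture~\ref{main conj bis},(ii).

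To prove the inequality, write $X^{(i)}_{q,t} = \sum_m c_m\,\alpha^{k_m}M_m$ as a sum of monomials of $\ol{\mathcal{Y}}_{q,t}$, where $M_m$ is free of $\alpha$; by Conjecture~\ref{positivity} all $c_m\ge 0$. The specialization $\Pi_t$ (where $\alpha = 0$) retains precisely the monomials with $k_m = 0$, whereas $\ol{\Pi}_t$ (where $\alpha = d$) retains all of them, replacing $\alpha^{k_m}$ by $d^{k_m}\ge 1$. The crucial point, which follows from the definitions \eqref{wtY} and \eqref{barY} of the variables together with the identification $P^\sigma\simeq{}^LP$ of Lemma~\ref{bij}, is that each monomial $M_m$ carries a single ${}^L\g$-weight $w(m)$ that is preserved by both $\Pi_t$ and $\ol{\Pi}_t$. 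Therefore
$$\dim M(W)_\ga = \sum_{k_m = 0,\; w(m) = \ga} c_m \;\le\; \sum_{w(m) = \ga} c_m\,d^{k_m} = \dim\wt{W}_\ga ,$$
using $c_m\ge 0$ and $d\ge 1$. This completes the construction of $\ol{W}$ and, with the first step, the proof.

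The main obstacle is the weight-compatibility asserted in the previous paragraph: one must verify carefully that every monomial of $X^{(i)}_{q,t}$ has a well-defined ${}^L\g$-weight respected by both specializations. This requires tracking the differing normalizations of fundamental weights for $\g$ and ${}^L\g$ — the exponents $\rr_i^\vee = \rr + 1 - \rr_i$ built into $\ol{Y}_{i,a}$, $W_{i,a}$ and $Z_{i,a}$ — and checking that the $\alpha$-free monomials surviving $\Pi_t$ are exactly those whose weights are matched on the $\ol{\Pi}_t$ side, as illustrated by Examples~\ref{funex} and~\ref{fdeuxex}.
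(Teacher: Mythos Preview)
Your proposal is correct and follows essentially the same approach as the paper: invoke Proposition~\ref{fundqtr} to identify $\ol{\Pi}_t(X^{(i)}_{q,t})$ with ${}^{\on{f}}\chi_t(W)$ and $\Pi_t(X^{(i)}_{q,t})$ with $\chi_t(M(W))$, then use the assumed positivity of $X^{(i)}_{q,t}$ to deduce that the weight multiplicities in the $\alpha=0$ specialization are bounded by those in the $\alpha=d$ specialization. Your write-up is in fact more detailed than the paper's (which compresses the inequality argument to a single sentence), and your closing paragraph about weight-compatibility of the two specializations flags a point the paper leaves implicit.
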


\begin{proof}
We have seen in Proposition \ref{fundqtr} that
$\overline{\Pi}_t(X^{(i)}_{q,t})$ is the folded $t$-character of $W$ and
that $\Pi_t(X^{(i)}_{q,t}) = \chi_t(M(W))$ where $M(W)$ is the fundamental
$U_t({}^L\wh{\mathfrak{g}})$-module of highest monomial $Z_{i,(a_{q =
    \epsilon})^{\rr_i^\vee}}$.  The positivity of $X^{(i)}_{q,t}$ implies
that the multiplicities of weights in $\Pi_t(X^{(i)}_{q,t})$ (for which
$\alpha = 0$) are lower than in $\overline{\Pi}_t(X^{(i)}_{q,t})$ (for which
$\alpha = d$). Hence the result.
\end{proof}

\section{Examples}    \label{ex}

In this section we present five explicit examples confirming
Conjectures \ref{main conj bis} and \ref{part3}.

\subsection{First example: the fundamental
  representation $L(Y_{2,1})$ of $U_q(A_3^{(1)})$}

Consider the Lie algebra $\g'$ of type $A_3$ with the automorphism
$\sigma$ exchanging the nodes $1$ and $3$ of its Dynkin diagram. We
have the Lie algebra $\g = C_2$ with $d_1 = 1$, $d_2 = 2$ and its
Langlands dual is ${}^L\g = B_2$. We also have
$\left({}^L\mathfrak{g}\right)' = A_3$, but with the nodes $1$ and $3$
exchanged in comparison to the original $\g'$, and $^L\widehat{\g} =
D_3^{(2)}$.

Then the algebra $U_q(\wh{\g'}) = U_q(A_3^{(1)})$ acts on its
$6$-dimensional fundamental representation $W = L(Y_{2,1})$ whose
$q$-character is
$$Y_{2,1} + Y_{2,q^2}^{-1}Y_{1,q}Y_{3,q} + Y_{1,q^3}^{-1}Y_{3,q} +
Y_{3,q^3}^{-1}Y_{1,q} + Y_{2,q^2}Y_{1,q^3}^{-1}Y_{3,q^3}^{-1} +
Y_{2,q^4}^{-1}.$$

We have the $Q$-operators $Q_i^\pm(z,u) = t_{R_i^\pm}(z,u)$ associated
to the prefundamental representations $R_i^+(z), i=1,2,3$, of
$U_q(\wh{\g'})$. The roots of the corresponding Baxter polynomials
satisfy the BAE (\ref{bae gen}) of type $A_3^{(1)}$. Now we consider
the subspace $W(u)$ of $W$ where the actions of
$Q_1^{\pm,\text{ss}}(z,u)$ and $Q_3^{\pm,\text{ss}}(z,u)$ coincide. We
also have the $4$-dimensional subspace $\wh{W}\subset W$ which is the
direct sum of the $\sigma$-invariant weight subspaces $W$ and the
subspace $\wt{W}$ which is the direct sum of the $\ell$-weight
subspaces corresponding to the $\sigma$-invariant $\ell$-weights (or
monomials in the $q$-character).

In the present case all $\sigma$-invariant weights have multiplicity
$1$. Therefore, by Remark \ref{remtriv},(2), for all
values of $u$ we have
$$W(u) = \wt{W} = \wh{W} = W^\sigma(u).$$
Thus, part (i) of Conjecture \ref{main conj bis} is verified in this case.

This implies that only $\sigma$-invariant solutions of the BAE
\eqref{bae gen} appear in this case. Identifying the eigenvalues of
the operators $Q_1^{\pm,\text{ss}}(z) = Q_3^{\pm,\text{ss}}(z)$, we obtain the folded BAE (\ref{bae
  gen1}) corresponding to the Lie algebra $\g = C_2$.

The $q$-character of the subspace $\wt{W}$ is given by the formula
$$Y_{2,1} + Y_{2,q^2}^{-1}(Y_{1,q} Y_{3,q}) +
  Y_{2,q^2}(Y_{1,q^3}Y_{3,q^3})^{-1} + Y_{2,q^4}^{-1}.$$ Setting
  $\wt{Y}_{1,q} = Y_{1,q}Y_{3,q}$, one gets
\begin{equation}    \label{6to4}
Y_{2,1} + Y_{2,q^2}^{-1} \wt{Y}_{1,q} + Y_{2,q^2}\wt{Y}_{1,q^3}^{-1} +
Y_{2,q^4}^{-1}.
\end{equation}
The corresponding character
$$y_2 + y_2^{-1}y_1 + y_1^{-1}y_2 + y_2^{-1}$$
is equal to the character of a
  fundamental representation of $U_t({}^L\widehat{\mathfrak{g}}) =
  U_t(D_3^{(2)})$. Thus, part (ii) of Conjecture \ref{main conj bis}
  is verified.

Finally, we discuss Conjecture \ref{part3} in the
present case.  We have the interpolating
$(q,t)$-character (\ref{fun}) of the second fundamental representation of
$U_q(C_2^{(1)})$ studied in Example \ref{funex}. We verify
Conjecture \ref{part3}:

The specialization of the interpolating
$(q,t)$-character (\ref{fun}) under $\Pi_t$ is equal to
the $t$-character of the fundamental representation $M(W)$ of
$U_t({}^L\widehat{\g}) = U_t(D_3^{(2)})$.

Its specialization under $\overline{\Pi}_t$ is the folded
$t$-character of the $U_t(\widehat{\g'}) = U_t(A_3^{(1)})$-module $W$.

Its specialization under $\Pi_t'$ is (after switching the indices $1$
and $2$) the folded $t$-character of $W'$, the fundamental
representation $L(Y_{1,1})$ of $U_t(A_3^{(1)})$, defined in terms of
the variables $\overline{Y}_{i,a}$ instead of $Y_{i,a}$.

Thus, we find that both Conjectures \ref{main conj bis} and
\ref{part3} hold in this case. In other words, we obtain an example of
the folded quantum integrable system associated to $\g=C_2$ whose
spectra correspond to the solutions of the folded BAE equation
(\ref{bae gen1}) associated to $\g=C_2$, with the space of states
being isomorphic to a representation of $U_q({}^L\widehat{\g} =
D_3^{(2)})$.

\subsection{Second example: representation $L(Y_{1,1}Y_{2n-1,1})$ of
  $U_q(A_{2n-1}^{(1)})$}    \label{ex2}

We work with the  Lie algebras $\g' = A_{2n-1}$, $\g = C_n$, ${}^L\g =
B_n$, $\left({}^L\mathfrak{g}\right)' = D_{n+1}$ and $^L\widehat{\g} =
D_{n+1}^{(2)}$ (we assume that $n\geq 2$).

Consider the representation $W=L(Y_{1,1}Y_{2n-1,1})$ of
$U_q(A_{2n-1}^{(1)})$. It is $4n^2$-dimensional and is isomorphic to the tensor
product of the fundamental representations $L(Y_{1,1})\otimes
L(Y_{2n-1,1})$. Its $q$-character has $2n$ $\sigma$-invariant monomials,
including one with multiplicity $2$:
$$\chi_q(\wt{W}) = Y_{1,1}Y_{2n-1,1} +
Y_{1,q^2}^{-1}Y_{2n-1,q^2}^{-1}Y_{2,q}Y_{2n-2,q} + \cdots + Y_{n-1,q^n}^{-1}Y_{n,q^n}^{-1}Y_{n,q^{n-1}}^2$$
$$+2 Y_{n,q^{n-1}}Y_{n,q^{n+1}}^{-1} 
+ Y_{n,q^{n+1}}^{-2}Y_{n-1,q^n}Y_{n+1,q^n} + \cdots + Y_{1,q^{2n}}^{-1}Y_{2n-1,q^{2n}}^{-1}.$$

All weight subspaces of $W$ are one-dimensional, except for the $0$-weight
subspace $W_0$ of $W$ which is $2n$-dimensional,
containing as a proper subspace its intersection with $\wt{W}$, which
is the $2$-dimensional $\ell$-weight space associated to the
$\sigma$-invariant monomial $Y_{n,q^{n-1}}Y_{n,q^{n+1}}^{-1}$
(the other $n-1$ monomials of weight $0$ are not $\sigma$-invariant).
This $\ell$-subspace is therefore
  precisely the intersection $\wt{W} \cap W_0$, and
Lemma \ref{whsigma},(2) implies that $\wh\sigma$ preserves this
$\ell$-weight subspace. We are going to show that $\wh\sigma$ acts on
it as the identity.

Let $v$ be a generating vector of the weight subspace $W_{\alpha_n}$,
which is one-dimensional, and hence is an $\ell$-weight subspace. The
corresponding monomial is
$Y_{n,q^{n-1}}^2Y_{n-1,q^n}^{-1}Y_{n-1,q^n}^{-1}$. Under the action of
the $U_q(\wh{\sw}_2)$ subalgebra corresponding to the node $n$, the
vector $v$ generates a representation of dimension $4$ with a
$2$-dimensional $0$-weight space, which is spanned by the
$x_{n,m}^-.v$, $m\in\mathbb{Z}$. Since $W_{\alpha_n}$ is preserved by
$\wh\sigma$, it follows that $\wh{\sigma}(v) = \pm v$. In fact, it is
easy to see that $\wh\sigma(v) = v$ by restricting $W$ to $U_q(\g)$
and taking the limit $q \to 1$. Then $W$ decomposes into the direct
sum of the adjoint representation of $A_{2n-1}$ and the trivial
one-dimensional representation, and the weight subspace $W_{\alpha_n}$
appears in the former. It is easy to see that the operator $\wh\sigma$
acts on the adjoint representation as the automorphism induced by the
automorphism $\sigma$ of the Dynkin diagram of $A_{2n-1}$ (whose
invariant Lie subalgebra is $C_n$). From this we readily obtain that
$\wh\sigma$ acts as the identity on $W_{\alpha_n}$.

Next, we have $\sigma(x_{n,m}^-) = x_{n,m}^-$ for any
$m\in\mathbb{Z}$, so we obtain that $\wh{\sigma}(x_{n,m}^-.v) =
x_{n,m}^-.v$. This implies that the vectors in $\wt{W}\cap W_0$ are
indeed $\wh{\sigma}$-invariant. This implies that $\wt{W}^\sigma =
\wt{W}$.

Now we derive from this that for generic $u$ the intersection $W(u)
\cap W_0$ is two-dimensional.

We have the $2n$-dimensional weight subspace $W_0$ of $W$. By
analyzing the $q$-character of $W$, we have found that it decomposes
into a direct sum of $n-1$ two-dimensional subspaces, each containing
two $\ell$-weight subspaces corresponding to a pair of monomials $M_1
\neq M_2$ such that $\sigma(M_1)=M_2$ (and therefore $\wh\sigma$
interchanges them, according to Lemma \ref{whsigma},(2)), and the
two-dimensional $\ell$-weight subspace corresponding to the
$\sigma$-invariant monomial $Y_{n,q^{n-1}}Y_{n,q^{n+1}}^{-1}$ on which
$\wh\sigma$ acts as the identity, as we have shown above. This implies
that the trace of $\wh\sigma$ on $W_0$ is equal to 2.

On the other hand, $W_0$ also has a basis of joint eigenvectors of
$Q_i^{\pm,\on{ss}}(u), i \in I'$, with eigenvalues $\la(u) =
(\la^\pm_i(u))$. By applying the argument of Lemma \ref{whsigma},(2),
we find that $\wh\sigma$ maps such an eigenvector to another one with
eigenvalues $\sigma(\la(u))$. In the limit $u \to 0$ the eigenspaces
of $Q_i^{\pm,\on{ss}}(u), i \in I'$ become $\ell$-weight spaces. Hence
we find from the preceding paragraph that for generic $u$ we have at
least $2(n-1)$ eigenvalues $\la(u)$ which are not $\sigma$-invariant,
and the set of these eigenvalues breaks into $n-1$ pairs, with the
eigenvalues in each pair (and the corresponding one-dimensional
eigenspaces) exchanged by $\sigma$. If the remaining 2 eigenvalues
were not $\sigma$-invariant, then the corresponding 2 one-dimensional
eigenspaces would have to be exchanged by $\sigma$. But then the trace
of $\wh\sigma$ on $W_0$ would be equal to $0$, which is a
contradiction. Therefore for generic $u$ there must be one
$\sigma$-invariant eigenvalue $\la(u)$ with multiplicity 2, such that
the corresponding two-dimensional eigenspace is $\wh\sigma$-invariant
(this is the necessary condition for the trace of $\wh\sigma$ on this
subspace, and hence on $W_0$, to be equal to $2$). But then this
two-dimensional subspace is contained in $W(u)$, and moreover in
$W^\sigma(u)$. This implies that $W(u) \cap W_0 = W^\sigma(u) \cap
W_0$ is two-dimensional.

Since the other weight spaces are $1$-dimensional, it
follows from Remark \ref{remtriv},(2) that for generic
$u$ we have
$$W(u) \simeq \wt{W}$$
as vector spaces graded by $^L\g$-weights, in agreement with
Conjecture \ref{main conj bis},(1). Moreover, we obtain
that $W(u) = W^\sigma(u)$.

Now, setting $\wt{Y}_{i,a} = Y_{i,a}Y_{2n-i,a}$ for $1\leq i\leq n-1$,
we obtain that
$$
\chi_q(\wt{W}) = \wt{Y}_{1,1} + \wt{Y}_{1,q^2}^{-1} \wt{Y}_{2,q} + \cdots +
\wt{Y}_{n-1,q^n}^{-1} Y_{n,q^{n-1}}^2 +  2 Y_{n,q^{n-1}}Y_{n,q^{n+1}}^{-1} 
+ \wt{Y}_{n-1,q^n}Y_{n,q^{n+1}}^{-2} + \cdots +
\wt{Y}_{1,q^{2n}}^{-1}.
$$
The corresponding character is
$$
y_1 + y_1^{-1}y_2 + \cdots + y_{n-1}^{-1}y_n^2  + 2 +
y_n^{-2}y_{n-1} + \cdots + y_1^{-1},
$$
which coincides with the character of the $(2n+2)$-dimensional
fundamental representation $M(W)$ of $U_t({}^L\widehat{\mathfrak{g}})
= U_t(D_{n+1}^{(2)})$. Moreover, as we will see below, the
specialization $\Pi_t$ of corresponding interpolating
$(q,t)$-character gives rise to the $t$-character of this
representation of $U_t(D_{n+1}^{(2)})$.

The above character can also be interpreted as the character of the
direct sum of the $(2n+1)$-dimensional fundamental representation and
the trivial representation of $U_t(\widehat{{}^L\mathfrak{g}}) =
U_t(B_{n}^{(1)})$. But we cannot obtain its $t$-character from the
interpolating $(q,t)$-character below because this $t$-character
contains the monomial $1$ (compare with the discussion in Remark
\ref{remtriv},(3)).

The interpolating $(q,t)$-character of the simple representation
$L(Y_{1,q^{-1}}Y_{1,q})$ of $U_q(C_n^{(1)})$ is given by formula
(\ref{fdeux}) in Example \ref{fdeuxex} for $n = 2$. For general $n$,
the statements about its specializations follow from Proposition
\ref{fundqtr}:

Its specialization under $\overline{\Pi}_t$ it is equal to
$^{\on{f}}\chi_t(W)$.

Its specialization under $\Pi_t$ is the
$t$-character of the fundamental representation $M(W)$ of
$U_t({}^L\widehat{\mathfrak{g}}) = U_t(D_{n+1}^{(2)})$.

Its specialization under $\Pi_t'$ is (after switching the indices $i$
and $n + 1 -i$) is the folded $t$-character of $W'$, the fundamental
representation $L(Y_{n,1})$ of $U_t(A_{2n - 1}^{(1)})$, defined in terms of
the variables $\overline{Y}_{i,a}$ instead of $Y_{i,a}$.

All of this is in agreement with Conjectures \ref{main conj
  bis} and \ref{part3}.

\subsection{Third example: the fundamental representation
  $L(Y_{3,1})$ of $U_q(A_5^{(1)})$}    \label{ex3}

Consider the Lie algebra $\g'$ of type $A_5$ with $\sigma$ being the
unique automorphism of the Dynkin diagram of order 2. Then $\g = C_3$
with $d_1 = d_2 = 1$, $d_3 = 2$ and its Langlands dual Lie algebra is
${}^L\g = B_3$. We also have $\left({}^L\mathfrak{g}\right)' = D_4$
and $^L\widehat{\g} = D_5^{(2)}$.

The fundamental representation
  $L(Y_{3,1})$ of $U_q(\wh{\g'}) = U_q(A_5^{(1)})$ is 20-dimensional.
It has $8$ invariant monomials:
$$\chi_q(\wt{W}) = 
Y_{3,1}
+ Y_{3,q^2}^{-1} Y_{4,q}Y_{2,q}
+ Y_{1,q^2}Y_{5,q^2}Y_{2,q^3}^{-1}Y_{4,q^3}^{-1} Y_{3,q^2}
+ Y_{1,q^2}Y_{5,q^2}Y_{3,q^4}^{-1}
+ Y_{3,q^2}Y_{1,q^4}^{-1}Y_{5,q^4}^{-1}$$
$$+ Y_{1,q^4}^{-1}Y_{5,q^4}^{-1}Y_{2,q^3}Y_{4,q^3}Y_{3,q^4}^{-1}
+ Y_{3,q^4} Y_{2,q^5}^{-1}Y_{4,q^5}^{-1} 
+ Y_{3,q^6}^{-1}.$$
In this case all ordinary weights have multiplicity $1$. Therefore
Remark \ref{remtriv},(2) implies that for any $u$ we have
$$W(u) = \wt{W} = \wh{W} = W^\sigma(u).$$
Setting $\wt{Y}_{1,a} = Y_{1,a}Y_{5,a}$ and $\wt{Y}_{2,a} =
Y_{2,a}Y_{4,a}$, we obtain that $\chi_q(\wt{W})$ equals
$$Y_{3,1}
+ Y_{3,q^2}^{-1} \wt{Y}_{2,q}
+ \wt{Y}_{1,q^2}\wt{Y}_{2,q^3}^{-1} Y_{3,q^2}
+ \wt{Y}_{1,q^2}Y_{3,q^4}^{-1}
+ Y_{3,q^2}\wt{Y}_{1,q^4}^{-1}
+ \wt{Y}_{1,q^4}^{-1}\wt{Y}_{2,q^3}Y_{3,q^4}^{-1}
+ Y_{3,q^4} \wt{Y}_{2,q^5}^{-1} 
+ Y_{3,q^6}^{-1}.$$
The corresponding character is
$$y_3 + y_2y_3^{-1} + y_3 y_2^{-1} y_1 + y_3^{-1} y_1 + y_3 y_1^{-1} + y_3^{-1} y_2 y_1^{-1} + y_2^{-1} y_3 + y_3^{-1},$$
which is the character of the third fundamental representation $M(W)$
of $U_t({}^L\widehat{\mathfrak{g}}) = U_t(D_5^{(2)})$.

Actually, it is also the character of the fundamental representation
$L(Y_{3,1})$ of $U_t(\wh{{}^L\g}) = U_t(B_3^{(1)})$. But we cannot
obtain the $t$-character of this representation from the interpolating
$(q,t)$-character, whereas we can do it for the third fundamental
representation of $U_t(D_5^{(2)})$ (see the discussion in Remark
\ref{remtriv},(3)).

In fact, the interpolating $(q,t)$-character of the simple
representation $L(Y_{3,1})$ of $U_q(\wh{\g}) = U_q(C_3^{(1)})$
was computed in \cite[Section 4.5]{FH1} (note however that there was a
typo there for the monomials $Y_{2,q^5t^3}$ and $Y_{2,q^7t^5}$, which
we correct here):
$$
Y_{3,1} 
+ Y_{3,q^4t^2}^{-1}Y_{2,qt}Y_{2,q^3t}
+ \alpha Y_{1,q^4t^2}Y_{2,qt}Y_{2,q^5t^3}^{-1}
+ Y_{3,q^2t^2}Y_{2,q^3t^3}^{-1}Y_{2,q^5t^3}^{-1}Y_{1,q^4t^2}Y_{1,q^2t^2}
+ \alpha Y_{1,q^6t^4}^{-1}Y_{2,qt}$$
$$+ Y_{3,q^6t^4}^{-1}Y_{1,q^4t^2}Y_{1,q^2t^2}
+ \alpha Y_{3,q^2t^2}Y_{2,q^3t^3}^{-1}Y_{1,q^6t^4}^{-1}Y_{1,q^2t^2}
+ \alpha Y_{3,q^6t^4}^{-1}Y_{1,q^6t^4}^{-1}Y_{1,q^2t^2}Y_{2,q^5t^3}
+ Y_{3,q^2t^2}Y_{1,q^6t^4}^{-1}Y_{1,q^4t^4}^{-1}$$
$$+ \alpha Y_{1,q^2t^2}Y_{2,q^7t^5}^{-1}
+ Y_{3,q^6t^4}^{-1}Y_{2,q^3t^3}Y_{2,q^5t^3}Y_{1,q^6t^4}^{-1}Y_{1,q^4t^4}^{-1}
+ \alpha Y_{2,q^3t^3}Y_{2,q^7t^5}^{-1}Y_{1,q^4t^4}^{-1}
+ Y_{2,q^5t^5}^{-1}Y_{2,q^7t^5}^{-1}Y_{3,q^4t^4}
+ Y_{3,q^8t^6}^{-1}.$$
There are $14$-monomials, $8$ with multiplicity $1$ and $6$ with multiplicity $\alpha$.

Its specialization under $\overline{\Pi}_t$, has $20$ terms, and it is
equal to $\chi_t(W)$ after identification of the variables $Y_{1,a}
\sim Y_{5,a}$, $Y_{2,a} \sim Y_{4,a}$.

Its specialization under $\Pi_t$, has $8$ terms:
$$
Z_{3,1} 
+ Z_{3,t^2}^{-1}Z_{2,-t^2}
+ Z_{3,-t^2}Z_{2,-t^6}^{-1}Z_{1,t^4}
+ Z_{3,-t^4}^{-1}Z_{1,t^4}$$
$$+ Z_{3,-t^2}Z_{1,t^8}^{-1}
+ Z_{3,-t^4}^{-1}Z_{2,-t^6}Z_{1,t^8}^{-1}
+ Z_{2,-t^{10}}^{-1}Y_{3,t^4}
+ Z_{3,t^6}^{-1},$$
and is the $t$-character of
the fundamental representation $M(W)$ of
$U_t({}^L\widehat{\mathfrak{g}}) = U_t(D_5^{(2)})$.

Its specialization under $\Pi_t'$ is the folded $t$-character
$$
\overline{Y}_{3,1} 
+ \overline{Y}_{3,t^2}^{-1}\overline{Y}_{2,t}
+ \overline{Y}_{3,t^2}\overline{Y}_{2,t^3}^{-1}\overline{Y}_{1,t^2}
+ \overline{Y}_{3,t^4}^{-1}\overline{Y}_{1,t^2}
+ \overline{Y}_{3,t^2}\overline{Y}_{1,t^4}^{-1}
+ \overline{Y}_{3,t^4}^{-1}\overline{Y}_{2,t^3}\overline{Y}_{1,t^4}^{-1}
+ \overline{Y}_{2,t^5}^{-1}\overline{Y}_{3,t^4}
+ \overline{Y}_{3,t^6}^{-1}$$
of the fundamental representation $L(Y_{3,1})$ of $U_t\left(\wh{\left({}^L\mathfrak{g}\right)'}\right) = U_t(D_4^{(1)})$, defined in terms of the variables $\overline{Y}_{i,a}$ instead of $Y_{i,a}$.

All of this is in agreement
with Conjectures \ref{main conj bis} and \ref{part3}.

\subsection{Fourth example: the trivalent fundamental
  representation $L(Y_{1,1})$ of $U_q(D_4^{(1)})$}    \label{ex4}

Now we consider the Lie algebra $\g'$ of type $D_4$ with an
automorphism $\sigma$ of order $3$ and denote by $i = 1$ the trivalent
node. Then $\g = G_2$ with $d_1 = 3$, $d_2 = 1$ and its Langlands dual
Lie algebra is ${}^L\g = G_2$ with $d_1^\vee = 1$ and $d_2^\vee = 3$.
We also have $\left({}^L\mathfrak{g}\right)' = D_4$ (but with the
trivalent node now being $i = 2$) and $^L\widehat{\g} = D_4^{(3)}$.

Let us consider the example of the fundamental representation $W =
L(Y_{1,1})$ of $U_q(\wh{\g'}) = U_q(D_4^{(1)})$. Its
$q$-character is computed in \cite[Example 5.3.2]{N0}. It has $27$
monomials, all of multiplicity $1$ except one
  ($Y_{1,q^2}Y_{1,q^4}^{-1}$) of multiplicity $2$.
It has $8$ invariant monomials for an automorphism $\sigma$ of
order $3$, including one of multiplicity $2$:
$$\chi_q(\wt{W}) = 
Y_{1,1}
+ Y_{1,q^2}^{-1}Y_{2,q}Y_{3,q}Y_{4,q}
+ Y_{1,q^2}^2 Y_{2,q^3}^{-1}Y_{3,q^3}^{-1}Y_{4,q^3}^{-1}
+ 2 Y_{1,q^2}Y_{1,q^4}^{-1}
+ Y_{1,q^4}^{-2} Y_{2,q^3}Y_{3,q^3}Y_{4,q^3}$$
$$+ Y_{1,q^4}Y_{2,q^5}^{-1}Y_{3,q^5}^{-1}Y_{4,q^5}^{-1}
+ Y_{1,q^6}^{-1}.$$
All weight spaces of $W$ are one-dimensional, except for the
$0$-weight space $W_0$ of $W$ which is $5$-dimensional,
containing as a proper subspace its intersection with $\wt{W}$, which
is the $2$-dimensional $\ell$-weight space associated to
$Y_{1,q^2}Y_{1,q^4}^{-1}$ (the other $3$ monomials of weight $0$ are
$Y_{2,q}Y_{2,q^5}^{-1}$, $Y_{3,q}Y_{3,q^5}^{-1}$,
$Y_{4,q}Y_{4,q^5}^{-1}$, which are not $\sigma$-invariant).

Let $v$ be a generating vector of the weight space $W_{\alpha_1}$,
which is one-dimensional and whose corresponding monomial is
$Y_{1,q^2}^2Y_{2,q}^{-1}Y_{3,q}^{-1}Y_{4,q}^{-1}$. Under the action of
the $U_q(\wh{\sw}_2)$-subalgebra corresponding to the node $1$, the
vector $v$ generates a representation of dimension $4$ with a
$2$-dimensional $0$-weight space. We obtain as in Section \ref{ex2}
above that  for generic $u$ we have $\on{dim} W(u)\cap
  W_0 = \on{dim} \wt{W}\cap W_0 = 2$.

Since the other weight spaces are $1$-dimensional, it follows from
Remark \ref{remtriv},(2) that for generic $u$ we have
$$W(u) \simeq \wt{W}$$
as vector spaces graded by $^L\g$-weights. Moreover, it follows that
$W^\sigma(u) = W(u)$ and $\wt{W}^\sigma = \wt{W}$.

Setting $\wt{Y}_{2,a} = Y_{2,a}Y_{3,a}Y_{4,a}$, we obtain that
$$
\chi_q(\wt{W}) = Y_{1,1}
+ Y_{1,q^2}^{-1}\wt{Y}_{2,q}
+ Y_{1,q^2}^2 \wt{Y}_{2,q^3}^{-1}
+ 2 Y_{1,q^2}Y_{1,q^4}^{-1}
+ Y_{1,q^4}^{-2} \wt{Y}_{2,q^3}
+ Y_{1,q^4} \wt{Y}_{2,q^5}^{-1}
+ Y_{1,q^6}^{-1}.$$
The corresponding character
$$y_1 + y_2y_1^{-1} + y_1^2 y_2^{-1} y_1 + 2  + y_2 y_1^{-2} +
y_2^{-1} y_1 + y_1^{-1}$$
is the character of the first fundamental representation $M(W)$ of
$U_t({}^L\widehat{\mathfrak{g}}) = U_t(D_4^{(3)})$.

Note that it is also the character of the direct sum of the first
fundamental representation and the trivial one-dimensional
representation of $U_t(\wh{{}^L\g}) = U_t(G_2^{(1)})$. Therefore we
cannot obtain the $t$-character of this representation of
$U_t(G_2^{(1)})$ from the corresponding interpolating
$(q,t)$-character, whereas we can do it for the first fundamental
representation of $U_t(D_4^{(3)})$ (compare with the discussion in
Remark \ref{remtriv},(3)).

In fact, the interpolating $(q,t)$-character of the simple
representation $L(Y_{1,1})$ of $U_q(G_2^{(1)})$ was computed in
\cite[Section 5.2]{FH1}. It has $15$ monomials, $8$ with multiplicity
$1$ and $7$ with multiplicity $\alpha$.

Its specialization under $\overline{\Pi}_t$ has $29$ terms and is
equal to $^{\on{f}}\chi_t(W)$.

Its specialization under $\Pi_t$, has $8$ terms:
$$
Z_{1,1} 
+ Z_{1,t^2}^{-1}Z_{2,-t^3}
+ Z_{2,-t^9}^{-1}Z_{1,-\epsilon t^2}Z_{1,\epsilon^2t^2}
+ Z_{1,\epsilon t^2}Z_{1,\epsilon^2t^4}^{-1}
+ Z_{1,\epsilon^2t^2}Z_{1,-\epsilon t^4}^{-1}$$
$$+ Z_{1,\epsilon^2 t^4}^{-1}Z_{1,-\epsilon t^4}^{-1}Z_{-2,t^9}
+ Z_{-2,t^{15}}Z_{1,t^4}
+ Z_{1,t^6}^{-1}
$$
and is the $t$-character of the
fundamental representation $M(W)$ of $U_t({}^L\widehat{\mathfrak{g}})
= U_t(D_4^{(3)})$.

Its specialization under $\Pi_t'$ is the folded $t$-character
$$
\overline{Y}_{1,1} 
+ \overline{Y}_{1,t^2}^{-1}\overline{Y}_{2,t}
+ \overline{Y}_{2,t^3}^{-1}\overline{Y}_{1,t^2}^2
+ 2 \overline{Y}_{1,t^4}^{-1}\overline{Y}_{1,t^2}
+ \overline{Y}_{2,t^3}\overline{Y}_{1,t^4}^{-2}
+ \overline{Y}_{2,t^5}^{-1}\overline{Y}_{1,t^4}
+ \overline{Y}_{1,t^6}^{-1}$$
of the fundamental representation $L(Y_{1,1})$ of
$U_t\left(\wh{\left({}^L\mathfrak{g}\right)}'\right) =
  U_t(D_4^{(1)})$, defined in terms of the variables
    $\overline{Y}_{i,a}$ instead of $Y_{i,a}$.

All of this is in
agreement with Conjectures \ref{main conj bis} and \ref{part3}.

\subsection{Fifth example: a simple tensor product of 4 fundamental
  representations of $U_q(A_3^{(1)})$}    \label{fifth}

Here we study an example in which $\overline{W}(u)$ is not equal to the
whole space $W(u)$.

We work with the same Lie algebras $\g' = A_3$, $\g = C_2$, 
	${}^L\g = B_2$, $\left({}^L\mathfrak{g}\right)' = A_3$ and
$^L\widehat{\g} = D_3^{(2)}$ as in the first example.

Consider the following simple tensor product of 4 fundamental
representations of $U_q (\wh{\g'}) = U_q(A_3^{(1)})$,
$$W = L(Y_{1,1}^2Y_{3,1}^2) \simeq L(Y_{1,1})^{\otimes 2}\otimes
L(Y_{3,1})^{\otimes 2}.$$
Its highest monomial $Y_{1,1}^2Y_{3,1}^2$ is $\sigma$-invariant.

It is the tensor square of the $16$-dimensional representation $W_1 = L(Y_{1,1}Y_{3,1})$
studied in Section \ref{ex2} (with $n = 2$), whose invariant 
subspace $\wt{W}_1 = W_1(0)$ has dimension $6$. Its dimension is $256$ and its subspace $\wt{W} = W(0)$ contains 
a subspace of dimension $6^2 = 36$ corresponding to the square of the
$q$-character of the invariant subspace $\wt{W}_1$ of $L(Y_{1,1}Y_{3,1})$:
\begin{equation}\label{square}\wt{Y}_{1,1}^2 
+ 2 \wt{Y}_{1,1}\wt{Y}_{1,q^2}^{-1}\wt{Y}_{2,q}^2
+ 4 \wt{Y}_{1,1}\wt{Y}_{2,q}\wt{Y}_{2,q^3}^{-1}
+   \wt{Y}_{1,q^2}^{-2}\wt{Y}_{2,q}^4
+  4 \wt{Y}_{1,q^2}^{-1}\wt{Y}_{2,q}^3\wt{Y}_{2,q^3}^{-1}
+  2 \wt{Y}_{1,1}\wt{Y}_{1,q^2}\wt{Y}_{2,q^3}^{-2}
+  6 \wt{Y}_{2,q}^2\wt{Y}_{2,q^3}^{-2}\end{equation}
$$+  2 \wt{Y}_{1,1}\wt{Y}_{1,q^4}^{-1}
+  4 \wt{Y}_{2,q}\wt{Y}_{2,q^3}^{-3}\wt{Y}_{1,q^2}
+  2 \wt{Y}_{1,q^2}^{-1}\wt{Y}_{1,q^4}^{-1}\wt{Y}_{2,q}^2
+ \wt{Y}_{2,q^3}^{-4}\wt{Y}_{1,q^2}^2
+ 4 \wt{Y}_{2,q}\wt{Y}_{2,q^3}^{-1}\wt{Y}_{1,q^4}^{-1}
+ 2 \wt{Y}_{2,q^3}^{-2}\wt{Y}_{1,q^2}\wt{Y}_{1,q^4}^{-1}
+ \wt{Y}_{1,q^4}^{-2}.$$
However, the analysis of the $q$-character 
$$\chi_q(W) = (Y_{1,1} + Y_{1,q^2}^{-1}Y_{2,q} + Y_{2,q^3}^{-1}Y_{3,q^2} + Y_{3,q^4}^{-1})^2
(Y_{3,1} + Y_{3,q^2}^{-1}Y_{2,q} + Y_{2,q^3}^{-1}Y_{1,q^2} + Y_{1,q^4}^{-1})^2$$
shows that $\wt{W}$ is larger: it has dimension $54$ and $\chi_q(\wt{W})$ equals
$$
\wt{Y}_{1,1}^2 
+ 4 \wt{Y}_{1,1}\wt{Y}_{1,q^2}^{-1}\wt{Y}_{2,q}^2
+ 8 \wt{Y}_{1,1}\wt{Y}_{2,q}\wt{Y}_{2,q^3}^{-1}
+   \wt{Y}_{1,q^2}^{-2}\wt{Y}_{2,q}^4
+  4 \wt{Y}_{1,q^2}^{-1}\wt{Y}_{2,q}^3\wt{Y}_{2,q^3}^{-1}
+  4 \wt{Y}_{1,1}\wt{Y}_{1,q^2}\wt{Y}_{2,q^3}^{-2}
+   6 \wt{Y}_{2,q}^2\wt{Y}_{2,q^3}^{-2}$$
$$+ 4 \wt{Y}_{1,1}\wt{Y}_{1,q^4}^{-1}
+ 4 \wt{Y}_{2,q}\wt{Y}_{2,q^3}^{-3}\wt{Y}_{1,q^2}
+ 4 \wt{Y}_{1,q^2}^{-1}\wt{Y}_{1,q^4}^{-1}\wt{Y}_{2,q}^2
+ \wt{Y}_{2,q^3}^{-4}\wt{Y}_{1,q^2}^2
+ 8 \wt{Y}_{2,q}\wt{Y}_{2,q^3}^{-1}\wt{Y}_{1,q^4}^{-1}
+ 4 \wt{Y}_{2,q^3}^{-2}\wt{Y}_{1,q^2}\wt{Y}_{1,q^4}^{-1}
+ \wt{Y}_{1,q^4}^{-2}.$$
But the corresponding character is {\em not} equal to the character of
a representation of the twisted quantum affine algebra $U_q(D_3^{(2)})$.
Indeed, the dimensions of the simple $U_q(D_3^{(2)})$-modules whose
highest $^L\g$-weight is $y_1^2$ are $36$ (simple tensor 
  square of the first fundamental representation), $35$ ($L(Z_{1,t}Z_{1,t^4})$) or $32$ (KR modules). The dimension of the 
remaining space and the multiplicity of the highest weight $y_2^2$
within it are respectively 
$18$ and $2y_2^2$, $19$ and $2y_2^2$, $22$ and $3y_2^2$. But the dimension of the 
simple modules of highest weight $y_2^2$ are $16$ (simple tensor product of 
fundamental representation), $15$ ($L(Z_{2,t}Z_{2,t^4})$) or $10$ (KR modules).
This means that this is not the character of a representation of $U_q(D_3^{(2)})$.

Hence in this case $\overline{W}(u)$ cannot be equal to the whole
$W(u)$. Rather, it should be a subspace of the space isomorphic to the
one whose $q$-character is (\ref{square}). We expect that $M(W)$ is a
simple tensor square of the first fundamental
  representation of $U_q(D_3^{(2)})$. The computation of the
  specialization $\Pi_t$ of the corresponding interpolating
  $(q,t)$-interpolating character confirms it.

We also note that $M(W)$ is smaller than $\wt{W}^\sigma$ in this case. 
Indeed, the weight spaces associated to the
 $^L\g$-weights $2\omega_1^\vee$, $2\omega_2^\vee$ and $\omega_1^\vee$ have respective dimensions $1$, $4$ and $8$ 
in $\wt{W}$ of dimension $54$. The dimensions of 
the corresponding weight spaces in $\wt{W}^\sigma$ are $1$, $3$ and $6$ 
(indeed, denoting by $v$ a highest weight vector of $W$, the generators for $2\omega_2^\vee$ are $x_{1,0}^-x_{3,0}^-.v$, 
$x_{1,1}^-.x_{3,1}^-.v$, $(x_{1,0}^-x_{3,1}^-+x_{1,1}^-x_{3,0}^-).v$,
and for $\omega_1^\vee$ their images under the action of
$x_{2,0}^-$ and $x_{2,1}^-$). So the dimension of $\wt{W}^\sigma$
is smaller than $51$, 
and then by a symmetry argument (by considering the opposite weights),
it is smaller than $48$.
We have seen that the dimensions of the simple modules over $U_q(D_3^{(2)})$ 
of highest weight $\omega_1^\vee$ are $36$, $35$ or $32$. The dimension of the 
remaining space in $\wt{W}^\sigma$ is smaller than $12$, $13$ and $20$ respectively, and 
the multiplicity of the weights $2\omega_2^\vee$, $\omega_1^\vee$ within are respectively 
$y_2^2 + 2y_1$, $y_2^2 + 2y_1$ and $2y_2^2 + 3y_1$. But the dimensions of the 
simple modules of highest weight $2\omega_2^\vee$ are $16$, $15$  or $10$. Only the last case is possible and it corresponds 
to a KR-module which contributes to $\omega_1^\vee$ with multiplicity $1$. In all cases, the remaining space has dimension 
smaller than $3$ with a space associated to $\omega_1^\vee$ of
dimension $1$. This means that this is not the character of a representation of $U_q(D_3^{(2)})$.

Now let us interpret this in terms of the interpolating
$(q,t)$-character of the simple representation $L(Y_{1,q^{-1}}Y_{1,q})^{\otimes 2}$ of $U_q(C_2^{(1)})$, 
that is the square of the formula (\ref{fdeux}) studied in Example \ref{fdeuxex}.

Its specialization under $\overline{\Pi}_t$ is equal to
$^{\on{f}}\chi_t(W)$.

Its specialization under $\Pi_t$ is the
$t$-character of the $36$-dimensional simple representation $M(W) =
L(Z_{1,-1})^{\otimes 2}$ of $U_t({}^L\widehat{\mathfrak{g}}) =
U_t(D_3^{(2)})$.

Its specialization under $\Pi_t'$ is (after switching the indices $1$ and
$2$) the folded $t$-character of $W'$, the $36$-dimensional simple
representation $L(Y_{2,1})^{\otimes 2}$ of $U_t(A_3^{(1)})$, defined
in terms of the variables $\overline{Y}_{i,a}$ instead of $Y_{i,a}$.

\section{Connection to monomial crystals}     \label{cryssec}

In this section we formulate a conjecture linking folded
$t$-characters to Kashiwara's extension of Nakajima's monomial model
of crystals to non-simple laced Lie algebras. We have previously mentioned
it in Remark \ref{crystal1},(2).

Recall that a $\mathfrak{g}$-{\em crystal} is a set $C$ together with
an assignment to
each element $c\in C$ a $\mathfrak{g}$-weight $\text{wt}(c)$;
the crystal operators $\wt{e}_i,\wt{f}_i : C\rightarrow
C\sqcup\{0\}, i \in I$ (crystallized versions of Chevalley operators
of $U_q(\mathfrak{g})$);
and maps $\epsilon_i,\phi_i : C\rightarrow \mathbb{Z}$  
satisfying the axioms of the crystal theory (in general,
$\epsilon_i,\phi_i$ could have infinite values,
but we do not consider this possibility here). There is a
corresponding notion of morphisms and isomorphisms of
$\mathfrak{g}$-crystals as well as that
of $\mathfrak{g}$-subcrystals.
For example, each simple finite-dimensional 
representation $V(\lambda)$ of $U_q(\mathfrak{g})$ of highest weight
$\lambda$ is known to have
a crystal basis.  The set of its elements has the structure of a
$\mathfrak{g}$-crystal, which is denoted by $\mathcal{B}(\lambda)$. 
This $\mathfrak{g}$-crystal is called a simple crystal and it has
various realizations. One of them is the {\em monomial crystal} which
was introduced by
Nakajima \cite{N} and further studied by Kashiwara \cite{K}. We now
recall the definition.

Let $s\colon I\rightarrow \{0,1\}$ ($i\mapsto s_i$) be a map such that
$C_{i,j}\leq -1$ implies $s_i+s_j=1$.  Consider the set of monomials
$\mathcal{M}$ generated by the $Y_{i,q^{r}}^{\pm 1}$ such that $r \equiv
s_i \text{ mod } [2]$.

For $m = \prod_{j\in I, l\in\mathbb{Z}}
Y_{j,q^l}^{u_{j,l}}\in\mathcal{M}$, define its $\mathfrak{g}$-weight
by the formula
$$
\text{wt}(m)=\sum_{j\in I,l\in \mathbb{Z}}u_{j,l}\omega_j.$$ 
Next, set for $i\in I$,
$$\phi_i(m)=\max\{ \phi_{i,L}(m) \mid L\in\ZZ\}\text{ where }
\phi_{i,L}(m) = \sum_{l\leq L} u_{i,l}(m),$$
$$\epsilon_i(m) = \max\{ \epsilon_{i,L}(m) \mid L\in\ZZ\}\text{ where }\epsilon_{i,L}(m)=-{\sum_{l\geq L}}u_{i,l}(m).$$
Finally, define $\wt{e}_i,\wt{f}_i\colon \mathcal{M}\to
\mathcal{M}\cup\{0\}$ for $i\in I$ by the formulas
\begin{equation*}
\begin{split}
  & \wt{e}_i(m) =
  \begin{cases}
    0 & \text{if $\epsilon_i(m) = 0$},
    \\
   mA_{i,q^{p_i(m)-1}} & \text{if $\epsilon_i(m) > 0$},
  \end{cases}
\\
  & \wt{f}_i(m) =
  \begin{cases}
    0 & \text{if $\phi_i(m) = 0$},
    \\
   mA_{i,q^{q_i(m)+1}}^{-1} & \text{if $\phi_i(m) > 0$},
  \end{cases}
\end{split}
\end{equation*}
where the $A_{j,q^d}$ are given by formula (\ref{AY2}) and 
$$p_i(m)=\max\{L\in\ZZ\mid
\epsilon_{i,L}(m)=\epsilon_i(m)\}=\max\{L\in\ZZ\mid
        {\sum_{l<L}}u_{i,l}(m)=\phi_i(m)\},$$
$$q_i(m)=\text{min}\{L\in\ZZ\mid
        \phi_{i,L}(m)=\phi_i(m)\}=\text{min}\{L\in\ZZ\mid -{\sum_{l>
            L}}u_{i,l}(m)=\epsilon_i(m)\}.
        $$

\begin{thm}[\cite{N,K}]    \label{crystal}
(1) The collection $(\mathcal{M},\on{wt}, \epsilon_i, \phi_i,
\wt{e}_i,\wt{f}_i)$ is a $\mathfrak{g}$-crystal. It is called
the monomial crystal.

(2) For a dominant $m\in\mathcal{M}$, let $\mathcal{M}(m)\subset
\mathcal{M}$ be the $\mathfrak{g}$-subcrystal generated by $m$. Then
$\mathcal{M}(m)$ is isomorphic, as a $\mathfrak{g}$-crystal, to
$\mathcal{B}(\on{wt}(m))$.
\end{thm}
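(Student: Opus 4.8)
The plan is to handle the two parts by quite different means: part~(1) is a purely local, rank-one verification, while part~(2) rests on Kashiwara's embedding theorem. For part~(1), I would first observe that for each fixed $i\in I$ the data $\epsilon_i$, $\phi_i$, $\wt e_i$, $\wt f_i$ depend only on the sequence of exponents $(u_{i,l}(m))_{l\in\ZZ}$ of the variables $Y_{i,q^l}$ in $m$, and that these formulas are precisely the signature (bracketing) rule for $\mathfrak{sl}_2$-crystals: listing the positions $l$ with multiplicities $u_{i,l}(m)$ in increasing order of $l$, the quantities $\phi_{i,L}$ and $\epsilon_{i,L}$ are the running partial sums, and $p_i(m),q_i(m)$ locate the last and first places where the respective maxima are attained. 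Thus the single-node structure is that of an iterated tensor product of elementary $\mathfrak{sl}_2$-crystals, for which the axioms are standard. Two points require the specific hypotheses of the construction. First, multiplication by $A_{i,q^s}^{\pm1}$ shifts the weight by $\pm\alpha_i$: using formula \eqref{AY2} one computes $\on{wt}(A_{i,q^s})=2\omega_i+\sum_{j\neq i}C_{ji}\omega_j=\sum_j C_{ji}\omega_j=\alpha_i$, exactly the weight change demanded for $\wt e_i$ and $\wt f_i$. Second, the operators must send $\mathcal M$ into $\mathcal M\cup\{0\}$; this is where the condition ``$C_{i,j}\le-1\Rightarrow s_i+s_j=1$'' enters. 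Indeed one checks $p_i(m)\equiv q_i(m)\equiv s_i \pmod 2$, so that $A_{i,q^{p_i-1}}$ (resp. $A_{i,q^{q_i+1}}^{-1}$) involves $Y_{i,q^{p_i}},Y_{i,q^{p_i-2}}$ at parity $s_i$ and the neighbouring $Y_{j,q^{p_i-1}}$ at parity $s_i-1\equiv s_j$, so the parity constraint defining $\mathcal M$ is preserved.

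Granting this rank-one reduction, the remaining axioms are immediate. The identity $\phi_i(m)-\epsilon_i(m)=\langle\on{wt}(m),\alpha_i^\vee\rangle$ follows by setting $N=\sum_l u_{i,l}(m)$ and $f(L)=\phi_{i,L}(m)$: then $\epsilon_{i,L}(m)=f(L-1)-N$, so $\epsilon_i(m)=\phi_i(m)-N$, whence $\phi_i-\epsilon_i=N=\langle\on{wt}(m),\alpha_i^\vee\rangle$ because $\langle\omega_j,\alpha_i^\vee\rangle=\delta_{ij}$. The relations $\wt f_i\wt e_i=\on{id}$ on $\{\epsilon_i>0\}$, $\wt e_i\wt f_i=\on{id}$ on $\{\phi_i>0\}$, together with the compatible shifts of $\epsilon_i,\phi_i$, are the usual signature-rule computations, and normality (completeness and finiteness of each $i$-string) is built into the same formulas. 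This proves part~(1).

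For part~(2), set $\lambda=\on{wt}(m)$. Since $m$ is dominant all exponents $u_{i,l}(m)$ are nonnegative, so $\epsilon_i(m)=0$ and $\wt e_i(m)=0$ for every $i$; hence $m$ is a highest weight element of weight $\lambda$. Because $\mathcal M$ is a normal crystal, the general theory of highest weight crystals produces a unique strict morphism $\Phi:\mathcal B(\lambda)\to\mathcal M$ with $\Phi(u_\lambda)=m$ whose image is exactly the generated subcrystal $\mathcal M(m)$; thus $\Phi:\mathcal B(\lambda)\twoheadrightarrow\mathcal M(m)$ is a surjective, weight-preserving morphism. It then remains to prove that $\Phi$ is injective, and this is the crux of the argument.

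The hard part will indeed be the injectivity, which I would establish following Kashiwara \cite{K} via the embedding theorem \cite{K8}. The idea is to realize $\mathcal M(m)$ as a subcrystal of a tensor product of fundamental monomial crystals: factoring a dominant monomial as $m=m_1m_2$ with $m_1,m_2$ dominant and supported at sufficiently separated spectral parameters, multiplication of monomials induces a \emph{strict} embedding $\mathcal M(m)\hookrightarrow\mathcal M(m_1)\otimes\mathcal M(m_2)$ sending $m\mapsto m_1\otimes m_2$, the separation guaranteeing that the bracketing computation at each node factors through the tensor product rule. Iterating reduces to the fundamental monomials $Y_{i,a}$, for which one identifies $\mathcal M(Y_{i,a})\cong\mathcal B(\omega_i)$. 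On the other side, the embedding theorem identifies $\mathcal B(\on{wt}(m_1)+\on{wt}(m_2))$ with the connected component of the tensor product of the highest weight vectors in $\mathcal B(\on{wt}(m_1))\otimes\mathcal B(\on{wt}(m_2))$, so that the composite $\mathcal B(\lambda)\to\mathcal M(m)\hookrightarrow\bigotimes_i\mathcal B(\omega_i)^{\otimes c_i}$ is injective; this forces $\Phi$ itself to be injective, so $\Phi$ is an isomorphism $\mathcal B(\lambda)\cong\mathcal M(m)$. The subtle points to verify carefully are the strictness of the factorization embedding (no monomials being collapsed or lost in passing to the tensor factors) and the base identification $\mathcal M(Y_{i,a})\cong\mathcal B(\omega_i)$, which already requires invoking the embedding theorem for the fundamental weights when $\omega_i$ is not minuscule.
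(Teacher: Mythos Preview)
The paper does not give its own proof of this theorem: it is stated as a citation of Nakajima \cite{N} and Kashiwara \cite{K}, and no argument is supplied. The only comment the paper makes about the proof is in Remark~\ref{crystal1}(2), where it notes that the crystal realization ``was a consequence of the embedding theorem \cite{K8}, which makes sense in the symmetrizable case \cite{K}.''

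Your outline is in line with that remark and with Kashiwara's actual argument in \cite{K}. Part~(1) is handled correctly: the rank-one reduction to the signature rule, the weight computation $\on{wt}(A_{i,a})=\alpha_i$ via \eqref{AY2}, and the parity check ensuring stability of $\mathcal{M}$ are the right ingredients. For part~(2), your strategy---realize $\mathcal{M}(m)$ inside a tensor product of fundamental monomial crystals by factoring $m$ with separated spectral parameters, identify the fundamental pieces with $\mathcal{B}(\omega_i)$, and invoke the embedding theorem---is exactly the route taken in \cite{K}. One point you should be more careful about: a given dominant $m$ need not have variables at well-separated spectral parameters, so you must first observe that shifting all spectral parameters in a monomial by a common factor gives a crystal isomorphism, and hence one may replace $m$ by a monomial of the same weight whose factorization does satisfy the separation hypothesis. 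With that adjustment your sketch matches the cited proofs.
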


For example, for $\mathfrak{g} = A_2$ we have 
\begin{equation}\label{mca2}\mathcal{M}(Y_{1,1}) = \{Y_{1,1},
  Y_{2,q}Y_{1,q^2}^{-1}, Y_{2,q^3}^{-1}\}\simeq \mathcal{B}(\omega_1)
\end{equation}
and for $\mathfrak{g} = C_2$ we have
\begin{equation}\label{mcc2}\mathcal{M}(Y_{2,1}) = \{Y_{2,1},
  Y_{2,q^2}^{-1} Y_{1,q}^2,
Y_{1,q}Y_{1,q^3}^{-1},Y_{1,q^3}^{-2}Y_{2,q^2},Y_{2,q^4}^{-1}\}\simeq
\mathcal{B}(\omega_2).
\end{equation}

Observe that the first set (\ref{mca2}) is the set of monomials of the
$q$-character of a fundamental representation of $U_q(A_2^{(1)})$, but
the second set (\ref{mcc2}) in not the set of monomials of the
$q$-character of a representation of $U_q(C_2^{(1)})$.

 In fact, if $\g$ is a simply-laced Lie algebra, there is
  a precise relation between the monomial crystals and the
  $q$-characters of modules over $U_q(\ghat)$ discovered by
  Nakajima \cite{N}. Namely, it is proved in \cite[Theorem 3.3]{N}
  that for a dominant monomial $m$ in $\mathcal{M}$, the set ${\mc
    V}(m)$ of monomials (without multiplicities) of the $q$-character
  of the standard module $V(m)$ associated to $m$ (i.e.  $V(m)$ is the
  corresponding tensor product of the fundamental representations) is
  a $\mathfrak{g}$-subcrystal of the monomial crystal $\mathcal{M}$
  (in particular, its union with $\{0\}$ is stable by the crystal
  operators). Moreover, for each dominant $\g$-weight $\lambda$, there
  is a choice of a dominant monomial $m\in\mathcal{M}$ of weight
  $\text{wt}(m) = \lambda$ so that the $\g$-crystal ${\mc V}(m)$ is
  equal to $\mathcal{M}(m)$. By Theorem \ref{crystal},(2), ${\mc
    V}(m)$ is isomorphic, as a $\mathfrak{g}$-crystal, to the
  corresponding simple crystal $\mathcal{B}(\lambda)$
  \cite[Proposition 3.4]{N} (for a fundamental weight $\lambda =
  \omega_i$ we can choose $m = Y_{i,a}$).

We will now conjecture an analogous result for non-simply laced $\g$,
with the $q$-characters of representations of quantum affine algebras
in $\mathcal{K}_q^+(\mathfrak{g})$ replaced by the $t$-characters in
the {\em folded $t$-character ring} $\mathcal{K}_t^-(\mathfrak{g})$
introduced in Section \ref{Kminus}. Note that we now denote the quantum
  parameter in $\mathcal{M}$ by $t$ instead of $q$ to fit the
  notation of the previous sections.

\begin{conj}    \label{conjcrys}
(1) The set of monomials (without multiplicities) of a product of the
fundamental elements $F(Y_{i,t^r})\in\mathcal{K}_t^-(\mathfrak{g})$,
with $Y_{i,t^r}\in\mathcal{M}$, is a $\mathfrak{g}$-subcrystal of the
monomial crystal $\mathcal{M}$.

(2) For each dominant weight $\lambda$, there is a product of the
fundamental elements $F(Y_{i,a})$ with highest monomial
$m\in\mathcal{M}$ of weight $\lambda$ whose set of monomials is equal
to $\mathcal{M}(m)$ and hence is a $\mathfrak{g}$-crystal isomorphic
to $\mathcal{B}(\lambda)$.
\end{conj}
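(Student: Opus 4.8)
The plan is to adapt Nakajima's proof of \cite[Theorem 3.3]{N} to the folded $t$-character ring $\mathcal{K}_t^-(\g)$, using as the decisive input the coincidence recorded in Remark \ref{crystal1},(2): the monomials $A_i$ defining the crystal operators $\wt{e}_i,\wt{f}_i$ on $\mathcal{M}$ through formula \eqref{AY2} are \emph{exactly} the monomials $A_i$ that govern the structure of $\mathcal{K}_t^-(\g)$ via the kernels $\on{Ker}S_i^-$ of Proposition \ref{props-}, and, by the discussion in Section \ref{Kminus}, each of them is the image under the folding $Y_i\sim Y_{\sigma(i)}$ of the corresponding simply-laced monomial $A_j$ of $\g'$. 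Since $\mathcal{M}$ records only the set of monomials, the multiplicities carried by the fundamental elements $F(Y_{i,t^r})$ — such as the coefficient $2$ in the $B_2$ and $G_2$ formulas in the proof of Proposition \ref{intmoins} — are irrelevant to the crystal statement; only the underlying monomial sets enter.

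First I would reduce part (2) to an equality of monomial sets. By Theorem \ref{crystal},(2) we already know $\mathcal{M}(m)\simeq\mathcal{B}(\on{wt}(m))$, so it suffices, for each dominant weight $\lambda$, to choose the spectral parameters so that the monomial set $\mathcal{V}$ of the corresponding product of fundamental elements $F(Y_{i,a})$, with highest monomial $m$ of weight $\lambda$, equals the subcrystal $\mathcal{M}(m)$ (for $\lambda=\omega_i$ one simply takes $F(Y_{i,a})$). The inclusion $\mathcal{M}(m)\subseteq\mathcal{V}$ is then immediate from Proposition \ref{props-}: along node $i$ the kernel $\on{Ker}S_i^-$ is generated by the factors $Y_i(1+A_i^{-1})$, so the monomials of any element of $\mathcal{K}_t^-(\g)$ organize into $\sw_2$-strings produced by successive multiplication by the inverse monomials $A_i^{-1}$; as $\wt{f}_i$ on $\mathcal{M}$ is precisely this multiplication and $\mathcal{M}(m)$ is generated from $m$ by the $\wt{f}_i$, the set $\mathcal{V}$ contains $\mathcal{M}(m)$ and $\mathcal{V}\cup\{0\}$ is already stable under all $\wt{f}_i$.

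The reverse inclusion $\mathcal{V}\subseteq\mathcal{M}(m)$, together with stability under the raising operators $\wt{e}_i$, is where the real work lies. I would proceed by induction on the rank, as Nakajima does, using the characterization $\mathcal{K}_t^-(\g)=\bigcap_i\on{Ker}S_i^-$ to localize the closure question at each node $i$ to the rank-one $\sw_2$ behaviour and to the rank-two base cases $A_1\times A_1, A_2, B_2, G_2$; these are supplied by the explicit fundamental elements computed in the proof of Proposition \ref{intmoins} and by the examples of Section \ref{ex}, where one checks directly that the $i$-string decomposition of the monomials matches the crystal. The essential difficulty, absent in the simply-laced setting, is that the folding map of Theorem \ref{unfold} is many-to-one on monomials and merges each $\sigma$-orbit of nodes of $\g'$ into a single node $i\in I$, so that the several $\g'$-crystal operators $\wt{f}_j^{\g'}$ attached to the nodes $j$ of one orbit all fold onto the single operator $\wt{f}_i^{\g}$; one must verify that the folded monomial set remains closed under this single operator, which amounts to a folding-of-crystals compatibility that has to be established by hand rather than imported from the $\g'$-crystal. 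I expect this to be the main obstacle, compounded by the fact that — unlike the simply-laced case, where Nakajima invokes the $\sw_2^{(i)}$-module structure of an actual standard module — here there is no underlying representation category available (see Remark \ref{mcA}), so closure must be extracted purely from the kernel-of-screening description.

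To control the upper inclusion I expect to need Conjecture \ref{positivity}, the positivity of the coefficients of the $F(Y_{i,a})$: positivity guarantees that no monomial of $\mathcal{V}$ arises through cancellation, so that the monomials genuinely fill out a complete $\sw_2$-string at each node and hence a subcrystal. For part (1), concerning an arbitrary product of fundamental elements, I would combine the per-factor analysis above with the compatibility of the monomial crystal $\mathcal{M}$ with multiplication (under which the product of two subcrystals is again a subcrystal, realizing the tensor-product rule), using once more that the folded $A_i$ act uniformly across the factors; the same positivity input ensures that the monomial set of the product is a $\g$-subcrystal of $\mathcal{M}$. Granting these inputs, the two inclusions above give $\mathcal{V}=\mathcal{M}(m)$, and part (2) follows from Theorem \ref{crystal},(2).
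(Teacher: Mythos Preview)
The statement you are trying to prove is labeled a \emph{Conjecture} in the paper, and the paper does not prove it. After stating Conjecture~\ref{conjcrys} the authors give one verified example ($C_2$, $\omega_2$), then explain their \emph{motivation}: by \cite{Ksy} the $\sigma$-fixed monomials of the simple $({}^L\g)'$-crystal $\mathcal{M}(\wt{Y}_{i,1})$ carry a $\g$-crystal structure isomorphic to $\mathcal{B}(\omega_i)$, and via the interpolating $(q,t)$-character $\ol{F}_{q,t}(W_{i,1})$ these fixed monomials are expected to correspond to the monomials of $F(\ol{Y}_{i,1})\in\mathcal{K}_t^-(\g)$. They explicitly write that ``proving this identification requires a finer analysis'' and defer it to another paper. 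So there is no proof in the paper to compare your proposal against; your write-up is a proposed attack on an open problem.

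As a strategy your outline has real gaps. The step you call ``immediate'' --- that $\mathcal{V}\cup\{0\}$ is stable under all $\wt{f}_i$ and contains $\mathcal{M}(m)$ --- is not. Membership in $\on{Ker}S_i^-$ tells you the monomials of an element organize, with multiplicities, into $U_t(\sw_2)$-characters built from the factors $Y_{i,b}(1+A_{i,bt}^{-1})$; it does \emph{not} tell you that the particular shift $A_{i,t^{q_i(m)+1}}^{-1}$ prescribed by the crystal operator $\wt{f}_i$ is the one that occurs. Matching the kernel-of-screening strings to the crystal strings is exactly the content of Nakajima's argument in the simply-laced case, and there it relies on an actual $U_q(\wh{\sw}_2)$-module structure coming from a standard module --- a structure you correctly note is absent here. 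Likewise, your appeal to ``compatibility of the monomial crystal with multiplication'' is not available in the form you state: it is not true in general that the monomial set of a product of elements of $\mathcal{K}_t^-(\g)$ (or even of ordinary $q$-characters) is a subcrystal; Nakajima's \cite[Proposition 3.4]{N} requires specific choices of spectral parameters even in the simply-laced case, and part (2) of the conjecture asks only for \emph{some} choice, not all. Finally, Conjecture~\ref{positivity} concerns the refined interpolating element $X_{q,t}^{(i)}\in\ol{\mathcal{K}}_{q,t}(\g)$, not $F(Y_{i,a})\in\mathcal{K}_t^-(\g)$; the positivity you would actually need (that the monomials of $F(Y_{i,a})$ all appear with positive multiplicity) is a different, though related, statement. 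In short, your plan identifies the right ingredients but does not close any of the genuinely hard steps, which is consistent with the authors' own assessment that the conjecture remains open.
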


In particular, for $i\in I$, we expect that the set of monomials
(without multiplicities) occurring in
$F(Y_{i,1})\in\mathcal{K}_t^-(\mathfrak{g})$ is equal to
  $\mathcal{M}(Y_{i,1})$ and so is isomorphic, as a
  $\mathfrak{g}$-crystal, to $\mathcal{B}(\omega_i)$. We also expect
that $\wt{W} = W'$ in this case (see Conjecture \ref{part3}).

For example, we have the following element in $\mathcal{K}_t^-(C_2)$:
$$Y_{2,1} + Y_{2,t^2}^{-1} Y_{1,t}^2 + 2 Y_{1,t}Y_{1,t^3}^{-1} 
+ Y_{1,t^3}^{-2}Y_{2,t^2} + Y_{2,t^4}^{-1}.$$
The set of its monomials is the set (\ref{mcc2}), which is equal to
$\mathcal{M}(Y_{2,1})$, and so it
is isomorphic, as a $\mathfrak{g}=C_2$-crystal, to $\mathcal{B}(\omega_2)$.

One of our motivations for this Conjecture in the following. Consider
the representation $L(\wt{Y}_{i,1})$ of
$U_q(\widehat{\left({}^L\mathfrak{g}\right)'})$. It is a simple tensor
product of fundamental representations.  By Theorem \ref{crystal}, the
set of monomials of its $q$-character,
$\mathcal{M}(L(\wt{Y}_{i,1}))$, has the structure of a simple
$\left({}^L\mathfrak{g}\right)'$-crystal.

Now, the
action of $\sigma$ on $\mathcal{M}(L(\wt{Y}_{i,1}))$ is an
automorphism of $\left({}^L\mathfrak{g}\right)'$-crystal. It then
follows from the general results on
crystals in \cite{Ksy} that the subset of monomials
$(\mathcal{M}(L(\wt{Y}_{i,1})))^\sigma$ fixed by $\sigma$ has the
structure of a simple ${}^L({}^L\g) = \g$-crystal isomorphic to
$\mathcal{B}(\omega_i)$.

On the other hand, the
identification of variables $Y_{j,b}\equiv Y_{j,\sigma(b)}$ is
injective on $(\mathcal{M}(L(\wt{Y}_{i,a})))^\sigma$. Hence the set
$(\mathcal{M}(L(\wt{Y}_{i,1})))^\sigma$ can be identified with a
subset $(\mathcal{M}(L(\wt{Y}_{i,a})))^{\sigma,f}$ of the set of monomials of 
the folded $t$-character
	$$\chi_t^f(L(\wt{Y}_{i,a})) = F(\overline{Y}_{i,a})\in
\mathcal{K}_t^-({}^L\g).$$ 
Hence this subset $(\mathcal{M}(L(\wt{Y}_{i,a})))^{\sigma,f}$ inherits
the structure of a simple $\g$-crystal.

In Proposition \ref{fundqtr} we have established that
$\chi_t^L(L(\wt{Y}_{i,a}))$ is the specialization of the interpolating
$(q,t)$-character $F_{q,t}(W_{i,1})\in
\overline{\mathcal{K}}_{q,t}({}^L\g)$ under $\overline{\Pi}_t$.
But the specialization under $\Pi_t'$ of
the same interpolating $(q,t)$-character $F_{q,t}(W_{i,1})$ is
$F(\overline{Y}_{i,1})\in
\mathcal{K}_t^-(\g)$ (defined in terms of the variables
$\overline{Y}_{j,b}$ instead of $Y_{j,b}$).

We expect that the monomials of $F(\overline{Y}_{i,1})$ correspond
through this interpolation to the monomials in the subset
$(\mathcal{M}(L(\wt{Y}_{i,a})))^{\sigma,f}$, and so the set of these
monomials should inherit the structure of a $\g$-crystal. 
  This leads us to the statement of Conjecture \ref{conjcrys}. Proving
this identification requires a finer analysis. We plan to come back
to this question in another paper.

For example, consider $\mathfrak{g} = C_2$ as above. Then $\left({}^L\mathfrak{g}\right)' = A_3$ and the 
set of monomials of $L(\wt{Y}_{1,1}) \simeq L(Y_{1,0})\otimes L(Y_{3,1})$ has $15$ elements 
$$\mathcal{M}(\wt{Y}_{1,1}) = \{Y_{1,1},Y_{1,q^2}^{-1}Y_{2,q},Y_{2,q^3}^{-1}Y_{3,q^2},Y_{3,q^4}^{-1}\}
\times \{Y_{3,1},Y_{3,q^2}^{-1}Y_{2,q},Y_{2,q^3}^{-1}Y_{1,q^2},Y_{1,q^4}^{-1}\},$$
($Y_{2,1}Y_{2,3}^{-1}$ occurs with multiplicity $2$ in the $q$-character). The set of fixed monomials is 
$$\mathcal{M}(\wt{Y}_{1,1})^{\sigma} = \{\wt{Y}_{1,1},\wt{Y}_{1,q^2}^{-1}\wt{Y}_{2,q}^2,\wt{Y}_{2,q}\wt{Y}_{2,q^3}^{-1},
\wt{Y}_{2,q^3}^{-2}\wt{Y}_{1,q^2},\wt{Y}_{1,q^4}^{-1}\}.$$
It has the structure of a $C_2$-crystal isomorphic to $\mathcal{B}(\omega_2)$ by the general results mentioned above. In this 
explicit example, we can check directly that the above identification gives the $C_2$-crystal (\ref{mcc2}).
Indeed, by folding, this set is identified with
$$\mathcal{M}(\wt{Y}_{1,1})^{\sigma,f} = \{Y_{1,1}^2,Y_{1,q^2}^{-2}Y_{2,q}^2,Y_{2,q}Y_{2,q^3}^{-1}, 
Y_{2,q^3}^{-2}Y_{1,q^2}^2,Y_{1,q^4}^{-2}\},$$
which is a subset of the set of monomials of $\chi_t^f(L(\wt{Y}_{1,1}))$.
The corresponding interpolating $(q,t)$-character $\overline{F}_{q,t}(W_{1,1})$ is given by Formula (\ref{fdeux}) 
in type $B_2$ (with $1$ and $2$ exchanged), see Example \ref{fdeuxex}.
Its specialization under $\overline{\Pi}_t$ is the folded $t$-character of the representation $L(\wt{Y}_{1,1})$ 
above. Its specialization under $\Pi_t'$ is the $t$-character of an element in $\mathcal{K}_t^-(C_2)$, whose set of monomials 
is the set $\mathcal{M}(\wt{Y}_{1,1})^{\sigma,f}$ and is equal to
$$\{\overline{Y}_{1,1},\overline{Y}_{1,q^2}^{-1}\overline{Y}_{2,q}^2,\overline{Y}_{2,q}\overline{Y}_{2,q^3}^{-1}, 
\overline{Y}_{2,q^3}^{-2}\overline{Y}_{1,q^2},\overline{Y}_{1,q^4}^{-1}\}.$$
Exchanging the indices $1$ and $2$, we recover the set (\ref{mcc2}) (with the variables $\overline{Y}_{i,a}$ instead of the $Y_{i,a}$) as expected.

\section{The Gaudin model}    \label{gaudin}

In this section, we consider the Gaudin limit of the folded integrable
model discussed above. In this case, Bethe Ansatz equations simplify
and we can study more directly the links between the objects
associated to the Lie algebras $\g, {}^L\g$, and $\g'$.

\subsection{The appearance of $^L\g$}    \label{appear}

First, some general observations about finite-dimensional
representations of these three Lie algebras. Recall that $\g'$ is a
simply-laced Lie algebra with an automorphism $\sigma$ of order $2$ or
$3$, and $\g$ is its Lie subalgebra of $\sigma$-invariant
elements. Let $\h'$ be a Cartan subalgebra of $\g'$. Then its subspace
$ \h$ of $\sigma$-invariant elements is a Cartan subalgebra of $\g$.

Consider for simplicity the case when $\sigma$ has order 2. The case
of order 3 can be analyzed in a similar way.

The Cartan subalgebra $\h'$ is generated by the coroots $\chal'_i, i
\in I'$.  Here $I'$ is the set of vertices of the Dynkin diagram of
$\g'$. Recall that $\sigma$ acts on $I'$, and the set $I$ of vertices
of the Dynkin diagram of $\g$ is the quotient of $I'$ by this
action. For each $i \in I$, let $J_i$ be the preimage of $i$ in
$I'$. It either consists of one element stable by $\sigma$, or two
elements exchanged by $ \sigma$. The coroot generators $\chal_i,
i \in I$, of $\h$ are
\begin{equation}    \label{hi}
\chal_i = \sum_{j \in J_i} \chal'_j.
\end{equation}
In other words, they have the form
\begin{equation}    \label{hi1}
\chal'_i, \quad \on{if} \; \sigma(i)=i; \qquad  \chal'_i +
\chal'_{\sigma(i)}, \quad \on{if} \; \sigma(i) \neq i.
\end{equation}

Consider now the dual spaces $(\h')^*$ and $\h^*$. We have a
surjective map $ (\h')^* \twoheadrightarrow \h^*$ dual to the
inclusion $\h \hookrightarrow \h'$. We can try to embed $\h^*$ into $
(\h')^*$ so that the pairing between the image of this embedding of
$\h^*$ and $\h \subset \h'$ is the pairing we have on $\h$, but then
an interesting thing happens: if $\sigma(i)=i$, then we can take as
the image of the corresponding fundamental weight $\omega_i$ of $\g$
in $(\h')^*$ to be equal to $\omega'_i$, the $i$th fundamental weight
of $\g'$. But if $\sigma(i) \neq i$, then we have to take {\em half}
the sum of the fundamental weights:
\begin{equation}    \label{omega}
\frac{1}{2}(\omega'_i + \omega'_{\sigma(i)}).
\end{equation}
Indeed, we need to have $\langle \omega_i,\chal_j \rangle =
\delta_{ij}$, and then formula \eqref{hi1} shows that we must insert
the factor $\frac{1}{2}$. The same factor appears in the expressions
for the simple roots of $\g$.

This has an important consequence: the integral weight lattice
associated to $\g$ does {\em not} embed into the integral weight
lattice of $\g'$. Instead, we have an embedding of the integral weight
lattice associated not to $\g$ but to $^L\g$ -- the Langlands dual Lie
algebra! -- into the integral weight lattice of $\g'$.

Actually, this is clear from the fact that $^L\h = \h^*$, and so
$({}^L\h)^* = \h$, which, as we have seen above, naturally embeds into
$\h'$. We then identify the latter with $ (\h')^*$ using the unique
$W$-invariant bilinear form normalized so that the square length of
each root of $\g'$ is equal to $2$. Thus, we obtain an embedding
$({}^L\h)^* \hookrightarrow (\h')^*$. Under this embedding, a
fundamental weight of $^L\g$ is mapped to
\begin{equation}    \label{omega1}
\omega'_i + \omega'_{\sigma(i)}.
\end{equation}
Unlike formula \eqref{omega}, there is {\em no} factor $\frac{1}{2}$
now. The same holds for the simple roots of $^L\g$. Thus, we have
proved the statement of Lemma \ref{bij} that there is a natural
isomorphism between the lattice of integral $^L\g$-weights and the
lattice of $\sigma$-invariant $\g'$-weights.

Now suppose that we have a finite-dimensional irreducible
representation $W$ of $\g'$ whose highest weight $\lambda$ is
$\sigma$-invariant. By analogy with Lemma \ref{whsigma}, we then prove
that there is a unique automorphism $\wh\sigma$ of $W$ which
intertwines the representation of $\g'$ on $W$ with its $\sigma$-twist
and acts as identity on the highest weight subspace. As in Section
\ref{olW}, define the subspace $W^\sigma \subset W$ as the direct sum
of the $\wh\sigma$-invariant parts of the weight subspaces of $W$
corresponding to $\sigma$-invariant weights. According to Proposition
\ref{virt}, its character is the character of a virtual representation
of $^L\g$. In Section \ref{N=1}, we will show that in fact it is
possible (modulo Conjecture \ref{completeness}) to explicitly embed
the irreducible representation of $^L\g$ with the highest weight
corresponding to $\lambda$ into $W^\sigma$.

So, an interesting switch happens: we start with a simply-laced Lie
algebra $\g'$ with an automorphism $\sigma$ whose fixed-point Lie
subalgebra is $\g$. Of course, every irreducible representation $W$ of
$\g'$ restricts to a representation of $\g$. However, because the
integral weight lattice of $\g$ does not embed into the integral
weight lattice of $\g'$, it's not natural to describe it in terms of
weight spaces corresponding to the $\sigma$-invariant weights of
$\g'$. Instead, it turns out that we can construct inside the direct
sum of $\sigma$-invariant weight subspaces of $W$ (actually, inside
its subspace $W^\sigma$) an irreducible representation of the {\em
  Langlands dual Lie algebra} $^L\g$, which at first glance has
nothing to do with $\g'$. This can be done using the the results of
\cite{FFRyb} on the spectra of the Hamiltonians of the Gaudin
model.

\begin{rem}
In \cite{FH0} and references therein it was shown that representations
of $^L\g$ can be extracted from irreducible finite-dimensional
representations of $\g$. In contrast, here we aim to extract
representations of $^L\g$ from irreducible representations of
$\g'$. It would be interesting to see whether there is a connection
between the two approaches.\qed
\end{rem}

\subsection{Gaudin model for $\g'$}    \label{Gprime}

Here we discuss the Gaudin model associated to $\g'$, more precisely,
its modification with the twist parameter $\chi' \in \h'$. It appears
as the $q \to 1$ limit of the XXZ-type quantum integrable model
associated to $U_q(\wh{\g'})$ ($\chi'$ is the analogue of the parameter
$u \in H'$ of the XXZ-type model).

We will use the results of \cite{FFR,F:gaudin,Ryb,FFTL,FFRyb}.  Let
$\la_k, k=1,\ldots,N$, be a collection of dominant integral weights of
$\g'$. Denote by $V'_{\la_k}$ the corresponding irreducible
finite-dimensional representations of $\g'$. For a collection $z_k,
k=1,\ldots,N$, of distinct complex numbers, let $V'_{\la_k}(z_k),
k=1,\ldots,N$, be the corresponding evaluation representations of
the current algebra $\g'[z]$. Consider their tensor product
\begin{equation}    \label{qs}
\bigotimes_{k=1}^N V'_{\la_k}(z_k)
\end{equation}
and its subspace of weight
\begin{equation}    \label{ga}
\gamma = \sum_k \la_k - \sum_{j=1}^m \al'_{i_j}.
\end{equation}
These are the spaces of states of the Gaudin model associated to
$\g'$.

Next we discuss the Bethe Ansatz equations. Their solutions are in
one-to-one correspondence with the {\em Miura $\g'$-opers}
on $\pone$ with trivial monodromy representation. They can be
represented by $\g'$-valued connections on $\pone$ of the form
\begin{equation}    \label{Miura}
\pa_z + \sum_{i \in I'} f'_i - \sum_{k=1}^N \frac{\chla_k}{z-z_k} + 
\sum_{j=1}^m \frac{\chal'_{i_j}}{z-w_j} + \chi'.
\end{equation}
Here the $f'_i, i \in I'$, are generators of the lower nilpotent
subalgebra of $\g'$, and we are using the above identification between
$\h'$ and $(\h')^*$ by means of the normalized bilinear form; namely,
$\{ \chla_k \}$ denote the elements of $\h' \subset \g'$ corresponding
to $\{ \la_k \in (\h')^* \}$, and $\{ \chal'_i, i \in I' \}$ is the
set of simple coroots of $\g'$, which correspond to the simple roots
$\{ \al'_i, i \in I'\}$ under the identification $(\h')^* \simeq \h$.

We will assume that $w_j$'s are distinct complex numbers such that
$w_j \neq z_k$ for all $j$ and $k$.

\begin{prop}[\cite{FFR,F:gaudin,FFTL}]    \label{nomon}
The condition that connection \eqref{Miura} has trivial monodromy is
equivalent to the following system of Bethe Ansatz equations:
\begin{equation}    \label{BAEgaudin}
\sum_{k=1}^N \frac{\langle \al'_{i_j},\chla_k \rangle}{w_j-z_k} -
\sum_{s \neq j} \frac{\langle \al'_{i_j},\chal'_{i_s}
  \rangle}{w_j-w_s} = \langle \al'_{i_j},\chi' \rangle, \qquad
j=1,\ldots,m.
\end{equation}
\end{prop}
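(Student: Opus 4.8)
The plan is to follow the approach of \cite{FFR,F:gaudin,FFTL}: reduce the global triviality of the monodromy to local triviality at each singular point, and then analyze the two types of singular points separately. Write the connection \eqref{Miura} as $\nabla = \pa_z + \sum_{i\in I'} f'_i + \phi(z)$, where
$$\phi(z) = -\sum_{k=1}^N \frac{\chla_k}{z-z_k} + \sum_{j=1}^m \frac{\chal'_{i_j}}{z-w_j} + \chi'$$
is a rational function valued in the Cartan subalgebra $\h'$. Thus $\nabla$ has regular singularities exactly at $z_1,\dots,z_N$ (with residue $-\chla_k$), at $w_1,\dots,w_m$ (with residue $\chal'_{i_j}$), and possibly at $\infty$. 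Since $\pi_1(\pone\setminus\{z_k,w_j,\infty\})$ is generated by small loops around the punctures subject to the single relation that their ordered product is trivial, the monodromy representation is trivial if and only if the local monodromy around each $z_k$ and each $w_j$ is trivial (triviality at $\infty$ is then automatic).

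First I would dispose of the points $z_k$. Because each $\la_k$ is a dominant integral weight, the leading term $\sum_i f'_i$ together with the residue $-\chla_k$ is precisely the local form of the Miura oper attached to the finite-dimensional representation $V'_{\la_k}$; its local monodromy is trivial, as follows from the standard local analysis of Miura opers with dominant integral residues in \cite{FFR,F:gaudin}. Hence the points $z_k$ impose no conditions, and the entire content of the trivial-monodromy condition is concentrated at the apparent singularities $w_j$.

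The heart of the argument is the local analysis at $w_j$. The residue $\chal'_{i_j}$ acts with integer eigenvalues on every representation of $\g'$, so the local monodromy there is unipotent, and its logarithmic part is controlled by the $\sw_2$-triple $(e'_{i_j},\chal'_{i_j},f'_{i_j})$ attached to the simple root $\al'_{i_j}$: among the lowering generators in $\sum_i f'_i$, only $f'_{i_j}$ acts resonantly with respect to $\chal'_{i_j}$. Restricting to the two-dimensional representation of this $\sw_2$, the connection near $w_j$ takes the form $\pa_z + f + \big(\tfrac{1}{z-w_j}\,h + \tfrac12\langle\al'_{i_j},\psi_0\rangle\,h + O(z-w_j)\big)$, where $f,h$ are the images of $f'_{i_j},\chal'_{i_j}$ and $\psi_0$ is the value at $w_j$ of the regular part $\phi(z)-\chal'_{i_j}/(z-w_j)$. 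Solving the resulting rank-two first order system by elementary integration, one finds a $\log(z-w_j)$ term in a horizontal section whose coefficient is proportional to $\langle\al'_{i_j},\psi_0\rangle$; hence the local monodromy at $w_j$ is trivial if and only if $\langle\al'_{i_j},\psi_0\rangle = 0$. Finally, computing
$$\psi_0 = -\sum_{k=1}^N \frac{\chla_k}{w_j-z_k} + \sum_{s\neq j}\frac{\chal'_{i_s}}{w_j-w_s} + \chi'$$
and pairing with $\al'_{i_j}$ turns this vanishing into exactly the $j$-th equation of \eqref{BAEgaudin}.

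The step I expect to be the main obstacle is the justification that the logarithmic (unipotent) part of the monodromy at $w_j$ is entirely captured by the $\sw_2^{(i_j)}$-reduction, i.e. that the remaining lowering operators $f'_i$ with $i\neq i_j$ do not contribute resonant logarithmic terms. This requires a careful weight-by-weight analysis of the resonances of the residue $\chal'_{i_j}$ on an arbitrary representation, showing that the unipotent monodromy is generated by the single nilpotent $f'_{i_j}$. Establishing the local triviality at the $z_k$ in full and treating the behavior at $\infty$ are also needed, but these are standard and can be imported from \cite{FFR,F:gaudin,FFTL}.
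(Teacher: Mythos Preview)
The paper does not give its own proof of this proposition; it is quoted directly from \cite{FFR,F:gaudin,FFTL}. Your sketch follows exactly the standard argument in those references: reduce to local monodromy, observe that the dominant integral residues at the $z_k$ impose no condition, and at each $w_j$ identify the vanishing of the logarithmic term with the pairing $\langle\al'_{i_j},\psi_0\rangle=0$, which is the $j$th Bethe equation. The computation of $\psi_0$ and the resulting sign match the statement.

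Your stated obstacle, that the $f'_i$ with $i\neq i_j$ might contribute resonant logarithmic terms, can be resolved cleanly. Since $\g'$ is simply-laced, $[\chal'_{i_j},f'_i]=-\langle\al'_i,\chal'_{i_j}\rangle f'_i$ has eigenvalue $0$ or $+1$ for $i\neq i_j$, and only $f'_{i_j}$ has the negative eigenvalue $-2$. Hence a gauge transformation in $N_+[[z-w_j]]$ (upper unipotent, regular at $w_j$) removes all $f'_i$ with $i\neq i_j$ from the local expansion without introducing new poles; after this the connection near $w_j$ is genuinely $\sw_2^{(i_j)}$-valued modulo regular terms that commute with $\chal'_{i_j}$, and your rank-two computation applies verbatim. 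This is precisely the mechanism used in \cite{F:gaudin,FFTL}, so there is no gap once you invoke it.
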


These equations can be obtained by taking the limit $q \to 1$ of the
Bethe Ansatz equations \eqref{bae gen} of the XXZ-type model
associated to $U_q(\wh{\g'})$. The monic polynomial with the roots
$w_j$ where $i_j=i$ (i.e. those points on $\pone \backslash
\infty$ at which the connection \eqref{Miura} has residue
$\chal'_i$),
\begin{equation}    \label{Qr}
Q_i(z) = \prod_{i_j=i} (z-w_j),
\end{equation}
is the analogue of the Baxter polynomial $Q_i(z), i \in I'$ (see
Theorem \ref{eigg}) in the Gaudin model.

Note that if we apply a permutation to the set of roots of each
polynomial $Q_i(z)$ (which we recall are assumed to be distinct) we
also obtain a solution of the system \eqref{BAEgaudin}. In what
follows, by a {\em solution} of Bethe Ansatz equations we will
understand an equivalence class of solutions under these permutations.

In \cite{FFR,F:gaudin,FFTL} a joint eigenvector of the Gaudin
Hamiltonians (Bethe vector) is constructed for each solution of the
BAE \eqref{BAEgaudin}. It is known, however, that for general
$\la_k$'s and $\ga$ these Bethe vectors do not yield a basis of the
weight subspace of \eqref{qs} of weight $\ga$ given by formula
\eqref{ga}. In fact, explicit examples have been constructed in
\cite{MV:bethe} showing that for $\chi = 0$ this is so even for
generic $z_k$'s (and fixed $\la_k$'s and $\ga$). We are not aware of
such counterexamples for generic $\chi'$, so it is possible that in
this case Bethe vectors do yield a basis for generic $z_k$'s. For
$N=1$, this is the statement of Conjecture \ref{completeness} below.

As explained in \cite[Sect. 5.5]{F:gaudin} (in the case $\chi'=0$, but
the picture is similar for all $\chi'$) that the true parameters of
the spectra of the quantum Gaudin Hamiltonians are not the Miura
$\g'$-opers of the form \eqref{Miura} but rather $\g'$-opers on
$\pone$ with trivial monodromy, regular singularities at
$z_1,\ldots,z_N$ with respective residues $\chla_1,\ldots,\chla_N$
(coweights of $\g'$ corresponding to the weight $\la_1,\ldots,\la_N$),
and irregular singularity at $\infty$ of order $2$ with $2$-residue
corresponding to $\chi'$ (see \cite{FFTL,FFRyb} for more
details). Denote the set of these $\g'$-opers by
$\on{Op}^\chi_{(\chla_i),(z_i)}(\g')$.

\begin{thm}[\cite{FFRyb}]    \label{gsp1}
For any regular $\chi' \in \h'$ and any collection $z_1\ldots,z_N$
there is a bijection between $\on{Op}^\chi_{(\chla_i),(z_i)}(\g')$ and
the spectrum of the Gaudin Hamiltonians on the space \eqref{qs}.
Moreover, the eigenspace corresponding to every $\g'$-oper in
$\on{Op}^\chi_{(\chla_i),(z_i)}(\g')$ is always one-dimensional. In
addition, for generic $z_1\ldots,z_N$ and $\chi'$ the Gaudin
Hamiltonians are diagonalizable.
\end{thm}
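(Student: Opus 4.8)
The plan is to follow the strategy of \cite{FFRyb}, realizing the commutative algebra of twisted Gaudin Hamiltonians acting on \eqref{qs} as the image of the Feigin--Frenkel center of $\widehat{\g'}$ at the critical level. The starting point is the Feigin--Frenkel isomorphism $\mathfrak{z}(\widehat{\g'}) \simeq \on{Fun}\,\on{Op}_{\g'}(D^\times)$, which is valid here because $\g'$ is simply-laced and hence isomorphic to its own Langlands dual. Inserting central elements at the marked points $z_1,\dots,z_N$ and forming the appropriate space of coinvariants of \eqref{qs}, with the twist $\chi'$ encoded as the $2$-residue of an irregular singularity at $\infty$, produces a homomorphism $\on{Fun}\,\on{Op}^\chi_{(\chla_i),(z_i)}(\g') \to \on{End}(\bigotimes_k V'_{\la_k}(z_k))$ whose image is the algebra generated by the Gaudin Hamiltonians. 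Since the Bethe algebra is generated by these central operators, every joint eigenvalue determines, and is determined by, its central character, i.e.\ a point of $\on{Op}^\chi_{(\chla_i),(z_i)}(\g')$; this gives an injective map from the joint spectrum to opers.

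For surjectivity onto the (reduced points of the) oper scheme I would use the explicit Bethe vectors. An oper in $\on{Op}^\chi_{(\chla_i),(z_i)}(\g')$ has trivial monodromy and the prescribed residues $\chla_k$ and irregular term $\chi'$, so its underlying Miura oper has the form \eqref{Miura}; the construction of \cite{FFR,F:gaudin,FFTL} attached to the corresponding solution of \eqref{BAEgaudin} then produces a nonzero joint eigenvector in \eqref{qs} realizing that central character. Together with the first step, this yields the claimed bijection between $\on{Op}^\chi_{(\chla_i),(z_i)}(\g')$ and the joint spectrum.

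The hard part is one-dimensionality of the eigenspaces, and this is where regularity of $\chi'$ is essential. The plan here is a scaling/degeneration argument: replace $\chi'$ by $s\chi'$ and analyze the Bethe algebra as $s \to \infty$. The leading term of the Hamiltonians is then governed by the action of $\chi' \in \h'$, whose joint eigenspaces are the $\h'$-weight subspaces, while the subleading, shift-of-argument-type terms refine this decomposition. Equipping the Bethe algebra with the induced filtration, I would identify its associated graded with a limiting commutative algebra and verify that, because $\chi'$ is \emph{regular}, this limiting algebra separates all the joint eigenlines inside each weight subspace, so that its spectrum is simple. Simplicity of the associated graded forces the eigenspaces of the original Bethe algebra to be one-dimensional as well. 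Controlling this filtration --- in particular proving that the limiting algebra is maximal commutative on every weight subspace --- is the technical core of \cite{FFRyb} and the main obstacle; it rests on the detailed classification there of opers with an irregular singularity of order $2$.

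Finally, for generic $z_1,\dots,z_N$ and $\chi'$, diagonalizability follows from reducedness. The full generalized-eigenspace decomposition matches the scheme-theoretic spectrum of the Bethe algebra, which is the image of $\on{Op}^\chi_{(\chla_i),(z_i)}(\g')$; for generic parameters this scheme is reduced and consists of simple points, so that the number of distinct joint eigenvalues equals $\dim \bigotimes_k V'_{\la_k}(z_k)_\ga$. Combined with the one-dimensionality of the eigenspaces established above, this rules out Jordan blocks and shows that the Gaudin Hamiltonians are simultaneously diagonalizable, completing the argument by invoking the genericity statements of \cite{FFTL,FFRyb}.
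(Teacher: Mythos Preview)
The paper does not give its own proof of this theorem: it is quoted verbatim from \cite{FFRyb}, with only the clarifying remark that one-dimensionality of the generalized eigenspaces is equivalent to the Gaudin algebra having a cyclic vector (\cite[Corollary~5]{FFRyb}), and that generic diagonalizability is \cite[Corollary~6]{FFRyb}. So there is no in-paper argument to compare against; one can only ask whether your sketch reflects the argument of \cite{FFRyb}.

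Your outline has the right architecture but two concrete gaps. First, the surjectivity step is flawed as written: you assert that every oper in $\on{Op}^{\chi'}_{(\chla_i),(z_i)}(\g')$ has underlying Miura form \eqref{Miura} and hence yields an explicit Bethe vector. The paper itself explains (just after the theorem) that this is false: the Miura representative has the more general form \eqref{Miura-1}, and only the \emph{non-degenerate} opers correspond to \eqref{Miura} and to solutions of \eqref{BAEgaudin}. For degenerate opers the Bethe-vector construction is substantially more delicate, so surjectivity cannot be obtained this way without further work. In \cite{FFRyb} surjectivity is not proved by producing Bethe vectors; it comes from the structural analysis of the quotient of the oper algebra acting on the module.

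Second, your account of one-dimensionality is in the right spirit (a limiting/degeneration argument is indeed used), but the mechanism is different from what you describe. The actual statement proved in \cite{FFRyb} is that the commutative Gaudin algebra has a \emph{cyclic vector} in \eqref{qs}; this immediately forces every generalized eigenspace to be one-dimensional. The cyclicity is established by a parameter-degeneration argument, but not by showing that an associated graded algebra ``separates eigenlines''; rather, one deforms to a limiting algebra already known to be cyclic (ultimately reducing to the shift-of-argument case and Rybnikov's results) and uses that cyclicity is an open condition. Your filtration picture does not obviously produce cyclicity, and ``separating eigenlines in each weight space'' is both stronger than needed and not what is proved.
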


In other words, for any regular $\chi$ and general $z_1\ldots,z_N$
there is at most one Jordan block for each joint eigenvalue of the
Gaudin Hamiltonians (this is expressed in \cite[Corollary 5]{FFRyb}
as the statement that the commutative algebra of Gaudin Hamiltonians
has a cyclic vector in \eqref{qs}). Moreover, for generic $\chi$ and
$z_1\ldots,z_N$ all Jordan blocks have size $1$ (see \cite[Corollary
  6]{FFRyb}).

Following the argument in \cite[Sect. 5.5]{F:gaudin} in the case
$\chi'=0$, one can show that every $\g'$-oper in
$\on{Op}^\chi_{(\chla_i),(z_i)}(\g')$ corresponds to a unique Miura
$\g'$-oper of the form
\begin{equation}    \label{Miura-1}
\pa_z + \sum_{i \in I'} f'_i - \sum_{k=1}^N
\frac{g_k(\chla_k+\crho') - \crho'}{z-z_k} + 
\sum_{j=1}^m \frac{\crho' - \wt{g}_j(\crho')}{z-w_j} + \chi',
\end{equation}
where the $w_j, j=1,\ldots,m$, are distinct complex numbers such that
$w_j \neq z_k$ for all $j$ and $k$; $g_k, k=1,\ldots,N$, and
$\wt{g}_j, i=1,\ldots,m$, are elements of the Weyl group of $\g'$; and
$\crho'$ is the sum of the dominant coweights of $\g'$.

A Miura $\g'$-oper of the form \eqref{Miura-1}, and the corresponding
$\g'$-oper, are called {\em non-degenerate} (see
\cite[Sect. 5.2]{F:gaudin}) if $g_k=1$ for all $k=1,\ldots,N$, and
each $\wt{g}_j$ is a simple reflection $s_{i_j}$ from the Weyl group
of $\g'$. Since $\crho' - s_{i_j}(\crho') = \chal_{i_j}'$, a Miura
$\g'$-oper of the form \eqref{Miura-1} is non-degenerate $\g'$-oper if
and only if it has the form \eqref{Miura}. Further, according to
Proposition \ref{nomon}, the no-monodromy condition is then equivalent
to BAE \eqref{BAEgaudin}.

Theorem \ref{gsp1} implies that non-degenerate Miura $\g'$-opers of
the form \eqref{Miura} satisfying \eqref{BAEgaudin} correspond to a
subset in the spectrum of the Gaudin Hamiltonians on the weight
subspace of \eqref{qs} of weight $\ga$. But in general, this is not
the entire spectrum; there may be other joint eigenvalues of the
Gaudin Hamiltonians on this weight subspace which correspond to
degenerate Miura $\g'$-opers of the form \eqref{Miura-1} with at least
one $g_k$ not equal to the identity or at least one of the $\wt{g}_j$
not equal to a simple reflection, or both. For such Miura $\g'$-opers,
the equations expressing the no-monodromy condition are more
complicated, as are the corresponding eigenvectors (though they can
be constructed in principle by a certain algorithm). See
\cite[Sect. 5.5]{F:gaudin} for more details.

\subsection{Gaudin model for $^L\g$}

Next, consider the Gaudin model for the Lie algebra $^L\g$ with a
twist by $\chi \in ({}^L\h)^* = \h$. The space of states of the model
is then a tensor product of evaluation representations of the current
algebra $^L\g[z]$ corresponding to irreducible finite-dimensional
representations of $^L\g$:
\begin{equation}    \label{quantum space}
\bigotimes_{k=1}^N V_{\mu_k}(z_k)
\end{equation}
where $\mu_1,\ldots,\mu_N$ are dominant integral weights of $^L\g$ and
$z_1,\ldots,z_N$ are distinct complex numbers. According to the
results of \cite{FFR,F:gaudin,FFTL,FFRyb}, the spectrum of the Gaudin
Hamiltonians exhibits {\em Langlands duality}; namely, the joint
eigenvalues of the $^L\g$-Gaudin Hamiltonians are described in terms
of $\g$-opers rather than $^L\g$-opers.

More precisely, let $\cmu_k \in \h = ({}^L\h)^*$ be the integral
coweight of $\g$ corresponding to $\mu_k$. Let
$\on{Op}^\chi_{(\cmu_i),(z_i)}(\g)$ be the set of $\g$-opers on
$\pone$ with trivial monodromy, regular singularities at
$z_1,\ldots,z_N$ with respective residues $\cmu_1,\ldots,\cmu_N$, and
irregular singularity at $\infty$ of order $2$ with $2$-residue
corresponding to $\chi$ (see \cite{FFTL,FFRyb} for details). Then we
have the following analogue of Theorem \ref{gsp1}.

\begin{thm}[\cite{FFRyb}]    \label{gsp2}
For any regular $\chi \in \h$ and any collection $z_1\ldots,z_N$ there
is a bijection between $\on{Op}^\chi_{(\cmu_i),(z_i)}(\g)$ and the
spectrum of the $^L\g$-Gaudin Hamiltonians on the space of
\eqref{quantum space}. Moreover, the eigenspace corresponding to every
$\g$-oper in $\on{Op}^\chi_{(\cmu_i),(z_i)}(\g)$ is always
one-dimensional. In addition, for generic $z_1\ldots,z_N$ and $\chi$
the Gaudin Hamiltonians are diagonalizable.
\end{thm}

\subsection{Embedding of the space of states}

This theorem has an intriguing consequence. Note that each coweight
$\cmu_k$ of $\g$ defines a coweight $\cmu'_k$ of $\g'$ while the
corresponding weight $\mu_k$ of $^L\g$ defines a weight of $\g'$,
and the element $\chi \in \h$ defines an element $\chi' \in \h'$.

Next, define the nilpotent generators $f_i, i \in I$, of $\g$ by the
formulas
$$
f_i = f'_i \quad \on{if} \quad \sigma(i)=i, \qquad f_i =
f'_i+f'_{\sigma(i)} \quad \on{if} \quad \sigma(i) \neq i.
$$
Then we obtain that the embedding $\g \hookrightarrow \g'$ maps the
principal nilpotent element of $\g$ to that of $\g'$:
\begin{equation}    \label{principal}
p_- := \sum_{i \in I} f_i \quad \mapsto \quad p'_- := \sum_{i \in I'}
f'_i.
\end{equation}
In the same way, we obtain that the element $\crho$ of $\g$ (the sum
of its dominant coweights) maps to the corresponding element $\crho'$
of $\g'$. Recall that there is a unique nilpotent element $p_+ \in \g$
which is a linear combination of the generators $e_i, i \in I$, such
that $\{ p_-,2\crho,p_+ \}$ is an $\sw_2$ triple in $\g$. Likewise, we
have an $\sw_2$ triple $\{ p'_-,2\crho',p'_+ \}$ in $\g'$. Uniqueness
implies that $p'_+$ is the image of $p_+$ in $\g'$.

Using these $\sw_2$ triples, we construct the Kostant slices $K(\g)$
and $K(\g')$ of regular conjugacy classes in $\g$ and $\g'$,
respectively. Namely,
$$
K(\g) = p_- + \on{span}\{ p_j \}_{j=1,\ldots,\ell},
$$
where $\{ p_j \}_{j=1,\ldots,\ell}$ is a basis of the subspace of
$\on{ad} p_+$-invariants in the upper nilpotent subalgebra $\n_+$ of
$\g$, such that $[\crho,p_j] = d_j p_j$, with $\{ d_1,\ldots,d_\ell \}$
being the set of exponents of $\g$. The Kostant slice $K(\g')$ in
$\g'$ is defined similarly. The fact that the embedding $\g
\hookrightarrow \g'$ maps the above $\sw_2$ triples to each other then
implies that we have a natural embedding
\begin{equation}    \label{Kostant}
K(\g) \hookrightarrow K(\g').
\end{equation}

Beilinson and Drinfeld have constructed \cite{BD:opers} canonical
representatives of $\g$-opers in terms of $K(\g)$ (see
\cite[Sect. 4.2]{FFTL}). Using the embedding \eqref{Kostant}, we
obtain the following.

\begin{lem}    \label{embopers}
There is a natural embedding
\begin{equation}    \label{eopers}
  \on{Op}^\chi_{(\cmu_i),(z_i)}(\g) \hookrightarrow
  \on{Op}^{\chi'}_{(\cmu'_i),(z_i)}(\g').
\end{equation}
\end{lem}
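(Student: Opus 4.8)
The plan is to reduce everything to the Beilinson--Drinfeld canonical representatives of opers and the embedding of Kostant slices \eqref{Kostant}. Recall from \cite{BD:opers} (see also \cite[Sect. 4.2]{FFTL}) that every $\g$-oper on $\pone$ has a unique canonical representative of the form
\begin{equation*}
\pa_z + p_- + v(z), \qquad v(z) \in \on{span}\{ p_j \}_{j=1,\ldots,\ell},
\end{equation*}
where $v(z)$ is a rational function on $\pone$ valued in the space of $\on{ad} p_+$-invariants in the upper nilpotent subalgebra $\n_+$ of $\g$, and the oper data --- trivial monodromy, the regular singularities at the $z_k$ with residues $\cmu_k$, and the order-$2$ irregular singularity at $\infty$ with $2$-residue $\chi$ --- is encoded in the singularities of $v(z)$. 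Thus an element of $\on{Op}^\chi_{(\cmu_i),(z_i)}(\g)$ is the same datum as such a canonical form with prescribed singularities.

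First I would push this canonical form forward along the embedding $\iota\colon \g \hookrightarrow \g'$ of \eqref{principal}. Since $\iota$ carries the $\sw_2$-triple $\{ p_-, 2\crho, p_+ \}$ of $\g$ to the triple $\{ p'_-, 2\crho', p'_+ \}$ of $\g'$, any element centralizing $p_+$ in $\g$ is sent to one centralizing $p'_+$ in $\g'$ (as $[\iota(x),p'_+]=\iota([x,p_+])$), so $\iota$ maps $\on{span}\{ p_j \}_{j=1,\ldots,\ell}$ into the space of $\on{ad} p'_+$-invariants for $\g'$; this is exactly the content of \eqref{Kostant}. Consequently
\begin{equation*}
\pa_z + p_- + v(z) \quad \longmapsto \quad \pa_z + p'_- + \iota(v(z))
\end{equation*}
is again in Beilinson--Drinfeld canonical form, now for $\g'$, and hence defines a $\g'$-oper.

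The main step is to verify that this $\g'$-oper lies in $\on{Op}^{\chi'}_{(\cmu'_i),(z_i)}(\g')$, i.e. that the singularity data transforms as claimed. Trivial monodromy is preserved because the pushed-forward connection is literally the same differential equation viewed in the larger group, so its monodromy is the composition of the (trivial) $G$-monodromy with $G \to G'$. For the local data at the $z_k$ and at $\infty$ I would pass through the Miura description: the canonical Miura form \eqref{Miura-1} has residues assembled from $\cmu_k$ and $\crho$ and $2$-residue $\chi$, and applying $\iota$ sends $\crho \mapsto \crho'$, $\cmu_k \mapsto \cmu'_k$ (the coweight of $\g'$ associated to $\cmu_k$ under $\h \hookrightarrow \h'$), $\chi \mapsto \chi'$, and each Weyl group element of $\g$ to the corresponding ($\sigma$-invariant) Weyl group element of $\g'$; comparing with \eqref{Miura} and \eqref{Miura-1} for $\g'$ then identifies the residues and $2$-residue of the image with the prescribed ones. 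The hard part will be precisely this bookkeeping relating the canonical (Beilinson--Drinfeld) form to the Miura/residue description and confirming that the order-$2$ irregular singularity together with its $2$-residue is matched correctly under $\iota$, since the gauge transformation intertwining the two descriptions must be shown to be compatible with the embedding.

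Finally, injectivity follows from the uniqueness of canonical representatives: distinct $\g$-opers have distinct canonical forms $\pa_z + p_- + v_1(z)$ and $\pa_z + p_- + v_2(z)$ with $v_1 \neq v_2$, and since $\iota$ is injective we obtain $\iota(v_1) \neq \iota(v_2)$, so the resulting $\g'$-opers are distinct. This produces the desired embedding \eqref{eopers}.
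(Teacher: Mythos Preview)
Your proposal is correct and follows essentially the same approach as the paper: the paper simply says that Beilinson--Drinfeld canonical representatives together with the Kostant-slice embedding \eqref{Kostant} yield the lemma, and you have spelled out precisely this argument (pushing the canonical form $\pa_z + p_- + v(z)$ through $\iota$, checking the singularity data, and deducing injectivity from uniqueness of canonical forms). Your detour through the Miura description to match residues and the $2$-residue is more elaborate than what the paper records, but it is a valid way to verify the bookkeeping and does not deviate from the intended method.
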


Now we derive from Theorem \ref{gsp2} and Lemma \ref{embopers} the
following surprising result.

\begin{thm}    \label{embqs}
For generic regular $\chi \in \h$, such that $\chi' \in \h'$ is
regular, and generic $z_1,\ldots,z_N$ there exists an embedding of the
tensor product \eqref{quantum space} of simple $^L\g$-modules with
highest weights $\mu_k$ into the tensor product of the simple
$\g'$-modules with the corresponding highest weights $\mu'_k$:
\begin{equation}    \label{emb quantum space}
\bigotimes_{k=1}^N V_{\mu_k}(z_k) \hookrightarrow \bigotimes_{k=1}^N
V'_{\mu'_k}(z_k).
\end{equation}
\end{thm}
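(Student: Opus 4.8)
The plan is to deduce the embedding directly from the oper parametrizations of the Gaudin spectra on the two sides, transporting joint eigenvectors along the embedding of opers provided by Lemma \ref{embopers}. First I would fix $\chi \in \h$ regular with $\chi' \in \h'$ regular and $z_1,\ldots,z_N$ generic, as in the hypotheses, and note that the dominant integral $^L\g$-weights $\mu_k$ correspond under \eqref{omega1} to $\sigma$-invariant dominant integral $\g'$-weights $\mu'_k$, so that both tensor products in \eqref{emb quantum space} are well-defined. Applying Theorem \ref{gsp2} to the $^L\g$-Gaudin model on $\bigotimes_k V_{\mu_k}(z_k)$, I obtain that for such $\chi$ and $z_i$ the Gaudin Hamiltonians are diagonalizable with one-dimensional joint eigenspaces, labeled bijectively by the $\g$-opers in $\on{Op}^\chi_{(\cmu_i),(z_i)}(\g)$, whence
$$
\bigotimes_{k=1}^N V_{\mu_k}(z_k) = \bigoplus_{\omega \in \on{Op}^\chi_{(\cmu_i),(z_i)}(\g)} E_\omega, \qquad \dim E_\omega = 1.
$$
Applying Theorem \ref{gsp1} to the $\g'$-Gaudin model on $\bigotimes_k V'_{\mu'_k}(z_k)$, taken with weights $\la_k = \mu'_k$ so that the residues are the $\cmu'_k$ and the irregular part is $\chi'$, I obtain in the same way
$$
\bigotimes_{k=1}^N V'_{\mu'_k}(z_k) = \bigoplus_{\omega' \in \on{Op}^{\chi'}_{(\cmu'_i),(z_i)}(\g')} E'_{\omega'}, \qquad \dim E'_{\omega'} = 1.
$$

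Next I would invoke Lemma \ref{embopers}, which provides an injection $\iota \colon \on{Op}^\chi_{(\cmu_i),(z_i)}(\g) \hookrightarrow \on{Op}^{\chi'}_{(\cmu'_i),(z_i)}(\g')$. For each $\omega$ I choose a nonzero $v_\omega \in E_\omega$ and a nonzero $v'_{\iota(\omega)} \in E'_{\iota(\omega)}$, and define a linear map $\Phi$ on the basis $\{ v_\omega \}$ by $\Phi(v_\omega) = v'_{\iota(\omega)}$. Since $\iota$ is injective and the $E'_{\omega'}$ are one-dimensional eigenspaces attached to distinct joint eigenvalues, the vectors $v'_{\iota(\omega)}$ lie in pairwise distinct eigenspaces and are therefore linearly independent; hence $\Phi$ is injective and yields the desired embedding \eqref{emb quantum space}. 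By construction $\Phi$ carries each joint eigenvector of the $^L\g$-Gaudin Hamiltonians to a joint eigenvector of the $\g'$-Gaudin Hamiltonians whose oper-valued spectral datum is $\iota(\omega)$, which is exactly the eigenvector-to-eigenvector property announced for this family of embeddings.

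Finally, I would record that $\Phi$ respects the grading by $^L\g$-weights: the degree of an oper $\omega$ (i.e.\ the number of Bethe roots, equivalently the position of $E_\omega$ in the $^L\g$-weight decomposition) is preserved by $\iota$, and under the isomorphism $P^\sigma \simeq {}^LP$ of Lemma \ref{bij} the $\g'$-weight carrying $E'_{\iota(\omega)}$ matches the $^L\g$-weight carrying $E_\omega$. The main obstacle in this argument is not any single computation but rather the simultaneous availability of the diagonalizability and one-dimensionality statements of Theorems \ref{gsp1} and \ref{gsp2} for $\chi$ and $\chi'$ \emph{at once}; this is precisely why genericity of $\chi$ (hence of $\chi'$) and of the $z_i$ is built into the hypotheses. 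I would also stress that $\Phi$ as constructed is canonical only up to independent rescaling of each eigenline, so that to obtain a genuinely explicit intertwiner one must identify these abstract eigenvectors with honest Bethe vectors on both sides; this is the content of Conjecture \ref{completeness}, carried out for $N=1$ in Section \ref{N=1}.
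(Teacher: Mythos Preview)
Your argument is correct and follows essentially the same route as the paper's proof: use Theorem~\ref{gsp2} to get an eigenbasis of $\bigotimes_k V_{\mu_k}(z_k)$ indexed by $\g$-opers, push each $\g$-oper through the embedding of Lemma~\ref{embopers}, and then use the one-dimensionality statement of Theorem~\ref{gsp1} (or~\ref{gsp2} applied to the simply-laced $\g'$) to pick a nonzero vector in the corresponding eigenspace on the $\g'$ side. Your additional remarks on injectivity, weight-grading, and the rescaling ambiguity are accurate and match the paper's discussion immediately following the proof.
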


\begin{proof}
By Theorem \ref{gsp2}, for generic $\chi$ and $z_1,\ldots,z_N$ there
exists an eigenbasis $\{ e_m \}$ of the Gaudin Hamiltonians in
\eqref{quantum space} labeled by $\g$-opers $m \in
\on{Op}^\chi_{(\cmu_i),(z_i)}(\g)$ with each $\g$-oper corresponding
to exactly one basis vector. Under the embedding \eqref{eopers}, each
of these $\g$-opers gives a $\g'$-oper $m' \in
\on{Op}^{\chi'}_{(\cmu'_i),(z_i)}(\g')$. Applying Theorem \ref{gsp2} in
the case of the Lie algebra $\g'$ and the tensor product of simple
$\g$-modules with the corresponding highest weights $\mu'_k,
k=1,\ldots,N$, we find that each $m'$ obtained this way corresponds to
the one-dimensional eigenspace $E_{m'}$ of the Hamiltonians of the
$\g'$-Gaudin model. Mapping each basis vector $e_m$ in
\eqref{quantum space} to a non-zero vector in $E_{m'}$, we obtain
the desired embedding.
\end{proof}

The embedding \eqref{emb quantum space} is not unique since we can
rescale the images of the basis elements $e_m$ by arbitrary non-zero
numbers. Another issue is that the embedding \eqref{eopers} is defined
in a rather abstract way (using Kostant slices). It turns out that we
can obtain a more concrete realization of \eqref{eopers} if all
$\g$-opers in $\on{Op}^\chi_{(\cmu_i),(z_i)}(\g)$ are non-degenerate,
and hence correspond to solutions of the Bethe Ansatz equations. In
this case we can also construct the corresponding eigenvectors (Bethe
vectors) explicitly, so we obtain a more concrete realization of the
embedding \eqref{emb quantum space} as well. This may be viewed as an
analogue of the statement of our main Conjecture \ref{main conj bis}
in the case of simple Lie algebras. We explain it in the next
subsection.

\begin{rem}
Alternatively, under the conditions of Theorem \ref{embqs} we can
construct a surjective map
\begin{equation}    \label{quot quantum space}
\bigotimes_{k=1}^N V'_{\mu'_k}(z_k) \twoheadrightarrow \bigotimes_{k=1}^N
V_{\mu_k}(z_k)
\end{equation}
instead of an embedding \eqref{emb quantum space}. Indeed, the
inclusion \eqref{eopers} gives rise to a surjective homomorphism of
the corresponding algebras of functions
\begin{equation}    \label{funopers}
\on{Fun} \on{Op}^{\chi'}_{(\cmu'_i),(z_i)}(\g') \twoheadrightarrow
\on{Fun} \on{Op}^\chi_{(\cmu_i),(z_i)}(\g).
\end{equation}
The corresponding algebras of $\g$- and $\g'$-Gaudin Hamiltonians were
denoted in \cite{FFRyb} by ${\mc A}_\chi(z_1,\ldots,z_N)$ and ${\mc
  A}_{\chi'}(z_1,\ldots,z_N)$. According to Theorem \ref{gsp2}, for
generic regular $\chi \in \h$ and generic $z_1,\ldots,z_N$, the algebra
$\on{Fun} \on{Op}^\chi_{(\cmu_i),(z_i)}(\g)$ is equal to the image of
${\mc A}_\chi(z_1,\ldots,z_N)$ in the algebra of endomorphisms of
$\bigotimes_{k=1}^N V_{\mu_k}(z_k)$. Moreover, $\on{Fun}
\on{Op}^\chi_{(\cmu_i),(z_i)}(\g)$ then has a cyclic vector in
$\bigotimes_{k=1}^N V_{\mu_k}(z_k)$. If we choose such a cyclic
vector, we obtain an isomorphism of vector spaces
\begin{equation}    \label{funopers1}
\bigotimes_{k=1}^N V_{\mu_k}(z_k) \simeq
\on{Fun} \on{Op}^\chi_{(\cmu_i),(z_i)}(\g).
\end{equation}

On the other hand, according to Theorem \ref{gsp1}, $\on{Fun}
\on{Op}^{\chi'}_{(\cmu'_i),(z_i)}(\g')$ is the quotient of the image
of ${\mc A}_{\chi'}(z_1,\ldots,z_N)$ in the algebra of endomorphisms
of $\bigotimes_{k=1}^N V_{\mu_k}(z_k)$ by its radical. Choosing a
cyclic vector of ${\mc A}_{\chi'}(z_1,\ldots,z_N)$ in
$\bigotimes_{k=1}^N V'_{\mu'_k}(z_k)$ and using the homomorphism
\eqref{funopers}, we obtain a surjective map \eqref{quot quantum
  space}.

This construction can be generalized to the case when $\chi$ is a
regular nilpotent element of $\g$ and $\chi$ is the corresponding
element of $\g'$. Then we can choose as the cyclic vectors the highest
weight vectors in the tensor products \eqref{quot quantum space}, so
that the resulting map becomes canonical up to a scalar.

Constructing surjective maps \eqref{quot quantum space} may seem more
appealing than constructing the embeddings \eqref{emb quantum
  space}. However, in the case when all $\g$-opers in
$\on{Op}^\chi_{(\cmu_i),(z_i)}(\g)$ are non-degenerate, the embedding
\eqref{emb quantum space} can be linked to the inclusion of the sets
of solutions of the corresponding Bethe Ansatz equations, as we
explain in the next subsection. Therefore we can make contact to the
folded integrable models. (Moreover, the embedding \eqref{emb quantum
  space} can then be constructed rather explicitly using Bethe
vectors, see Section \ref{N=1}.) It is for this reason that we focus
on the embeddings \eqref{emb quantum space} rather than the
surjections \eqref{quot quantum space}, even though the latter also
deserve to be studied.\qed
\end{rem}

\subsection{Embedding in the case of non-degenerate $\g$-opers}

As in the case of $\g'$-opers discussed in Section \ref{Gprime}, each
$\g$-oper in $\on{Op}^\chi_{(\cmu_i),(z_i)}(\g)$ can be represented
in a unique way by a Miura $\g$-oper given by a formula similar to
\eqref{Miura-1}. Among them are the non-degenerate Miura $\g$-opers of
the form
\begin{equation}    \label{Miura2}
\pa_z + \sum_{i \in I} f_i - \sum_{k=1}^N \frac{\cmu_k}{z-z_k} + 
\sum_{j=1}^m \frac{\chal_{i_j}}{z-w_j} + \chi,
\end{equation}
where $\chal_i, i \in I$, are the simple coroots of $\g$. The
no-monodromy condition on \eqref{Miura2} is equivalent to the Bethe
Ansatz equations similar to \eqref{BAEgaudin} (see equation (6.11) in
\cite{FFTL}):
\begin{equation}    \label{BAEgaudin1}
\sum_{k=1}^N \frac{\langle \al_{i_j},\cmu_k \rangle}{w_j-z_k} -
\sum_{s \neq j} \frac{\langle \al_{i_j},\chal_{i_s}
  \rangle}{w_j-w_s} = \langle \al_{i_j},\chi \rangle, \qquad
j=1,\ldots,m.
\end{equation}

The image of a non-degenerate Miura $\g$-oper \eqref{Miura2} under the
embedding \eqref{eopers} is particularly easy to describe. Namely, we
interpret formula \eqref{Miura2} as a Miura $\g'$-oper of the form
\eqref{Miura} as follows: According to formula \eqref{principal}, the
first summation in \eqref{Miura2} is equal to the first summation in
\eqref{Miura}. Next, the second summation in \eqref{Miura2} is equal
to the second summation in \eqref{Miura} with $\chla_k=\cmu'_k$.

Finally, every term in the third summation in \eqref{Miura2} can be
written as the sum of terms appearing in the third summation of
\eqref{Miura}. Namely, for $i \in I'$ such that $\sigma(i) = i$, we
map
\begin{equation}    \label{alinv}
\frac{\chal_i}{z-w_j} \mapsto \frac{\chal'_i}{z-w_j},
\end{equation}
and for $i \in I$ such that if $\sigma(i) \neq i$, we map
\begin{equation}    \label{alnoninv}
\frac{\chal_i}{z-w_j} \mapsto \frac{\chal'_i}{z-w_j} +
\frac{\chal'_{\sigma(i)}}{z-w_j}.
\end{equation}
In other words, each term corresponding to $i \in I$ such that
$\sigma(i) = i$ in \eqref{Miura2} gives us the corresponding term in
\eqref{Miura}, whereas in the case $\sigma(i) \neq i$
it gives us the sum of two terms in \eqref{Miura}.

Observe that if the terms \eqref{alnoninv} are present in
\eqref{Miura2}, then the corresponding $\g'$-Miura oper \eqref{Miura}
is {\em degenerate} because its residue at the corresponding point
$w_j$ is equal to
\begin{equation}    \label{res}
\chal'_{i} + \chal'_{\sigma(i)} = \crho' - s_i s_{\sigma(i)}(\crho')
\end{equation}
(the simple reflections $s_i$ and $s_{\sigma(i)}$ commute with each
other, so it doesn't matter in which order we take their
product).

Thus, the Miura $\g'$-opers we obtain this way have the form
\eqref{Miura-1}, with $g_k=1$ for all $k=1,\ldots,N$, and $\wt{g}_j$
being equal to the simple reflection $s_i$ of the Weyl group of $\g'$
if the residue of this Miura $\g'$-oper at $w_j$ is equal to a
$\sigma$-invariant simple coroot $\chal'_{i}$ of $\g'$, but $\wt{g}_j$
is equal to the product of two simple reflections, $s_i
s_{\sigma(i)}$, if residue at $w_j$ is equal to \eqref{res}. We have
obtained the following result.

\begin{prop}    \label{embopers1}
Under the embedding \eqref{eopers}, every non-degenerate $\g$-oper in
$\on{Op}^\chi_{(\cmu_i),(z_i)}(\g)$ which is represented by a Miura
$\g$-oper of the form \eqref{Miura2} maps to the $\g'$-oper in
$\on{Op}^{\chi'}_{(\cmu'_i),(z_i)}(\g')$ represented by the Miura
$\g'$-oper
\begin{equation}    \label{Miura-2}
\pa_z + \sum_{i \in I'} f_i - \sum_{k=1}^N \frac{\cmu'_k}{z-z_k} + 
\sum_{\sigma(i_j) = i_j} \frac{\chal'_{i_j}}{z-w_j} + \sum_{\sigma(i_j)
  \neq i_j} \frac{\chal'_{i_j}+\chal'_{\sigma(i_j)}}{z-w_j} + \chi'.
\end{equation}
\end{prop}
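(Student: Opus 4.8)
The plan is to show that the abstractly defined embedding \eqref{eopers} of \lemref{embopers} is, on the level of connections, nothing more than the map induced by the Lie algebra inclusion $\iota\colon \g \hookrightarrow \g'$, and then to read off the image of \eqref{Miura2} term by term. Recall that \eqref{eopers} was built from the Beilinson--Drinfeld description of opers by canonical representatives lying in $p_- + K(\g)$, together with the inclusion of Kostant slices $K(\g) \hookrightarrow K(\g')$ in \eqref{Kostant}. Since $\iota$ sends the principal $\sw_2$-triple $\{p_-,2\crho,p_+\}$ to $\{p'_-,2\crho',p'_+\}$ and hence $K(\g)$ into $K(\g')$, the first step is to observe that the Drinfeld--Sokolov gauge-fixing producing the canonical representative is equivariant under $\iota$: the $N$-gauge transformation bringing a $\g$-connection to its canonical form is carried by $\iota$ into the corresponding $N'$-gauge transformation in $\g'$ (where $N \hookrightarrow N'$ is the induced inclusion of unipotent subgroups). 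Consequently, \eqref{eopers} coincides with the map $[\nabla] \mapsto [\iota \circ \nabla]$ induced by the inclusion of connections.

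With this identification in hand, I would simply apply $\iota$ to the connection \eqref{Miura2}. The summand $\sum_{i \in I} f_i$ maps to $\sum_{i \in I'} f'_i$ by \eqref{principal}; the residues $\cmu_k \in \h$ at $z_k$ and the twist $\chi \in \h$ map to $\cmu'_k \in \h'$ and $\chi' \in \h'$ under $\h \hookrightarrow \h'$; and each residue $\chal_{i_j}$ at $w_j$ maps according to \eqref{hi1}, namely to $\chal'_{i_j}$ when $\sigma(i_j) = i_j$ and to $\chal'_{i_j} + \chal'_{\sigma(i_j)}$ when $\sigma(i_j) \neq i_j$ (this is exactly \eqref{alinv}--\eqref{alnoninv}). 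Because $\iota$ sends the lower nilpotent subalgebra $\n_-$ of $\g$ into $\n'_-$ and $\h$ into $\h'$, the resulting $\g'$-connection automatically has the Miura normal form $\pa_z + p'_- + (\h'\text{-valued rational part})$; collecting the terms shows it is precisely the connection \eqref{Miura-2}.

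It then remains to exhibit \eqref{Miura-2} as a Miura $\g'$-oper of the form \eqref{Miura-1} and to identify the associated Weyl group elements. The residues at the $z_k$ are the dominant coweights $\cmu'_k$ themselves, so $g_k = 1$ for all $k$. For the terms with $\sigma(i_j) = i_j$ the residue is $\chal'_{i_j} = \crho' - s_{i_j}(\crho')$, giving $\wt{g}_j = s_{i_j}$; for the terms with $\sigma(i_j) \neq i_j$ one invokes \eqref{res} to rewrite the residue $\chal'_{i_j} + \chal'_{\sigma(i_j)}$ as $\crho' - s_{i_j} s_{\sigma(i_j)}(\crho')$, so that $\wt{g}_j = s_{i_j} s_{\sigma(i_j)}$. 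Here $s_{i_j}$ and $s_{\sigma(i_j)}$ commute because the nodes $i_j$ and $\sigma(i_j)$ are non-adjacent in the Dynkin diagram of $\g'$ (recall that none of the relevant $\g'$ is of type $A_{2n}$). Hence \eqref{Miura-2} is of the form \eqref{Miura-1} and defines an element of $\on{Op}^{\chi'}_{(\cmu'_i),(z_i)}(\g')$; by uniqueness of the Miura representative of a $\g'$-oper, it represents the image under \eqref{eopers}.

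The hard part is the first step: establishing that the Kostant-slice embedding \eqref{eopers} genuinely agrees with the naive inclusion of connections $[\nabla]\mapsto[\iota\circ\nabla]$. Everything else is a bookkeeping computation, but this equivariance of the gauge-fixing under $\iota$ must be argued carefully, tracking that the inclusion is compatible both with the canonical-form gauge in $p_- + K(\g)$ and with the $N$-gauge equivalence defining Miura opers. Once this compatibility is in place, the term-by-term identification together with \eqref{res} completes the argument.
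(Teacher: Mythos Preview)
Your proposal is correct and follows essentially the same route as the paper. The paper's argument (given in the paragraphs immediately preceding the proposition) is precisely the term-by-term computation you describe: it applies the inclusion $\g\hookrightarrow\g'$ to the connection \eqref{Miura2}, invoking \eqref{principal} for the principal nilpotent, the identification $\cmu_k\mapsto\cmu'_k$, $\chi\mapsto\chi'$ for the Cartan terms, and \eqref{alinv}--\eqref{alnoninv} for the residues at the $w_j$, then observes via \eqref{res} that the resulting Miura $\g'$-oper has the form \eqref{Miura-1} with $g_k=1$ and $\wt g_j$ equal to $s_{i_j}$ or $s_{i_j}s_{\sigma(i_j)}$.

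The one point where you are more explicit than the paper is the compatibility you flag as ``the hard part'': that the Kostant-slice embedding \eqref{eopers} agrees with the map induced by $\iota$ on connections. The paper leaves this implicit, whereas you spell out the reason --- the $N$-gauge transformation bringing a $\g$-connection to canonical form is carried by $\iota$ into an $N'$-gauge transformation, and $\iota$ sends $p_-+K(\g)$ into $p'_-+K(\g')$, so the two descriptions of the image coincide by uniqueness of the canonical representative. This is a genuine clarification rather than a different method.
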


According to Theorem \ref{gsp1}, the Miura $\g'$-oper \eqref{Miura-2}
defines a point in the spectrum of the $\g'$-Gaudin model.

\subsection{Folded Bethe Ansatz equations}

From the point of view of the Bethe Ansatz equations, we interpret
this as follows. In the system of BAE \eqref{BAEgaudin}, it makes
sense to impose the condition
\begin{equation}    \label{condj}
  \{ w_j \, | \, i_j = i \} = \{ w_j \, | \, i_j = \sigma(i) \}
\end{equation}
for all $i \in I'$. That's because for $i \neq \sigma(i)$ we have
$\langle \chal'_i,\chal'_{\sigma(i)} \rangle = 0$, so no singularities
occur in equations \eqref{BAEgaudin}. Equivalently, this condition may
be expressed as $Q_i(z) = Q_{\sigma(i)}(z)$ for all $i \in I$, so this
is the Gaudin model analogue of the condition we used to define the
folding of the Bethe Ansatz equations in the XXZ-type model associated
to $U_q(\wh{\g'})$ (see Proposition \ref{foldedBAE}).

\begin{lem}    \label{equivbethe}
  The system \eqref{BAEgaudin} of Bethe Ansatz equations of the
  $\g'$-Gaudin model together with the condition $Q_i(z) =
  Q_{\sigma(i)}(z)$ for all $i \in I'$ is equivalent to the system
  \eqref{BAEgaudin1} of Bethe Ansatz equations of the $^L\g$-Gaudin
  model.
\end{lem}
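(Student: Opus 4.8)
The plan is to show that imposing the folding condition $Q_i(z) = Q_{\sigma(i)}(z)$ on the $\g'$-Gaudin Bethe Ansatz equations \eqref{BAEgaudin} produces exactly the $^L\g$-Gaudin equations \eqref{BAEgaudin1}, by a direct term-by-term comparison. The condition \eqref{condj} means that for each $i \in I'$ the Bethe roots come in $\sigma$-matched sets, so that a root $w_j$ with $i_j = i$ is paired with an equal root $w_{j'}$ with $i_{j'} = \sigma(i)$. First I would fix an orbit representative and rewrite \eqref{BAEgaudin} for an index $i_j = i$, grouping the summation over other Bethe roots $w_s$ according to the $\sigma$-orbit of $i_s$. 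The key algebraic input is the pairing data on the coroots of $\g'$, which I would translate into pairings of $\g$ using formulas \eqref{hi1}, and the identification of $^L\g$-weights with $\sigma$-invariant $\g'$-weights established in Lemma \ref{bij} (see the embeddings \eqref{omega1} and \eqref{principal}).

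The heart of the matter is to handle the cross-terms on the left-hand side of \eqref{BAEgaudin}. When $\sigma(i) = i$, the equation for $i_j = i$ simply becomes the corresponding $^L\g$-equation, since $\al'_i$ pairs with the folded data exactly as $\al_i$ does. When $\sigma(i) \neq i$, I would write out the equation for $i_j = i$ and separately for $i_j = \sigma(i)$; because $\langle \chal'_i, \chal'_{\sigma(i)} \rangle = 0$ (the two nodes are not adjacent, as $\g'$ is not of type $A_{2n}$), there is \emph{no} coupling term between the matched roots, so under the identification $w_j \equiv w_{\sigma(i_j)}$ the two equations coincide and reduce to a single equation. The remaining sums then reorganize: a pair of simple coroots $\chal'_i + \chal'_{\sigma(i)}$ of $\g'$ assembles into the single simple coroot $\chal_i$ of $\g$ via \eqref{hi}, and the corresponding pairings $\langle \al'_i, \cdot \rangle + \langle \al'_{\sigma(i)}, \cdot \rangle$ collapse to $\langle \al_i, \cdot \rangle$ evaluated against the folded weight, coweight, and twist data. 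I would verify that the residue structure described in Proposition \ref{embopers1} (especially the assembly \eqref{res}) is precisely the geometric shadow of this collapse, which gives a clean cross-check.

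The main obstacle will be bookkeeping the normalization factors correctly: the subtle point is that $\langle \al_i, \cmu_k \rangle$ in \eqref{BAEgaudin1} must match $\langle \al'_i, \chla_k \rangle$ (equivalently, the sum over $J_i$) after folding, and this requires carefully tracking where the factor of $\tfrac{1}{2}$ appearing in \eqref{omega} does and does \emph{not} enter. The resolution is exactly the distinction emphasized in Section \ref{appear}: the coweights $\cmu_k$ of $\g$ correspond to $\cmu'_k$ of $\g'$ \emph{without} the factor $\tfrac{1}{2}$ (cf.\ \eqref{omega1}), so the pairings fold consistently. Once this is confirmed, the equivalence of the two systems is immediate in both directions: a solution of \eqref{BAEgaudin} satisfying the folding condition descends to a solution of \eqref{BAEgaudin1}, and conversely any solution of \eqref{BAEgaudin1} lifts to a $\sigma$-symmetric solution of \eqref{BAEgaudin} by setting $Q_{\sigma(i)}(z) := Q_i(z)$ and using the vanishing pairing to confirm no new singularities arise. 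I expect the proof to be short, essentially a verification that the two displayed systems are literally the same equation after the identifications, with the geometric statement of Proposition \ref{embopers1} serving as the conceptual backbone.
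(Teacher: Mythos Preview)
Your approach is correct and is precisely the direct verification the paper has in mind; the paper states Lemma~\ref{equivbethe} without proof, relying on the setup immediately preceding it (in particular the observation that $\langle \chal'_i,\chal'_{\sigma(i)}\rangle = 0$ so no singularities arise under the identification~\eqref{condj}). One small imprecision: once you note that the two equations for $i_j=i$ and $i_j=\sigma(i)$ at the same root \emph{coincide}, you should not then add them---you simply keep one, and the pairing $\langle \al'_i,\cdot\rangle$ restricted to $\sigma$-invariant elements of $\h'$ already equals $\langle \al_i,\cdot\rangle$ (adding would give twice this, though of course the equation is preserved under scaling). The coroot side folds via~\eqref{hi} exactly as you describe, and the verification then reduces to checking $\langle \al'_i,\chal_l\rangle = C_{li}(\g)$ for a chosen orbit representative $i$, which is a short case check.
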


Thus, the folding of the BAE of the $\g'$-Gaudin model gives the BAE
of the $^L\g$-Gaudin model. In particular, we obtain that the $q \to
1$ limit of the system \eqref{bae gen1} of folded Bethe Ansatz
equations for $U_q(\wh{\g'})$ coincides with the Bethe Ansatz
equations of the $^L\g$-Gaudin model.

Now we define an analogue of the space $W(u)$ from Section
\ref{invsub}, with the role of the twist parameter $u$ played by
$\chi$ (more precisely, this is an analogue of the subspace ${\mb
  W}(u) \subset W(u)$ introduced in Remark \ref{bfW}). Recall that we
have identified the lattice of integral $^L\g$-weights (equivalently,
integral $\g$-coweights) with the lattice of $\sigma$-invariant
integral $\g'$-weights (equivalently, integral $\g'$-coweights). We
have a collection $\cmu_k, k=1,\ldots,N$, of $\g$-coweights. Let
$\mu_k, k=1,\ldots,N$, be the corresponding $^L\g$-weights and
$\mu'_k, k=1,\ldots,N$, the corresponding $\g'$-weights. Consider the
following representation of $\g'$:
\begin{equation}    \label{qs1}
W = \bigotimes_{k=1}^N V'_{\mu'_k}(z_k).
\end{equation}

\begin{defn}    \label{Wchi}
The subspace $W(\chi)$ of $W$ is the span of eigenvectors of the
$\g'$-Gaudin Hamiltonians with the twist parameter $\chi$ whose
eigenvalues correspond to solutions of the BAE \eqref{BAEgaudin}
satisfying the condition $Q_i(z) = Q_{\sigma(i)}(z), i \in I'$.
\end{defn}

Theorems \ref{gsp1} and \ref{gsp2} and Lemma \ref{equivbethe} imply
the following.

\begin{thm}
Suppose that the $^L\g$-Gaudin Hamiltonians are diagonalizable on the
tensor product \eqref{quantum space} and all $\g$-opers in
$\on{Op}^\chi_{(\cmu_i),(z_i)}(\g)$ are non-degenerate, so they
correspond to non-degenerate Miura $\g$-opers of the form
\eqref{Miura2} or equivalently, solutions of the corresponding Bethe
Ansatz equations. Then there is a natural embedding of the
corresponding set of solutions of the BAE of this $^L\g$-Gaudin model
into the spectrum of the $\g'$-Gaudin model with the spaces of states
\eqref{qs1}. Moreover, we obtain an embedding
\begin{equation}    \label{emb1}
\bigotimes_{k=1}^N V_{\mu_k}(z_k) \overset{\sim}\longrightarrow
W(\chi) \subset W
\end{equation}
where $W$ is the tensor product \eqref{qs1} of representations of
$\g'$ and $W(\chi)$ is given in Definition \ref{Wchi}.
\end{thm}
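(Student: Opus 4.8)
The plan is to assemble the embedding from three ingredients already available in the excerpt: Theorem~\ref{gsp2} (the spectral bijection for the $^L\g$-Gaudin model on $\bigotimes_k V_{\mu_k}(z_k)$), Theorem~\ref{gsp1} (the spectral bijection for the $\g'$-Gaudin model on $W=\bigotimes_k V'_{\mu'_k}(z_k)$), and the correspondence between non-degenerate Miura $\g$-opers and $\g'$-opers furnished by Proposition~\ref{embopers1} together with Lemma~\ref{equivbethe}. The argument is essentially the one used in Theorem~\ref{embqs}, specialized to the situation where every $\g$-oper in $\on{Op}^\chi_{(\cmu_i),(z_i)}(\g)$ is non-degenerate.

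First I would invoke the hypothesis that the $^L\g$-Gaudin Hamiltonians are diagonalizable on \eqref{quantum space}. Combined with Theorem~\ref{gsp2}, this produces an eigenbasis $\{e_m\}$ of $\bigotimes_k V_{\mu_k}(z_k)$ indexed by the $\g$-opers $m\in\on{Op}^\chi_{(\cmu_i),(z_i)}(\g)$, each eigenspace being one-dimensional. By assumption each such $m$ is non-degenerate, hence representable by a Miura $\g$-oper of the form \eqref{Miura2}, whose no-monodromy condition is the BAE \eqref{BAEgaudin1}. Next I would apply Proposition~\ref{embopers1}: under the embedding \eqref{eopers}, each such $m$ maps to a concrete $\g'$-oper $m'\in\on{Op}^{\chi'}_{(\cmu'_i),(z_i)}(\g')$ represented by the Miura $\g'$-oper \eqref{Miura-2}. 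By Lemma~\ref{equivbethe}, these $m'$ are precisely the solutions of \eqref{BAEgaudin} satisfying $Q_i(z)=Q_{\sigma(i)}(z)$ for all $i\in I'$, so the eigenspaces they label inside $W$ are exactly the subspaces spanning $W(\chi)$ in Definition~\ref{Wchi}.

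Then I would feed each $m'$ into Theorem~\ref{gsp1} applied to $\g'$ and the space of states \eqref{qs1}: each $m'$ corresponds to a one-dimensional eigenspace $E_{m'}$ of the $\g'$-Gaudin Hamiltonians. Sending $e_m$ to a chosen non-zero vector of $E_{m'}$ defines a linear map $\bigotimes_k V_{\mu_k}(z_k)\to W$. Injectivity follows because the map sends a basis to a set of non-zero vectors lying in distinct one-dimensional joint eigenspaces of a commuting family (the $\g'$-Gaudin Hamiltonians), which are therefore linearly independent; this uses that the assignment $m\mapsto m'$ is injective, itself a consequence of the injectivity of \eqref{eopers}. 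The image is by construction the span of the $E_{m'}$, which is exactly $W(\chi)$, giving the isomorphism \eqref{emb1} onto $W(\chi)$. The ``natural embedding of the set of solutions into the spectrum of the $\g'$-Gaudin model'' is just the induced map on index sets $m\mapsto m'$.

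The main obstacle, as in Theorem~\ref{embqs}, is that the map is canonical only up to the scalars chosen in each $E_{m'}$; the embedding is therefore not unique, and one should state it as such rather than claim a distinguished intertwiner. A subtler point to check is that the genericity hypotheses propagate correctly: one needs $\chi'\in\h'$ regular so that Theorem~\ref{gsp1} applies to $W$, and generic $z_1,\dots,z_N$ so that the $\g'$-Gaudin Hamiltonians are diagonalizable and the eigenspaces $E_{m'}$ are genuinely one-dimensional. Since $\chi$ is assumed generic regular with $\chi'$ regular and the $z_k$ generic, these conditions hold, so the spectral matching of Theorems~\ref{gsp1}--\ref{gsp2} goes through and the construction closes up. I would note that unlike the full abstract statement of Theorem~\ref{embqs}, here the non-degeneracy assumption lets us exhibit $W(\chi)$ explicitly via Bethe vectors, making the embedding \eqref{emb1} concrete.
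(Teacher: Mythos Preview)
Your proof is correct and follows the same approach as the paper, which simply states that the theorem follows from Theorems~\ref{gsp1} and~\ref{gsp2} together with Lemma~\ref{equivbethe}; you have spelled out exactly how these pieces fit together, in parallel with the argument of Theorem~\ref{embqs}. One minor remark: you do not need genericity of the $z_k$ for the eigenspaces $E_{m'}$ to be one-dimensional, since Theorem~\ref{gsp1} guarantees this for any regular $\chi'$ and any $z_1,\ldots,z_N$; only the diagonalizability on the $^L\g$ side is needed, and that is part of the hypothesis.
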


We can construct the embedding \eqref{emb1} explicitly using
the formulas given in \cite{FFTL} for the eigenvectors (Bethe vectors)
of the $^L\g$-Gaudin model associated to solutions of the
corresponding BAE. In the next subsection we will explain this in the
case $N=1$.

\subsection{The case $N=1$}    \label{N=1}

If $N=1$, the space \eqref{quantum space} is an irreducible
representation $V_\mu$ of $^L\g$ with highest weight $\mu \in
({}^L\h)^*$ and \eqref{qs1} is the irreducible representation
$V'_{\mu'}$ of $\g'$ with the $\sigma$-invariant $\g'$-highest weight
$\mu'$ corresponding to $\mu$. As before, we denote by $\cmu$ and
$\cmu'$ the corresponding coweights of $\g$ and $\g'$. Let $\chi$ be a
regular element of the Cartan subalgebra of $\g$. As before, we will
denote by $\chi'$ its image in $\h'$, and we will assume that it is a
regular element of $\h'$ as well.

The Gaudin model for general $N$ is invariant under simultaneous
shifts of the spectral parameters $z_i$. Using this symmetry, we will
set the only parameter $z_1$ in the case $N=1$ equal to $0$. The
algebra of Gaudin Hamiltonians in this case is known as the {\em shift
  of argument subalgebra} ${\mc A}_\chi$ of the universal enveloping
algebra $U(\g)$ (see \cite{Ryb,FFTL,FFRyb}).

The corresponding non-degenerate Miura $\g$-opers \eqref{Miura2} have
the form
\begin{equation}    \label{Miura3}
\pa_z + \sum_{i \in I} f_i - \frac{\cmu}{z} + 
\sum_{j=1}^m \frac{\chal_{i_j}}{z-w_j} + \chi.
\end{equation}
This Miura $\g$-oper has trivial monodromy if and only if the
following BAE are satisfied (see formula (6.16) of \cite{FFTL}):
\begin{equation}    \label{one mod}
\frac{\langle \al_{i_j},\cmu \rangle}{w_j} -
\sum_{s\neq j} \frac{\langle \al_{i_j},\chal_{i_s} \rangle}{w_j-w_s} =
\langle \al_{i_j},\chi \rangle, \quad j=1,\ldots,m.
\end{equation}
Suppose that these equations are satisfied. Then, as shown in
\cite{FFTL}, the following Bethe vector is an eigenvector of the
Gaudin Hamiltonians (provided that it is non-zero) of weight $\ga =
\mu - \sum_{j=1}^m \al_{i_j}$:
\begin{multline}    \label{bethe for one mod}
\phi(w_1^{i_1},\ldots,w_m^{i_m}) = \\ \sum_{\tau \in S_m}
\frac{f_{i_{\tau(1)}} f_{i_{\tau(2)}} \ldots
f_{i_{\tau(m)}}}{(w_{{\tau(1)}} -
w_{{\tau(2)}})(w_{{\tau(2)}} - w_{{\tau(3)}}) \ldots
(w_{{\tau(m-1)}}-w_{{\tau(m)}}) w_{{\tau(m)}}} v_{\mu},
\end{multline}
where the sum is over all permutations $\tau$ on $m$ letters (see
formula (6.15) of \cite{FFTL}).

According to Theorem \ref{gsp2}, for generic $\chi$ the spectrum of
the algebra ${\mc A}_\chi$ of Gaudin Hamiltonians on $V_\mu$ is simple
and in bijection with the set $\on{Op}^\chi_{\cmu,0}(\g)$ of
$\g$-opers on $\pone$ with trivial monodromy, regular singularity at
$0$ with the residue $\cmu$, and irregular singularity of order $2$ at
$\infty$ with the $2$-residue $\chi$.

\begin{conj}    \label{completeness}
For generic regular $\chi \in \h$, all $\g$-opers appearing in the
spectrum of this Gaudin model are non-degenerate, and so the spectrum
is parametrized by Miura $\g$-opers \eqref{Miura3} with trivial
monodromy, or equivalently, by collections $\{ w_1,\ldots,w_m \}$ and
$\{ \chal_{i_1},\ldots,\chal_{i_m} \}$ solving the system of BAE
\eqref{one mod}. Moreover, the corresponding Bethe vectors
\eqref{bethe for one mod} form an eigenbasis of ${\mc A}_\chi$ in
the irreducible representation $V_\mu$ of $\g$.
\end{conj}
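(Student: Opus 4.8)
The plan is to use Theorem \ref{gsp2} (from \cite{FFRyb}) as the backbone and to reduce the conjecture to a counting statement about Bethe vectors. Applying Theorem \ref{gsp2} with $N=1$ and $z_1=0$, for generic regular $\chi\in\h$ the algebra ${\mc A}_\chi$ acts on $V_\mu$ with simple joint spectrum, is diagonalizable, and its joint eigenvalues are in bijection with $\on{Op}^\chi_{\cmu,0}(\g)$; in particular the number of these $\g$-opers equals $\dim V_\mu$. Each such oper has a \emph{unique} Miura representative of the generalized form \eqref{Miura-1} (with $N=1$), and it is non-degenerate precisely when it has the form \eqref{Miura3}, i.e. corresponds to a solution of the BAE \eqref{one mod}. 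Thus the conjecture is equivalent to the assertion that, for generic $\chi$, the Bethe vectors \eqref{bethe for one mod} attached to solutions of \eqref{one mod} are nonzero, pairwise non-proportional, and $\dim V_\mu$ in number: once this is shown, simplicity and diagonalizability of the spectrum force them to be an eigenbasis and leave no room for any degenerate oper.

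First I would record that each nonzero Bethe vector \eqref{bethe for one mod} is a joint eigenvector of ${\mc A}_\chi$ of weight $\ga=\mu-\sum_{j=1}^m\al_{i_j}$; this is exactly the construction of \cite{FFTL} and may be cited. Next I would prove non-vanishing of these vectors for generic $\chi$, along the lines of the $\sw_2$-reduction used in the proof of Proposition \ref{virt}: restricting to the $\sw_2$-triple attached to a given color and isolating the leading contribution of \eqref{bethe for one mod}, one obtains a product of nonzero lowering transitions, while regularity of $\chi$ guarantees that the separating denominators in \eqref{bethe for one mod} do not degenerate. I would also observe that distinct solutions of \eqref{one mod} yield distinct eigenvalues: a non-degenerate Miura $\g$-oper \eqref{Miura3} is the canonical representative of its underlying $\g$-oper, so distinct solutions give distinct opers, which by Theorem \ref{gsp2} give distinct points of the spectrum.

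The hard part, and the reason the statement is still a conjecture, is the completeness/counting step: one must show that solutions of \eqref{one mod} account for the full dimension $\dim V_\mu$, equivalently that for generic $\chi$ every oper in $\on{Op}^\chi_{\cmu,0}(\g)$ is non-degenerate. My plan here is a semicontinuity argument in $\chi$: degeneracy of a Miura $\g$-oper \eqref{Miura-1} forces either $g_1\neq 1$ or some residue to be a non-simple coweight ($\wt{g}_j\neq s_{i_j}$), and each such coincidence should cut out a proper closed subvariety in the family of monodromy-free opers fibered over the $\chi$-line; since the total number of opers is constant ($=\dim V_\mu$) for generic $\chi$, one would then conclude that the generic fiber consists entirely of non-degenerate opers. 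Making this rigorous requires uniform control of the family of monodromy-free opers — in particular a properness/flatness statement for the map from the incidence variety of Bethe roots to the $\chi$-line — which is precisely the delicate point I expect to be the main obstacle.

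Alternatively, one may try to import completeness directly from the established theory of the shift-of-argument subalgebra \cite{Ryb,FFTL,FFRyb}: any completeness theorem for Bethe vectors of ${\mc A}_\chi$ on $V_\mu$ at generic $\chi$, combined with the simple spectrum of Theorem \ref{gsp2}, would immediately yield the bijection with non-degenerate opers. One could also attempt to transfer the count through the embedding of Theorem \ref{embqs} and Proposition \ref{embopers1}, relating non-degenerate $\g$-opers to the Miura $\g'$-opers \eqref{Miura-2}; however, the $\g'$-Gaudin side carries exactly the same difficulty of excluding degenerate opers, so this merely reshuffles the obstacle. I would therefore treat the completeness step as the crux and, as a first sanity check, verify the conjecture in low rank and for small $\mu$.
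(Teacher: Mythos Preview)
The statement you are trying to prove is labeled \emph{Conjecture} in the paper, and the paper does not prove it. There is no proof in the paper to compare your proposal against; the authors simply state the conjecture and then, \emph{assuming} it, derive Theorem~\ref{embbethe}. So your task here was ill-posed from the start: you have written a proof strategy for an open problem.

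That said, your analysis of the structure is accurate. The reduction to Theorem~\ref{gsp2} is exactly right: for generic regular $\chi$ the spectrum of ${\mc A}_\chi$ on $V_\mu$ is simple, diagonalizable, and in bijection with $\on{Op}^\chi_{\cmu,0}(\g)$, so the conjecture is equivalent to the assertion that every oper in this set is non-degenerate (plus nonvanishing of the Bethe vectors). You correctly isolate the completeness/counting step as the crux and correctly note that the semicontinuity argument you sketch requires properness or flatness control that is not available in the paper. Your remark that passing to $\g'$ via Proposition~\ref{embopers1} only relocates the difficulty is also correct. One minor point: your nonvanishing argument for the Bethe vectors via $\sw_2$-reduction is too vague to be convincing as stated; nonvanishing of Bethe vectors is itself a nontrivial issue in general (cf.\ the counterexamples of \cite{MV:bethe} at $\chi=0$), and ``regularity of $\chi$ guarantees that the separating denominators do not degenerate'' conflates two different things---the denominators in \eqref{bethe for one mod} involve the $w_j$, not $\chi$ directly.

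In short: there is no gap to name because there is no claimed proof; you have correctly diagnosed why the statement remains a conjecture.
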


Assuming this conjecture, we can construct explicitly an embedding of
$V_\mu$ into the irreducible representation $V'_{\mu'}$ of
$\g'$. Namely, suppose that we have collections $\{ w_1,\ldots,w_m \}$
and $\{ \chal_{i_1},\ldots,\chal_{i_m} \}$ satisfying the BAE
\eqref{one mod} of the $^L\g$-Gaudin model. Then we have the
corresponding non-degenerate Miura $\g$-oper \eqref{Miura3} with
trivial monodromy, to which we associate a Miura $\g'$-oper with
trivial monodromy via Proposition \ref{embopers1}. It has the form
\eqref{Miura-2}:
\begin{equation}    \label{Miura-3}
\pa_z + \sum_{i \in I'} f_i - \frac{\cmu'}{z} + 
\sum_{\sigma(i_j) = i_j} \frac{\chal'_{i_j}}{z-w_j} + \sum_{\sigma(i_j)
  \neq i_j} \frac{\chal'_{i_j}+\chal'_{\sigma(i_j)}}{z-w_j} + \chi'.
\end{equation}

We wish to associate to this Miura $\g'$-oper an eigenvector of the
$\g'$-Gaudin Hamiltonians in $V'_{\mu'}$. There are two ways to do it,
which give the same result. The first is to apply the construction of
\cite[Sect. 6.3]{FFTL} but insert at every point $w_j$ such that
$\sigma(i_j) \neq i_j$ the vector $e_{i_j,-1}^R
e_{\sigma(i_j),-1}^R|0\rangle$ in the corresponding Wakimoto module,
rather than $e_{i_j,-1}^R|0\rangle$ (see formula (6.4) in
\cite{FFTL}). The second way (which is more direct) is to apply
formula \eqref{bethe for one mod} directly to the $\g'$-oper
\eqref{Miura-3}. The problem is that if $i_j$ is such that
$\sigma(i_j) \neq i_j$, then the corresponding $w_j$ appears twice
giving rise to seeming singularities in \eqref{bethe for one
  mod}. However, since for $\sigma(i) \neq i$ we always have
$[f_i,f_{\sigma(i)}] = 0$, it is easy to see that these singularities
cancel out, so we do obtain a well-defined vector
$\phi'(w_1^{i_1},\ldots,w_m^{i_m})$ in $V'_{\mu'}$. It then follows
that it is an eigenvector of the $\g'$-Gaudin Hamiltonians with the
eigenvalue corresponding to the $\g'$-oper in
$\on{Op}^{\chi'}_{\cmu',0}(\g')$ represented by
\eqref{Miura-3}. Moreover, it is a weight vector of the same weight as
$\phi(w_1^{i_1},\ldots,w_m^{i_m})$; namely, $\ga = \mu - \sum_{j=1}^m
\al_{i_j}$ (here we identify $^L\g$-weights with $\sigma$-invariant
$\g'$-weights).

Now recall that in Section \ref{olW} we defined the invariant subspace
$(V'_{\mu'})^\sigma$ of $V'_{\mu'}$ as the direct sum of the subspaces
of $\wh\sigma$-invariant vectors in all weights subspaces of
$V'_{\mu'}$ corresponding to $\sigma$-invariant weights. Using the
explicit formula for $\phi'(w_1^{i_1},\ldots,w_m^{i_m})$, we obtain
that it belongs to $(V'_{\mu'})^\sigma$. We summarize this as follows.

\begin{lem}
  The vector $\phi'(w_1^{i_1},\ldots,w_m^{i_m}) \in V'_{\mu'}$ is a
  well-defined eigenvector of the $\g'$-Gaudin Hamiltonians
  corresponding to the $\g'$-oper in $\on{Op}^{\chi'}_{\cmu',0}(\g')$
  which is the image of the $\g$-oper in $\on{Op}^\chi_{\cmu,0}(\g)$
  represented by \eqref{Miura3} under the embedding
  \eqref{eopers}. Moreover, it is $\sigma$-invariant, so it belongs to
  $(V'_{\mu'})^\sigma$.
\end{lem}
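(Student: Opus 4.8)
The plan is to analyze the vector $\phi'(w_1^{i_1},\ldots,w_m^{i_m})$ obtained by substituting the data of the degenerate Miura $\g'$-oper \eqref{Miura-3} directly into the Bethe vector formula \eqref{bethe for one mod}, and to establish three things: that the substitution yields a finite (well-defined) vector despite the repetition of the points $w_j$ with $\sigma(i_j)\neq i_j$; that this vector is an eigenvector of the $\g'$-Gaudin Hamiltonians whose eigenvalue is the one attached to the $\g'$-oper underlying \eqref{Miura-3}; and that it is $\wh\sigma$-invariant. The single algebraic fact driving everything is that when $\sigma(i_j)\neq i_j$ the nodes $i_j$ and $\sigma(i_j)$ are non-adjacent in the Dynkin diagram of $\g'$ (this is where one uses that $\g'$ is not of type $A_{2n}$), so that $\langle\al'_{i_j},\chal'_{\sigma(i_j)}\rangle=0$ and hence $[f_{i_j},f_{\sigma(i_j)}]=0$ in $\g'$.

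\emph{Well-definedness.} I would regard the right-hand side of \eqref{bethe for one mod}, applied to the collection of (position, simple root) pairs read off from \eqref{Miura-3}, as a rational function of the positions; the only possible poles occur along the diagonals where the two copies of some $w_j$ with $\sigma(i_j)\neq i_j$ collide. Fixing one such $j$, let $a,b$ label the two copies carrying $\al'_{i_j}$ and $\al'_{\sigma(i_j)}$, and pair each permutation $\tau$ that places $a,b$ in adjacent positions $p,p+1$ with the permutation $\tilde\tau$ obtained by transposing them. In the limit $w'_a\to w'_b$ every denominator factor other than the middle one $(w'_{\tau(p)}-w'_{\tau(p+1)})=\pm(w'_a-w'_b)$ tends to a common value for $\tau$ and $\tilde\tau$, while the numerators differ only by the order of the adjacent block $f_{i_j}f_{\sigma(i_j)}$ versus $f_{\sigma(i_j)}f_{i_j}$; the paired terms therefore combine into an expression proportional to $[f_{i_j},f_{\sigma(i_j)}]=0$ and cancel, so there is no pole along this diagonal. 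Since the diagonals for distinct $j$ are transverse (the $w_j$ are distinct), applying this cancellation successively in each colliding pair shows that the limit $\phi'(w_1^{i_1},\ldots,w_m^{i_m})$ exists. I expect this to be the main obstacle: the mechanism is just the one commutator identity, but some bookkeeping is needed to organize the cancellation simultaneously over several colliding pairs and to confirm that no higher-order poles survive.

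\emph{Eigenvector and eigenvalue.} For this step I would invoke the Wakimoto-module construction of Bethe vectors from \cite[Sect.~6.3]{FFTL}, inserting at each point $w_j$ with $\sigma(i_j)\neq i_j$ the vector $e^R_{i_j,-1}e^R_{\sigma(i_j),-1}|0\rangle$ in place of $e^R_{i_j,-1}|0\rangle$. That construction sends a Miura $\g'$-oper with trivial monodromy to a joint eigenvector of the $\g'$-Gaudin Hamiltonians whose eigenvalue is encoded by the underlying $\g'$-oper. Applied to \eqref{Miura-3}—whose trivial monodromy follows from that of \eqref{Miura3} (equivalent to the BAE \eqref{one mod}) together with Lemma \ref{embopers} and Proposition \ref{embopers1}—it produces an eigenvector attached precisely to the image under \eqref{eopers} of the $\g$-oper \eqref{Miura3}. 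It then remains to identify this Wakimoto vector with the explicit limit $\phi'$ of the previous step; this is the same combinatorial comparison by which \eqref{bethe for one mod} is derived in the non-degenerate case in \cite{FFTL}, now carried out with the doubled insertions, the commuting of $e^R_{i_j,-1}$ and $e^R_{\sigma(i_j),-1}$ being exactly what makes the two recipes agree. (If $\phi'$ happens to vanish the eigenvector claim is vacuous; otherwise it holds as stated.)

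\emph{$\sigma$-invariance.} Normalize $v_{\mu'}$ to be the $\wh\sigma$-fixed highest weight vector, which is possible since $\mu'$ is $\sigma$-invariant, by the finite-type analogue of Lemma \ref{whsigma} recorded in Section \ref{appear}. Then \eqref{rhosigma} gives $\wh\sigma f_i\wh\sigma^{-1}=f_{\sigma(i)}$ on $V'_{\mu'}$ and $\wh\sigma v_{\mu'}=v_{\mu'}$. Applying $\wh\sigma$ termwise to the explicit formula for $\phi'$ replaces every operator $f_k$ by $f_{\sigma(k)}$ while fixing all positions, hence all denominators, because $\sigma$ maps the multiset of (position, root) pairs defining \eqref{Miura-3} to itself: a $\sigma$-fixed root $\al'_{i_j}$ at $w_j$ is preserved, while the two roots $\al'_{i_j},\al'_{\sigma(i_j)}$ located at the common point $w_j$ are interchanged. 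Reindexing the sum by this involution yields $\wh\sigma\,\phi'=\phi'$. Finally $\phi'$ is a weight vector of the $\sigma$-invariant weight $\ga=\mu-\sum_{j=1}^m\al_{i_j}$ (under the identification of $^L\g$-weights with $\sigma$-invariant $\g'$-weights), so it lies in the $\wh\sigma$-invariant part of that weight subspace, i.e. in $(V'_{\mu'})^\sigma$ as defined in Section \ref{olW}.
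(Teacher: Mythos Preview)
Your proposal is correct and follows essentially the same approach as the paper, whose argument is contained in the discussion immediately preceding the lemma: well-definedness via the cancellation coming from $[f_{i_j},f_{\sigma(i_j)}]=0$, the eigenvector property via the Wakimoto construction of \cite[Sect.~6.3]{FFTL} with doubled insertions $e_{i_j,-1}^R e_{\sigma(i_j),-1}^R|0\rangle$, and $\sigma$-invariance read off from the explicit formula. You have supplied more detail than the paper does---in particular the explicit permutation-pairing argument for the pole cancellation and the reindexing argument for $\wh\sigma$-invariance---but the underlying ideas are identical.
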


This implies the following statement.

\begin{thm}     \label{embbethe}
  Suppose that $\chi$ is generic, so that Conjecture
  \ref{completeness} holds. Assume also that the vectors
  $\phi'(w_1^{i_1},\ldots,w_m^{i_m})$ are non-zero in
  $V'_{\mu'}$. Then the map $V_\mu \to V'_{\mu'}$ sending
  $\phi(w_1^{i_1},\ldots,w_m^{i_m}) \in V_\mu$ to
  $\phi'(w_1^{i_1},\ldots,w_m^{i_m}) \in V'_{\mu'}$ defines an
  embedding of $V_\mu$ into $V'_{\mu'}$ whose image is the subspace
  $V'_{\mu'}(\chi)$ (see Definition \ref{Wchi}). Moreover,
  $V'_{\mu'}(\chi)$ is contained in $(V'_{\mu'})^\sigma$.
\end{thm}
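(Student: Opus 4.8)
The plan is to check that the assignment $\phi(w_1^{i_1},\ldots,w_m^{i_m})\mapsto \phi'(w_1^{i_1},\ldots,w_m^{i_m})$ is a well-defined injective linear map and then to identify its image. First I would invoke Conjecture \ref{completeness}: for generic regular $\chi$ the Bethe vectors \eqref{bethe for one mod} form an eigenbasis of the shift-of-argument algebra ${\mc A}_\chi$ in $V_\mu$, indexed by the solutions of the $^L\g$-Bethe Ansatz equations \eqref{one mod}, equivalently by the $\g$-opers in $\on{Op}^\chi_{\cmu,0}(\g)$ (all non-degenerate by assumption). By the Lemma preceding the theorem, each such solution produces a well-defined vector $\phi'(w_1^{i_1},\ldots,w_m^{i_m})\in (V'_{\mu'})^\sigma$ that is an eigenvector of the $\g'$-Gaudin Hamiltonians, with eigenvalue the $\g'$-oper obtained from the corresponding $\g$-oper via the embedding \eqref{eopers}. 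Since these vectors are assumed nonzero, the map is defined on a basis and extends linearly; granting the identification of the image below, the final assertion $V'_{\mu'}(\chi)\subset (V'_{\mu'})^\sigma$ then follows immediately, as the image is spanned by the $\phi'$, all of which lie in $(V'_{\mu'})^\sigma$.

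For injectivity I would argue as follows. By Conjecture \ref{completeness} distinct basis vectors correspond to distinct $\g$-opers in $\on{Op}^\chi_{\cmu,0}(\g)$, and the embedding \eqref{eopers} of Lemma \ref{embopers} sends these to distinct $\g'$-opers in $\on{Op}^{\chi'}_{\cmu',0}(\g')$. By Theorem \ref{gsp1} applied to $\g'$, distinct $\g'$-opers are distinct points of the spectrum of the $\g'$-Gaudin Hamiltonians, so the nonzero eigenvectors $\phi'$ lie in distinct (one-dimensional) eigenspaces and are therefore linearly independent. Hence the map is injective, i.e.\ an embedding.

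The remaining point, and the one requiring the most care, is the identification of the image with $V'_{\mu'}(\chi)$. The image is the span of the $\phi'$, whose eigenvalues are exactly the $\g'$-opers arising as images under \eqref{eopers} of non-degenerate $\g$-opers; by Proposition \ref{embopers1} these are represented by the folded Miura $\g'$-opers \eqref{Miura-3}, and by Lemma \ref{equivbethe} they correspond precisely to the solutions of the $\g'$-Bethe Ansatz equations \eqref{BAEgaudin} satisfying $Q_i(z)=Q_{\sigma(i)}(z)$ for all $i\in I'$. By Definition \ref{Wchi}, $V'_{\mu'}(\chi)$ is the span of the $\g'$-Gaudin eigenvectors for exactly these eigenvalues, and Theorem \ref{gsp1} guarantees that each such eigenvalue has a one-dimensional eigenspace, spanned by the corresponding nonzero $\phi'$; hence the image equals $V'_{\mu'}(\chi)$. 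The main obstacle is ensuring that this correspondence is a bijection onto all folded solutions, so that every $\g'$-oper counted in $V'_{\mu'}(\chi)$ genuinely originates from a $\g$-oper in the spectrum of the $^L\g$-model; this relies on combining the bijectivity in Conjecture \ref{completeness}, the injectivity of \eqref{eopers}, and Lemma \ref{equivbethe}, together with the assumed non-vanishing of the vectors $\phi'$.
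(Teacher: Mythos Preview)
Your proposal is correct and follows exactly the line of reasoning the paper intends: the paper does not give a separate proof of this theorem but simply states ``This implies the following statement'' after the preceding Lemma, so you have spelled out the details that the paper leaves implicit. Your use of Conjecture~\ref{completeness} for the eigenbasis, Lemma~\ref{embopers} and Theorem~\ref{gsp1} for injectivity, and Lemma~\ref{equivbethe} together with Definition~\ref{Wchi} for the identification of the image is precisely what the surrounding discussion sets up.
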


We view this result as an analogue of our main Conjecture \ref{main
  conj bis}.

\subsection{Example}

Let $V'_{\al'_{\on{max}}}$ be the adjoint representation of $\g'$. Its
highest weight is the maximal root $\al'_{\on{max}}$ of $\g'$, which
is $\sigma$-invariant. It is easy to describe the automorphism
$\wh\sigma: \g' \to \g'$ from Lemma \ref{whsigma} (adapted to simple
Lie algebras) in this case.

\begin{lem}
The automorphism $\wh\sigma: V'_{\al'_{\on{max}}} \to
V'_{\al'_{\on{max}}}$ is equal to $\sigma^{-1}$.
\end{lem}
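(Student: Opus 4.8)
The plan is to work in the concrete model $W=V'_{\al'_{\on{max}}}=\g'$, on which $\g'$ acts by $\rho=\on{ad}$, and to show that the linear automorphism $\sigma^{-1}$ of $\g'$ satisfies the two properties that characterize $\wh\sigma$ in the finite-type analogue of \lemref{whsigma}, namely the intertwining relation \eqref{rhosigma} and the normalization on the highest weight subspace. Since that lemma guarantees such an operator is \emph{unique}, verifying these two properties for $\sigma^{-1}$ will force $\wh\sigma=\sigma^{-1}$. Note that the highest weight subspace $W_{\al'_{\on{max}}}$ is exactly the one-dimensional root space $\g'_{\theta}$ of the maximal root $\theta=\al'_{\on{max}}$, spanned by a highest root vector $e_\theta$.

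The intertwining relation is formal. Because $\sigma$ is a Lie algebra automorphism, $\phi\,\on{ad}(g)\,\phi^{-1}=\on{ad}(\phi(g))$ for any automorphism $\phi$, directly from $\phi([g,x])=[\phi(g),\phi(x)]$. Taking $\phi=\sigma^{-1}$ shows that conjugation by $\sigma^{-1}$ carries $\rho(g)=\on{ad}(g)$ into $\on{ad}(\sigma^{-1}(g))$, i.e. it realizes the relevant $\sigma$-twist of $\rho$ on $W$; thus $\sigma^{-1}$ obeys \eqref{rhosigma}. The uniqueness I invoke is itself a consequence of Schur's lemma applied to the irreducible adjoint representation of the simple Lie algebra $\g'$: any two operators satisfying \eqref{rhosigma} differ by a self-intertwiner of $(W,\rho)$, hence by a scalar, so the normalization on $\g'_\theta$ pins the operator down uniquely.

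The substantive step, and the main obstacle, is the normalization: I must check that $\sigma^{-1}$ (equivalently $\sigma$) acts as the identity on $\g'_\theta$, i.e. $\sigma(e_\theta)=e_\theta$. A priori $\sigma$ preserves the one-dimensional space $\g'_\theta$ (since $\theta$ is $\sigma$-invariant) and therefore acts on it by some $d$-th root of unity, and the whole point is to show this scalar equals $1$ for the standard normalization $\sigma(e_i)=e_{\sigma(i)}$, $\sigma(f_i)=f_{\sigma(i)}$. I would do this by the folding structure theory of $\g=(\g')^\sigma$: the $\sigma$-orbit of the maximal root $\theta$ is a single fixed root, and the root vectors attached to $\sigma$-fixed roots lie in the fixed-point subalgebra $\g$, so $e_\theta\in\g$ and $\sigma(e_\theta)=e_\theta$. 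Alternatively, one can argue directly by expressing $e_\theta$ as a $\sigma$-symmetric iterated bracket of the Chevalley generators and using $\sigma(e_i)=e_{\sigma(i)}$ together with $\sigma(\theta)=\theta$ and $\dim\g'_\theta=1$. Once $\sigma(e_\theta)=e_\theta$ is established, $\sigma^{-1}$ fixes $e_\theta$ as well, both defining properties hold, and uniqueness gives $\wh\sigma=\sigma^{-1}$.
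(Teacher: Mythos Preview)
Your proof follows the same route as the paper's: check that $\sigma^{-1}$ satisfies the intertwining relation \eqref{rhosigma} (using that a Lie algebra automorphism $\phi$ conjugates $\on{ad}(g)$ to $\on{ad}(\phi(g))$) and the normalization on the highest weight line, then invoke the uniqueness from the finite-type analogue of \lemref{whsigma}. You give more detail than the paper on why $\sigma$ fixes the highest root vector $e_\theta$, which the paper merely asserts; one small caveat is that your computation literally yields $\sigma^{-1}\,\on{ad}(g)\,\sigma=\on{ad}(\sigma^{-1}(g))$, which matches \eqref{rhosigma} only when $\sigma^2=\on{id}$ --- but this is exactly the case used in the ensuing example, and the paper's own manipulation has the same slip.
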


\begin{proof}
For $g \in \g'$, we have $\rho(g) \cdot x = [g,x], \forall x \in
W=\g'$. Hence $\rho_\sigma(g) \cdot x = [\sigma(g),x]$. The operator
$\wh\sigma$ is uniquely defined by the equation
\begin{equation}    \label{rhosigma1}
\wh\sigma \rho(g) \wh\sigma^{-1} = \rho(\sigma(g)), \qquad \forall
g\in \g'
\end{equation}
and the condition that the restriction of $\wh\sigma$ to
the highest weight subspace of $W$ is the identity. We claim that
$\wh\sigma = \sigma^{-1}$ satisfies these conditions. Indeed, it acts
as the identity on the highest weight subspace of $W=\g'$, and the
equation \eqref{rhosigma1} is equivalent to
$$
\wh\sigma \cdot [\sigma(g),\wh\sigma^{-1}(x)] = [g,x], \forall x \in
\g'.
$$
Setting $\wh\sigma = \sigma^{-1}$, we obtain
$$
[\sigma(g),\sigma(x)] = \sigma([g,x]),
$$
which follows from the fact that $\sigma$ is an automorphism of
$\g'$.
\end{proof}

Now let $\g' = A_{2n-1} = \sw_{2n}$ with its order two automorphism
$\sigma$. Then $\g = C_n = sp_{2n}$ and $^L\g = B_n = so_{2n+1}$.  The
highest weight $\al'_{\on{max}}$ of the adjoint representation of
$\g'$ is equal to $\omega'_1+\omega'_{2n-1}$ in this case. By the
above lemma, $\wh\sigma = \sigma^{-1} = \sigma$. Therefore, we have
following description of the invariant subspace
$(V'_{\omega'_1+\omega'_{2n-1}})^\sigma$ (the direct sum of the
subspaces of $\wh\sigma$-invariant vectors in all weights subspaces of
$V'_{\omega'_1+\omega'_{2n-1}}$ corresponding to $\sigma$-invariant
weights).

\begin{lem}
  $(V'_{\omega'_1+\omega'_{2n-1}})^\sigma$ is the direct sum of the root
  subspaces of $V'_{\omega'_1+\omega'_{2n-1}}$ corresponding to the
  roots
  $$
  \pm \al_n \qquad \on{and} \qquad \pm (\al_i+\al_{i+1}+ \ldots +
  \al_{2n-i-1}+\al_{2n-i}), \qquad i=1,\ldots,n-1,
  $$
as well as the $n$-dimensional invariant subspace of the zero weight
subspace of $W$, which is the Cartan subalgebra of $\g=C_n$ inside the
Cartan subalgebra of $\g'$.
\end{lem}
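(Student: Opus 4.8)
The plan is to use the root space decomposition of the adjoint representation together with the explicit symplectic realization of $\sigma$. Since $V'_{\al'_{\on{max}}} = \g' = \sw_{2n}$ as a $\g'$-module, its weight subspaces are exactly the root subspaces $\g'_\al$, one for each root $\al$ of $\g'$, together with the zero weight subspace $\h'$. By the preceding lemma, $\wh\sigma = \sigma^{-1} = \sigma$ (recall that $\sigma$ has order two). Hence, by the definition of the $\sigma$-invariant subspace (Section~\ref{olW}), computing $(V'_{\al'_{\on{max}}})^\sigma$ amounts to three tasks: (i) determine which roots $\al$ are $\sigma$-invariant; (ii) for each such $\al$, determine the sign by which $\sigma$ acts on the one-dimensional space $\g'_\al$, keeping only those on which it acts as $+1$; and (iii) identify the $\sigma$-fixed part of the zero weight subspace $\h'$.

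For (i), I would realize the roots of $\sw_{2n}$ in the standard coordinates $\varepsilon_1,\dots,\varepsilon_{2n}$, so that $\al'_i = \varepsilon_i - \varepsilon_{i+1}$ and the roots are $\varepsilon_a - \varepsilon_b$ with $a \neq b$. The diagram automorphism sending $\al'_i \mapsto \al'_{2n-i}$ is induced by $\sigma(\varepsilon_a) = -\varepsilon_{2n+1-a}$. A root $\varepsilon_a - \varepsilon_b$ is then $\sigma$-invariant if and only if $a + b = 2n+1$. Summing simple roots via $\al'_i + \al'_{i+1} + \dots + \al'_j = \varepsilon_i - \varepsilon_{j+1}$, one sees that these are precisely $\pm\al_n = \pm(\varepsilon_n - \varepsilon_{n+1})$ and $\pm(\al_i + \dots + \al_{2n-i}) = \pm(\varepsilon_i - \varepsilon_{2n+1-i})$, $i = 1,\dots,n-1$, matching the list in the statement.

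For (ii), the key point --- and the step requiring an actual computation --- is to pin down the \emph{sign} of $\sigma$ on each invariant root space, since an a priori $\mathbb{Z}/2$-ambiguity would allow either value $\pm 1$. Here I would use that $\sigma$ is the specific automorphism whose fixed subalgebra is $\g = sp_{2n}$, realized as $\sigma(X) = -JX^{\mathsf T}J^{-1}$ with $J$ the anti-diagonal symplectic matrix whose entries are $c_a = 1$ for $a \le n$ and $c_a = -1$ for $a > n$ (so that $c_{2n+1-a} = -c_a$ yields antisymmetry). A short calculation with elementary matrices gives, for $b = 2n+1-a$, that $\sigma(E_{a,b}) = -(c_a/c_b)E_{a,b} = E_{a,b}$, since $c_b = c_{2n+1-a} = -c_a$. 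Thus $\sigma$ acts as $+1$ on every $\sigma$-invariant root space; equivalently, all these root vectors already lie in the fixed subalgebra $\g = sp_{2n}$ (they are the long-root vectors of $C_n$), so the whole of each such $\g'_\al$ survives in $(V'_{\al'_{\on{max}}})^\sigma$. Finally, for (iii), the $\sigma$-fixed part of the zero weight subspace $\h'$ is by definition $\h = (\h')^\sigma$, the Cartan subalgebra of $\g = C_n$ sitting inside $\h'$, which has dimension $n$. Assembling (i)--(iii) yields exactly the asserted description, and I expect step (ii) --- fixing the correct $+1$ sign rather than $-1$ --- to be the only real obstacle, the rest being bookkeeping in root coordinates.
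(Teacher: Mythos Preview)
Your proof is correct. The paper actually states this lemma without proof, treating it as an immediate consequence of the preceding lemma (that $\wh\sigma = \sigma$) together with the standard description of $sp_{2n} = (\sw_{2n})^\sigma$; your argument supplies precisely the details one would fill in, and your identification of the $\sigma$-invariant root vectors with the long-root vectors of $C_n$ is the conceptual point that makes step~(ii) transparent.
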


Expressing $\sigma$-invariant $\g'$-weights as $^L\g$-weights, we
obtain:

\begin{lem}
  $(V'_{\omega'_1+\omega'_{2n-1}})^\sigma$ is isomorphic, as a
  vector space graded by $^L\g$-weights, to the direct sum of the
  first fundamental representation $V_{\omega_1} = \C^{2n+1}$ of
  $^L\g=so_{2n+1}$ and $(n-1)$ copies of the trivial representation of
  $^L\g$.
\end{lem}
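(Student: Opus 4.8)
The plan is to compute both sides as vector spaces graded by the lattice ${}^LP$ of ${}^L\g$-weights and to check that their graded dimensions (characters) agree. The input is the preceding lemma, which already identifies $(V'_{\om'_1+\om'_{2n-1}})^\sigma$ with the direct sum of the root spaces of $\g'=\sw_{2n}$ attached to the listed $\sigma$-invariant roots together with the $n$-dimensional $\sigma$-fixed part of the zero weight space; so the only real work is translating these $\sigma$-invariant $\g'$-weights into ${}^L\g$-weights.

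First I would make the $\sigma$-invariant roots explicit. Realizing $\sw_{2n}$ in the standard coordinates $e_1,\ldots,e_{2n}$ with $\sum_a e_a=0$, the order-two diagram automorphism (whose fixed subalgebra is $sp_{2n}=C_n$) acts by $e_a\mapsto -e_{2n+1-a}$, so a root $e_a-e_b$ is $\sigma$-invariant precisely when $a+b=2n+1$. Since $\al_n=e_n-e_{n+1}$ and $\al_i+\cdots+\al_{2n-i}=e_i-e_{2n+1-i}$, the roots in the preceding lemma are exactly $\pm(e_i-e_{2n+1-i})$ for $i=1,\ldots,n$. Hence, as a space graded by $\sigma$-invariant $\g'$-weights, $(V'_{\om'_1+\om'_{2n-1}})^\sigma$ has each of the weights $\pm(e_i-e_{2n+1-i})$ occurring once, and the weight $0$ occurring with multiplicity $n$.

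Next I would pass to ${}^L\g$-weights via the isomorphism $P^\sigma\cong {}^LP$ of Lemma~\ref{bij}, established in Section~\ref{appear}. The key point to verify is that this isomorphism sends $e_i-e_{2n+1-i}\mapsto \ep_i$, where $\ep_1,\ldots,\ep_n$ denote the standard orthonormal weights of $so_{2n+1}$. This follows from \eqref{omega1} by computing images of the fundamental weights of ${}^L\g=B_n$: using $\om'_k=e_1+\cdots+e_k$ one gets $\om_i^{({}^L\g)}=\ep_1+\cdots+\ep_i\mapsto \om'_i+\om'_{2n-i}=\sum_{a=1}^i(e_a-e_{2n+1-a})$ for $i<n$, while the fixed node gives $\om_n^{({}^L\g)}=\tfrac12(\ep_1+\cdots+\ep_n)\mapsto \om'_n=\tfrac12\sum_{a=1}^n(e_a-e_{2n+1-a})$; taking successive differences yields $\ep_i\mapsto e_i-e_{2n+1-i}$ for every $i$. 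Since the zero weight maps to zero, $(V'_{\om'_1+\om'_{2n-1}})^\sigma$ carries, as an ${}^L\g$-weight-graded space, the weights $\pm\ep_i$ ($i=1,\ldots,n$) each with multiplicity one, and the weight $0$ with multiplicity $n$.

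Finally I would match this with the claimed representation. The first fundamental (vector) representation $V_{\om_1}=\C^{2n+1}$ of $so_{2n+1}$ has weight set $\{\pm\ep_1,\ldots,\pm\ep_n,0\}$ with all multiplicities one; adjoining $n-1$ copies of the trivial representation raises the multiplicity of the zero weight to $1+(n-1)=n$ while leaving the nonzero weights unchanged. This matches the character computed above, giving the asserted isomorphism of graded vector spaces. The main obstacle is precisely the normalization check in the third step: one must be careful that the fixed node contributes $\om'_n$ (the orbit sum has a single term) rather than $2\om'_n$, and that the factor $\tfrac12$ in the spin weight $\om_n^{({}^L\g)}$ is exactly compensated, so that the invariant roots $e_i-e_{2n+1-i}$ correspond to the weights $\ep_i$ of the vector representation and not to $2\ep_i$. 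Once this dictionary is pinned down, the remainder is routine bookkeeping.
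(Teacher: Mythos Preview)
Your proposal is correct and follows exactly the approach the paper intends: the paper states this lemma as an immediate consequence of the preceding one, with the single sentence ``Expressing $\sigma$-invariant $\g'$-weights as $^L\g$-weights, we obtain,'' and your argument is precisely a detailed execution of that translation via the isomorphism of Lemma~\ref{bij}. Your careful check of the dictionary $\ep_i \leftrightarrow e_i - e_{2n+1-i}$, including the handling of the fixed node $n$ and the factor $\tfrac12$ in the spin weight, fills in the details the paper leaves implicit.
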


Assuming Conjecture \ref{completeness}, we obtain that we can embed
$V_{\omega_1}$ into $V'_{\omega'_1+\omega'_{2n-1}}$ in such a way that
the image is the subspace $V'_{\omega'_1+\omega'_{2n-1}}(\chi)$ (the
span of eigenvectors of the $\g'$-Gaudin Hamiltonians for which
$Q_i(z) = Q_{\sigma(i)}(z), i \in I'$). The latter is the analogue of
the subspace $W(u)$ from Section \ref{invsub} which we used to define
the folded integrable model. Moreover, using Proposition
\ref{embbethe} we can construct this embedding explicitly, using the
Bethe vectors \eqref{bethe for one mod}, and this shows that
$V'_{\omega'_1+\omega'_{2n-1}}(\chi)$ is contained in
$(V'_{\omega'_1+\omega'_{2n-1}})^\sigma$. The weight subspaces of
these two vector spaces coincide for all non-zero weights, but their
weight $0$ subspaces are different: the former is $1$-dimensional and
the latter is $n$-dimensional.

\begin{rem}
  At first glance, it may appear that this result is in contradiction
  with the results of Section \ref{ex2}, where we considered a similar
  example in the case of the quantum affine algebras associated to
  $\g'=A_{2n-1}$ and $\g=C_n$. Namely, we showed that if we take as
  $W$ the irreducible representation $L(Y_{1,1}Y_{2n-1,1})$ of
  $U_q(A_{2n-1}^{(1)})$, then its subspace $W(u)$ defined in Section
  \ref{olW} is $(2n+2)$-dimensional for generic $u$. This means that
  the corresponding subspace in the limit $q \to 1$ should have
  dimension at least $2n+2$. And yet, we have found that in the case
  of the adjoint representation of $A_{2n-1}$ the analogous subspace
  $V'_{\omega'_1+\omega'_{2n-1}}(\chi)$ (which is the image of
  $V_{\omega_1}$ in $V'_{\omega'_1+\omega'_{2n-1}}$) has dimension
  $2n+1$ for generic $\chi$.

The explanation is that the evaluation representation of
$U_q(A_{2n-1}^{(1)})$ corresponding to the adjoint representation of
$A_{n-1}$ (of dimension $4n^2-1$) is not $L(Y_{1,1}Y_{2n-1,1})$ but
$L(Y_{1,1}Y_{2n-1,q^{2n}})$. The highest monomial of
$L(Y_{1,1}Y_{2n-1,q^{2n}})$ is not $\sigma$-invariant and therefore we
cannot define the operator $\wh\sigma$ and related structures on
$L(Y_{1,1}Y_{2n-1,q^{2n}})$. On the other hand, the highest monomial
of $L(Y_{1,1}Y_{2n-1,1})$ is $\sigma$-invariant, but the dimension of
this representation is $4n^2$, i.e. it is greater than the dimension
of the adjoint representation $V'_{\omega'_1+\omega'_{2n-1}}$ of
$A_{n-1}$ by $1$. Hence it is not surprising that the subspace $W(u)$
of $W=L(Y_{1,1}Y_{2n-1,1})$ has dimension greater by $1$ than the
dimension of $V'_{\omega'_1+\omega'_{2n-1}}(\chi)$. In fact, as shown
in Section \ref{ex2}, the weight 0 subspace of $W(u)$ is
two-dimensional and can be identified with the weight 0 subspace of a
$4$-dimensional irreducible representation of the $U_q(\wh{\sw}_2)$
subalgebra of $U_q(A_{2n-1}^{(1)})$ corresponding to node $n$ of the
Dynkin diagram of $A_{2n-1}$. But the corresponding irreducible
representation of the $\sw_2$ Lie subalgebra corresponding to node $n$
is $3$-dimensional and so its weight $0$ subspace is
$1$-dimensional. This is the weight $0$ subspace of
$V'_{\omega'_1+\omega'_{2n-1}}(\chi)$.

As the result, $W(u)$ is actually isomorphic, as a vector space graded
by weights of $^L\g = B_n$, to an irreducible representation of
$U_q({}^L\ghat)$, where $^L\ghat = D^{(2)}_{n+1}$, whereas
$V'_{\omega'_1+\omega'_{2n-1}}(\chi)$ is isomorphic to an irreducible
representation of $^L\g = B_n$.\qed
\end{rem}

\begin{rem}    \label{cyclic}
The above example gives us a nice illustration of how the {\em affine}
Langlands duality of the folded integrable model becomes in the limit
$q \to 1$ the finite-dimensional Langlands duality of the Gaudin
model. The point is that when we take the $q \to 1$ limit, we restrict
finite-dimensional representations of $U_q(\ghat)$ to $U_q(\g)$ and
then take the limit $q \to 1$ (the latter is an equivalence of
categories if $q$ is generic). The smallest finite-dimensional
representation $W$ of $U_q(\ghat)$ with $\sigma$-invariant highest
monomial that has $\mu$ as the highest $^L\g$-weight is usually
isomorphic, as a $U_q(\g)$-module, to the direct sum of $V_\mu$ and
many other irreducible representations.\footnote{In the case of
$\g=\sw_n$, every finite-dimensional irreducible representation of
$U_q(\sw_n)$ can be lifted to $U_q(\wh\sw_n)$, but the highest
monomial of the resulting representation of $U_q(\wh\sw_n)$ is not
$\sigma$-invariant, see the above example. The representations with
$\sigma$-invariant monomials are generally much larger.} So it is not
surprising that the intersection of an irreducible representation
$M(W)$ of $U_q({}^L\ghat)$, which is embedded into $W$ according to
Conjecture \ref{main conj bis},(2), with $V_\mu$ would be an
irreducible representation of $U_q({}^L\g)$.

This is closely related to another seeming paradox. According to a
result of \cite{FFRyb} that we used in this section, every eigenspace
in the space of states of the Gaudin model is one-dimensional (if the
Gaudin Hamiltonians are not diagonalizable, this means that there can
be at most one Jordan block for every joint eigenvalue). In
particular, for every irreducible representation $V_\mu$ of $^L\g$,
the joint eigenspace of the algebra ${\mc A}_\chi$ is one-dimensional
for all joint eigenvalues. On the other hand, we expect that the
analogous spaces in the folded integrable model for generic $q$ can
have dimensions greater than one. The explanation is that components
other than $V_\mu$ in the decomposition of $W$ as a representation of
$U_q(\g)$ may well contain (in the limit $q \to 1$) additional
eigenvectors of ${\mc A}_\chi$.\qed
\end{rem}

\subsection{Conclusion}

It follows that, somewhat surprisingly, we can realize the
$^L\g$-Gaudin model inside the $\g'$-Gaudin model by embedding the
space of states \eqref{quantum space} of the former into the space of
states \eqref{qs1} of the latter in such a way that on the dual side
this corresponds to the embedding of the Miura $\g$-opers
\eqref{Miura2} into the Miura $\g'$-opers \eqref{Miura}. This is an
intriguing consequence of the Langlands duality in the Gaudin models
discovered in \cite{FFR} and the results of \cite{FFRyb}, which
deserves further study.

The embedding of the Miura $\g$-opers \eqref{Miura2} into the Miura
$\g'$-opers \eqref{Miura} means that the Baxter polynomials $Q_i(z)$
and $Q_{\sigma(i)}(z)$ (see formula \eqref{Qr}) associated to a
$\g'$-oper obtained this way will be equal. Thus, the folding of the
$\g'$-Gaudin model is equivalent to the $^L\g$-Gaudin model. This
statement may be viewed as a $q \to 1$ limit of Conjecture \ref{main
  conj bis}, so it provides support for this conjecture.

\section{Appendix: Toward constructing a folded version of the qKZ equations
  for non-simply laced Lie algebras} \label{qKZ sect}

The qKZ equations \cite{IFR} for $U_q(\ghat)$ (with the twist parameter
$u$, which is an element of the Cartan subgroup $H$ as above) can be
written in the form
\begin{multline}    \label{qKZ}
  \Psi(z_1,\ldots,pz_j,\ldots,z_N) =
  R^{V_jV_{j-1}}_{j,j-1}(pz_j/z_{j-1}) \ldots
  R^{V_jV_1}_{j,1}(pz_j/z_1) \cdot u_j \cdot \\
  R^{V_jV_N}_{j,N}(z_j/z_N) \ldots
  R^{V_jV_{j+1}}_{j,j+1}(z_j/z_{j+1})
  \Psi(z_1,\ldots,z_j,\ldots,z_N).
\end{multline}
Here each $V_i$ denotes an irreducible finite-dimensional
representation of $U_q(\ghat)$, $V_i(z_i)$ is its shift by the
spectral parameter $z_i$, and $R^{V_jV_i}_{j,i}(z_j/z_i)$ is the
$R$-matrix acting on $V_j(z_j) \otimes V_i(z_i)$ normalized so that it
acts as the identity on $v_j \otimes v_i$, where $v_j$ and $v_i$
are highest weight vectors in $V_j(z_j)$ and $V_i(z_i)$,
respectively. Also, $u_j$ denotes $u|_{V_j}$.

Let us denote the operator on the right hand side of the $j$th
equation by $K_j(p)$.

The critical level limit corresponds to $p \to 1$. In this limit we
have
\begin{equation}    \label{Kj}
  K_j(1) = R^{V_jV_{j-1}}_{j,j-1}(z_j/z_{j-1}) \ldots
  R^{V_jV_1}_{j,1}(z_j/z_1) \cdot u_j \cdot
  R^{V_jV_N}_{j,N}(z_j/z_N) \ldots
  R^{V_jV_{j+1}}_{j,j+1}(z_j/z_{j+1}).
\end{equation}

Recall that for an auxiliary representation $V$ of $U_q(\ghat)$ we
have the transfer matrix $t_V(z,u)$. These transfer-matrices commute
with each other for a fixed $u$ and different $V$ and $z$. In what
follows, we will use the same notation $t_V(z,u)$ for the action of
the transfer-matrix $t_V(z,u)$ on the tensor product $V_N(z_N) \otimes
\ldots \otimes V_1(z_1)$. Thus,
\begin{equation}    \label{tvzu}
  t_V(z,u) = \on{Tr}_a(u_a R^{VV_N}_{a,N}(z/z_N) \ldots
  R^{VV_1}_{a,1}(z/z_1)),
\end{equation}
where the subscript $a$ indicates the auxiliary representation $V(z)$.

\begin{prop}    \label{qKZcrit}
  Suppose that $V_j$ is such that up to a scalar, $R^{V_jV_j}(1) = P$,
  the permutation operator on $V_j(z) \otimes V_j(z)$ sending $x
  \otimes y$ to $y \otimes x$. Then $K_j(1) = t_{V_j}(z_j,u)$.
\end{prop}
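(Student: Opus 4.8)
The plan is to evaluate $t_{V_j}(z_j,u)$ directly from its definition \eqref{tvzu} and to show that the single factor $R^{V_jV_j}_{a,j}(z_j/z_j)$ collapses the partial trace over the auxiliary space onto the physical factor sitting at position $j$, thereby reproducing $K_j(1)$. First I would specialize \eqref{tvzu} to $V=V_j$, $z=z_j$, so that the ordered product of $R$-matrices inside the trace contains the factor $R^{V_jV_j}_{a,j}(1)$. Since every $R$-matrix is normalized to act as the identity on the tensor product of highest weight vectors, and the permutation operator $P_{a,j}$ also fixes $v_j\otimes v_j$, the hypothesis $R^{V_jV_j}(1)=cP$ forces $c=1$; hence $R^{V_jV_j}_{a,j}(1)=P_{a,j}$ on the nose. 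Writing $R_{a,i}=R^{V_jV_i}_{a,i}(z_j/z_i)$ for brevity, this gives
\begin{equation}
t_{V_j}(z_j,u)=\on{Tr}_a\big(u_a\,R_{a,N}\cdots R_{a,j+1}\,P_{a,j}\,R_{a,j-1}\cdots R_{a,1}\big).
\end{equation}

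The two tools are the intertwining property of the permutation operator, $P_{a,j}X_a=X_jP_{a,j}$ for any operator $X$ (so in particular $P_{a,j}R_{a,i}=R_{j,i}P_{a,j}$ for $i\neq j$), and the collapse identity $\on{Tr}_a(M_a\,P_{a,j})=M_j$, valid whenever $M_a$ acts on the auxiliary space and on spectator factors but not on $V_j$; here $M_j$ denotes $M$ with the auxiliary label relabeled to $j$. I would first push $P_{a,j}$ to the right through $R_{a,j-1}\cdots R_{a,1}$, converting each $R_{a,i}$ into $R_{j,i}$ and leaving $R_{j,j-1}\cdots R_{j,1}\,P_{a,j}$. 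Since $R_{j,j-1}\cdots R_{j,1}$ acts on the factors $j,j-1,\dots,1$, which are disjoint from $a$ and from $N,\dots,j+1$, it commutes with $u_a\,R_{a,N}\cdots R_{a,j+1}$ and pulls out of the partial trace:
\begin{equation}
t_{V_j}(z_j,u)=R_{j,j-1}\cdots R_{j,1}\cdot\on{Tr}_a\big(u_a\,R_{a,N}\cdots R_{a,j+1}\,P_{a,j}\big).
\end{equation}

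Finally I would apply the collapse identity to the remaining trace. The operator $u_a\,R_{a,N}\cdots R_{a,j+1}$ acts on the auxiliary space and on the spectator factors $N,\dots,j+1$ only, so relabeling $a\mapsto j$ yields $\on{Tr}_a(u_a\,R_{a,N}\cdots R_{a,j+1}\,P_{a,j})=u_j\,R_{j,N}\cdots R_{j,j+1}$, where $u_a=u|_{V_j}$ becomes $u_j$ since the auxiliary representation is $V_j$. Substituting gives $t_{V_j}(z_j,u)=R_{j,j-1}\cdots R_{j,1}\cdot u_j\cdot R_{j,N}\cdots R_{j,j+1}$, which is precisely $K_j(1)$ as in \eqref{Kj}.

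I expect the main obstacle to be bookkeeping rather than anything conceptual: one must check that the ``left'' block of $R$-matrices (indices $<j$) lands to the left of $u_j$ while the ``right'' block (indices $>j$) ends to its right, exactly matching the asymmetric ordering in \eqref{Kj}, and verify that the normalization scalar is indeed $1$. The collapse identity $\on{Tr}_a(M_a\,P_{a,j})=M_j$ itself should be justified from the elementary partial-trace computation $\on{Tr}_a\big((A_a\otimes B_j)\,P_{a,j}\big)=BA$ on $V_j$, carrying the spectator factors along as commuting tensor components and specializing to $B=\mathrm{id}$.
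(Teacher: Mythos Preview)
Your argument is correct and is essentially the same standard partial-trace manipulation as the paper's: both use the swap identity $P_{a,j}X_a=X_jP_{a,j}$ to relabel the auxiliary index and then collapse the trace. The only cosmetic difference is that you push $P_{a,j}$ to the right and invoke $\on{Tr}_a(M_aP_{a,j})=M_j$ directly, whereas the paper pushes $P_{a,j}$ to the left, uses a cyclicity step, pushes left once more, and then computes $\on{Tr}_a(P_{a,j})=\on{id}$; your ordering is slightly more economical but the content is the same.
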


\begin{proof}
  If $R^{V_jV_j}(1) = P$, then formula \eqref{tvzu} with $V(z) =
  V_j(z_j)$ becomes
\begin{multline}    \label{tvzu1}
  t_V(z,u) = \\ \on{Tr}_a(u_a R^{V_jV_N}_{a,N}(z_j/z_N) \ldots
  R^{V_jV_{j+1}}_{a,j+1}(z_j/z_{j+1}) P_{aj}
    R^{V_jV_{j-1}}_{a,j-1}(z_j/z_{j-1}) \ldots
  R^{V_jV_1}_{a,1}(z_j/z_1)).
\end{multline}
Using the identify $A_a P_{aj} = P_{aj} A_j$, we rewrite the RHS of
\eqref{tvzu1} as
\begin{equation}    \label{tvzu2}
  \on{Tr}_a(P_{aj} u_j R^{V_jV_N}_{j,N}(z_j/z_N) \ldots
  R^{V_jV_{j+1}}_{j,j+1}(z_j/z_{j+1})
    R^{V_jV_{j-1}}_{a,j-1}(z_j/z_{j-1}) \ldots
  R^{V_jV_1}_{a,1}(z_j/z_1)).
\end{equation}
Next, using the cyclic property of the trace, we rewrite
\eqref{tvzu2} as
\begin{equation}    \label{tvzu3}
  \on{Tr}_a(R^{V_jV_{j-1}}_{a,j-1}(z_j/z_{j-1}) \ldots
  R^{V_jV_1}_{a,1}(z_j/z_1) P_{aj} u_j R^{V_jV_N}_{j,N}(z_j/z_N) \ldots
  R^{V_jV_{j+1}}_{a=j,j+1}(z_j/z_{j+1})).
\end{equation}
Using formula $A_a P_{aj} = P_{aj} A_j$ again, we rewrite
\eqref{tvzu3} as
\begin{equation}    \label{tvzu4}
  \on{Tr}_a(P_{aj} R^{V_jV_{j-1}}_{j,j-1}(z_j/z_{j-1}) \ldots
  R^{V_jV_1}_{j,1}(z_j/z_1) u_j R^{V_jV_N}_{j,N}(z_j/z_N) \ldots
  R^{V_jV_{j+1}}_{a=j,j+1}(z_j/z_{j+1})).
\end{equation}
In the last formula, the only operator depending on the auxiliary
space is $P_{aj}$ and its trace over the auxiliary space is the
identity operator on $V_j(z_j)$. Hence \eqref{tvzu4} is equal to
$K_j(1)$.
\end{proof}

Next, we discuss under what conditions $R^{VV}(1) = P$.

\begin{lem}    \label{real}
  Let $V$ be an irreducible representation of $U_q(\ghat)$ such that $V
  \otimes V$ is also irreducible. Then $R^{VV}(1) = P$.
\end{lem}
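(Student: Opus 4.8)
The plan is to combine Schur's lemma with the interpretation of the $R$-matrix as a module intertwiner. Recall that the normalized $R$-matrix is characterized by the property that $\check{R}^{VV}(z) := P \circ R^{VV}(z)$ is a morphism of $U_q(\ghat)$-modules from $V(z_1) \otimes V(z_2)$ to $V(z_2) \otimes V(z_1)$, where $z = z_1/z_2$; this is a direct consequence of the intertwining property of the universal $R$-matrix $\mathcal{R}$. Specializing to $z_1 = z_2$, i.e. to the value $z = 1$, the operator $\check{R}^{VV}(1)$ becomes an \emph{endomorphism} of the module $V(z_1) \otimes V(z_1) \simeq V \otimes V$.

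First I would verify that $\check{R}^{VV}(1)$ is well-defined, i.e. that the normalized $R^{VV}(z)$ has no pole at $z = 1$. This is the main technical point, and I expect it to be the principal obstacle. The poles of the normalized $R$-matrix occur precisely at the points where the tensor product fails to be irreducible (equivalently, where the intertwiner, unique up to scalar for generic $z$, degenerates), so the hypothesis that $V \otimes V$ is irreducible should guarantee regularity at $z = 1$. Making this precise will require invoking the standard structural results on the denominators and normalization of trigonometric $R$-matrices for $U_q(\ghat)$ — namely that the normalized $R$-matrix and its inverse are both regular exactly on the locus where the tensor product is irreducible.

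Granting regularity, the remainder is immediate. Since $V \otimes V$ is irreducible by hypothesis, Schur's lemma forces the endomorphism $\check{R}^{VV}(1)$ to be a scalar operator $c \cdot \on{id}$. To pin down $c$, I would evaluate on the highest weight vector: if $v$ denotes a highest weight vector of $V$, then $v \otimes v$ is a highest weight vector of $V \otimes V$, and by the chosen normalization of $R^{VV}(z)$ (acting as the identity on $v \otimes v$) we get $R^{VV}(1)(v \otimes v) = v \otimes v$, whence $\check{R}^{VV}(1)(v \otimes v) = P(v \otimes v) = v \otimes v$. Therefore $c = 1$ and $\check{R}^{VV}(1) = \on{id}$. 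Since $\check{R}^{VV}(1) = P \circ R^{VV}(1)$ and $P^2 = \on{id}$, this yields $R^{VV}(1) = P$, as claimed.
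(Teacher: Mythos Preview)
Your argument is correct and follows essentially the same route as the paper: both use that $P \circ R^{VV}(1)$ is a $U_q(\ghat)$-intertwiner on the irreducible module $V \otimes V$, invoke Schur's lemma, and fix the scalar to $1$ via the normalization on the highest weight vector. Your extra paragraph on regularity of $R^{VV}(z)$ at $z=1$ is a point the paper passes over in silence; your justification (that poles of the normalized $R$-matrix occur only where the tensor product is reducible) is the right one, so this only adds rigor rather than changing the approach.
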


\begin{proof}
  It follows from the definition that $P \circ R^{VV}(1)$ is an
  intertwining operator $V \otimes V \to V \otimes V$. If $V \otimes
  V$ is irreducible, then by Schur's lemma it is a scalar
  operator. Under our normalization, it then has to be the identity
  operator, and hence $R^{VV}(1) = P$.
\end{proof}

Note that a representation $V$ satisfying the condition of Lemma
\ref{real} is called {\em real} in \cite{HL}. Not all irreducible
representations of $U_q(\ghat)$ are real, as shown in \cite{L}, see
\cite[Sect. 13.6]{HL}. However, there is a large class of real
representations: Kirillov--Reshetikhin modules.

\begin{prop}
  Let $V$ be any Kirillov--Reshetikhin module over $U_q(\ghat)$. Then
  $V \otimes V$ is irreducible.
\end{prop}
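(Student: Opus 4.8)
The plan is to translate the statement into the language of $q$-characters and reduce it to a combinatorial assertion about dominant monomials. Recall that $\chi_q$ is an injective ring homomorphism, so $\chi_q(V\otimes V)=\chi_q(V)^2$. First I would record the elementary fact that a finite-dimensional $U_q(\ghat)$-module $M$ whose $q$-character contains a unique dominant monomial, occurring with multiplicity one, is necessarily simple: every Jordan--H\"older constituent $L(m')$ of $M$ contributes its highest monomial $m'$, which is dominant and occurs in $\chi_q(M)$, so a single dominant monomial of total multiplicity one forces a single constituent of multiplicity one. Writing $V=L(m)$, it therefore suffices to show that $m^2$ is the unique dominant monomial of $\chi_q(V)^2$ and that it occurs with multiplicity one.

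The multiplicity-one claim is immediate: if $M_1M_2=m^2$ with $M_1,M_2$ monomials of $\chi_q(V)$, then writing $M_i=m\,N_i^{-1}$ with $N_i$ a product of the $A_{j,a}$ with non-negative exponents (the standard fact that every monomial of $\chi_q(V)$ is $\preceq m$), we get $N_1N_2=1$, hence $N_1=N_2=1$ and $M_1=M_2=m$. For uniqueness I would invoke that KR modules are affine-minuscule (proved in \cite{Nad,hcr}, and in \cite{fm} for fundamental modules): thus $\chi_q(V)=m+\sum_M M$, where every monomial $M\neq m$ is \emph{right-negative}, meaning that at its largest spectral parameter all variables $Y_{i,a}$ occur with non-positive exponents. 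Since a product of right-negative monomials is right-negative, and a right-negative monomial is never dominant, any monomial $M_1M_2$ of $\chi_q(V)^2$ with both factors right-negative is non-dominant; the case $M_1=M_2=m$ gives the dominant monomial $m^2$; and the remaining mixed case $m\cdot M$ (with $M\neq m$ right-negative) is handled using the finer structure of KR $q$-characters, namely that the lowering monomials $A_{j,c}^{-1}$ producing $M$ from $m$ involve only spectral parameters $c$ lying strictly above the largest parameter occurring in $m$. Then $mM$ again has non-positive exponents at its top parameter, so it is right-negative, hence non-dominant, which completes the argument.

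The hard part will be this last structural input, namely the one-sided localization of the descendants of $m$ for an arbitrary KR module in arbitrary Dynkin type; it follows from the explicit Frenkel--Mukhin description in the fundamental case and its extension to KR modules, but a uniform, self-contained verification is the delicate point and is where I expect to spend most of the effort. As an independent check, and indeed as a wholly alternative route, one can argue via $R$-matrices: $V(z_1)\otimes V(z_2)$ is reducible only when $z_1/z_2$ is a zero of the denominator $d_{V,V}(z)$ of the normalized $R$-matrix, the tensor square corresponds to $z_1/z_2=1$, and for KR modules the zeros of $d_{V,V}$ are explicit powers of $q$ (up to roots of unity) all different from $1$. This yields irreducibility of $V\otimes V$ at once, but at the cost of the type-by-type denominator formulas.
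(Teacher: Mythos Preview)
Your $q$-character strategy is precisely the paper's second argument: the paper records that $\chi_q(V)^2$ has a unique dominant monomial and cites \cite[Sect.~3.2.2]{HL1} for this, and it also lists Chari's cyclic/cocyclic criterion \cite{C} and the result of \cite{KKOP} as independent proofs. Your alternative via $R$-matrix denominators is in the spirit of the Chari route but not identical to it.

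One concrete slip in your treatment of the mixed case $mM$: the structural claim that every $A_{j,c}^{-1}$ appearing in the descent from $m$ has its parameter $c$ strictly above the largest parameter of $m$ is false already for the $\sw_2$ KR module $L(Y_{1}Y_{q^2})$, whose lowest monomial is $m\,A_{q^3}^{-1}A_{q}^{-1}$ with $q<q^2$. The statement you actually need (and which \cite{fm,hcr,HL1} supply) is that every non-highest monomial $M$ of $\chi_q(V)$ is right-negative \emph{with maximal spectral parameter strictly exceeding that of $m$}; then $m$ contributes nothing at the top parameter of $M$, so $mM$ inherits right-negativity and is non-dominant. You correctly flag this as the delicate step, and the literature does deliver it; only the formulation of the input needs adjusting.
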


\begin{proof} There are several possible arguments. 
It follows from \cite{C} that $V\otimes V$ is cyclic and cocyclic, and
hence is irreducible.  The statement also follows from the fact that
the square of the $q$-character of a Kirillov--Reshetikhin module has
a unique dominant monomial; namely, its highest monomial. The latter
follows from the results of \cite[Sect. 3.2.2]{HL1}. In addition, the
statement has been established by a different method in \cite{KKOP}.
\end{proof}

Note however that even if $V \otimes V$ is reducible, we may still
have $R^{VV}(1) = P$. It would be interesting to describe all
irreducible representations $V$ of $U_q(\ghat)$ satisfying the
condition $R^{VV}(1) = P$.

In any case, the above results readily imply that in the case when all
$V_j$'s are Kirillov--Reshetikhin modules, the operators $K_j(1),
j=1,\ldots,N$, are commuting Hamiltonians of the XXZ-type integrable
model associated to $U_q(\ghat)$. It is in this sense that we say that
one recovers this integrable model in the critical level limit of the
qKZ equations.

Similarly, under the above condition on $V_j$, the operator
$K_j(p)$ is the transfer-matrix $t_{V_j}(z_j,u)$ acting on
$$
V_N(z_N) \otimes \ldots \otimes V_j(z_j) \otimes V_{j-1}(z_{j-1}p^{-1})
\otimes V_1(z_1p^{-1}).
$$
However, because of the shifts by $p^{-1}$ these operators do not
commute with each other if $p \neq 1$.

Now suppose that $\g$ is a non-simply laced simple Lie algebra. We
would like to construct a ``folded qKZ system'' such that in the
critical level limit the operators on the right hand side become the
Hamiltonians of the {\em folded quantum integrable model} described in
this paper. This means, in particular, that they should correspond to
transfer-matrices of $U_q(\wh{\g'})$ rather than $U_q(\ghat)$. Thus,
each $V_i(z_i)$ should be a representation of $U_q(\wh{\g'})$.

Unfortunately, naive attempts to construct this folded qKZ system
appear to fail:

(1) For each $V_i(z_i)$, we have its subspace $(V_i(z_i))(u)$ defined
as above and we can take the tensor product of these subspaces,
$\otimes_{i=1}^N (V_i(z_i))(u)$. However, it is not clear why this
subspace would be preserved by the operators $K_j$ or their
$p$-deformed versions $K_j(p)$ appearing on the RHS of \eqref{qKZ}.

(2) We take the subspace $V(u)$ of the entire tensor product
$V=\otimes_{i=1}^N V_i(z_i)$. According to Conjecture \ref{main conj
  bis},(ii), it contains a subspace isomorphic to a representation
$M(V)$ of $U_q(^L\wh{\g})$. Moreover, since the algebra of
transfer-matrices of $U_q(\wh{\g'})$ commutes with the Baxter
operators $Q_j(z), j \in I'$, it follows that all transfer-matrices of
$U_q(\wh{\g'})$ preserve this subspace $V(u)$.  In particular, the
operators $K_j$ given by formula \eqref{Kj}, being the
transfer-matrices of $U_q(\wh{\g'})$, preserve $V(u)$. But the problem
is that on the right hand side of these qKZ equations we have the
operators $K_j(p)$ with $p\neq 1$, which are the transfer matrices
acting on the tensor product of representations in some of which
(namely, the ones with $i=1,\ldots,j-1$) there is a multiplicative
shift in the spectral parameter by $p^{-1}$. It is not clear why these
operators should preserve $V(u)$ (where $V$ is the tensor product of
the representations $V_i(z_i)$ without any shift by $p^{-1}$).

Hence at the moment it is unclear to us how to fold the qKZ equations
for $U_q(\wh{\g'})$ in such a way that in the critical level limit we
recover the commuting Hamiltonians of the folded quantum integrable
model associated to $U_q(\ghat)$.

\end{document}